\documentclass[12pt]{article}
\usepackage{amsmath, amsthm, amssymb, stmaryrd, fullpage}

\newtheorem{theorem}{Theorem}[subsection]
\numberwithin{equation}{theorem}
\newtheorem{lemma}[theorem]{Lemma}
\newtheorem{cor}[theorem]{Corollary}
\newtheorem{prop}[theorem]{Proposition}
\theoremstyle{definition}
\newtheorem{defn}[theorem]{Definition}
\newtheorem{example}[theorem]{Example}
\newtheorem{convention}[theorem]{Convention}
\newtheorem{remark}[theorem]{Remark}
\newtheorem{hypothesis}[theorem]{Hypothesis}
\newtheorem{notation}[theorem]{Notation}

\newcommand{\calE}{\mathcal{E}}
\newcommand{\calO}{\mathcal{O}}
\newcommand{\calR}{\mathcal{R}}
\newcommand{\calZ}{\mathcal{Z}}

\newcommand{\gothm}{\mathfrak{m}}
\newcommand{\gotho}{\mathfrak{o}}
\newcommand{\gothp}{\mathfrak{p}}

\newcommand{\AAA}{\mathbb{A}}
\newcommand{\CC}{\mathbb{C}}
\newcommand{\DD}{\mathbb{D}}
\newcommand{\FF}{\mathbb{F}}
\newcommand{\QQ}{\mathbb{Q}}
\newcommand{\RR}{\mathbb{R}}
\newcommand{\ZZ}{\mathbb{Z}}

\newcommand{\del}{\partial}

\newcommand{\be}{\mathbf{e}}
\newcommand{\bv}{\mathbf{v}}
\newcommand{\bw}{\mathbf{w}}

\newcommand{\dual}{\vee}

\DeclareMathOperator{\abs}{abs}
\DeclareMathOperator{\based}{based}
\DeclareMathOperator{\bd}{bd}

\DeclareMathOperator{\Der}{Der}
\DeclareMathOperator{\End}{End}
\DeclareMathOperator{\Frac}{Frac}
\DeclareMathOperator{\Gal}{Gal}
\DeclareMathOperator{\gr}{gr}
\DeclareMathOperator{\Hom}{Hom}
\DeclareMathOperator{\irreg}{irreg}
\DeclareMathOperator{\inte}{inte}
\DeclareMathOperator{\length}{length}
\DeclareMathOperator{\rank}{rank}
\DeclareMathOperator{\Real}{Re}
\DeclareMathOperator{\Spec}{Spec}
\DeclareMathOperator{\spect}{sp}
\DeclareMathOperator{\Sym}{Sym}
\DeclareMathOperator{\Trace}{Trace}

\begin{document}

\title{Good formal structures for flat meromorphic connections, I:
Surfaces}
\author{Kiran S. Kedlaya}
\date{December 8, 2009}
\maketitle

\begin{abstract}
We give a criterion
under which one can obtain a good decomposition 
(in the sense of Malgrange) of a formal flat connection
on a complex analytic or algebraic variety of arbitrary dimension.
The criterion is stated in terms of the spectral behavior of differential operators,
and generalizes Robba's construction of the
Hukuhara-Levelt-Turrittin decomposition in the one-dimensional case.
As an application, we prove the 
existence of good formal structures for flat meromorphic 
connections on surfaces after suitable blowing up; this 
verifies a conjecture of
Sabbah, and extends a result of Mochizuki for algebraic connections.
Our proof uses a finiteness argument on the valuative tree associated
to a point on a surface, in order to verify the numerical criterion.
\end{abstract}

\section*{Introduction}

The Hukuhara-Levelt-Turrittin decomposition theorem
gives a classification of differential modules over the field
$\CC((z))$ of formal Laurent series resembling the decomposition of a
finite-dimensional vector space equipped with a linear endomorphism
into generalized eigenspaces. It implies that after adjoining
a suitable root of $z$, one can express any differential module as a successive
extension of one-dimensional modules.
This classification serves as the basis for
the asymptotic analysis of meromorphic connections around a 
(not necessarily regular) singular point.
In particular, it leads to a coherent description of the \emph{Stokes phenomenon},
i.e., the fact that the asymptotic growth of horizontal sections 
near a singularity must be described using different asymptotic series depending
on the direction along which one approaches the singularity.
(See \cite{varadarajan} for a beautiful exposition of this material.)

The purpose of this series of papers
is to give some higher-dimensional analogues of the Hukuhara-Levelt-Turrittin
decomposition for \emph{irregular} flat formal meromorphic connections on complex 
analytic or 
algebraic varieties. 
(The regular case is already well understood by work of Deligne \cite{deligne}.)
We do not discuss asymptotic analysis or the Stokes
phenomenon; these has been treated in the two-dimensional case by Sabbah
\cite{sabbah} (building on work of Majima \cite{majima}), 
and one expects the higher-dimensional case to behave similarly.

This paper separates naturally into two parts.
In the remainder of this introduction, we discuss these two parts 
individually, then append some further remarks.

\subsection{Local structure theory}

In the first part of the paper (\S~\ref{sec:diff alg}--\ref{sec:numerical}),
we develop a 
numerical criterion for the existence of a \emph{good decomposition}
(in the sense of Malgrange \cite{malgrange-lille}) of a formal flat meromorphic
connection at a point where the polar divisor has normal crossings. 
This criterion is inspired by the treatment of
the original decomposition theorem given by Robba \cite{robba-hensel}
using spectral properties of differential operators on nonarchimedean rings;
our treatment depends heavily on joint work with
Xiao \cite{kedlaya-xiao} concerning differential modules on some nonarchimedean
analytic spaces.

The criterion can be formulated as follows.
Let $M$ be a finite projective module over
$R = \CC \llbracket x_1,\dots,x_n \rrbracket[x_1^{-1},\dots,x_m^{-1}]$
equipped with a continuous flat connection. For each exceptional divisor on a toric
blowup of $\Spec R$, one can compute the \emph{irregularity} 
of $M$ and of $M^\dual \otimes M$ using spectral norms of differential
operators. The criterion asserts that $M$ admits a good decomposition
(after pullback along a finite \'etale cover of $\Spec R$)
if and only if the variation of the irregularity over the space of toric
exceptional divisors is consistent with such a decomposition.

\subsection{Application to surfaces}

In the second part of the paper (\S~\ref{sec:valuative}--\ref{sec:surfaces}),
we apply the numerical criterion against
a conjecture of Sabbah  \cite[Conjecture~2.5.1]{sabbah}
concerning formal meromorphic connections on a two-dimensional
complex algebraic or analytic variety. 
We say that such a
connection has a \emph{good formal structure} at some point if it
acquires a good decomposition after pullback along a finite cover ramified only
over the polar divisor. In general, even if the polar divisor has normal crossings,
one only has good formal structures away from some discrete set, the set of
\emph{turning points}. However, Sabbah conjectured
that one can replace the given surface with a suitable blowup in such
a way that the pullabck connection admits good formal structures everywhere.

In the case of an algebraic connection on an algebraic variety,
this conjecture was proved by Mochizuki \cite[Theorem~1.1]{mochizuki}.
Mochizuki's proof uses reduction modulo a large prime and analysis
of the resulting $p$-curvature. It can be extended to some nonalgebraic
cases, inovlving power series over a
subring of $\CC$ which is \emph{finitely generated} over $\ZZ$.
However, the latter restriction is essential for reduction mod $p$,
so one cannot hope to treat the general analytic problem this way.

We give a proof of Sabbah's conjecture in full,
as an application of the numerical
criterion described above. To control the turning points, we
interpret irregularity of a differential module as a function of a
\emph{valuative tree} (in the language of Favre and Jonsson 
\cite{favre-jonsson}), i.e., a one-dimensional nonarchimedean
analytic space in the sense of Berkovich. 
This interpretation is partly inspired by
recent work of Baldassarri and di Vizio
\cite{baldassarri-divizio}.

\subsection{Further remarks}

Later in this series,
we will establish an analogue of Sabbah's conjecture for higher
dimensional varieties, using the same numerical criterion.
In the algebraic case, such an analogue has been given by
Mochizuki \cite[Theorem~19.5]{mochizuki2} using analytic methods.

We conclude this introduction by 
pointing out an analogy between this circle of ideas and
a corresponding problem in the theory of $p$-adic differential modules.
The latter is what we call the ``semistable reduction problem''
for overconvergent $F$-isocrystals, which we treated recently in 
the papers \cite{kedlaya-part1, kedlaya-part2, kedlaya-part3, kedlaya-part4}.
We thank Yves Andr\'e for the suggestion to transpose some ideas
from that work into the present context.

\subsection*{Acknowledgments}
The author thanks the Tata Institute for Fundamental Research
for its hospitality during July-August 2008.
Thanks to Liang Xiao for finding an error in the proof
of Theorem~\ref{T:regular} in an early draft.
Thanks also to Francesco Baldassarri, Joseph Gubeladze,
 and Takuro Mochizuki for helpful comments.
Financial support was provided by NSF CAREER grant DMS-0545904,
the MIT NEC Research Support Fund,
and the MIT Cecil and Ida Green Career Development Professorship.

\section{Differential algebra}
\label{sec:diff alg}

We start with some definitions and notations regarding
differential rings, fields, and modules,
with an emphasis on spectral constructions.
Note that all rings we consider
will be unital and commutative unless otherwise specified.

\setcounter{theorem}{0}
\begin{convention}
For $M$ a free module over a ring $R$, $T: M \to M$ a function,
and $\be_1,\dots,\be_n$ a basis of $M$, the \emph{matrix of action} of $T$
on $M$ is defined as the $n \times n$ matrix $N$ over $R$ satisfying
$T(\be_j) = \sum_{i} N_{ij} \be_i$. That is, we use the basis to identify
$M$ with column vectors of length $n$.
\end{convention}

\begin{defn}
Let $R$ be a ring equipped with a norm $|\cdot|$.
For $\rho = (\rho_1,\dots,\rho_n) \in (0, +\infty)^n$, the
\emph{$\rho$-Gauss norm} $|\cdot|_\rho$ on the polynomial ring
$R[T_1,\dots,T_n]$
is defined by the formula
\begin{equation} \label{eq:gauss}
\left|
\sum_{i_1,\dots,i_n} R_{i_1,\dots,i_n} T_1^{i_1} \cdots T_n^{i_n} 
\right|_r
= \max_{i_1,\dots,i_n} \{ |R_{i_1,\dots,i_n}| \rho_1^{i_1} \cdots \rho_n^{i_n}
\}.
\end{equation}
We will use the formula \eqref{eq:gauss}
also to define Gauss norms in some other
settings (twisted polynomials, power series).
\end{defn}

\subsection{Differential rings and modules}

\begin{defn}
Let $R$ be a ring.
Equip $\Hom_\ZZ(R,R)$ with the $R$-module structure given by
\[
(r_1 f)(r_2) = r_1 f(r_2) \qquad (r_1, r_2 \in R; f \in \Hom_\ZZ(R,R)).
\]
The \emph{module of absolute derivations} $\Der(R)$ is the 
$R$-submodule of $\Hom_\ZZ(R,R)$ consisting of maps
$\del: R \to R$ satisfying the Leibniz rule:
\[
\del(r_1 r_2) = \del(r_1) r_2 + r_1 \del(r_2) \qquad (r_1, r_2 \in R).
\]
There is a canonical $R$-linear
isomorphism $\Der(R) \cong \Hom_R(\Omega_{R/\ZZ}, R)$, for
$\Omega_{R/\ZZ}$ the module of absolute K\"ahler differentials of $R$.
The \emph{Lie bracket} on $\Der(R)$ is the map
$[\cdot,\cdot]: \Der(R) \times \Der(R) \to \Der(R)$ defined by
\[
[\del_1, \del_2](r) = \del_1(\del_2(r)) - \del_2(\del_1(r)) \qquad
(\del_1,\del_2 \in \Der(R); r \in R);
\]
this satisfies the Jacobi identity, and so gives $\Der(R)$ the structure
of a Lie algebra over $\ZZ$ (but not over $R$, since the bracket is not
$R$-linear).

A \emph{differential ring/field/domain} is a ring/field/domain $R$ equipped
with a Lie algebra $\Delta_R$ over $\ZZ$, a homomorphism
of Lie algebras $\Delta_R \to \Der(R)$ 
(which equips each $\del \in \Delta_R$ with the action of a derivation on $R$)
and an $R$-module structure on $\Delta_R$ for which the map 
$\Delta_R \to \Der(R)$ is $R$-linear. Note that this forces
\[
[\del_1, r\del_2] = r[\del_1, \del_2] + \del_1(r) \del_2 \qquad
(\del_1,\del_2 \in \Delta_R; r \in R).
\]
(In practice, the map
$\Delta_R \to \Der(R)$ will be injective, so the previous equation
will be automatic.)
\end{defn}

\begin{defn} \label{D:twisted}
For $R$ a differential ring, let
$R\{\Delta_R\}$ denote the \emph{twisted universal enveloping algebra}
of the Lie algebra $\Delta_R$. It may be described as the free associative
algebra over $\ZZ$ generated by the abelian group $R \oplus \Delta_R$, modulo
the two-sided ideal generated by the relations
\begin{align*}
r_1 r_2 - r_2 r_1 & \qquad (r_1, r_2 \in R) \\
\del_1 \del_2 - \del_2 \del_1 - [\del_1, \del_2] & \qquad (\del_1, \del_2 \in \Delta_R) \\
\del r - r \del - \del(r) & \qquad (\del \in \Delta_R, r \in R).
\end{align*}
The \emph{degree} of a nonzero element $x$ of $R\{\Delta_R\}$ is the smallest
nonnegative integer $s$ such that $x$ can be written as a sum of elements
of $R\{\Delta_R\}$, each of the form $r \del_1 \cdots \del_i$ for some $i \leq s$
and some $\del_1,\dots,\del_i \in \Delta_R$.
Note that the degree is always finite.
Let $R\{\Delta_R\}^{(s)}$ be the $R$-submodule of $R\{\Delta_R\}$
consisting of elements of degree at most $s$; these may be thought of as
\emph{differential operators of order at most $s$}.
\end{defn}

\begin{defn}
For $R$ a differential ring, a \emph{differential module} over $R$
is a left $R\{\Delta_R\}$-module $M$. 
In concrete terms, $M$ is an $R$-module equipped with 
an action of the Lie algebra $\Delta_R$ on $M$ (as a $\ZZ$-module), which on one hand
is $R$-linear in the sense that
\[
(r\del)(m) = r \del(m) \qquad (r \in R, \del \in \Delta_R, m \in M)
\]
and on the other hand is compatible with the derivation action of
$\Delta_R$ on $M$ in the sense that
\[
\del(rm) = \del(r) m + r \del(m) \qquad (\del \in \Delta_R, r \in R, m \in M).
\]
If $M$ is finite as an $R$-module, we call $M$ a 
\emph{finite differential module} over $R$. We write $H^0(M)$ for the joint kernel
of the $\del \in \Delta_R$ on $M$; the elements of $H^0(M)$ are also called the
\emph{horizontal} elements of $M$.

Let $M, N$ be differential modules over a differential ring $R$. We equip
$M \otimes_R N$ with the differential module structure
\[
\del(m \otimes n) = \del(m) \otimes n
+ m \otimes \del(n) \qquad (\del \in \Delta_R, m \in M, n \in N).
\]
We equip $\Hom_R(M,N)$ with the differential module structure
\[
\del(f)(m) = \del(f(m)) - f(\del(m)) \qquad
(\del \in \Delta_R, f \in \Hom_R(M,N), m \in M).
\]
In particular, the dual $M^\dual = \Hom_R(M, R)$ 
is again a differential module.
Note that if $M$ is a differential module over $R$ whose underlying $R$-module
is projective, then endomorphisms $M \to M$ of differential modules
are in bijection with horizontal elements of $\End(M) = M^\dual \otimes_R M$.
\end{defn}

\begin{example}
For any differential ring $R$
and any $r \in R$, 
we obtain a differential module $E(r)$ over $R$ whose underlying module
is free on one generator $\bv$, by specifying
\[
\del(\bv) = \del(r)\bv \qquad (\del \in \Delta_R).
\]
Note that $E(r) \otimes E(s) \cong E(r+s)$ and $E(r)^\dual \cong E(-r)$.
\end{example}

\begin{remark}
Let $R$ be a differential ring. Then for any multiplicative subset $S$
of $R$, we may extend the action of each $\del \in \Delta_R$ to 
the localization $S^{-1} R$ by declaring that
\[
\del(r s^{-1}) = \del(r) s^{-1} - \del(s) r s^{-2} \qquad (r \in R, s \in S).
\]
To confirm that this is well-defined, we must check that if $r \in R$ maps to
$0$ in $S^{-1} R$, then so does $\del(r)$. To see this, pick $s \in S$
such that $rs = 0$. Then $rs^2 = 0$ also, so
\[
0 = \del(rs^2) = \del(r) s^2 + 2 rs \del(s) = \del(r) s^2.
\]
We may thus view $S^{-1} R$ as a differential ring equipped with the 
module of derivations $S^{-1} \Delta_R$.
This is well-defined because if $\del \in \Delta_R$ maps to zero in
$S^{-1} \Delta_R$, then 
the image of $\del$ on $S^{-1} R$ is killed by some $s \in S$, so 
$\del$ acts as the zero derivation on $S^{-1} R$.
\end{remark}

\subsection{Locally simple differential rings}

\begin{defn}
We say a differential ring $R$ is \emph{locally simple} if
for each prime ideal $\gothp$ of $R$,
the local ring $R_\gothp$ is simple as a differential ring,
i.e., it contains no nonzero proper ideal stable under the action of 
$\Delta_R \otimes_R R_\gothp$.
\end{defn}

We use the following criterion to verify that a differential ring
is locally simple.
\begin{defn}
A noetherian local ring $R$ with maximal ideal $\gothm$
is \emph{regular} if $\dim_{R/\gothm} \gothm/\gothm^2 = \dim(R)$.
In this case, the natural map
\begin{equation} \label{eq:regular graded}
\Sym_{R/\gothm} \gothm/\gothm^2 \to \gr(R) = 
\bigoplus_{n=0}^\infty \gothm^n/\gothm^{n+1}
\end{equation}
of graded rings is an isomorphism \cite[Theorem~14.4]{matsumura}.
\end{defn}

\begin{prop} \label{P:locally simple}
Let $R$ be a regular local ring with maximal ideal $\gothm$
and residue field $\kappa$ having characteristic $0$.
Suppose $R$ is equipped with the structure of a differential ring in 
such a way that the pairing
\[
\Delta_R \times (\gothm/\gothm^2) \to \kappa
\]
is nondegenerate on the right.
Then $R$ is simple as a differential ring.
\end{prop}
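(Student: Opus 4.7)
The plan is to show that any nonzero $\Delta_R$-stable ideal $I \subseteq R$ contains a unit, hence equals $R$. Since $R$ is noetherian local, Krull's intersection theorem gives $\bigcap_n \gothm^n = 0$, so every nonzero $r \in R$ has a well-defined order $\nu(r)$, the largest $n$ with $r \in \gothm^n$. It suffices to show that for every $r \in I$ with $\nu(r) = n \geq 1$, there exists $\del \in \Delta_R$ with $\nu(\del(r)) < n$; iterating then produces an element of $I$ of order $0$.

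First I would set up a convenient choice of derivations. Choose $t_1,\dots,t_d \in \gothm$ whose images form a $\kappa$-basis of $\gothm/\gothm^2$. The pairing $(\del, \bar x) \mapsto \del(x) \bmod \gothm$ is well-defined (since derivations send $\gothm^2$ into $\gothm$) and $\kappa$-bilinear after reducing $\del$ modulo $\gothm \Delta_R$. Nondegeneracy on the right says the induced map $\Delta_R \to (\gothm/\gothm^2)^\dual$ has full image, so one can select $\del_1',\dots,\del_d' \in \Delta_R$ whose residues form a $\kappa$-basis. The $d \times d$ matrix $A = (\del_i'(t_j) \bmod \gothm)$ is then invertible; lifting $A^{-1}$ to an $R$-matrix $B$ and setting $\del_i = \sum_k B_{ik} \del_k'$ yields derivations satisfying $\del_i(t_j) \equiv \delta_{ij} \pmod{\gothm}$.

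The heart of the argument is a compatibility between $\del_i$ and the graded structure \eqref{eq:regular graded}. For any $\del \in \Delta_R$, the Leibniz rule implies $\del(\gothm^n) \subseteq \gothm^{n-1}$, so $\del$ induces a $\kappa$-linear map $\bar\del \colon \gothm^n/\gothm^{n+1} \to \gothm^{n-1}/\gothm^n$. Using $\del_i(t_j) = \delta_{ij} + u_{ij}$ with $u_{ij} \in \gothm$, a short calculation on monomials shows $\del_i(t^\alpha) \equiv \alpha_i t^{\alpha - e_i} \pmod{\gothm^n}$ for $|\alpha| = n$. Under the isomorphism $\gr(R) \cong \kappa[\bar t_1,\dots,\bar t_d]$, this means $\bar\del_i$ acts as the formal partial derivative $\partial/\partial \bar t_i$.

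Now given $r \in I$ with $\nu(r) = n \geq 1$, its image $P \in \gr^n(R)$ is a nonzero homogeneous polynomial of degree $n$ in the $\bar t_i$. Because $\kappa$ has characteristic zero, Euler's identity $nP = \sum_i \bar t_i \cdot \partial P/\partial \bar t_i$ forces some $\partial P/\partial \bar t_i$ to be nonzero, so $\del_i(r) \in I \cap (\gothm^{n-1} \setminus \gothm^n)$. Iterating the order drops until one lands in $R \setminus \gothm$, exhibiting a unit in $I$. The main subtlety is verifying cleanly the formula for the induced action on $\gr(R)$, which ensures that the algebraic nondegeneracy hypothesis translates into genuine reduction of order on $\gr(R)$; the characteristic-zero hypothesis enters exactly here, and without it one loses control (e.g.\ $\bar t_1^p$ is annihilated by every $\partial/\partial \bar t_i$).
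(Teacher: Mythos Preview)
Your proof is correct and follows essentially the same approach as the paper's: both construct derivations $\del_i$ dual to a regular system of parameters modulo $\gothm$, show that the induced action on $\gr(R)$ is by formal partial differentiation, and use characteristic zero to drop the $\gothm$-adic order of any nonzero element of a differential ideal until a unit is reached. The only cosmetic differences are that you invoke Euler's identity where the paper computes directly on monomial coefficients, and you spell out the matrix-inversion step in constructing the $\del_i$ a bit more explicitly.
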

\begin{proof}
Choose $x_1,\dots,x_r \in R$ which form a basis of $\gothm/\gothm^2$
over $\kappa$. 
By our hypothesis on $R$, we can find $\del_1,\dots,\del_r \in \Delta_R$
such that for $i,j \in \{1,\dots,r\}$,
\[
\del_i(x_j) \equiv \begin{cases} 1 \pmod{\gothm} & (i=j) \\
0 \pmod{\gothm} & (i \neq j).
\end{cases}
\]
For each nonnegative integer $j$,
let $S_j$ be the set of $(e_1,\dots,e_r) \in \ZZ$ with $e_1,\dots,e_r \geq 0$
and $e_1 + \cdots + e_r = j$. By \eqref{eq:regular graded}, the 
quantities $x_1^{e_1} \cdots x_r^{e_r}$ for $(e_1,\dots,e_r) \in S_j$
project to a basis of
$\gothm^j/\gothm^{j+1}$. Thus for each $y \in \gothm^j$, we can choose $c(y,E)\in R$
for each $E \in S_j$ so that
\[
y \equiv \sum_{E= (e_1,\dots,e_r)\in S_j} c(y, E) x_1^{e_1} \cdots x_r^{e_r} 
\pmod{\gothm^{j+1}},
\]
and the $c(y,E)$ are uniquely determined modulo $\gothm$.

Let $I$ be a nonzero differential ideal of $R$.
Let $j$ be the largest nonnegative integer such that $I \subseteq \gothm^j$.
Suppose by way of contradiction that $j > 0$; we can then pick
$y \in I \setminus \gothm^{j+1}$. 
Pick $i \in \{1,\dots,r\}$ such that $c(y,E) \not\equiv 0 \pmod{\gothm}$
for some $E
= (e_1,\dots,e_r) \in S_j$ with $e_i > 0$. Then
\[
\del_i(y) \cong \sum_{E \in S_j} e_i x_i^{-1} c(y, E) x_1^{e_1} \cdots 
x_r^{e_r} \pmod{\gothm^j}.
\]
In other words, for $(e_1,\dots,e_r) \in S_{j-1}$,
we have
\[
c(\del_i(y), (e_1,\dots,e_r)) \equiv (e_i + 1) c(y, (e_1,\dots,e_{i-1},
e_i + 1, e_{i+1},\dots,e_r)) \pmod{\gothm},
\]
and moreover this is nonzero modulo $\gothm$ for 
some choice of $(e_1,\dots,e_r) \in S_{j-1}$
(since $\kappa$ is of characteristic $0$).
We conclude that $\del_i(y) \notin \gothm^j$,
contradicting the choice of $j$.
Hence $j$ must equal $0$; this implies that $R$ is simple
as a differential ring, as desired.
\end{proof}

\begin{remark}
In Proposition~\ref{P:locally simple}, the hypothesis that the residue
field have characteristic $0$ is essential. For instance, if
$R = \FF_p \llbracket x \rrbracket$ and $\Delta_R$ is generated by
$\frac{\del}{\del x}$, then $x^p$ generates
a nonzero proper differential ideal. To get around this, one must work not
just with powers but also with \emph{divided powers} of the derivations,
i.e., expressions like $\del^n/n!$. (These are often called
\emph{Hasse-Schmidt derivations}.)
\end{remark}

\begin{example} \label{exa:locally simple}
Consider one of the following cases.
\begin{enumerate}
\item
Let $R$ be a smooth algebra over a field $k$ of characteristic zero,
viewed as a differential ring with $\Delta_R = \Hom_R(\Omega_{R/k}, R)$.
\item
Let $R$ be the local ring of a complex manifold, equipped with all
holomorphic derivations.
\item
Replace either of the previous examples with the completion along an ideal.
\end{enumerate}
In all of these cases,
Proposition~\ref{P:locally simple} implies that the resulting differential
ring is locally simple.
\end{example}

\begin{prop}
Let $R$ be a locally simple 
differential ring. Then any finite differential module
over $R$ is locally free.
\end{prop}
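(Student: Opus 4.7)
The plan is to reduce the problem to a local differential-simple ring and then show that the kernel of a minimal surjection onto $M$ must vanish by constructing a nonzero differential ideal that violates simplicity. Since local freeness is a local property and $R_\gothp$ is assumed to be differentially simple for every $\gothp$, I would pass at once to the case where $R$ is a local differential ring that is simple as a differential ring, with maximal ideal $\gothm$ and residue field $\kappa$.

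Next, I would set $r = \dim_\kappa M/\gothm M$, lift a $\kappa$-basis to elements $m_1,\dots,m_r \in M$, and invoke Nakayama's lemma (for which we need only $M$ finite and $R$ local) to conclude that these generate $M$. This gives a surjection $\phi\colon R^r \to M$ with kernel $K$; the goal becomes proving $K=0$.

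The main technical step, and the main obstacle, is to extract from $K$ a genuine differential ideal of $R$. The bare kernel $K$ lives in $R^r$ with its standard componentwise derivations, under which it need not be stable. The fix is the standard ``twist'': for each $\del \in \Delta_R$, write the action on $M$ in the generators as $\del(m_j) = \sum_i B^\del_{ij} m_i$ with $B^\del \in M_r(R)$, and define $\tilde\del$ on $R^r$ by $\tilde\del(v) = \del(v) + B^\del v$. One checks that $\tilde\del$ is a derivation on $R^r$ and that $\phi(\tilde\del v) = \del(\phi v)$, so $K$ is stable under each $\tilde\del$. Let $J \subseteq R$ be the ideal generated by all coordinates of all $v \in K$. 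Then for any $v \in K$ and any index $i$, the identity
\[
\del(v_i) = \tilde\del(v)_i - \sum_k B^\del_{ik} v_k
\]
exhibits $\del(v_i)$ as a sum of entries of $\tilde\del(v) \in K$ and $R$-multiples of entries of $v$, all of which lie in $J$. By Leibniz this gives $\del(J) \subseteq J$, so $J$ is a differential ideal.

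Finally, I would close by noting that minimality of $r$ forces $K \subseteq \gothm R^r$ (otherwise the $m_i$ would fail to be a basis modulo $\gothm$), hence $J \subseteq \gothm \neq R$. Differential simplicity of $R$ then yields $J = 0$, so $K = 0$ and $\phi$ is an isomorphism, producing a free basis of $M$ around $\gothp$. This argument avoids any noetherian hypothesis on $R$ and does not require the formalism of Fitting ideals, although $J$ is precisely $F_{r-1}(M)$ in the usual indexing.
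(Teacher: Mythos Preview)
Your proof is correct and follows essentially the same approach as the paper: reduce to the local case, use Nakayama to get a minimal generating set, and show that the ideal generated by the coordinates of all relations is a differential ideal contained in $\gothm$, hence zero by simplicity. The only difference is presentational---you package the key computation via the twisted derivation $\tilde\del$ on $R^r$, whereas the paper expands $\del(c_1 \be_1 + \cdots + c_d \be_d)=0$ directly to reach the same conclusion that $\del(c_i)$ lies in the ideal generated by the relation coordinates.
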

\begin{proof}
We immediately reduce to the case where $R$ is a local ring
with maximal ideal $\gothm_R$ and residue field $\kappa_R$.
Let $M$ be a finite differential module over $R$. 
Let $\be_1,\dots,\be_d$ be a basis of $M/\gothm_R M$ over $\kappa_R$;
then $\be_1,\dots,\be_d$ generate $M$ over $R$ by Nakayama's lemma
\cite[Theorem~2.2]{matsumura}.
Let $N$ be the $R$-submodule of $R^d$ consisting of tuples
$(c_1,\dots,c_d)$ such that $c_1 \be_1 + \cdots + c_d \be_d = 0$.
For $(c_1,\dots,c_d) \in N$, we have
\[
0 = \del(c_1 \be_1 + \cdots + c_d \be_d)
= \del(c_1) \be_1 + \cdots + \del(c_d) \be_d +
c_1 \del(\be_1) + \cdots + c_d \del(\be_d).
\]
By writing each $\del(\be_i)$ as an $R$-linear combination of 
$\be_1,\dots,\be_d$,
we obtain $(c'_1,\dots,c'_d) \in N$ with
\[
c'_i - \del(c_i) \in (c_1,\dots,c_d)R \qquad (i=1,\dots,d).
\]
This implies that the ideal generated by $c_1,\dots,c_d$ for all
$(c_1,\dots,c_d) \in N$ is a \emph{differential} ideal of $R$. 
If this ideal is nonzero,
then it must be the unit ideal by the hypothesis that $R$ is locally simple,
but this would produce a relation $c_1 \be_1 + \cdots + c_d \be_d = 0$
with $c_i \notin \gothm_R$ for some $i$, contradicting the linear
independence of $\be_1,\dots,\be_d$ in $M/\gothm_R M$. Hence the ideal
must be zero, which implies $N = 0$; that is, $M$ is free over $R$.
\end{proof}

\subsection{Based differential rings}

For explicit calculations, we will often need to fix generators of the
algebra of derivations; this corresponds to the geometric action of fixing
local coordinates on a smooth algebraic or analytic variety.

\begin{defn} \label{D:based}
A \emph{based differential ring of order $h$} is a differential ring
$R$ equipped with a basis $\del_1, \dots, \del_h$ of $\Delta_R$ as an
$R$-module, such that $[\del_i, \del_j] = 0$ for $i,j \in \{1,\dots,h\}$.
Given such data, we may identify $R\{\Delta_R\}$ with the 
\emph{ring of twisted polynomials} $R\{T_1,\dots,T_h\}$,
with $T_i$ corresponding to $\del_i$.
In case $h=1$, we often write $\del$ instead of $\del_1$.
\end{defn}

\begin{defn}
Let $(F, \del)$ be a based differential field of order 1. 
For $V$ a differential module over $F$ of rank $d$, a \emph{cyclic vector}
of $V$ is an element $\bv \in V$ such that
$\bv, \del(\bv), \dots, \del^{d-1}(\bv)$ are linearly independent over $F$.
Each cyclic vector determines an isomorphism
$V \cong F\{T\}/F\{T\}P(T)$ for some monic polynomial $P$,
under which $\bv$ maps to the class of $1 \in F\{T\}$.
\end{defn}

\begin{lemma} \label{L:cyclic}
Let $(F, \del)$ be a based differential field of order $1$,
such that $F$ has characteristic $0$ and $\del$ acts via a nonzero derivation
on $F$.
Then every differential module over $F$ contains at least one cyclic
vector.
\end{lemma}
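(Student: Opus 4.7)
The plan is to establish the classical cyclic vector lemma via the Deligne-Katz construction, carried out in three steps.

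\textit{Step 1 (Normalization of the derivation).} For any $u \in F^{\times}$, induction on $k$ using the Leibniz rule gives
\[
(u\del)^{k}(\bv) - u^{k}\del^{k}(\bv) \in F \cdot \bv + F \cdot \del(\bv) + \cdots + F \cdot \del^{k-1}(\bv),
\]
so the $F$-span of $\{(u\del)^{k}(\bv) : 0 \leq k \leq d-1\}$ equals that of $\{\del^{k}(\bv) : 0 \leq k \leq d-1\}$. Thus cyclic vectors for $\del$ and for $u\del$ coincide. Since $\del$ acts as a nonzero derivation on $F$, I pick $s \in F$ with $\del(s) \neq 0$ and replace $\del$ by $\del(s)^{-1}\del$; the element $t := s$ then satisfies $\del(t) = 1$.

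\textit{Step 2 (Explicit candidate).} Fix an arbitrary basis $\be_{1}, \dots, \be_{d}$ of $V$. Following the Deligne-Katz construction, I propose as candidate cyclic vector an expression of the form
\[
\bv = \sum_{i=1}^{d} \sum_{j \geq 0} c_{i,j}(t)\, \del^{j}(\be_{i}),
\]
where the coefficients $c_{i,j}(t) \in \QQ[t]$ come from a truncated formal parallel-transport operator. The design goal is that the $d \times d$ matrix $M$ with entries $M_{k,i}$ given by the coefficient of $\be_{i}$ in $\del^{k}(\bv)$ should have a forced nonzero determinant, regardless of the differential-module structure of $V$.

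\textit{Step 3 (Wronskian nonvanishing).} I verify $\det(M) \neq 0$ by direct computation, repeatedly applying the Leibniz rule together with $\del(t^{n}) = n t^{n-1}$ and the binomial identities satisfied by the $c_{i,j}$. For the correct choice in Step 2, the top-$t$-degree term of $\det(M)$ is a nonzero rational multiple of a fixed power of $t$. The characteristic-zero hypothesis enters precisely here: factorials appearing as denominators in the $c_{i,j}$ must be inverted, and the leading rational coefficient must not be swallowed by $p$-torsion.

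\textit{Main obstacle.} The substantive difficulty is the engineering in Steps 2–3. Naive candidates, such as $\bv = \sum_{i=1}^{d} t^{i-1}/(i-1)!\, \be_{i}$, can fail: for particular differential structures on $V$ the higher-derivative terms in $\del(\bv)$ can telescope, causing accidental cancellation and making the Wronskian vanish. The Deligne-Katz formula is crafted with cross-terms that rule out such cancellations uniformly in the module, at the cost of making Step 3 a combinatorially intricate (but purely universal) identity in $\QQ[t]$. Once the right formula is in place, characteristic zero reduces the verification to a routine calculation.
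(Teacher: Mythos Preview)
The paper does not actually prove this lemma; it simply cites the standard references \cite[Theorem~III.4.2]{dgs} and \cite[Theorem~5.4.2]{kedlaya-course}. Your outlined approach via the Deligne--Katz construction is indeed one of the standard proofs found in those sources, and your Step~1 (reducing to the case $\del(t)=1$ by rescaling the derivation) is correct and cleanly stated.

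However, your proposal has a genuine gap at the decisive point. In Step~2 you write the candidate only as ``an expression of the form $\bv = \sum_{i,j} c_{i,j}(t)\,\del^{j}(\be_i)$'' with unspecified coefficients $c_{i,j}(t) \in \QQ[t]$, and in Step~3 you assert that ``for the correct choice'' the leading $t$-term of the Wronskian is nonzero---but you never write down the formula or perform the computation. You then explicitly label this the ``main obstacle'' and describe the Deligne--Katz formula as ``crafted with cross-terms'' without ever stating what they are. As written, the proposal is a description of the shape of a proof rather than a proof: the entire content of the cyclic vector theorem lies precisely in exhibiting the explicit polynomial coefficients (for instance Katz's choice $\bv = \sum_{j=0}^{d-1} \frac{t^j}{j!} \sum_{k=0}^{j} (-1)^k \binom{j}{k} \del^{j-k}(\be_{k})$, or one of its variants) and then verifying the determinant identity. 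Until you commit to a specific formula and carry out the Wronskian calculation---or at least indicate the inductive or specialization argument that forces nonvanishing---Steps~2 and~3 remain promissory notes rather than arguments.
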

\begin{proof}
This is the cyclic vector theorem 
\cite[Theorem III.4.2]{dgs},
\cite[Theorem~5.4.2]{kedlaya-course}.
\end{proof}

\subsection{Scales and absolute scales}

We now introduce nonarchimedean norms and define some spectral
invariants associated to $\nabla$-modules.
This largely follows \cite[Chapter~6]{kedlaya-course}.

\begin{defn}
For any abelian group $M$, a \emph{nonarchimedean seminorm} on $M$
is a nonconstant function $|\cdot|: M \to [0, +\infty)$
satisfying the following conditions.
\begin{enumerate}
\item[(a)]
For $x,y \in M$, $|x-y| \leq \max\{|x|, |y|\}$.
\item[(b)]
We have $|0| = 0$.
\end{enumerate}
Such a function is a \emph{nonarchimedean norm} on $M$
if in addition the following condition holds.
\begin{enumerate}
\item[(c)]
For any nonzero $x \in M$, $|x| > 0$.
\end{enumerate}
Two nonarchimedean (semi)norms $|\cdot|_1, |\cdot|_2$ on the same group $M$ are
\emph{metrically equivalent} if there exist $c_1,c_2 > 0$ such that
for all $x \in M$,
\[
c_1 |x|_1 \leq |x|_2 \leq c_2 |x|_1.
\]
This evidently defines an equivalence relation.
\end{defn}

\begin{defn}
For $M$ an abelian group equipped with a nonarchimedean (semi)norm $|\cdot|$, 
an endomorphism $T: M \to M$ is \emph{bounded} if there exists some
$c \geq 0$ such that for all $m \in M$, $|T(m)| \leq c|m|$.
The set of such $c$ has a least element, called the \emph{operator (semi)norm} of
$T$ on $M$, and denoted $|T|_M$.
For $T$ a bounded endomorphism on $M$,
for any nonnegative integers $s,t$, we have $|T^{s+t}|_M \leq |T^s|_M |T^t|_M$;
from this observation, it follows by an elementary analysis argument
(Fekete's lemma; see \cite[Problem~98]{polya-szego}) that 
\[
\lim_{s \to \infty} |T^s|_M^{1/s} = \inf_s \{|T^s|_M^{1/s}\}.
\]
This quantity is called the \emph{spectral radius} of $T$ on $M$,
and denoted $|T|_{\spect,M}$. Note that different but metrically equivalent
seminorms on $M$ give may 
different operator seminorms for the same $T$,
but give the same spectral radii.
\end{defn}

\begin{defn}
For $R$ a ring, a nonarchimedean (semi)norm on the additive group of $R$
is \emph{multiplicative} if it satisfies the following condition.
\begin{enumerate}
\item[(d)]
For $x,y \in R$, $|xy| = |x| |y|$.
\end{enumerate}
For $R$ a ring equipped with a nonarchimedean seminorm $|\cdot|$,
and $M$ an $R$-module, a seminorm $|\cdot|_M$ on $M$ is \emph{compatible with $R$}
if for all $r \in R$ and $m \in M$, $|rm|_M = |r| |m|_M$.
\end{defn}

\begin{lemma} \label{L:metric equiv}
Let $F$ be a field complete under a nonarchimedean norm $|\cdot|$.
Then for any finite-dimensional vector space $V$, any two nonarchimedean
norms on $V$ compatible with $F$ are metrically equivalent.
\end{lemma}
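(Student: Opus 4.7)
The plan is to reduce everything to comparison with a single standard ``sup'' norm defined using a chosen basis, and to prove this comparison by induction on $\dim V$ using the completeness of $F$.

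Fix a basis $\be_1,\dots,\be_n$ of $V$ and define $|\sum c_i \be_i|_{\sup} = \max_i |c_i|$. One checks directly that this is a compatible nonarchimedean norm. Since metric equivalence is transitive, it suffices to show that every compatible nonarchimedean norm $|\cdot|_V$ on $V$ is metrically equivalent to $|\cdot|_{\sup}$. The upper bound is immediate from the ultrametric inequality: $|\sum c_i \be_i|_V \le \max_i |c_i|\,|\be_i|_V \le C\,|\sum c_i \be_i|_{\sup}$ with $C = \max_i |\be_i|_V$.

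The content is the reverse inequality, which I would prove by induction on $n$. For $n=1$ it is a direct consequence of compatibility, since $|c\be_1|_V = |c|\,|\be_1|_V$. For the inductive step, suppose for contradiction that no constant $c_1>0$ works; then one can find vectors $v_k \in V$ with $|v_k|_{\sup}=1$ but $|v_k|_V \to 0$. Passing to a subsequence, we may assume the coordinate of $v_k$ achieving the sup-maximum is always the same index, say $n$, and after rescaling (using compatibility) we may assume this coordinate is exactly $1$. Thus $v_k = \be_n + w_k$ with $w_k \in W := \langle \be_1,\dots,\be_{n-1}\rangle$. From $|v_k|_V \to 0$ and the ultrametric inequality, $(v_k)$ is Cauchy in $|\cdot|_V$, hence so is $(w_k) = (v_k - \be_n)$.

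Applying the induction hypothesis to $W$ tells us that $|\cdot|_V$ and $|\cdot|_{\sup}$ restricted to $W$ are metrically equivalent, so $(w_k)$ is Cauchy in $|\cdot|_{\sup}$ on $W$. This forces each coordinate sequence to be Cauchy in $F$, and by completeness of $F$ each converges, giving $w_k \to w \in W$ in $|\cdot|_{\sup}$, and hence also in $|\cdot|_V$. Then $v_k \to \be_n + w$ in $|\cdot|_V$, so by continuity $|\be_n + w|_V = 0$, forcing $\be_n + w = 0$ and contradicting the linear independence of the basis. The main obstacle is precisely the inductive step above: making sure one really can extract from $|v_k|_V \to 0$ a Cauchy sequence in $W$ whose limit in $F$-coefficients exists, and this is where completeness of $F$ is used essentially.
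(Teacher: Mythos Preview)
Your argument is correct and is precisely the standard induction proof found in the references the paper cites (e.g., Schneider's \emph{Nonarchimedean Functional Analysis}); the paper itself does not reprove the lemma but simply refers to \cite[Theorem~1.3.6]{kedlaya-course} and \cite[Proposition~4.13]{schneider}. One small point worth making explicit in your write-up: when you rescale so that the $n$-th coordinate equals $1$, you are dividing by a scalar of norm $1$ (since $|v_k|_{\sup}=1$ and that maximum is attained at coordinate $n$), so both $|v_k|_V$ and $|v_k|_{\sup}$ are preserved by compatibility; this is what keeps $|v_k|_V\to 0$ after the normalization.
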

\begin{proof}
See \cite[Theorem~1.3.6]{kedlaya-course} or \cite[Proposition~4.13]{schneider}.
\end{proof}

\begin{defn}
By a \emph{nonarchimedean differential ring},
we will mean a differential ring $R$
equipped with a multiplicative nonarchimedean norm $|\cdot|$ under
which the action of each $\del \in \Delta_R$ is bounded.
We add the modifier \emph{complete} if $R$ is complete under $|\cdot|$.
We also allow substituting
more restrictive words for ``ring'' in this definition,
such as ``domain'' or ``field''.
\end{defn}

\begin{defn}
For $F$ a complete nonarchimedean differential field, $\del \in \Delta_F$,
and $V$ a nonzero finite differential module over $F$, we define the
\emph{spectral radius} $|\del|_{\spect,V}$ to be the spectral radius of $\del$
on the additive group of $V$ equipped with any nonarchimedean norm
compatible with $F$ (the choice being immaterial thanks to 
Lemma~\ref{L:metric equiv}). Note that 
$|\del|_{\spect,V} \geq |\del|_{\spect,F}$
(see \cite[Lemma~6.2.4]{kedlaya-course}).

In case $|\del|_{\spect,F} > 0$,
we define the \emph{scale} of $\del$ on $V$
as the ratio $|\del|_{\spect,V}/|\del|_{\spect,F}$;
it is always at least 1.
If $F$ is based of order $h$,
and $|\del_i|_{\spect,F} > 0$ for all $i$,
we define the \emph{based scale}
of $V$ to be the maximum of the scales of
$\del_1,\dots,\del_h$ on $V$.
\end{defn}

\begin{lemma} \label{L:spectral norm}
Let $F$ be a complete nonarchimedean differential field
with residue field of characteristic $0$.
Then for any $\del \in \Delta_F$ and any positive integer $n$, 
$|\del^n|_F = |\del|_F^n$; consequently, $|\del|_{\spect,F} = |\del|_F$.
\end{lemma}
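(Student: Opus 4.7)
The plan is to prove $|\del^n|_F = |\del|_F^n$ directly for each positive integer $n$; the spectral-radius statement then follows by taking $n$-th roots. The bound $|\del^n|_F \leq |\del|_F^n$ is immediate from submultiplicativity of the operator norm, so I focus on the reverse.

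I argue by contradiction. Suppose $|\del^n|_F < |\del|_F^n$ for some $n \geq 1$, and pick a real number $c$ with $|\del^n|_F^{1/n} < c < |\del|_F$. By the definition of $|\del|_F$ as a supremum, choose $a \in F$ nonzero with $u := \del(a)/a$ satisfying $|u| > c$. The key calculation is that $b_n(m) := \del^n(a^m)/a^m$, as a function of the positive integer $m$, agrees with the evaluation of a polynomial $B_n(T) \in F[T]$ of degree exactly $n$ with leading coefficient $u^n$. This is established by induction on $n$ from the recursion $B_{n+1}(T) = Tu \cdot B_n(T) + \del(B_n(T))$ (where $\del$ acts on the $F$-coefficients), starting from $B_0(T) = 1$; the recursion itself is a direct consequence of the Leibniz rule applied to $\del(\del^n(a^m)) = \del(b_n(m) \cdot a^m)$.

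The residue-characteristic-zero hypothesis now plays two roles. First, $|m| = 1$ for every positive integer $m$, so the leading monomial $u^n m^n$ of $B_n(m)$ has norm exactly $|u|^n$. Combining this with $|B_n(m)| = |\del^n(a^m)|/|a^m| \leq |\del^n|_F < c^n < |u|^n$, and dividing by $u^n$, I obtain $p(T) := B_n(T)/u^n \in F[T]$ of degree $n$ with leading coefficient $1$, satisfying $|p(m)| < 1$ for all positive integers $m$. Let $M \geq 1$ denote the maximum of the norms of the coefficients of $p$; rescaling by an element of norm $M$ (passing to a finite extension of $F$ if $M$ lies outside its value group, or working intrinsically in the graded quotient $\{|x|\leq M\}/\{|x|<M\}$) and then reducing modulo the maximal ideal yields a nonzero polynomial $\bar p \in \kappa[T]$ of degree at most $n$. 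Second, residue characteristic zero means $\ZZ$ injects into $\kappa$, so $\bar p$ vanishes at the infinitely many distinct positive-integer images, forcing $\bar p = 0$---a contradiction.

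The main technical care is in the passage to $\kappa[T]$: one must verify that $\bar p$ is genuinely nonzero, which follows because some coefficient of $p$ attains the norm $M$ (the leading $1$ when $M = 1$, or some lower-order coefficient when $M > 1$). Granted this bookkeeping, the contradiction amounts to the pigeonhole principle on roots of a polynomial in a characteristic-zero field.
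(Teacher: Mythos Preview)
Your argument is correct and takes a genuinely different route from the paper's proof. The paper proceeds by induction on $n$: given $|\del^n|_F = |\del|_F^n$, it picks $x$ nearly realizing this norm and shows that either $x$ or $x^2$ witnesses $|\del^{n+1}|_F \geq (1-\epsilon)^2 |\del|_F^{n+1}$, by expanding $\del^{2n}(x^2)$ via Leibniz and using $|\binom{2n}{n}| = 1$ to isolate the middle term. Your approach instead fixes $n$ and varies the exponent $m$ in $a^m$; the recursion $B_{n+1}(T) = Tu\,B_n(T) + \del(B_n(T))$ is the clean organizing idea, and the contradiction comes from a polynomial of degree $\leq n$ over the residue field vanishing at all positive integers. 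Both proofs use residue characteristic zero, but in different places: the paper needs $|\binom{2n}{n}| = 1$, while you need $\ZZ \hookrightarrow \kappa$.

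One small simplification: in your final step you do not need to pass to an extension or to graded quotients. Since $p$ is monic, $M \geq 1$; if $j$ is any index with $|p_j| = M$, then $q = p/p_j$ has all coefficients in $\gotho_F$ with $q_j = 1$ a unit, so $\bar q \neq 0$ in $\kappa[T]$, and $|q(m)| = |p(m)|/M \leq |p(m)| < 1$ gives $\bar q(\bar m) = 0$ for all $m$. Also, your ``first role'' of the hypothesis (that $|u^n m^n| = |u|^n$) is not actually doing work in the argument as written, since the bound $|p(m)| < 1$ comes directly from $|B_n(m)| \leq |\del^n|_F$ without needing to isolate the top term; the essential use of characteristic zero is only the injection of $\ZZ$ into $\kappa$.
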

\begin{proof}
We proceed by induction on $n$, the case $n=1$ being evident. Given the claim
for some $n$, for each $\epsilon > 0$  we can find some $x \in F^\times$
with $|\del^n(x)| \geq (1-\epsilon) |\del|_F^n |x|$.
We will show that either $y=x$ or $y=x^2$ satisfies
$|\del^{n+1}(y)| \geq (1-\epsilon)^2 |\del|_F^{n+1} |y|$;
this will yield the desired result.

We may assume that $|\del^{n+1}(x)| < (1-\epsilon) |\del|_F |\del^n(x)|$, as otherwise
$y=x$ works. Write
\[
\del^{2n}(x^2) = \sum_{i=0}^{2n} \binom{2n}{i} \del^i(x) \del^{2n-i}(x).
\]
The summand with $i=n$ has norm $|\del^n(x)|^2$ because the residue field
of $F$ has characteristic $0$. On the other hand, each summand with
$i > n$ has norm
\begin{align*}
|\del^i(x)| |\del^{2n-i}(x)|
&\leq |\del|_F^{i-n-1}| |\del^{n+1}(x)| |\del|_F^{2n-i} |x| \\
&< (1-\epsilon) |\del|_F^n |\del^n(x)| |x| \\
&\leq |\del^n(x)|^2,
\end{align*}
and similarly for $i < n$. It follows that $|\del^{2n}(x^2)| = |\del^n(x)|^2
\geq (1-\epsilon)^2 |\del|_F^{2n} |x|^2$,
so 
\[
|\del^{n+1}(x^2)| \geq |\del|_F^{-n+1} |\del^{2n}(x^2)|
\geq (1-\epsilon)^2 |\del|_F^{n+1} |x^2|.
\]
Hence $y=x^2$ works, proving the claim.
\end{proof}

One can also give a related definition in the unbased case.
This is suggested by work of Baldassarri and di Vizio 
\cite{baldassarri-divizio}.
\begin{defn}
Let $F$ be a complete nonarchimedean differential
field.
Let $V$ be a nonzero finite differential module on $F$.
Let $L(V)$ be the (not necessarily commutative)
ring of bounded endomorphisms of the additive group of $V$.
Let $D_V: F\{\Delta_F\} \to L(V)$ be the ring homomorphism
induced by the action of $\Delta_F$ on $V$.
For $s$ a nonnegative integer, let $D_{V,s}: F\{\Delta_F\}^{(s)} \to L(V)$
be the restriction of $D_V$ to $F\{\Delta_F\}^{(s)}$.

Equip $F\{\Delta_F\}$ with the norm $|D_F(\cdot)|_F$,
i.e., the operator norm for the action on $F$.
Choose a norm on $V$ compatible with $F$; this defines an operator norm on
$L(V)$. 
Using these norms, we can compute the operator norm $|D_{V,s}|$.
For any nonnegative integers $s,t$, we have the inequality
\[
|D_{V,s+t}| \leq |D_{V,s}| |D_{V,t}|.
\]
Again by Fekete's lemma, we have
\[
\lim_{s \to \infty} |D_{V,s}|^{1/s} = \inf_s \{|D_{V,s}|^{1/s}\}.
\]
We call this limit the \emph{absolute scale} of $V$; it again is independent
of the choice of the norm on $V$.
\end{defn}

\begin{defn} \label{D:irregularity}
Let $F$ be a complete nonarchimedean differential
field.
Let $V$ be a differential module of finite rank $n$ over $F$.
Let $V_1,\dots,V_m$ be the Jordan-H\"older constituents of $V$
in the category of differential modules over $F$.
Define the \emph{absolute scale multiset} of $V$ to be
the multiset consisting
of the absolute scale of $V_i$ with multiplicity $\dim_F V_i$
for $i = 1,\dots,m$.

Write the absolute scale multiset of $V$ as $\{s_1, \dots,s_n\}$
with $s_1 \geq \dots \geq s_n$. For $i=1,\dots,n$, define the
\emph{$i$-th (absolute) partial irregularity} of $V$ as 
\[
\irreg_i(V) = \sum_{j=1}^i \log s_j.
\]
We also call $\irreg_1(V)$ the \emph{(absolute) Poincar\'e-Katz rank} of $V$.
We also call $\irreg_n(V)$ the \emph{(absolute) irregularity} of $V$
and denote it by $\irreg(V)$.

If $F$ is based of order $h$ and $|\del_i|_{\spect,F} > 0$ for $i=1,\dots,h$,
we may similarly define the \emph{based scale multiset} 
of $V$, and refer to the \emph{based irregularity} and so forth.
\end{defn}

\begin{remark}
Note that our definition of irregularity depends on the normalization of
the absolute value. This dependence will be convenient when we study irregularity
as a function of a varying norm.
\end{remark}

\subsection{Derivations of rational type}

The absolute scale of a differential module is typically
difficult to compute. It is easier to compute when one can choose
a particularly nice basis of derivations.
\begin{defn}
Let $R$ be a nonarchimedean based differential
$\QQ$-algebra of order $h$.
(The corresponding definition in case of positive residual characteristics is
somewhat more delicate; see \cite[Definition~1.4.1]{kedlaya-xiao}.)
For $u_1,\dots,u_h \in R$, we say that the derivations $\del_1,\dots,\del_h$ 
are \emph{of rational type} with respect to $u_1,\dots,u_h$ if
the following conditions hold.
\begin{enumerate}
\item[(a)]
For $i,j \in \{1,\dots,h\}$,
\[
\del_i(u_j) = \begin{cases} 1 & i = j \\
0 & i \neq j. \end{cases}
\]
\item[(b)]
For $i \in \{1,\dots,h\}$, $|\del_i|_R \leq |u_i|^{-1}$. (By (a),
this implies $|\del_i|_R = |u_i|^{-1}$.)
\end{enumerate}
\end{defn}

\begin{lemma} \label{L:rational type norm}
Let $F$ be a complete nonarchimedean based differential
field of order $h$ with residue field of characteristic $0$,
such that $\del_1,\dots,\del_h$ are of rational type with respect
to some $u_1,\dots,u_h \in F$. Then
the norm $|D_F(\cdot)|_F$ on $F\{\Delta_F\} = 
F\{T_1,\dots,T_h\}$ coincides with the
$(|u_1|^{-1},\dots,|u_h|^{-1})$-Gauss norm
(see \eqref{eq:gauss}).
 In particular,
$|\del_i|_F = |\del_i|_{\spect,F} = |u_i|^{-1}$ for $i=1,\dots,h$.
\end{lemma}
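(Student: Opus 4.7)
The plan is to prove the equality of norms by establishing the two directions separately, then deduce the consequences for individual $\del_i$.

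For the upper bound, I would start from condition (b) of rational type, which gives $|\del_i|_F = |u_i|^{-1}$. Since the derivations in a based differential ring commute, and operator norms are submultiplicative, for any multi-index $I = (i_1,\ldots,i_h)$ the operator $\del^I = \del_1^{i_1}\cdots\del_h^{i_h}$ satisfies
\[
|\del^I|_F \leq \prod_{j=1}^{h} |\del_j|_F^{i_j} = |u|^{-I},
\]
where I write $|u|^{-I}$ for $\prod_j |u_j|^{-i_j}$. Hence for $P = \sum_I r_I T^I \in F\{T_1,\ldots,T_h\}$, the strong triangle inequality yields $|D_F(P)|_F \leq \max_I |r_I|\,|u|^{-I}$, which is the $(|u_1|^{-1},\ldots,|u_h|^{-1})$-Gauss norm of $P$.

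For the lower bound, I would exhibit an explicit test element achieving this norm. Let $I_0$ be a multi-index where the Gauss norm is attained, and let $N$ exceed every coordinate $i_j$ appearing in $P$. Consider the monomial $u^M := u_1^{N}\cdots u_h^{N}$. Using $\del_i(u_j) = \delta_{ij}$ and commutativity, a direct computation gives
\[
\del^I(u^M) = \prod_{j=1}^{h} N(N-1)\cdots(N-i_j+1)\, u_j^{N - i_j}.
\]
Here the key role of the characteristic-zero residue field enters: since every factor $N(N-1)\cdots(N-i_j+1)$ is a nonzero rational integer and the residue field of $F$ has characteristic $0$, each such factor has norm $1$ in $F$. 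Thus $|\del^I(u^M)|_F = |u^M|_F \cdot |u|^{-I}$, and
\[
P(\del)(u^M) = \sum_I r_I \del^I(u^M)
\]
has norm exactly $|u^M|_F \cdot \max_I |r_I|\,|u|^{-I}$, because the summand with index $I_0$ dominates strictly (or ties with other dominating terms, whose contributions cannot cancel since each has the form $(\text{unit integer})\cdot r_I u^{M-I}$ and distinct $I$ give $R$-linearly independent monomials $u^{M-I}$). Dividing by $|u^M|_F$ shows $|D_F(P)|_F$ is at least the Gauss norm.

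The main technical care is in the lower-bound step, specifically in verifying that no cancellation occurs among dominant terms; this is handled because the $u^{M-I}$ for distinct $I$ are distinct monomials in the $u_j$, so their norms are independent and can be computed from $|u_j|$ alone, and each prefactor is a unit by the residue characteristic hypothesis. Combining the two bounds gives the claimed equality of $|D_F(\cdot)|_F$ with the Gauss norm. Applied to $P = T_i$, this reads $|\del_i|_F = |u_i|^{-1}$, and then $|\del_i|_{\spect,F} = |\del_i|_F$ follows immediately from Lemma~\ref{L:spectral norm}.
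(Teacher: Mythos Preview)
Your upper bound is fine and matches the paper. The gap is in the lower bound, specifically in the non-cancellation claim. You assert that the summands $(\text{unit integer})\cdot r_I\, u^{M-I}$ for distinct $I$ cannot cancel because the $u^{M-I}$ are ``linearly independent monomials''. But the $u_j$ are merely elements of $F$, and the coefficients $r_I$ lie in $F$ as well; there is no ambient polynomial ring here in which $u^{M-I}$ are independent. Concretely, take $h=1$, $F = k((t))$, $u_1 = t$, $\del_1 = d/dt$, and $P = T - 2t^{-1}$. Both terms have Gauss norm $|t|^{-1}$. With $N=2$ (the minimal value your proof allows), $P(\del)(t^2) = 2t - 2t^{-1}\cdot t^2 = 0$, so your test element witnesses nothing. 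Choosing $N$ larger happens to work in this example, but your argument does not explain why some $N$ always avoids cancellation.

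The paper fixes this by choosing the test element more carefully: instead of $u_1^N\cdots u_h^N$, it uses $u_1^{k_1}\cdots u_h^{k_h}$ where $(k_1,\dots,k_h)$ is a multi-index achieving the Gauss norm that is \emph{minimal} for the componentwise order among such indices. Then for any other dominant index $(j_1,\dots,j_h)$, some $j_i > k_i$, so $\del_i^{j_i}(u_i^{k_i}) = 0$ and that summand vanishes outright; non-dominant indices contribute terms of strictly smaller norm. Thus only the single term with index $(k_1,\dots,k_h)$ survives at top norm, and no cancellation issue arises. You can repair your argument by adopting this choice of test element.
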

\begin{proof}
On one hand, $|D_F(\cdot)|_F$ is bounded above by the
$(|u_1|^{-1},\dots,|u_h|^{-1})$-Gauss norm because
\[
|D_F(\del_1^{i_1} \cdots \del_h^{i_h})|_F
\leq |D_F(\del_1)|_F^{i_1} \cdots |D_F(\del_h)|_F^{i_h}
\leq |u_1|^{-i_1}\dots|u_h|^{-i_h}.
\]
On the other hand, given $P \in F\{T_1,\dots,T_h\}$ nonzero, write
\[
P = \sum_{j_1,\dots,j_h=0}^\infty P_{j_1,\dots,j_h} T_1^{j_1} \cdots T_h^{j_h}.
\]
Of the tuples $(k_1,\dots,k_h)$ for which $|P_{k_1,\dots,k_h} u_1^{-k_1}
\cdots u_h^{-k_h}|$ is maximal,
choose one which is minimal for the componentwise term order on $\ZZ^h$.
Let us consider the expression
\begin{equation} \label{eq:rational}
P_{j_1,\dots,j_h} \del_1^{j_1}\cdots \del_h^{j_h} (u_1^{k_1} \cdots u_h^{k_h})
\end{equation}
for some nonnegative integers $j_1,\dots,j_h$.
\begin{itemize}
\item
If $(j_1,\dots,j_h) = (k_1,\dots,k_h)$, then \eqref{eq:rational} equals
$k_1! \cdots k_h! P_{k_1,\dots,k_h}$.
This has norm $|P_{k_1,\dots,k_h}|$ because
$F$ has residual characteristic $0$, which forces $|k_1! \cdots k_h!| = 1$.
\item
If $(j_1,\dots,j_h) \neq (k_1,\dots,k_h)$ but
$|P_{j_1,\dots,j_h} u_1^{-j_1} \cdots u_h^{-j_h}| = |P_{k_1,\dots,k_h}
u_1^{-k_1}\cdots u_h^{-k_h}|$,
then there exists $i \in \{1,\dots,h\}$ for which $j_i > k_i$.
This forces \eqref{eq:rational} to equal 0.
\item
If
$|P_{j_1,\dots,j_h} u_1^{-j_1} \cdots u_h^{-j_h}| < |P_{k_1,\dots,k_h}
u_1^{-k_1}\cdots u_h^{-k_h}|$,
then \eqref{eq:rational} either equals 0 or has norm
\[
|P_{j_1,\dots,j_h} u_1^{k_1-j_1} \cdots u_h^{k_h-j_h}| < |P_{k_1,\dots,k_h}|.
\]
\end{itemize}
These together imply that
\[
|D_F(P)(u_1^{k_1}\cdots u_h^{k_h})| = |P_{k_1,\dots,k_h}|,
\]
and so
$|D_F(\cdot)|_F$ is bounded below by the
$(|u_1|^{-1},\dots,|u_h|^{-1})$-Gauss norm. 
\end{proof}

\begin{prop} \label{P:scale to absolute}
Let $F$ be a complete nonarchimedean based differential
field of order $h$ with residue field of characteristic $0$,
such that $\del_1,\dots,\del_h$ are of rational type with respect
to some $u_1,\dots,u_h \in F$. Then
for any nonzero finite
differential module $V$ over $F$, the based scale of $V$ equals the
absolute scale of $V$.
\end{prop}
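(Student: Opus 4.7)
The plan is to prove both inequalities between the absolute scale $A$ and the based scale $B$ of $V$. By Lemma~\ref{L:rational type norm}, the norm on $F\{\Delta_F\} = F\{T_1,\dots,T_h\}$ used to define $A$ is the Gauss norm with radii $f_i := |u_i|^{-1} = |\del_i|_{\spect,F}$; explicitly, $|P|_F = \max_{\mathbf{j}} |P_{\mathbf{j}}| f_1^{j_1}\cdots f_h^{j_h}$ whenever $P = \sum_{\mathbf{j}} P_{\mathbf{j}} \del_1^{j_1}\cdots\del_h^{j_h}$ lies in $F\{\Delta_F\}^{(s)}$ (with multi-index $\mathbf{j}$ satisfying $|\mathbf{j}| := j_1+\cdots+j_h \leq s$).

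For the lower bound $A \geq B$, I evaluate $D_{V,s}$ at the single operator $P = \del_i^s$, for which $|P|_F = f_i^s$ and $|D_V(P)|_V = |\del_i^s|_V$. Thus $|D_{V,s}| \geq |\del_i^s|_V/f_i^s$; taking $s$-th roots, letting $s \to \infty$, and maximizing over $i$ yields $A \geq \max_i |\del_i|_{\spect,V}/f_i = B$.

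For the upper bound $A \leq B$, the nonarchimedean triangle inequality together with submultiplicativity of operator norms (and the commutativity $[\del_i,\del_j]=0$, allowing me to reorder monomials) gives
\[
|D_V(P)|_V \leq \max_{\mathbf{j}:\, |\mathbf{j}|\leq s} |P_{\mathbf{j}}| \prod_i |\del_i^{j_i}|_V.
\]
The main technical obstacle is that although $|\del_i^j|_V^{1/j} \to |\del_i|_{\spect,V} \leq B f_i$, the values $|\del_i^j|_V^{1/j}$ at small $j$ may exceed $Bf_i$, so one cannot directly bound $\prod_i |\del_i^{j_i}|_V$ by $B^{|\mathbf{j}|} f^{\mathbf{j}}$. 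I fix this with an $\epsilon$-argument enabled by Fekete's lemma: for any $\epsilon > 0$ and each $i$, I choose a constant $C_i \geq 1$ with $|\del_i^{j}|_V \leq C_i((B+\epsilon)f_i)^{j}$ for all $j \geq 0$ (using the asymptotic bound $|\del_i^j|_V^{1/j} \leq (B+\epsilon) f_i$ for $j$ beyond some threshold, and absorbing the finitely many small $j$ into $C_i$). Setting $C := \prod_i C_i$, I conclude $|D_V(P)|_V \leq C(B+\epsilon)^s |P|_F$, so $|D_{V,s}|^{1/s} \leq C^{1/s}(B+\epsilon) \to B+\epsilon$ as $s \to \infty$; letting $\epsilon \to 0$ completes the proof.
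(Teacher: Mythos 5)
Your proof is correct and follows essentially the same path as the paper: Lemma~\ref{L:rational type norm} identifies the norm on $F\{\Delta_F\}$ with a Gauss norm, the lower bound comes from testing $D_{V,s}$ on $\del_i^s$, and the upper bound uses a Fekete-style $\epsilon$-argument to control $|\del_i^j|_V$ uniformly in $j$ before bounding $|D_V(P)|_V$ term by term. The only cosmetic difference is that the paper fixes a near-extremal $P$ and runs a chain of inequalities, while you bound the supremum over $P$ directly; both hinge on the same inequality $|\del_i|_{\spect,V}|u_i| \le S_V^{\based}$ and the observation that the based scale is at least $1$, which lets you replace $(B+\epsilon)^{|\mathbf{j}|}$ by $(B+\epsilon)^{s}$.
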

\begin{proof}
(Compare \cite[Lemma~2.1.2]{andre-baldassarri} or
\cite[Proposition~6.3.1]{kedlaya-course}.)
Let $S_V^{\based}$ and $S_V^{\abs}$ denote the based scale 
and absolute scale of $V$,
respectively.
By taking $T_i^s \in F\{T_1,\dots,T_h\}^{(s)}$, we obtain the inequality
\[
|\del_i^s|_{V} \leq |D_{V,s}| |\del_i^s|_F = |D_{V,s}| |u_i|^{-s}\qquad 
(i=1,\dots,h),
\]
with the last equality requiring Lemma~\ref{L:spectral norm}
or Lemma~\ref{L:rational type norm}.
We may take $s$-th roots of both sides and then
take the limit as $s \to \infty$ to deduce
\begin{equation} \label{eq:bdv1}
S_V^{\based} = |u_i| \lim_{s \to \infty} |\del_i^s|_V^{1/s}
\leq \lim_{s \to \infty} |D_{V,s}|^{1/s} = S_V^{\abs}.
\end{equation}

Fix a norm on $V$ compatible with $F$.
Given $\epsilon \in (0, |D_{V,s}|)$, choose $c>0$ such that for $i=1,\dots,h$
and $j \geq 0$,
\[
|\del_i^{j}|_V \leq c 
(|\del_i|_{\spect,V} + \epsilon |u_i|^{-1} )^{j}.
\]
Given a nonnegative integer $s$,
choose $P = \sum_{j_1,\dots,j_h} P_{j_1,\dots,j_h} T_1^{j_1}\cdots T_h^{j_h}
\in F\{T_1,\dots,T_h\}^{(s)}$ 
nonzero such that $|D_V(P)|_V \geq |D_F(P)|_F (|D_{V,s}| - \epsilon)$.
Then by Lemma~\ref{L:rational type norm},
\begin{align*}
\max_{j_1,\dots,j_h} \{|P_{j_1,\dots,j_h} u_1^{-j_1} \cdots u_h^{-j_h}| (|D_{V,s}| - \epsilon)\}
&= |D_F(P)|_F (|D_{V,s}| - \epsilon) \\
&\leq |D_V(P)|_V \\
&\leq \max_{j_1,\dots,j_h} \{|P_{j_1,\dots,j_h} D_V(T_1^{j_1}\cdots T_h^{j_h})|_V\} \\
&\leq \max_{j_1,\dots,j_h} \left\{|P_{j_1,\dots,j_h}| 
c^h \prod_{i=1}^h (|\del_i|_{\spect,V} + \epsilon |u_i|^{-1} )^{j_i}\right\}.
\end{align*}
For the index $(j_1,\dots,j_h)$ which maximizes the right side, we have
\[
|u_1|^{-j_1} \cdots |u_h|^{-j_h} (|D_{V,s}| - \epsilon) \leq c^h
\prod_{i=1}^h (|\del_i|_{\spect,V} + \epsilon |u_i|^{-1})^{j_i}.
\]
Since $1 = |\del_i|_{\spect,F} |u_i| \leq |\del_i|_{\spect,V} |u_i| \leq S_V^{\based}$
and $j_1 + \cdots + j_h \leq s$, this implies
\[
|D_{V,s}| - \epsilon \leq c^h (S^{\based}_V + \epsilon)^s.
\]
Taking $s$-th roots of both sides, then taking the limit as $s \to \infty$,
yields
\begin{equation} \label{eq:bdv2}
\limsup_{s \to \infty} (|D_{V,s}| - \epsilon)^{1/s} 
\leq S_V^{\based} + \epsilon
\end{equation}
for any $\epsilon > 0$. Hence \eqref{eq:bdv2} holds also with $\epsilon = 0$;
this and \eqref{eq:bdv1} imply that
$S_V^{\abs} = S_V^{\based}$.
\end{proof}

\subsection{Newton polygons and spectral radii}

It was originally observed by Robba \cite{robba-hensel} that the usual
theory of Newton polygons for univariate
polynomials over a complete nonarchimedean field carries over nicely
to twisted polynomials. 
This provides an important mechanism for computing based scales,
and by extension absolute scales (in the rational type case).

\begin{defn}
Let $F$ be a nonarchimedean based differential field of order $1$.
For $P(T) = \sum_i P_i T^i \in F\{T\}$ nonzero, define the
\emph{Newton polygon} of $P$ to be the boundary of the lower convex hull
of the set
\[
\bigcup_i \{(x,y) \in \RR^2: x 
\geq -i, y \geq -\log |P_i| - (x+i) \log |\del|_F\}
\]
(with the convention that $(x+i) \log |\del|_F = 0$ for $x = -i$,
even if $|\del|_F = 0$).
This agrees with the usual Newton polygon except that all slopes greater
than $-\log |\del|_F$ are replaced with $-\log |\del|_F$.
For $r \leq -\log |\del|_F$, the \emph{multiplicity} of $r$ as a slope of
(the Newton polygon of) $P$ is defined as the horizontal width of the 
segment of the polygon having slope $r$.
One can check that the multiplicity of $r$ as a slope of 
the product $PQ$ equals the sum of the multiplicities of $r$ as a slope
of $P$ and of $Q$ \cite[Lemma~6.4.2]{kedlaya-course}.
\end{defn}

\begin{lemma} \label{L:factor slopes}
Let $F$ be a complete nonarchimedean based differential field of order $1$.
Then for any nonzero $P(T) \in F\{T\}$, there exists a unique factorization
$P = P_1 \cdots P_m$ such that the Newton polygon of $P_i$ has all slopes
equal to some value $r_i$, and $r_1 < \cdots < r_m$.
\end{lemma}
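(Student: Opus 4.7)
The strategy is to peel off the factor corresponding to the smallest slope by a Hensel-type successive approximation in $F\{T\}$, then recurse on the quotient.

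I would set up the induction on the number of distinct slopes of $P$. If $P$ has a single slope, take $m=1$. Otherwise, let $r_1$ be the smallest slope of $P$ and let $d_1$ be its multiplicity. The main task is to produce a monic $P_1 \in F\{T\}$ of degree $d_1$, all of whose slopes equal $r_1$, such that $P = P_1 Q$ for some $Q \in F\{T\}$. Once such a $P_1$ is constructed, the multiplicativity of slope multiplicities cited just before the lemma identifies the slopes of $Q$ with the slopes of $P$ other than $r_1$, each strictly greater than $r_1$, and the inductive hypothesis applied to $Q$ completes the existence.

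To construct $P_1$, I would use successive approximation together with right Euclidean division by monic polynomials in $F\{T\}$, available because $F$ is a field. Start with $P_1^{(0)}$ equal to the monic twisted polynomial of degree $d_1$ whose nonzero coefficients are exactly those of $P$ on the slope-$r_1$ segment of the Newton polygon. At step $n$, write $P = A_n P_1^{(n)} + R_n$ with $\deg R_n < d_1$, and set $P_1^{(n+1)} = P_1^{(n)} + R_n$. A slope estimate on $R_n$—using that every slope of $P$ other than $r_1$ strictly exceeds $r_1$—shows that the Newton polygon of $R_n$ lies above the slope-$r_1$ line through the leading term of $P_1^{(n)}$ by a margin that improves geometrically with $n$. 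Completeness of $F$ then gives convergence of $\{P_1^{(n)}\}$ in the finite-dimensional $F$-subspace of monic twisted polynomials of degree $d_1$ to the desired $P_1$, with $Q$ obtained as the limit of the quotients $A_n$.

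For uniqueness, suppose $P = P_1 \cdots P_m = P_1' \cdots P_{m'}'$ are two factorizations of the required shape. Multiplicativity of slope multiplicities forces both factorizations to realize the same ordered slope data, so in particular $m = m'$ and $d_1 = \deg P_1 = \deg P_1'$. It suffices to show $P_1 = P_1'$, for then one cancels on the left (noting that $F\{T\}$ is a domain, since $F$ is a field) and inducts on $m$. To see $P_1 = P_1'$, observe that both are monic left divisors of $P$ of degree $d_1$ whose Newton polygons concentrate at slope $r_1$; the approximation scheme above is contractive, and both candidates are fixed points of it, so they coincide. The main obstacle will be executing the slope bookkeeping for $R_n$ in the noncommutative setting, where the relation $Ta = aT + \del(a)$ can perturb the Newton polygon of a product; verifying that the perturbations are dominated by the slope gap between $r_1$ and the next slope of $P$, and hence that the correction $R_n$ has strictly improving Newton polygon, is the technical heart of the argument.
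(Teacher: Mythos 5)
The paper does not prove this lemma; it cites Robba, \cite{kedlaya-part3}, and \cite{kedlaya-course}. Your Hensel-type successive approximation is indeed the standard route taken in those sources, so the overall strategy is sound, but the sketch as written has two genuine gaps.

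First, a sidedness inconsistency. You announce $P = P_1 Q$ with $P_1$ a \emph{left} factor, but then iterate with \emph{right} Euclidean division $P = A_n P_1^{(n)} + R_n$, which converges (if it converges) to a \emph{right} factorization $P = A P_1$. In the twisted ring $F\{T\}$ left and right factors are not interchangeable, and the order matters: if you peel off the smallest-slope piece as a right factor and recurse on the quotient, the recursion terminates with $P = P_m \cdots P_1$, which is the reverse of the ordering asserted in the lemma. To get $P = P_1 \cdots P_m$ with $r_1 < \cdots < r_m$ you must either use left division to extract $P_1$ on the left, or extract the \emph{largest} slope $r_m$ on the right via right division and recurse on the left quotient. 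Either fix is legitimate (the opposite ring of $F\{T\}$ for $\del$ is $F\{T\}$ for $-\del$, so the two sides are symmetric), but the current writeup conflates them.

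Second, you defer ``the technical heart,'' and your framing of it is not quite right. What makes the Hensel argument work in $F\{T\}$ is not that the commutation perturbations are ``dominated by the slope gap between $r_1$ and the next slope''; it is that the $|\del|_F$-Gauss norm on $F\{T\}$ is multiplicative. The truncation of all Newton polygon slopes at $-\log|\del|_F$ in the paper's definition is precisely what encodes this: for $\rho \geq |\del|_F$, the correction term $\del^l(b)$ arising from $T^i b = \sum_l \binom{i}{l}\del^l(b) T^{i-l}$ satisfies $|\del^l(b)|\rho^{-l} \leq |b|$, so the Gauss norm of a product is determined by the leading terms exactly as in the untwisted case. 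Once multiplicativity is in hand, the contraction estimate for $R_n$ is the same bookkeeping as in the commutative Hensel/Newton-polygon factorization, and the slope gap only governs the \emph{rate} of convergence. Without stating and using multiplicativity, the ``slope bookkeeping'' cannot be completed. The uniqueness argument also needs slightly more than ``both are fixed points of a contraction'': you must first verify that any monic left (or right) factor of degree $d_1$ with pure slope $r_1$ actually lies in the basin of the iteration, which again uses multiplicativity of the Gauss norm and the separation of slopes. Finally, the parenthetical ``$F\{T\}$ is a domain, since $F$ is a field'' is true but the reason is additivity of degree under multiplication (a consequence of the leading-coefficient formula in the twisted ring), not simply that $F$ is a field.
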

\begin{proof}
See \cite[Th\'eor\`eme~2.4]{robba-hensel}, 
\cite[Corollary~3.2.4]{kedlaya-part3},
or \cite[Theorem~6.4.4]{kedlaya-course}.
\end{proof}

\begin{prop} \label{P:read slopes}
Let $F$ be a complete nonarchimedean based differential field of order $1$
of characteristic $0$, such that $|\del|_F = |\del|_{\spect,F} > 0$.
\begin{enumerate}
\item[(a)]
Any finite differential module $V$ over $F$ admits a unique decomposition
\[
V = \bigoplus_{s \geq 1} V_s
\]
as a direct sum of differential submodules, such that every nonzero subquotient
of $V_s$ has based scale $s$.
\item[(b)]
For any isomorphism $V \cong F\{T\}/F\{T\}P(T)$ of left $F\{T\}$-modules,
and any $s \geq 1$, the multiplicity of $-\log s -\log |\del|_F$ as a slope
of $P$ equals $\dim_F V_s$.
\end{enumerate}
\end{prop}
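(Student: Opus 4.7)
The plan is to reduce to a cyclic module, apply the Newton polygon factorization of twisted polynomials (Lemma~\ref{L:factor slopes}) to produce a canonical filtration whose graded pieces are pure-scale, and then split the filtration by an $\Ext^1$-vanishing argument.

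By Lemma~\ref{L:cyclic} (applicable since $F$ has characteristic $0$ and $\del$ is nonzero), pick a cyclic vector to identify $V \cong F\{T\}/F\{T\}P$ for some monic $P$. Apply Lemma~\ref{L:factor slopes} to write $P = P_1 \cdots P_m$ uniquely with $P_i$ of pure slope $r_i$ and $r_1 < \cdots < r_m$; set $s_i = e^{-r_i}/|\del|_F$, so $s_1 > \cdots > s_m \geq 1$. Since $P_{i+1} \cdots P_m$ right-divides $P$, the chain of left ideals $F\{T\}(P_{i+1}\cdots P_m) \supseteq F\{T\}P$ yields a submodule filtration $0 = V^0 \subset \cdots \subset V^m = V$, with the iso $R \mapsto R \cdot P_{i+1} \cdots P_m$ identifying $V^i/V^{i-1}$ with $F\{T\}/F\{T\}P_i$. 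By multiplicativity of slope multiplicities under products of twisted polynomials, every middle factor of $P_i$ has pure slope $r_i$, so every subquotient of $V^i/V^{i-1}$ is cyclic with pure-slope-$r_i$ minimal polynomial; invoking the standard identification of pure Newton slope $r$ with the spectral radius $e^{-r}$ of $\del$ on the corresponding cyclic module, each such subquotient has based scale $s_i$.

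The heart of the argument is to split this filtration. Induct on $i$, assuming $V^{i-1} = \bigoplus_{j < i} V_{s_j}$ has already been realized as a pure-scale direct sum. The class of the extension $0 \to V^{i-1} \to V^i \to V^i/V^{i-1} \to 0$ is an element of $\coker(\del)$ on $W := \Hom_F(V^i/V^{i-1}, V^{i-1}) = \bigoplus_{j < i} W_j$ with $W_j := V_{s_j} \otimes (V^i/V^{i-1})^\dual$ and $\del(\phi) = \del \circ \phi - \phi \circ \del$; it suffices to show $\coker(\del) = 0$ on each $W_j$. The dual $(V^i/V^{i-1})^\dual$ is still pure of scale $s_i$ (verified by estimating $|\del^n|$ on the dual from $\del(f)(v) = \del(f(v)) - f(\del(v))$), and invoking the standard tensor rule for pure differential modules of distinct scales, $W_j$ is pure of scale $\max(s_j, s_i) = s_j > 1$. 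Choose a cyclic vector via Lemma~\ref{L:cyclic} and identify $W_j \cong F\{T\}/F\{T\}Q$; since $Q$ has pure slope strictly less than $-\log|\del|_F$, the constant term $Q(0)$ is nonzero. The relation $Q(\del) = 0$ on $W_j$, written as $Q(0) \cdot w = -\del(R(\del) w)$ for $Q(T) = Q(0) + TR(T)$, shows that $\del$ is surjective on $W_j$, hence bijective by finite-dimensionality. Thus $\coker(\del) = 0$, the extension splits, and induction completes (a). Uniqueness follows from the vanishing of $\Hom_{F\{T\}}(U, U')$ between pure modules of distinct scales, since the image would have to be simultaneously pure of two scales. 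For (b), one computes $\dim V_{s_i} = \dim V^i/V^{i-1} = \deg P_i$, which matches the multiplicity of $r_i = -\log s_i - \log|\del|_F$ as a slope of $P$ by construction.

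The main obstacle is invoking two pieces of standard pure-scale calculus that have not been formalized in the excerpt, namely the tensor rule (that $V \otimes W$ is pure of scale $\max(s_V, s_W)$ when $s_V \neq s_W$) and the scale-preserving property of duality, both requiring brief auxiliary estimates of $|\del^n|$ on tensor products and duals. With these standard inputs in hand, everything else is formal.
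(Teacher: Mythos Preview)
Your argument is correct, but it takes a different route from the paper's. The paper applies Lemma~\ref{L:factor slopes} in \emph{both} $F\{T\}$ and its opposite ring: the factorization $P = P_1 \cdots P_m$ (slopes increasing) gives an ascending filtration of $V$, while the opposite-ring factorization $P = P'_m \cdots P'_1$ (slopes decreasing) gives a second, transverse filtration; intersecting these produces the direct sum directly, with the slope--scale identification then supplied by the Christol--Dwork calculation (cited as \cite[Corollary~6.5.4]{kedlaya-course}). You instead use only the one-sided factorization to obtain a filtration and then split it by an $\operatorname{Ext}^1$-vanishing argument, using the surjectivity of $\del$ on a pure module of scale $>1$ via the nonvanishing of the constant term of its twisted minimal polynomial.

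Both approaches rest on the same slope--scale input (your ``standard identification'' is exactly the Christol--Dwork step the paper cites), so neither avoids that black box. The trade-off is that the paper's two-sided factorization is self-contained once that input is granted, whereas your route imports two further facts not proved in the text: that duals preserve purity of scale and that the tensor product of pure modules of distinct scales $s \neq s'$ is pure of scale $\max(s,s')$. These are indeed standard (e.g.\ \cite[Lemma~6.2.8]{kedlaya-course}), and you flag them honestly; but note that the tensor statement is genuinely needed in its \emph{pure} form---merely knowing $|\del|_{\spect,W_j} > |\del|_F$ would not force $Q(0) \neq 0$, since $Q$ could still have a slope equal to $-\log|\del|_F$. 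Your argument for surjectivity of $\del$ from $Q(0) \neq 0$ is a nice touch that gives an explicit inverse and makes the $\operatorname{Ext}^1$-vanishing very concrete.
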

\begin{proof}
Since $\del$ acts via a nonzero derivation on $F$,
we may apply Lemma~\ref{L:cyclic} to exhibit an isomorphism
$V \cong F\{T\}/F\{T\}P(T)$. By Lemma~\ref{L:factor slopes}
applied in both $F\{T\}$ and its opposite ring, we obtain a decomposition
of $V$ corresponding to the distinct slopes of $P$.
Both parts then follow from a calculation of Christol and Dwork; see
\cite[Corollary~6.5.4]{kedlaya-course}. (An analogous
calculation had been made previously by Malgrange \cite[\S 1]{malgrange}.)
\end{proof}

In the rational type setting, we may obtain a corresponding conclusion for
absolute scales.
\begin{prop} \label{P:read absolute slopes}
Let $F$ be a complete nonarchimedean based differential
field of order $h$ with residue field of characteristic $0$,
such that $\del_1,\dots,\del_h$ are of rational type with respect
to some $u_1,\dots,u_h \in F$. Then any 
finite differential module $V$ over $F$ admits a unique decomposition
\[
V = \bigoplus_{s \geq 1} V_s
\]
as a direct sum of differential submodules, such that every nonzero subquotient
of $V_s$ has absolute scale $s$.
\end{prop}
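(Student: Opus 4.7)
The plan is to reduce to Proposition~\ref{P:read slopes} by applying it to each derivation $\del_i$ separately, and then combine the resulting one-variable decompositions using the commutation relations $[\del_i,\del_j]=0$ together with the identification of based and absolute scales from Proposition~\ref{P:scale to absolute}.

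First I would fix $i \in \{1,\dots,h\}$ and regard $F$ as a complete nonarchimedean based differential field of order $1$ with distinguished derivation $\del_i$. By Lemma~\ref{L:rational type norm} we have $|\del_i|_F = |\del_i|_{\spect,F} = |u_i|^{-1} > 0$, and the residue field has characteristic $0$, so Proposition~\ref{P:read slopes}(a) yields a decomposition $V = \bigoplus_{s \geq 1} V_{i,s}$ of $V$ as an $F\{\del_i\}$-module with every nonzero subquotient of $V_{i,s}$ of $\del_i$-scale $s$. The bulk of the work is then to show that for each $j \neq i$, the derivation $\del_j$ preserves each $V_{i,s}$. Writing $\pi_{i,s} : V \to V_{i,s}$ for the projection and setting $D = \pi_{i,s'}\,\del_j\,\pi_{i,s}$ for $s \neq s'$, one checks directly that $D$ is $F$-linear (using $\pi_{i,s'}\pi_{i,s} = 0$) and commutes with $\del_i$ (using $[\del_i,\del_j] = 0$ together with the $\del_i$-equivariance of $\pi_{i,\bullet}$). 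Hence $D$ defines an $F\{\del_i\}$-linear map $V_{i,s} \to V_{i,s'}$; its image, being simultaneously a quotient of $V_{i,s}$ and a submodule of $V_{i,s'}$, would have every subquotient of $\del_i$-scale both $s$ and $s'$, which is impossible by the uniqueness in Proposition~\ref{P:read slopes}. Thus $D = 0$, and summing over $s' \neq s$ yields $\del_j \pi_{i,s} = \pi_{i,s} \del_j$, so the $V_{i,s}$ are $\Delta_F$-stable.

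With the $h$ single-variable decompositions now mutually commuting, I would form the common refinement $V = \bigoplus_\sigma V_\sigma$ with $V_\sigma = V_{1,s_1} \cap \cdots \cap V_{h,s_h}$ for $\sigma = (s_1,\dots,s_h)$. The intrinsic characterization of the scale pieces in Proposition~\ref{P:read slopes} implies that each $\del_i$-scale decomposition is inherited by every $\Delta_F$-subquotient of $V$ (via $W \cap V_{i,s}$ for submodules and the analogous quotient construction), and therefore so is the refinement. Consequently every nonzero $\Delta_F$-subquotient of $V_\sigma$ has $\del_i$-scale $s_i$ for each $i$, hence based scale $\max_i s_i$, hence absolute scale $\max_i s_i$ by Proposition~\ref{P:scale to absolute}. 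Grouping then defines $V_s := \bigoplus_{\max_i s_i = s} V_\sigma$, which has the stated property. For uniqueness, if $V = \bigoplus V'_s$ is another such decomposition, the composite $V_s \hookrightarrow V \twoheadrightarrow V'_{s'}$ with $s \neq s'$ has image whose subquotients would have absolute scale both $s$ and $s'$, forcing the image to vanish; hence $V_s = V'_s$.

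The main obstacle is the commutativity of the single-variable decompositions established in the second paragraph; once that compatibility is in hand, the remainder of the argument is formal bookkeeping combined with the based-versus-absolute identification provided by Proposition~\ref{P:scale to absolute}.
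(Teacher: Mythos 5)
Your proposal is correct and follows the same outline as the paper's proof: reduce to based scales via Proposition~\ref{P:scale to absolute}, apply Proposition~\ref{P:read slopes} to each $\del_i$ separately, show the single-derivation decompositions are mutually compatible, and take the common refinement grouped by maximal scale. The one place where you go beyond the paper is the middle step: the paper simply asserts ``since the decomposition is unique, it must be respected by $\del_j$ for $j \neq i$,'' whereas you supply a concrete verification --- the cross-term $D = \pi_{i,s'}\,\del_j\,\pi_{i,s}$ is $F$-linear (the Leibniz term is killed by $\pi_{i,s'}\pi_{i,s}=0$), commutes with $\del_i$, and hence is a morphism $V_{i,s}\to V_{i,s'}$ of $F\{\del_i\}$-modules that must vanish by the subquotient scale characterization. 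That computation is exactly what makes the paper's terse ``uniqueness'' assertion rigorous, and it's worth having written out; but it is the same idea, not a different approach.
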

\begin{proof}
By Proposition~\ref{P:scale to absolute}, the based scale and absolute scale
agree for any nonzero finite differential module over $F$. We may thus work 
with
based scales hereafter. 

For $i=1,\dots,h$, since $|\del_i|_{\spect,F} = |\del_i|_F = 
|u_i|^{-1}$ by Lemma~\ref{L:rational type norm},
we may perform the decomposition of 
Proposition~\ref{P:read slopes}
by viewing $F$ as a based differential field of order 1 equipped
only with $\del_i$. However, since the decomposition
is unique, it must be respected by $\del_j$ for $j \neq i$.

Now take the minimal common refinement of the decompositions obtained 
for $i=1,\dots,h$.
Each summand of the resulting decomposition has the property that all of
its nonzero subquotients have the same based scale. We may thus group terms
to get a decomposition of the desired form, which is evidently unique.
\end{proof}

We also obtain an invariance of absolute scale multisets under suitable
extensions of the differential field.
\begin{lemma} \label{L:base change}
Let $F \subseteq F'$ be an inclusion of complete nonarchimedean based differential
fields of order $h$ with residue fields of characteristic $0$,
such that $\del_1,\dots,\del_h$ are of rational type with respect
to some $u_1,\dots,u_h \in F$. 
Then for any finite differential module $V$ over $F$,
the absolute scale multisets of $V$ and $V \otimes_F F'$ coincide.
\end{lemma}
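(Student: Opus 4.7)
The plan is to reduce to the single-derivation setting, where slope invariance under isometric base change provides the main tool.

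First, I observe that since $u_1, \ldots, u_h \in F \subseteq F'$, the derivations $\del_1, \ldots, \del_h$ remain of rational type on $F'$ with respect to the same elements. Hence by Proposition~\ref{P:scale to absolute}, the absolute scale agrees with the based scale on both sides, and it suffices to match the based scale multisets. Moreover, by Lemma~\ref{L:rational type norm}, $|\del_i|_F = |\del_i|_{F'} = |u_i|^{-1}$ for each $i$, so the normalization used to compute Newton polygon slopes of twisted polynomials in $T_i$ agrees over $F$ and $F'$.

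For each $i \in \{1,\ldots,h\}$, I would view $F$ as a based differential field of order $1$ with respect to $\del_i$ alone, and apply Proposition~\ref{P:read slopes} to obtain the unique decomposition $V = \bigoplus_{s \geq 1} V_{i,s}$ into $\del_i$-scale components. Choosing a cyclic vector via Lemma~\ref{L:cyclic} (applicable because $F$ has characteristic $0$ and $\del_i$ acts nontrivially) presents $V_{i,s} \cong F\{T\}/F\{T\}P_{i,s}(T)$ as a left $F\{\del_i\}$-module, with $P_{i,s}$ having Newton polygon consisting of a single slope $-\log s - \log|\del_i|_F$ of width $\dim_F V_{i,s}$. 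After base change, this becomes $V_{i,s} \otimes_F F' \cong F'\{T\}/F'\{T\}P_{i,s}(T)$; since the inclusion $F \hookrightarrow F'$ is isometric and $|\del_i|_F = |\del_i|_{F'}$, the Newton polygon of $P_{i,s}$ is identical whether computed over $F$ or $F'$. Thus by Proposition~\ref{P:read slopes}(b) applied over $F'$, every nonzero subquotient of $V_{i,s} \otimes_F F'$ has $\del_i$-scale $s$, so by the uniqueness in Proposition~\ref{P:read slopes}(a), the summands $V_{i,s} \otimes_F F'$ are exactly the $\del_i$-scale components of $V \otimes_F F'$.

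Finally, I would assemble these compatible decompositions. Since each $V_{i,s}$ is preserved by the full action of $\Delta_F$ (being canonical), the minimal common refinement of the $\del_i$-scale decompositions over $i = 1, \ldots, h$ exhibits $V$ as a direct sum of differential submodules, each having constant $\del_i$-scale $s_i$ for every $i$; regrouping these pieces by $\max_i s_i$ reproduces the based-scale decomposition of Proposition~\ref{P:read absolute slopes}. Base change commutes with all of these constructions, so the $s$-th summand of $V$ tensors up to the $s$-th summand of $V \otimes_F F'$, and the based (hence absolute) scale multisets of $V$ and $V \otimes_F F'$ coincide. The main technical point to track carefully will be the isometric nature of $F \hookrightarrow F'$, which is what guarantees that the coefficients of each $P_{i,s}$ retain their norms and hence that the Newton polygon slopes persist under base change.
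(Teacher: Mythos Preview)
Your proof is correct and follows essentially the same approach as the paper: reduce to based scales via Proposition~\ref{P:scale to absolute}, then use cyclic vectors and the invariance of Newton polygons under the isometric extension $F \hookrightarrow F'$ to transport the $\del_i$-scale information to $V \otimes_F F'$. The only organizational difference is that the paper first reduces to the case of $V$ irreducible (so that each $\del_i$-scale multiset is automatically a singleton, making the max over $i$ immediate), whereas you work directly with the full scale decomposition of Proposition~\ref{P:read slopes} and its common refinement; both routes are valid and rest on the same key point.
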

\begin{proof}
Again by Proposition~\ref{P:scale to absolute}, we may consider based scale
multisets instead.
Moreover, it suffices to check the claim 
for $V$ irreducible.

Fix $i \in \{1,\dots,h\}$, and temporarily equip $F$ solely with $\del_i$.
Since $V$ is irreducible,
any nonzero $\bv \in V$
is a cyclic vector
and so corresponds to an isomorphism $V \cong F\{T_i\}/F\{T_i\}P_i$.
By Proposition~\ref{P:read slopes}, the Newton polygon of $P_i$ must
consist of a single element, otherwise $V$ would be decomposable.
That is, the scale multiset of $\del_i$ on $V$ consists solely of $s_i$.
Our hypotheses on $F$ and $F'$ ensure that the Newton polygon of $P_i$
does not change when we extend scalars to $F'$. Thus the scale
multiset of $V \otimes_F F'$, viewed as a differential module over $F$ equipped
solely with $\del_i$, again consists of the single element $s_i$.

Now equip $F$ and $F'$ again with all of $\del_1,\dots,\del_h$.
By the previous paragraph, the scale multisets of 
$V$ and $V \otimes_F F'$ both consist of
the single element $\max_i \{s_i\}$, proving the claim.
\end{proof}

\section{Differential algebra over discretely valued fields}
\label{sec:diff alg fields}

In this section, we focus on differential algebra over complete discretely
valued fields of equal characteristic $0$, i.e., Laurent series fields
over a field of characteristic $0$. Our main goal is to extend certain
results, like the Hukuhara-Levelt-Turrittin decomposition theorem, to
the case where the field carries not only a derivation with respect to
the series parameter, but also some derivations acting on the base field.
Much of the content of this section already appears in
the book of Andr\'e and Baldassarri \cite[Chapter~2]{andre-baldassarri}.

Note that this section includes a running hypothesis; see
Hypothesis~\ref{H:diff alg}.

\setcounter{theorem}{0}
\begin{defn}
For $R \subseteq S$ an inclusion of rings, and $M$ a finite $S$-module,
an \emph{$R$-lattice} in $M$ is a finite $R$-submodule $N$ of $M$
such that the natural map $N \otimes_R F \to M$ is an isomorphism.
Note that if $S$ is a localization of $R$ and 
$M$ is torsion-free, then an $R$-lattice in $M$
is just a finite $R$-submodule of $M$ which spans $M$ over $S$.
\end{defn}

\subsection{Setup}

We start by instituting a running hypothesis
and then investigating its initial consequences.

\begin{hypothesis} \label{H:diff alg}
Throughout \S \ref{sec:diff alg fields}, 
let $\gotho_F$ be a complete discrete valuation ring with norm $|\cdot|$
(of arbitrary normalization),
equipped with the structure of a \emph{locally simple} 
nonarchimedean differential ring.
Let $\gothm_F$ be the maximal ideal of $\gotho_F$. Let
$K = \gotho_F/\gothm_F$ be the residue field of $\gotho_F$.
Let $F$ be the fraction field of $\gotho_F$, 
viewed as a complete nonarchimedean
differential field.
Assume that $\Delta_F = \Delta_{\gotho_F} \otimes_{\gotho_F} F$ is finite
dimensional over $F$, and that $\Delta_{\gotho_F}$ is saturated (i.e.,
any $\del \in \Delta_F$ under which $\gotho_F$ is stable in fact belongs
to $\Delta_{\gotho_F}$).
Let $K_0$ be the joint kernel of $\Delta_F$ on $F$.
\end{hypothesis}

\begin{lemma} \label{L:find rational type1}
For any generator $z$ of $\gothm_F$,
there exists $\del \in \Delta_{\gotho_F}$ of rational type with respect to $z$.
\end{lemma}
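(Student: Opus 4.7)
The plan is to produce the desired $\del$ by rescaling a derivation in $\Delta_{\gotho_F}$ that fails to preserve $\gothm_F$. Once condition (a), $\del(z) = 1$, is arranged, condition (b), $|\del|_{\gotho_F} \leq |z|^{-1}$, will follow automatically from the DVR structure of $\gotho_F$.

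First I would locate a derivation $\del_0 \in \Delta_{\gotho_F}$ with $\del_0(z) \notin \gothm_F$. Since $\gotho_F$ is local and locally simple, it has no nonzero proper differential ideal; in particular $\gothm_F = (z)$, which is nonzero and proper, cannot be a differential ideal. By the Leibniz rule, $(z)$ is stable under $\del_0$ if and only if $\del_0(z) \in (z)$, so some $\del_0 \in \Delta_{\gotho_F}$ must send $z$ to a unit of $\gotho_F$. I then set $\del = \del_0(z)^{-1} \del_0$, which lies in $\Delta_{\gotho_F}$ because $\Delta_{\gotho_F}$ is an $\gotho_F$-module and $\del_0(z)^{-1} \in \gotho_F^\times$, and which satisfies $\del(z) = 1$ by construction.

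For condition (b) I would bound $|\del(r)|$ explicitly for $r \in \gotho_F$. Write $r = u z^n$ with $u \in \gotho_F^\times$ and $n = v(r) \geq 0$, so $|r| = |z|^n$. The Leibniz rule together with $\del(z) = 1$ yields $\del(r) = \del(u) z^n + n u z^{n-1}$. Since $\del$ preserves $\gotho_F$, $\del(u) \in \gotho_F$ and the first term has norm at most $|z|^n$; since the residue field has characteristic zero, any nonzero integer $n$ is a unit of $\gotho_F$, so when $n \geq 1$ the second term has norm exactly $|z|^{n-1}$. Thus $|\del(r)| \leq |z|^{n-1} = |r|/|z|$ for $n \geq 1$, while for $n = 0$ we have $|\del(r)| = |\del(u)| \leq 1 \leq |z|^{-1} = |r|/|z|$. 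This gives $|\del|_{\gotho_F} \leq |z|^{-1}$.

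I do not anticipate a serious obstacle here: the only real input is that $\del(z) = 1$ forces $\del$ to lower $z$-adic valuation by at most one, which is exactly the bound required. The residue-characteristic-zero hypothesis (inherited from equal characteristic zero in Hypothesis~\ref{H:diff alg}) enters only to guarantee that nonzero integers remain units in $\gotho_F$, which is what makes the coefficient $n$ in the expansion of $\del(r)$ harmless.
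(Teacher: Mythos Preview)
Your proof is correct and follows essentially the same approach as the paper: use local simplicity to find $\del_0 \in \Delta_{\gotho_F}$ with $\del_0(z)$ a unit, then set $\del = \del_0(z)^{-1}\del_0$. The paper's proof stops there and leaves the verification of the norm bound $|\del|_{\gotho_F} \leq |z|^{-1}$ implicit, whereas you spell it out via the expansion $\del(uz^n) = \del(u)z^n + nuz^{n-1}$; this extra detail is correct and helpful but not a different idea.
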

\begin{proof}
Since $\gotho_F$ is locally simple, there exists $\del_0 \in \Delta_{\gotho_F}$
such that $\del_0(z) \in \gotho_F \setminus \gothm_F$.
Then $\del = \del_0(z)^{-1} \del_0$ is of rational
type with respect to $z$.
\end{proof}

\begin{lemma} \label{L:find rational type}
Let $z$ be a generator of $\gothm_F$. Suppose $\del \in \Delta_{\gotho_F}$
is of rational type with respect to $z$.
\begin{enumerate}
\item[(a)]
For each $m \in \ZZ$, 
$z \del$ acts on $z^m \gotho_F/z^{m+1} \gotho_F$ as multiplication by $m$.
\item[(b)]
The action of $z\del$ on $\gothm_F$ is bijective.
\item[(c)]
The kernel of $\del$ is contained in $\gotho_F$ and
projects bijectively onto $\gotho_F/\gothm_F \cong K$.
That is, there exists an isometric isomorphism $F \cong K((z))$
under which $\del$ corresponds to $\frac{\del}{\del z}$.
\item[(d)]
For any $\del' \in \Delta_F$ with 
$|\del'|_F \leq 1$, we have $|[z\del,\del']|_F \leq |z|$.
\item[(e)]
For any generator $z'$ of $\gothm_F$ and any
$\del' \in \Delta_{\gotho_F}$ of rational type with respect to $z'$,
$|z'\del' - z\del|_F \leq |z|$.
\item[(f)]
The joint kernel on $\gotho_F/\gothm_F \cong K$ of the action of
all $\del' \in \Delta_F$ with $|\del'|_F \leq 1$ equals $K_0$.
\end{enumerate}
\end{lemma}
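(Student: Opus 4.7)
The plan is to prove the six parts in order, bootstrapping each from the previous, using throughout that the residue characteristic is $0$. (This is implicit in the setup: if it were $p > 0$, then $(z^p)$ would be a nonzero proper ideal of $\gotho_F$ stable under every $\del' \in \Delta_{\gotho_F}$, since $\del'(z^p) = p z^{p-1} \del'(z) = 0$, contradicting local simplicity.) Part~(a) follows at once from the Leibniz rule: $(z\del)(z^m u) = m z^m u + z^{m+1} \del(u)$, and the second term lies in $z^{m+1}\gotho_F$ because $\del$ preserves $\gotho_F$. Given~(a), part~(b) reduces to knowing that multiplication by any integer $m \geq 1$ is invertible on $\gothm_F^m/\gothm_F^{m+1}$; injectivity of $z\del$ on $\gothm_F$ follows because a nonzero kernel element would have a well-defined leading power in the filtration $\{\gothm_F^n\}$ contradicting~(a), and surjectivity follows by successive approximation in the complete ring $\gotho_F$.

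For part~(c), I first verify $\ker(\del) \cap \gothm_F = 0$: if $\del(u) = 0$ with $u = zw \in \gothm_F$, then $w = -z\del(w) \in \gothm_F$, so iterating gives $u \in \bigcap_n \gothm_F^n = 0$; a similar analysis of $x = z^{-n} y$ rules out kernel elements outside $\gotho_F$. Surjectivity of $\ker(\del) \to K$ comes by lifting: given $\bar c$, choose $c_0 \in \gotho_F$ mapping to $\bar c$ and solve $\del(u) = -\del(c_0)$ for $u \in \gothm_F$, which is possible because~(b) forces $\del(\gothm_F) = z^{-1}\gothm_F = \gotho_F$. The resulting subfield $K' := \ker(\del)$ then projects isomorphically to $K$, and iterating the decomposition $u = \tilde a_0 + z u_1$ (with $\tilde a_0 \in K'$ lifting $u \bmod \gothm_F$) produces the isometric expansion $F \cong K((z))$ identifying $\del$ with $d/dz$.

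Parts~(d) and~(e) are direct computations relying on the sub-claim that any $D \in \Delta_{\gotho_F}$ with $D(z) \in \gothm_F$ satisfies $|D|_F \leq 1$ (verified by the Leibniz expansion of $D$ applied to $z^n w$, invoking residue characteristic $0$). For~(d), saturation forces $\del' \in \Delta_{\gotho_F}$ and preservation of $\gothm_F$, so $\del'(z) = zv$ for some $v \in \gotho_F$; then $[z\del, \del'] = z[\del, \del'] - \del'(z) \del = z\bigl([\del,\del'] - v\del\bigr)$, and the bracketed operator $D := [\del,\del'] - v\del$ sends $z$ to $z\del(v) \in \gothm_F$, giving $|[z\del, \del']|_F = |z| \cdot |D|_F \leq |z|$. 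For~(e), writing $\del' = a\del + D_0$ with $a = \del'(z)$ and $D_0(z) = 0$, the relation $\del'(z') = 1$ with $z' = uz$ solves as $a = u^{-1}(1 - \del'(u) z)$; direct simplification then yields $z'\del' - z\del = z\bigl(-z\del'(u) \del + u D_0\bigr)$, and each summand has $F$-norm at most $1$.

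The main obstacle is~(f). The inclusion $K_0 \subseteq $ (joint kernel) is immediate. For the reverse, let $\bar c \in K$ be a joint-kernel element and $c \in K'$ its unique lift via~(c); the goal is $\del''(c) = 0$ for every $\del'' \in \Delta_F$. By $F$-linearity one may take $\del'' \in \Delta_{\gotho_F}$, and the decomposition $\del'' = \del''(z)\del + D_0$ with $D_0(z) = 0$ reduces (using $\del(c) = 0$) to showing $D_0(c) = 0$ for every horizontal $D_0 \in \Delta_{\gotho_F}$; the sub-claim gives $|D_0|_F \leq 1$, whence $D_0(c) \in \gothm_F$ by the joint-kernel hypothesis. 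The crucial step is that the iterated commutator $D^{(k)} := (\mathrm{ad}\,\del)^k(D_0)$ again kills $z$ for every $k$ (by induction, $[\del, D](z) = -D(1) = 0$ whenever $D(z) = 0$), hence satisfies $|D^{(k)}|_F \leq 1$ and falls under the hypothesis. Combining the identity $\del^k D_0 = \sum_{j=0}^k \binom{k}{j} D^{(j)} \del^{k-j}$ with $\del(c) = 0$ yields $\del^k(D_0(c)) = D^{(k)}(c) \in \gothm_F$ for all $k \geq 1$. Expanding $D_0(c) = \sum_{i \geq 1} a_i z^i$ with $a_i \in K'$ via~(c), the congruence $\del^k(D_0(c)) \equiv k! \, a_k \pmod{\gothm_F}$ together with residue characteristic $0$ forces $a_k = 0$ for every $k \geq 1$, so $D_0(c) = 0$ as desired.
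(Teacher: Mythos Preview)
Your proof is correct. Parts (a)--(c) match the paper's argument essentially verbatim. For (d) and (e) you take a more computational route: you explicitly decompose derivations and invoke the sub-claim that any $D \in \Delta_{\gotho_F}$ with $D(z) \in \gothm_F$ has $|D|_F \leq 1$. The paper instead observes that since $z\del$ acts on each graded piece $z^m\gotho_F/z^{m+1}\gotho_F$ as the scalar $m$, it commutes there with any $\del'$ preserving the filtration; this handles (d) and (e) in one line each without any decomposition. Your approach works but is heavier.

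For (f) the two arguments genuinely diverge. The paper argues by contradiction: assuming the lift $x$ of $u$ is not killed by all $\del'$ with $|\del'|_F \leq 1$, one picks $\del'$ maximizing $|\del'(x)|$ (the discreteness of the value group makes this legitimate), and then (d) applied to $z^{-1}[z\del,\del']$ forces $|[z\del,\del'](x)| \leq |z|\cdot|\del'(x)|$, contradicting $|(z\del)(\del'(x))| = |\del'(x)|$ from (a). Your argument is constructive: you reduce to horizontal $D_0$ (killing $z$), note that iterated $\mathrm{ad}\,\del$ preserves this property, and then use the Leibniz identity $\del^k D_0 = \sum \binom{k}{j}(\mathrm{ad}\,\del)^j(D_0)\,\del^{k-j}$ together with the Taylor expansion of $D_0(c)$ to kill each coefficient. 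The paper's version is shorter and makes the role of (d) transparent; yours avoids the extremal-choice step and would adapt more readily to settings where such a maximum is not obviously attained.
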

\begin{proof}
Since $\del \in \Delta_{\gotho_F}$,
for $u \in \gotho_F$, we have 
\[
(z\del)(z^m u) = m z^m u + z^m (z\del)(u) \equiv m z^m u \pmod{z^{m+1} \gotho_F}.
\]
This proves (a). Note that the action on $z^m \gotho_F/z^{m+1} \gotho_F$
is bijective for $m \neq 0$. This proves (b).

By (a), we have 
$\ker(\del) \subseteq \gotho_F$. On the other hand, for any $x \in \gotho_F$,
we have $z \del(x) \in \gothm_F$, so by (b) there exists a unique $y \in \gothm_F$
with $z \del(x) = z \del(y)$. Thus $x - y$ is the unique element of 
$\ker(\del)$ congruent to $x$ modulo  $\gothm_F$. This proves (c).

Given $\del' \in \Delta_F$ with $|\del'|_F \leq 1$,
for each $m \in \ZZ$, the actions of $\del'$ and $z \del$ on $z^m \gotho_F /z^{m+1}
\gotho_F$ commute (because by (a), the latter is multiplication by $m$). This proves
(d).

Given a generator $z'$ of $\gothm_F$ and
$\del' \in \Delta_F$ of rational type with respect to $z'$,
by (a), for each $m \in \ZZ$, the action of both $z'\del'$ and $z\del$ on 
$z^m \gotho_F/z^{m+1} \gotho_F$ is multiplication by $m$.
This proves (e).

Suppose $u \in \gotho_F/\gothm_F \cong K$ 
is in the joint kernel of all $\del' \in \Delta_F$ with
$|\del'|_F \leq 1$. Let $x \in \gotho_F$ be the image of $u$
under the identification $K \cong \ker(\del)$ from (c).
We claim that $\del'(x) = 0$ for all
$\del' \in \Delta_F$ with $|\del'|_F \leq 1$.
Suppose the contrary; choose $\del' \in \Delta_F$
with $|\del'|_F \leq 1$ to maximize $|\del'(x)|$.
Since $\del'(x)$ reduces modulo $\gothm_F$ to $\del'(u) = 0$,
we must have $|\del'(x)| = |z|^m$ for some positive integer $m$.
Then by (a), we have $|(z\del)(\del'(x))| = |z|^m$ also.
On the other hand, since $(z\del)(x) = 0$, we have
\[
(z\del)(\del'(x)) = [z\del, \del'](x).
\]
By (d), $|z^{-1} [z\del, \del']|_F \leq 1$, so by our choice of
$\del'$, we must have $|z^{-1}[z\del, \del'](x)| \leq |z|^{m}$.
This gives a contradiction, which forces
$\del'(x) = 0$ for all $\del' \in \Delta_F$ with $|\del'|_F \leq 1$.
This proves (f).
\end{proof}

\begin{remark} \label{R:puiseux}
Given any continuous isomorphism $F \cong K((z))$
(as provided by Lemmas~\ref{L:find rational type1}
and~\ref{L:find rational type}(c)), 
any finite extension $F'$ of $F$ can be embedded into $K'((z^{1/m}))$
for some finite extension $K'$ of $K$ and some positive integer $m$,
by Puiseux's theorem \cite[\S~V.4, exercise 2]{bourbaki-alg47}.
Consequently, the integral closure $\gotho_{F'}$
of $\gotho_F$ again satisfies Hypothesis~\ref{H:diff alg}.
\end{remark}

\subsection{Regular differential modules}
\label{subsec:regulating1}

\begin{hypothesis}
Throughout \S~\ref{subsec:regulating1},
let $z$ be a generator of $\gothm_F$.
Let $\del \in \Delta_{\gotho_F}$
be a derivation of rational type with respect to $z$
(which exists by Lemma~\ref{L:find rational type1}).
Let $V$ be a finite differential module over $F$.
\end{hypothesis}

\begin{defn}
A subset $S$ of $K$ is \emph{prepared} if no nonzero integer
appears either as an element of $S$ or as a difference between two
elements of $S$. 
\end{defn}

\begin{defn}
Let $W$ be an $\gotho_F$-lattice in $V$ stable under $z\del$;
then $z\del$ acts on $W/zW$ as a $K$-linear transformation.
We say $W$ is a \emph{regulating lattice} in $V$ if the eigenvalues
of $z\del$ on $W$ are prepared;
these eigenvalues are called the \emph{exponents} of $W$.
We say $V$ is \emph{regular} if there exists a regulating lattice in $V$.
These definitions appear to depend on $z$
and $\del$, but in fact they do not; see Proposition~\ref{P:regulating}
and Corollary~\ref{C:regulating}.
\end{defn}

\begin{example} \label{exa:exp}
For $r \in F$, the canonical generator of $E(r)$ spans a regulating lattice
if and only if $r \in \gotho_F$, in which case the exponent of this lattice is 0.
\end{example}

\begin{lemma} \label{L:shearing}
Let $W$ be a $\gotho_F$-lattice in $V$ stable under
$z \del$.
Let $S$ be a subset of $\overline{K}$ stable under the absolute Galois group of 
$K$. Then there exists another $\gotho_F$-lattice $W'$ in $V$ stable under
$z\del$, such that the eigenvalues of $z\del$ on $W'/zW'$ are the same as
on $W/zW$ except that each $\alpha \in S$ is replaced by
$\alpha + 1$.
\end{lemma}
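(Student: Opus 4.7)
The plan is to use a classical shearing transformation: modify $W$ by shrinking the ``$S$-part'' by a factor of $z$, which shifts the corresponding eigenvalues by $+1$.

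First, I would set up a Fitting-style decomposition of $W/zW$. Let $T$ denote the $K$-linear endomorphism of $W/zW$ induced by $z\del$, and let $P(\lambda) \in K[\lambda]$ be its characteristic polynomial. Because $S \cap \overline{K}$ is stable under $\Gal(K^{\mathrm{sep}}/K)$, the roots of $P$ lying in $S$ and those lying outside $S$ each form Galois-stable multisets, so $P$ factors over $K$ as $P = P_S \cdot P_{S^c}$ with $P_S, P_{S^c}$ coprime. Setting $U = \ker P_S(T)^N$ and $U' = \ker P_{S^c}(T)^N$ for $N$ sufficiently large yields a $T$-stable decomposition $W/zW = U \oplus U'$ over $K$, where the eigenvalues of $T|_U$ are precisely those of $T$ lying in $S$ (with the same multiplicities), and those of $T|_{U'}$ are the remaining ones.

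Next I would lift this to $W$. Choose a $K$-basis of $W/zW$ adapted to the decomposition $U \oplus U'$ and lift it to elements of $W$; by Nakayama's lemma the result is an $\gotho_F$-basis of $W$, giving an $\gotho_F$-module decomposition $W = \tilde{W}_0 \oplus \tilde{W}_1$ with $\tilde{W}_0$, $\tilde{W}_1$ reducing modulo $z$ to $U$, $U'$ respectively. These lifts are not themselves $z\del$-stable, but the $z\del$-stability of $U$ and $U'$ in $W/zW$ gives $z\del(\tilde{W}_0) \subseteq \tilde{W}_0 + zW$ and $z\del(\tilde{W}_1) \subseteq \tilde{W}_1 + zW$.

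Now define $W' = z\tilde{W}_0 \oplus \tilde{W}_1$; this is visibly an $\gotho_F$-lattice in $V$. Stability under $z\del$ follows from the identity $z\del(z\bw) = z\cdot z\del(\bw) + z\bw$: for $\bw \in \tilde{W}_0$ we get $z\del(z\bw) \in z\tilde{W}_0 + z^2 W \subseteq W'$, and for $\bw \in \tilde{W}_1$ we get $z\del(\bw) \in \tilde{W}_1 + z\tilde{W}_0 + z\tilde{W}_1 \subseteq W'$. Finally, I would read off the eigenvalues of $z\del$ on $W'/zW' \cong (z\tilde{W}_0/z^2\tilde{W}_0) \oplus (\tilde{W}_1/z\tilde{W}_1)$: on the second summand the action is unchanged (eigenvalues outside $S$), while on the first, the identity above shows $z\del$ acts on $z\bw \bmod z^2\tilde{W}_0$ as $T|_U + \mathrm{id}$, so the eigenvalues from $S$ are each increased by $1$, as required.

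No step here is really an obstacle: the Galois-stability hypothesis enters only in the very first step to guarantee that the Fitting decomposition descends to $K$, and everything afterward is a direct matrix computation. The one subtlety worth being explicit about is that the identification $z\tilde{W}_0/z^2\tilde{W}_0 \cong U$ (via division by $z$) is what produces the $+1$ shift, so I would take care to separate the computation of $z\del$ on the two summands and verify each modulo $zW'$ rather than modulo $z\tilde{W}_0$ or $z\tilde{W}_1$ individually.
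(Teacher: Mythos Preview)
Your proposal is correct and follows essentially the same shearing construction as the paper. The paper's presentation is slightly slicker: it defines $W'$ directly as the preimage of $Y$ (your $U'$) under $W \to W/zW$, which makes $z\del$-stability immediate without any Leibniz-rule computation, and then observes that $zB_X \cup B_Y$ gives a basis of $W'/zW'$; but the underlying lattice and the eigenvalue shift are identical to yours.
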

\begin{proof}
Since $S$ is Galois-stable, the direct sum of the generalized eigenspaces
of $(W/zW) \otimes_K \overline{K}$ corresponding to eigenvalues in $S$ 
(resp.\ not in $S$) descends to a subspace $X$ (resp.\ $Y$) 
of $W/zW$. Let $B_X$ (resp.\ $B_Y$) be a subset of $W$ lifting a basis 
of $X$ (resp.\ of $Y$).
Let $W'$ be the preimage of $Y$ in $W$; then $W'$ is also
stable under $z \del$, and a basis for $W'/zW'$ is given by
the images of $zB_X$ and $B_Y$. This proves the claim.
\end{proof}

\begin{remark}
The construction in Lemma~\ref{L:shearing} is classically known as a 
\emph{shearing transformation}.
\end{remark}

\begin{prop} \label{P:make regulating}
Suppose that there exists a $\gotho_F$-lattice $W$ in $V$ stable under
$z \del$. Then there exists a regulating lattice $W'$ in $V$ 
such that the eigenvalues of $z\del$ on $W/zW$ and on $W'/zW'$ project
to the same multisubset of $\overline{K}/\ZZ$.
\end{prop}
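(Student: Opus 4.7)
The plan is to build $W'$ by iterative modification of $W$, using Lemma~\ref{L:shearing} together with the rescaling $W \mapsto z^k W$ for $k \in \ZZ$. Since $(z\del)(z^k w) = k z^k w + z^k (z\del)(w)$ thanks to $\del(z) = 1$, the spectrum of $z\del$ on $z^k W / z^{k+1} W$ is the spectrum on $W/zW$ shifted by $+k$; in particular this gives a new $\gotho_F$-lattice with all eigenvalues shifted by $k$. Combining a rescaling by $z^{-1}$ with Lemma~\ref{L:shearing} applied to $\overline{K} \setminus (S-1)$ (Galois-stable whenever $S$ is), I obtain a downward shear: for any Galois-stable $S \subseteq \overline{K}$, I can produce another $\gotho_F$-lattice whose spectrum is that of $W$ with the eigenvalues in $S$ shifted by $-1$ and the rest unchanged. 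In total this provides a bidirectional shearing capable of shifting any Galois-stable subset of the spectrum by any integer, with no effect on eigenvalues lying outside that subset.

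With this toolkit I proceed by a decreasing-defect induction. Define
\[
D(W) = \sum_{C} \bigl(n^C_{\max} - n^C_{\min}\bigr),
\]
where $C$ ranges over the $\ZZ$-orbits in $\overline{K}$ that meet the eigenvalue multiset $E$ of $z\del$ on $W/zW$, and $n^C_{\max}, n^C_{\min}$ are the largest and smallest integer offsets attained within $C$ relative to a fixed representative. If $D(W) > 0$, some orbit $C$ has $n^C_{\min} < n^C_{\max}$; let $\beta \in E$ attain the minimum offset in $C$ and let $S$ be the Galois orbit of $\beta$. Because the Galois action on $\overline{K}$ commutes with translation by $\ZZ$, it preserves the integer-offset ordering inside each $\ZZ$-orbit, so each element of $S$ is the minimum eigenvalue in its own $\ZZ$-orbit. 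Applying Lemma~\ref{L:shearing} to $S$ therefore strictly increases the minimum within every $\ZZ$-orbit it touches, so $D(W)$ strictly decreases. Iterating, after finitely many steps $D(W) = 0$, and each $\ZZ$-orbit meets the spectrum in at most one distinct value.

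At that stage the only remaining obstruction to preparedness is that the orbit $\ZZ \subseteq \overline{K}$ may contribute a single nonzero integer $m$ to the spectrum. Since $\{m\}$ is trivially Galois-stable (as $m \in \ZZ \subseteq K$), I apply the bidirectional shearing to $\{m\}$ exactly $|m|$ times to move it to $0$, producing a lattice $W'$ whose spectrum is prepared, and which is therefore regulating. Every operation above shifts eigenvalues only by integers, so the image of the spectrum in $\overline{K}/\ZZ$ (as a multiset) is preserved throughout, yielding the claimed equality. The main technical hurdles are the construction of the downward shear from the upward one and the observation that the Galois orbit of a minimum-offset eigenvalue consists entirely of minima in their respective $\ZZ$-orbits; once these are in place, the defect induction is routine.
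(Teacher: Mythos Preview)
Your proof is correct and follows essentially the same strategy as the paper: iterate the shearing lemma on Galois-stable sets until the spectrum is prepared, noting that every step only shifts eigenvalues by integers. Your defect function $D(W)$ makes explicit what the paper's proof leaves as ``apply Lemma~\ref{L:shearing} repeatedly.''

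There is one genuine difference worth noting. The paper handles Galois compatibility via a trace argument: if $\alpha,\beta$ are Galois-conjugate over $K$ with $\alpha-\beta\in\ZZ$, then $[L:K](\alpha-\beta)=\Trace_{L/K}(\alpha)-\Trace_{L/K}(\beta)=0$, so $\alpha=\beta$; hence eigenvalues differing by a nonzero integer lie in different Galois orbits and can be sheared independently. You instead observe that the Galois action commutes with $\ZZ$-translation and therefore carries $E\cap C$ bijectively (with offsets preserved) onto $E\cap\sigma(C)$, so the Galois orbit of a minimum-offset eigenvalue consists entirely of minima in their respective $\ZZ$-orbits. Both arguments are short; yours avoids the trace computation, while the paper's yields the slightly sharper structural statement directly. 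Finally, your downward-shear construction is correct but unnecessary: once $D(W)=0$ and the only obstruction is a single integer eigenvalue $m\neq 0$, replacing $W$ by $z^{-m}W$ shifts the entire spectrum by $-m\in\ZZ$, which sends $m$ to $0$ and leaves all differences (hence preparedness of the differences) and the image in $\overline{K}/\ZZ$ unchanged.
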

\begin{proof}
Suppose that $\alpha, \beta \in \overline{K}$ are Galois-conjugate
over $K$ and $\alpha - \beta \in \ZZ$. For $L$ the Galois closure
of $K(\alpha,\beta)$ over $K$, we have
\begin{align*}
[L:K] (\alpha - \beta) &= \Trace_{L/K}(\alpha - \beta) \\
&= \Trace_{L/K}(\alpha) - \Trace_{L/K}(\beta) = 0
\end{align*}
and so $\alpha = \beta$. 

By the previous paragraph, any two eigenvalues of $z\del$ on $W/zW$ which 
differ by a nonzero integer belong to different Galois orbits over $K$. Hence
by applying Lemma~\ref{L:shearing} repeatedly,
we can obtain a new $\gotho_F$-lattice $W'$ stable under $z\del$, such that the eigenvalues
of $z\del$ on $W'/zW'$ are prepared but are congruent modulo $\ZZ$
to the eigenvalues of
$z\del$ on $W/zW$.
This proves the claim.
\end{proof}

\begin{prop} \label{P:regulating}
Let $W$ be a regulating lattice in $V$.
Then $W$ is stable under any $\del' \in \Delta_F$ with $|\del'|_F \leq 1$.
\end{prop}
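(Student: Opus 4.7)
The plan is to proceed by contradiction via a maximality argument on the ``order of pole'' that $\del'$ introduces on $W$. By the saturation hypothesis in Hypothesis~\ref{H:diff alg}, the set $\{\del' \in \Delta_F : |\del'|_F \leq 1\}$ is exactly $\Delta_{\gotho_F}$, which is finite as an $\gotho_F$-module. For each $\del' \in \Delta_{\gotho_F}$, let $m_{\del'}$ be the least nonnegative integer with $z^{m_{\del'}}\del'(W) \subseteq W$; this exists because $W$ and $\Delta_{\gotho_F}$ are both finite over $\gotho_F$. Set $M = \max_{\del'} m_{\del'}$, finite for the same reason. The aim is to show $M = 0$.

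Suppose $M \geq 1$, and fix $\del'$ with $m_{\del'} = M$. Set $\bar{V} = W/zW$, and define $K$-linear endomorphisms $\bar{T}, \bar{T}'$ of $\bar{V}$ by reducing $z\del$ and $z^M\del'$ modulo $zW$. Well-definedness and $K$-linearity of $\bar{T}'$ use $M \geq 1$: for any $r \in \gotho_F$ we have $z^M\del'(r) \in z^M\gotho_F \subseteq z\gotho_F$ (since $|\del'|_F \leq 1$), so the Leibniz terms land in $zW$. By the minimality of $M$ for this particular $\del'$, $\bar{T}' \neq 0$.

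The crucial identity is $[\bar{T}, \bar{T}'] = M \bar{T}'$. Expanding, for $w \in W$,
\[
(z\del)(z^M\del'(w)) = M z^M\del'(w) + z^M[z\del, \del'](w) + z^M \del'((z\del)(w)).
\]
The middle term must vanish in $\bar{V}$: by Lemma~\ref{L:find rational type}(d), $|[z\del, \del']|_F \leq |z|$, so $\del_\ast := z^{-1}[z\del, \del']$ satisfies $|\del_\ast|_F \leq 1$, whence $\del_\ast \in \Delta_{\gotho_F}$ by saturation. The maximality of $M$ then yields $z^M \del_\ast(W) \subseteq W$, i.e., $z^M[z\del, \del'](W) = z \cdot z^M \del_\ast(W) \subseteq zW$. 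Reducing modulo $zW$ gives $\bar{T}\bar{T}' = M\bar{T}' + \bar{T}'\bar{T}$.

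Finally, invoke preparedness of the exponents. Extending scalars to an algebraic closure $\bar{K}$, decompose $\bar{V} \otimes_K \bar{K}$ into generalized eigenspaces $\bar{V}_\lambda$ for $\bar{T}$. The commutator identity forces $\bar{T}'(\bar{V}_\lambda) \subseteq \bar{V}_{\lambda+M}$; since $M$ is a nonzero integer, preparedness rules out $\lambda + M$ being an eigenvalue for any eigenvalue $\lambda$, so $\bar{T}' = 0$, contradicting the previous paragraph. Hence $M = 0$, i.e., $\del'(W) \subseteq W$ for every $\del' \in \Delta_{\gotho_F}$. The main obstacle is pinning down the right maximality setup so that the commutator error $z^M[z\del, \del']$ provably lands in $zW$; the saturation hypothesis combined with taking the uniform maximum over all of $\Delta_{\gotho_F}$ (rather than a single $\del'$) is precisely what makes this possible.
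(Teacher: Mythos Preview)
Your proof is correct and follows essentially the same strategy as the paper: both set up a uniform pole-order bound $M$ (the paper calls it $m$) over all $\del' \in \Delta_{\gotho_F}$, use Lemma~\ref{L:find rational type}(d) together with saturation to control the commutator term, and invoke preparedness of the exponents to force $M$ down to $0$.

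The one substantive difference is in how the preparedness is exploited at the end. The paper lifts the characteristic polynomial $P(T)$ of $z\del$ on $W/zW$, shows $\tilde{P}(z\del)(\del'(\bv)) \in z^{-m+1}W$, and then uses the identity $\tilde{P}(z\del)(z^{-m}\bw) = z^{-m}\tilde{P}(z\del-m)(\bw)$ together with invertibility of $P(z\del - m)$ on $W/zW$ to conclude $\del'(\bv) \in z^{-m+1}W$. You instead establish the commutator identity $[\bar{T},\bar{T}'] = M\bar{T}'$ on $W/zW$ and read off directly that $\bar{T}'$ shifts generalized eigenspaces by $M$, whence $\bar{T}' = 0$. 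Your route is arguably cleaner---it avoids the polynomial lift and the shift identity---and makes the ``weight-shifting'' mechanism transparent; the paper's route has the mild advantage of not requiring passage to an algebraic closure of $K$. Either way the content is the same: preparedness forbids a nonzero integer shift of the spectrum of $z\del$ on $W/zW$.
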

\begin{proof}
Since $\Delta_F$ is finite-dimensional over $F$, there must exist a
nonnegative integer 
$m$ such that for all $\del' \in \Delta_F$ with $|\del'|_F \leq 1$,
we have $\del'(W) \subseteq z^{-m} W$. Suppose $m>0$ is such an integer;
then for any $n \in \ZZ$ and $\bv \in W$,
\[
\del'(z^n \bv) = n z^{n-1} \del'(z) \bv + z^n \del'(\bv) \in z^{-m+n} W.
\]
Let $P(T) = \sum_i P_i T^i \in K[T]$ 
be the characteristic polynomial of $z\del$ on $W/zW$.
Choose a monic polynomial $\tilde{P}(T) = \sum_i \tilde{P}_i T^i \in \gotho_F[T]$
lifting $P(T)$.
Then for $\bv \in W$, $\tilde{P}(z\del)(\bv) \in zW$, so
\[
\sum_i \del'(\tilde{P}_i) (z\del)^i(\bv)
+ \sum_i \tilde{P}_i (z\del)^i(\del'(\bv))
+ \sum_i \tilde{P}_i [\del', (z\del)^i](\bv) = \del'(\tilde{P}(z\del)(\bv)) \in z^{-m+1}W.
\]
In this equation, $[\del',(z\del)^i] \in z \Delta_{\gotho_F}$ by repeated application
of
Lemma~\ref{L:find rational type}(d), so the third sum over $i$ belongs to
$z^{-m+1}W$, while the first sum belongs to $W$. Consequently, the second sum,
which is $\tilde{P}(z\del)(\del'(\bv))$, belongs to $z^{-m+1}W$. However, 
for any $\bw \in W$ and any $m \in \ZZ$,
\[
\tilde{P}(z\del)(z^{-m} \bw) = z^{-m} \tilde{P}(z \del - m)(\bw),
\]
and the action of $P(z \del-m)$ on $W/zW$ is invertible because $z\del$
acts with prepared eigenvalues. It follows that $\del'(\bv) \in z^{-m+1}W$.

In other words, if $m > 0$, then 
also $\del'(W) \subseteq z^{-m+1}W$
for all $\del' \in \Delta_F$ with $|\del'|_F \leq 1$. 
We conclude that $\del'(W)
\subseteq W$ for all such $\del'$, as desired.
\end{proof}

\begin{cor} \label{C:regulating}
Let $W$ be a regulating lattice in $V$.
For any generator $z'$ of $\gothm_F$ and any $\del' \in \Delta_{\gotho_F}$ of
rational type with respect to $z'$, $W$ is stable under $z' \del'$,
and the eigenvalues of $z'\del'$ on $W/zW$ are the same as those of $z \del$.
\end{cor}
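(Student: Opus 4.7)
The plan is to derive the corollary directly from Lemma~\ref{L:find rational type}(e) and Proposition~\ref{P:regulating}. The key observation will be that switching between the two choices $(z,\del)$ and $(z',\del')$ of rational-type data only perturbs the Euler operator $z\del$ by an element of $\Delta_F$ whose norm on $F$ is bounded by $|z|$. Since $\gothm_F = zW$-- wait, rather since both $z$ and $z'$ generate $\gothm_F$, the equality $zW = z'W = \gothm_F W$ makes the quotient $W/zW$ unambiguous, so the statement about eigenvalues does not depend on which uniformizer is used to form the quotient.

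First I would set $\del'' := z^{-1}(z'\del' - z\del)$, which lies in $\Delta_F$ because $\Delta_F$ is an $F$-vector space containing both $z'\del'$ and $z\del$. By Lemma~\ref{L:find rational type}(e) this element satisfies $|\del''|_F \leq 1$, so Proposition~\ref{P:regulating} applies and yields $\del''(W) \subseteq W$. Multiplying through by $z$, this reads
\[
(z'\del' - z\del)(W) \subseteq zW.
\]
Combined with the stability of $W$ under $z\del$ that is built into the definition of a regulating lattice, this shows that $W$ is stable under $z'\del' = (z'\del' - z\del) + z\del$, which is the first half of the conclusion.

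For the claim about eigenvalues, I would note that the inclusion $(z'\del' - z\del)(W) \subseteq zW$ means that $z'\del'$ and $z\del$ induce the \emph{same} $K$-linear endomorphism on $W/zW$. In particular their characteristic polynomials over $K$ coincide, so they have identical multisets of eigenvalues. I do not anticipate any genuine obstacle here: once Proposition~\ref{P:regulating} and Lemma~\ref{L:find rational type}(e) are in hand, the corollary is a short formal deduction. The only subtlety to keep straight is that the $z$-adic bookkeeping is what simultaneously delivers stability of $W$ and agreement of the induced endomorphism modulo $\gothm_F$.
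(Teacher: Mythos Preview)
Your proof is correct and follows essentially the same route as the paper: both use Lemma~\ref{L:find rational type}(e) to bound $|z'\del'-z\del|_F \leq |z|$ and then apply Proposition~\ref{P:regulating} to the normalized difference $(z'\del'-z\del)/z$, concluding that $z'\del'$ and $z\del$ induce the same map on $W/zW$. The only cosmetic difference is that the paper obtains stability of $W$ under $z'\del'$ by applying Proposition~\ref{P:regulating} directly to $z'\del'$ (which has $|z'\del'|_F \leq 1$), whereas you deduce it from stability under $z\del$ together with $(z'\del'-z\del)(W)\subseteq zW$; both are equally valid.
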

\begin{proof}
The stability of $W$ under $z'\del'$ follows from Proposition~\ref{P:regulating}.
The equality of the eigenvalues follows from 
the fact that $|z'\del'-z\del|_F \leq |z|$ by Lemma~\ref{L:find rational type}(e),
so Proposition~\ref{P:regulating} implies that $W$ is stable under
$(z'\del'-z\del)/z$.
\end{proof}

\begin{prop} \label{P:regular}
For $V$ a nonzero finite differential module over $F$,
the following conditions are equivalent.
\begin{enumerate}
\item[(a)]
$|z\del|_{\spect,V} \leq 1$
(which implies $|z\del|_{\spect,V} = 1$).
\item[(b)]
$|\del|_{\spect,V} \leq |z|^{-1}$
(which implies $|\del|_{\spect,V} = |z|^{-1}$).
\item[(c)]
The absolute scale of $V$ is at most $1$ (which implies that it is
equal to $1$).
\item[(d)]
$V$ is regular.
\end{enumerate}
\end{prop}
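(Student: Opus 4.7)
My plan is to establish the chain $(d) \Rightarrow (a) \Leftrightarrow (b) \Rightarrow (d) \Rightarrow (c) \Rightarrow (b)$, yielding all four equivalences. Two steps are immediate: for $(d) \Rightarrow (a)$, I equip $V$ with the sup norm whose unit ball is the regulating lattice $W$; the $z\del$-stability of $W$ then gives $|z\del|_V \leq 1$ and hence $|z\del|_{\spect,V} \leq 1$. For $(c) \Rightarrow (b)$, I use $|\del^s|_V \leq |D_{V,s}| \cdot |\del|_F^s$ (applying Lemma~\ref{L:spectral norm} to identify $|D_F(\del^s)|_F = |\del|_F^s$) and pass to $s$-th roots in the limit.

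For $(a) \Leftrightarrow (b)$, I would use the classical identities in $F\{T\}$
\[
z^n T^n = (zT)(zT - 1) \cdots (zT - n + 1), \qquad (zT)^n = \sum_{k=0}^n c_{n,k}\, z^k T^k,
\]
with all $c_{n,k} \in \ZZ$ (unsigned Stirling numbers). Since $K$ has characteristic $0$, every nonzero integer has norm $1$, and the factors $zT - j$ pairwise commute as polynomials in $zT$. Taking operator norms on $V$ with $T = \del$, then $n$-th roots and $n \to \infty$, yields $|z\del|_{\spect,V} = |z| \cdot |\del|_{\spect,V}$. Combined with the baselines $|\del|_{\spect,F} = |z|^{-1}$ (Lemma~\ref{L:rational type norm}) and $|z\del|_{\spect,F} = 1$ (from $(z\del)(z^m) = m z^m$), each inequality in (a), (b) becomes the asserted equality.

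For $(b) \Rightarrow (d)$, Proposition~\ref{P:read slopes} applied with $F$ viewed as based of order $1$ with basis $\{\del\}$ says that (b) forces every based $\del$-scale of $V$ to equal $1$. A cyclic vector (Lemma~\ref{L:cyclic}) then yields $V \cong F\{T\}/F\{T\} P(T)$ with $P$ monic of degree $d = \rank V$ whose Newton polygon has unique slope $\log|z|$; the coefficient bounds $|P_i| \leq |z|^{d-i}$ ensure that the $\gotho_F$-span of $1, T, \dots, T^{d-1}$ is a $z\del$-stable $\gotho_F$-lattice. Proposition~\ref{P:make regulating} then upgrades it to a regulating lattice via shearing transformations.

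Finally, $(d) \Rightarrow (c)$: by Proposition~\ref{P:regulating}, the regulating lattice $W$ is stable under every $\del' \in \Delta_F$ with $|\del'|_F \leq 1$, so relative to the $W$-norm on $V$ we have $|\del'|_V \leq |\del'|_F$ for all $\del' \in \Delta_F$ (by rescaling). The main obstacle is passing from this to the absolute scale bound. I would extend $\del$ to a rational-type basis of $\Delta_F$, by lifting a transcendence basis of $K/K_0$ to $\gotho_F$ (using Lemma~\ref{L:find rational type}(f) to locate enough candidates), then invoke Proposition~\ref{P:scale to absolute} to identify the absolute scale with the maximum of the individual $\del_i$-scales, each of which is at most $1$ by the lattice stability just established.
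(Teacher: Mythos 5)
Your steps $(d)\Rightarrow(a)$ and $(c)\Rightarrow(b)$ are correct and match the paper. The $(a)\Leftrightarrow(b)$ step via $z^n T^n = (zT)(zT-1)\cdots(zT-n+1)$ and its Stirling inverse is a genuine alternative (the paper instead proves $(a)\Rightarrow(d)$ and $(b)\Rightarrow(d)$ separately by two cyclic-vector arguments); it works, provided you promote operator-norm bounds to spectral-radius bounds with the usual $\epsilon$-argument, using that each side is bounded below by $1$ so the maximal index dominates. But there are two real gaps.

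In $(b)\Rightarrow(d)$ the details are wrong. With $T$ corresponding to $\del$, a monic $P$ whose Newton polygon has all slopes equal to $\log|z| = -\log|\del|_F$ gives $|P_i|\leq |z|^{i-d}$, not $|z|^{d-i}$; since $|z|<1$ this is a bound $\geq 1$, so the $P_i$ can be large. Consequently the $\gotho_F$-span of $\bv, \del(\bv), \dots, \del^{d-1}(\bv)$ is \emph{not} $z\del$-stable: $z\del$ applied to $\del^{d-1}(\bv)$ yields $-\sum_i zP_i\,\del^i(\bv)$, and $|zP_i|\leq |z|^{i-d+1}$ can exceed $1$. The correct lattice (as in the paper) is the $\gotho_F$-span of $z^i\del^i(\bv)$, $i=0,\dots,d-1$; then applying $z\del$ to the top element gives $(d-1)z^{d-1}\del^{d-1}(\bv) - \sum_i P_i z^{d-i}\cdot z^i\del^i(\bv)$, and $|P_i z^{d-i}| \leq 1$ by the corrected bound, so this lattice is $z\del$-stable and Proposition~\ref{P:make regulating} then applies.

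In $(d)\Rightarrow(c)$ you invoke a commuting rational-type basis $\del_1,\dots,\del_h$ of $\Delta_F$ extending $\del$, but you do not construct it, and Hypothesis~\ref{H:diff alg} does not guarantee it exists: $\Delta_F$ is only assumed to be a finite-dimensional $F$-submodule of derivations closed under the Lie bracket, not one admitting a basis of pairwise-commuting derivations, and Lemma~\ref{L:find rational type}(f) only identifies the joint kernel $K_0$, it does not manufacture dual coordinates. The paper's argument needs none of this: Proposition~\ref{P:regulating} already says the regulating lattice $W$ is stable under \emph{every} $\del'\in\Delta_F$ with $|\del'|_F\leq 1$, so with the sup norm from $W$ one gets $|D_{V,s}|\leq 1$ for all $s$ directly, hence absolute scale $\leq 1$, with no appeal to Proposition~\ref{P:scale to absolute}. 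You should replace the rational-type-basis detour with this direct estimate.
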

\begin{proof}
Put $n = \dim_F V$.
Apply Lemma~\ref{L:cyclic} to produce a cyclic vector $\bv \in V$.
Given (a), the $\gotho_F$-lattice in $V$ spanned by
$(z\del)^i(\bv)$ for $i=0,\dots,n-1$ is stable under $z\del$ by
Proposition~\ref{P:read slopes}. 
Similarly, given (b), the $\gotho_F$-lattice in $V$ spanned by
$z^i \del^i(\bv)$ for $i=0,\dots,n-1$ is stable under $z\del$ by
Proposition~\ref{P:read slopes}. 
By
Proposition~\ref{P:make regulating}, there exists a regulating lattice in $V$.
Hence (a) and (b) each imply (d).

Given (d), choose a regulating lattice in $W$ and equip
$V$ with the supremum norm defined by a basis of $W$.
Then for any $\del' \in \Delta_F$ with $|\del'|_F \leq 1$,
$|\del'|_V \leq 1$ by Proposition~\ref{P:regulating}.
This implies (c).

Given (c), (a) and (b) follow at once. This completes
the argument.
\end{proof}
\begin{cor} \label{C:regular}
If $0 \to V_1 \to V \to V_2 \to 0$ is a short exact sequence
of finite differential modules over $F$, then
$V$ is regular if and only if both $V_1$ and $V_2$ are regular.
\end{cor}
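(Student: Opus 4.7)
My plan is to deduce the corollary directly from Proposition~\ref{P:regular}, using the characterization of regularity via absolute scale (condition (c)). By that proposition, a nonzero finite differential module $W$ over $F$ is regular if and only if its absolute scale is equal to $1$; equivalently, every element of its absolute scale multiset equals $1$. (If $W = 0$, then $W$ is vacuously regular.) So the problem reduces to comparing the absolute scale multisets of $V$, $V_1$, and $V_2$.

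Recalling Definition~\ref{D:irregularity}, the absolute scale multiset of a finite differential module is defined in terms of its Jordan--H\"older constituents in the category of differential modules over $F$. Given a short exact sequence $0 \to V_1 \to V \to V_2 \to 0$, the Jordan--H\"older theorem tells us that the multiset of Jordan--H\"older constituents of $V$ is the disjoint union of those of $V_1$ and $V_2$. Therefore the absolute scale multiset of $V$ is the disjoint union of the absolute scale multisets of $V_1$ and $V_2$.

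From this it is immediate that every element of the absolute scale multiset of $V$ equals $1$ if and only if the same holds for both $V_1$ and $V_2$. Via Proposition~\ref{P:regular}, this is the desired equivalence.

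I do not expect any real obstacle here; the entire content is the observation that the absolute scale multiset is additive over short exact sequences (because it depends only on the Jordan--H\"older constituents), combined with the already-established equivalence between regularity and a bound on the absolute scale. The only mild care is to handle the possibility that $V_1$ or $V_2$ is zero, where the statement is trivial, so that Proposition~\ref{P:regular} may be applied only to the nonzero modules among $V, V_1, V_2$.
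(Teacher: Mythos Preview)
Your forward implication matches the paper's: if $V$ is regular, then by Proposition~\ref{P:regular}(c) its absolute scale is at most $1$, and since the absolute scale cannot increase when passing to a sub- or quotient module, the same holds for $V_1$ and $V_2$.

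The gap is in the converse. You assert that, by Proposition~\ref{P:regular}, a nonzero $W$ is regular if and only if every element of its absolute scale \emph{multiset} equals $1$. But Proposition~\ref{P:regular}(c) speaks only of the absolute scale of $W$ itself, which is defined directly as $\lim_s |D_{W,s}|^{1/s}$, not as the maximum over Jordan--H\"older constituents. The implication you need --- that if every Jordan--H\"older constituent of $V$ has absolute scale $1$ then $V$ does as well --- is not provided by Proposition~\ref{P:regular}, and unwinding it by induction on length reduces exactly to the backward direction of the corollary you are trying to prove. So as written the argument is circular. (The equality ``absolute scale of $V$ equals the largest element of the absolute scale multiset'' is true, but in the paper it only becomes available after Proposition~\ref{P:scale from h}.)

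The paper instead proves the converse via condition~(d): given regulating lattices $W_1, W_2$ in $V_1, V_2$, one lifts a basis of $W_2$ to $V$ and checks that for $m \gg 0$ the lattice $z^{-m}W_1 + \tilde W_2$ is stable under $z\del$, then applies Proposition~\ref{P:make regulating}. Your idea can be repaired, but not through the absolute scale: switch to condition~(a) and prove directly that the spectral radius of the single operator $z\del$ on an extension is bounded by the maximum of its spectral radii on the sub and quotient (a block upper-triangular estimate on $(z\del)^n$). That is a genuine alternative route, but it requires that extra lemma; it does not fall out of the Jordan--H\"older additivity of the absolute scale multiset alone.
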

\begin{proof}
If $V$ is regular, then so are $V_1$ and $V_2$ by Proposition~\ref{P:regular}(c).
Conversely, if $V_1$ and $V_2$ are regular, let
$W_1,W_2$ be regulating lattices in $V_1,V_2$.
Choose a basis of $W_2$, lift it to $V$, and
let $\tilde{W}_2$ be the $\gotho_F$-span of the result.
Then for $m$ a sufficiently large integer,
$W = z^{-m} W_1 + \tilde{W}_2$ is an $\gotho_F$-lattice of $V$ stable under
$z \del$. By Proposition~\ref{P:make regulating}, $V$ is regular.
\end{proof}

\begin{prop} \label{P:exponents in base}
Let $W$ be a regulating lattice in $V$. Then the characteristic
and minimal 
polynomials of $z \del$ on $W/zW$ have coefficients in $K_0$; consequently,
the exponents of $W$ belong to $\overline{K_0}$.
\end{prop}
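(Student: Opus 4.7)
The plan is to show that each coefficient of both polynomials is annihilated by every $\del' \in \Delta_F$ with $|\del'|_F \leq 1$; by Lemma~\ref{L:find rational type}(f), this places the coefficients in $K_0$, as required. Fix such a $\del'$. By Proposition~\ref{P:regulating}, $\del'$ stabilizes $W$. By Lemma~\ref{L:find rational type}(d), $|[z\del,\del']|_F \leq |z|$, so $z^{-1}[z\del,\del']$ again has operator norm at most $1$ on $F$; it therefore stabilizes $W$ (again by Proposition~\ref{P:regulating}), so $[z\del,\del']$ carries $W$ into $zW$. Equivalently, the induced actions of $z\del$ and $\del'$ on $W/zW$ commute as additive endomorphisms of the quotient.

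Fix an $\gotho_F$-basis of $W$, and let $A, M$ be the matrices over $K$ representing $z\del$ and $\del'$ on $W/zW$; here $z\del$ is $K$-linear (because $z\del(\gotho_F) \subseteq \gothm_F$), while $\del'$ descends to a derivation with respect to the derivation $\overline{\del'}$ induced on $K$. Unpacking the vanishing commutator in this basis yields the matrix identity $\overline{\del'}(A) = [A,M]$, with $\overline{\del'}$ acting entrywise on $A$. A short telescoping argument promotes this to $\overline{\del'}(A^k) = [A^k, M]$ for all $k \geq 0$, and hence to $\overline{\del'}(Q(A)) = (\overline{\del'}Q)(A) + [Q(A), M]$ for every $Q \in K[T]$, where $\overline{\del'}Q$ is obtained by applying $\overline{\del'}$ to each coefficient.

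For the characteristic polynomial $P(T) = \det(TI - A)$, taking traces yields $\overline{\del'}(\Trace(A^k)) = \Trace([A^k, M]) = 0$ for every $k \geq 1$; since $K$ has characteristic zero, Newton's identities then force $\overline{\del'}$ to annihilate every coefficient of $P$. For the minimal polynomial $m$, the above identity combined with $m(A) = 0$ yields $(\overline{\del'}m)(A) = 0$; since $m$ is monic, $\overline{\del'}m$ has degree strictly less than $\deg m$, and minimality of $m$ forces $\overline{\del'}m = 0$. Thus both polynomials have coefficients in $K_0$, and the exponents---being roots of either---lie in $\overline{K_0}$. The main technical step is the matrix calculation that extracts $\overline{\del'}(A) = [A, M]$ from the vanishing of $[z\del,\del']$ on $W/zW$; once that is in place, the ensuing commutator and trace manipulations are routine.
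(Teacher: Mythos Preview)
Your proof is correct and follows essentially the same approach as the paper: both arguments use Proposition~\ref{P:regulating} and Lemma~\ref{L:find rational type}(d) to show that $[z\del,\del']$ acts as zero on $W/zW$, then deduce that $(\overline{\del'}m)(A)=0$ and hence $\overline{\del'}m=0$ by minimality. Your matrix formulation and the trace/Newton-identity treatment of the characteristic polynomial are minor stylistic variations on the paper's operator-theoretic argument, which handles the minimal polynomial first and then notes that the characteristic polynomial follows (since its roots lie in $\overline{K_0}$ and $K_0$ is algebraically closed in $K$ in characteristic zero).
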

\begin{proof}
Let $P(T) = \sum_i P_i T^i\in K[T]$ 
be the minimal polynomial of $z\del$ on $W/zW$.
For any $\del' \in \Delta_F$ with $|\del'|_F \leq 1$ and any $\bv \in W/zW$,
we then have
\[
0 = \del'(P(z\del)(\bv))
= \sum_i \del'(P_i) (z\del)^i(\bv)
+ \sum_i P_i [\del',(z\del)^i](\bv)
+ \sum_i P_i (z\del)^i(\del'(\bv)).
\]
Note that $|[\del',z\del]|_F \leq |z|$ by Lemma~\ref{L:find rational type}(d),
so Proposition~\ref{P:regulating} implies that
$[\del',(z\del)^i](\bv)$ vanishes in $W/zW$. Hence the second sum
on the right side vanishes. The third sum also vanishes because it is just
$P(z\del)(\del'(\bv))$. Thus the first sum must also vanish, proving that
the coefficients of $P$ are killed by any $\del' \in \Delta_F$
with $|\del'|_F \leq 1$. By Lemma~\ref{L:find rational type}(f),
the coefficients of $P$ must belong to $K_0$; this implies the same for 
the characteristic polynomial of $z\del$ on $W/zW$, as desired.
\end{proof}

The following construction, while useful, cannot be formulated
in a manner independent of the choice of $z$ and $\del$.

\begin{lemma} \label{L:fundamental solution}
Let $W$ be a regulating lattice in $V$.
Then there is a unique $K$-lattice $W_0$ in $W$ stable under $z \del$.
\end{lemma}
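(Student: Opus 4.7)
My approach is to use the identification $F \cong K((z))$ from Lemma~\ref{L:find rational type}(c), under which $\gotho_F \cong K[[z]]$ and $\del$ corresponds to $d/dz$, so $z\del$ becomes a $K$-linear operator on $W$ whose kernel on $\gotho_F$ is exactly $K$. Finding a $z\del$-stable $K$-lattice $W_0 \subseteq W$ is then equivalent to finding an $\gotho_F$-basis $\bv_1,\dots,\bv_n$ of $W$ in which the matrix of $z\del$ lies in $M_n(K)$ (i.e., is ``constant''); the $K$-span of such a basis is the desired $W_0$.

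For existence, I would fix an arbitrary $\gotho_F$-basis $\be_1,\dots,\be_n$ of $W$, yielding a matrix $A = \sum_{k \geq 0} A_k z^k \in M_n(\gotho_F)$ with $A_k \in M_n(K)$. By the regulating hypothesis together with Proposition~\ref{P:exponents in base}, $A_0$ has prepared eigenvalues in $\overline{K_0}$. I then seek $U = I + \sum_{k \geq 1} U_k z^k \in \mathrm{GL}_n(\gotho_F)$ with $U_k \in M_n(K)$ so that the conjugated matrix equals the constant $A_0$; the required identity $(z\del)(U) = U A_0 - AU$ yields, upon comparing coefficients of $z^n$, the recursion
\[
U_n A_0 - A_0 U_n - n U_n = \sum_{j=1}^{n} A_j U_{n-j} \qquad (n \geq 1),
\]
whose right-hand side depends only on $U_0,\dots,U_{n-1}$. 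The $K$-linear operator $T_n \colon X \mapsto X A_0 - A_0 X - n X$ on $M_n(K)$ has eigenvalues over $\overline{K}$ of the form $\alpha_j - \alpha_i - n$, where $\alpha_1,\dots,\alpha_n$ are the eigenvalues of $A_0$; preparedness guarantees these are nonzero for every $n \geq 1$, so $T_n$ is invertible and $U_n$ is uniquely determined at each stage. The resulting $U \in \mathrm{GL}_n(\gotho_F)$ produces the desired lattice $W_0 = \bigoplus_i K \bv_i$.

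For uniqueness, suppose $W_0, W_0'$ are two $z\del$-stable $K$-lattices in $W$; choosing $K$-bases of each gives constant matrices $A_0, A_0' \in M_n(K)$ for $z\del$, and the change-of-basis matrix $U \in \mathrm{GL}_n(\gotho_F)$ satisfies $(z\del)(U) = U A_0' - A_0 U$. Expanding $U = \sum_{k \geq 0} U_k z^k$, the $z^0$ term gives $A_0' = U_0^{-1} A_0 U_0$, so the two matrices are similar; for $k \geq 1$, the equation $U_k A_0' = (A_0 + kI) U_k$ is a Sylvester equation whose two spectra differ by the nonzero integer $k$, hence are disjoint by preparedness, forcing $U_k = 0$. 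Thus $U \in \mathrm{GL}_n(K)$, whence $W_0 = W_0'$. The only nontrivial point in the whole argument is the invertibility of the Sylvester-type operators at each level of the recursion, but this falls out directly from the preparedness condition built into the definition of a regulating lattice.
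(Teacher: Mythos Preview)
Your proof is correct and takes essentially the same approach as the paper: both identify $F$ with $K((z))$ via Lemma~\ref{L:find rational type}(c), write the equation $(z\del)(U) = UA_0 - AU$ for a change-of-basis matrix $U$ with $U_0 = I$, and solve it coefficient-by-coefficient using the invertibility of the Sylvester-type operator $X \mapsto XA_0 - A_0X - nX$, which follows from preparedness. Your uniqueness argument is in fact spelled out more explicitly than the paper's (you compare two arbitrary stable $K$-lattices directly, whereas the paper leaves uniqueness of $W_0$ as an implicit consequence of the uniqueness of $U$); the reference to Proposition~\ref{P:exponents in base} is harmless but unnecessary, since only preparedness is used.
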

\begin{proof}
As in Lemma~\ref{L:find rational type}(c), we identify $F$ with $K((z))$
in such a way that $\del$ corresponds to $\frac{\del}{\del z}$.
Let $\be_1,\dots,\be_d$ be a basis of $W$.
Let $N$ be the matrix of action of $z \del$ on $\be_1,\dots,\be_d$.
We will show that there is a unique $d \times d$ 
matrix $U = \sum_{i=0}^\infty U_i z^i$ over $K \llbracket z \rrbracket$
with $U_0$ equal to the identity matrix, such that
\[
N U + z \del(U) = U N_0.
\]
To see this, let $T$ be the linear transformation $X \mapsto N_0 X - X N_0$ on 
$d \times d$ matrices over $K$. 
The eigenvalues of $T$ are the pairwise differences
between eigenvalues of $N_0$. Since $N_0$ has
prepared eigenvalues, $T+i$ is invertible for all $i \neq 0$;
consequently, if we have computed $U_j$ for $j < i$, then $U_i$ is determined 
uniquely by the equation
\begin{equation} \label{eq:solve de}
i U_i = U_i N_0 - N_0 U_i - \sum_{j=1}^i N_j U_{i-j}.
\end{equation}
Given the existence and uniqueness of $U$, we obtain $W_0$
as the $K$-span of the basis $\bv_1,\dots,\bv_d$ of $V$ given by
$\bv_j = \sum_i U_{ij} \be_i$.
This proves the desired result.
\end{proof}

\begin{remark} \label{R:formal product}
For $W_0$ a $K$-lattice in $V$ stable under $z\del$, we can formally
write each $\bv \in V$ as $\sum_{i \in \ZZ} \bv_i z^i$ with $\bv_i \in W_0$;
that is, we can embed $V$ into the product of $z^i W_0$ over $i \in \ZZ$.
In this representation, we can compute the action of $z\del$ by the formula
\[
(z\del)(\bv) = \sum_{i \in \ZZ} (z\del + i)(\bv_i) z^i.
\]
\end{remark}

\begin{prop} \label{P:same exponents}
Let $W, W'$ be $\gotho_F$-lattices in $V$ stable under $z\del$.
\begin{enumerate}
\item[(a)]
The exponents of $W$ and $W'$ coincide as multisubsets of
$\overline{K_0}/\ZZ$.
\item[(b)]
Given any prepared multisubset of $\overline{K_0}$ which 
is stable under the absolute Galois group of $K_0$ and
coincides with the
exponents of $W$ as a multisubset of $\overline{K_0}/\ZZ$, there exists 
a unique choice of $W'$ with this multisubset as its exponents.
\end{enumerate}
\end{prop}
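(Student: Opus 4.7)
My approach to both (a) and (b) centers on Lemma~\ref{L:fundamental solution}. For (a), first reduce to the case where $W$ and $W'$ are both regulating, using Proposition~\ref{P:make regulating} to produce, for each, a regulating lattice whose exponents agree modulo~$\ZZ$ with the eigenvalues of $z\del$ on the original quotient. Once both are regulating, Lemma~\ref{L:fundamental solution} yields canonical $K$-lattices $W_0 \subseteq W$ and $W_0' \subseteq W'$ on which $z\del$ acts by constant matrices $N_0, N_0'$ having prepared eigenvalue multisets equal to the respective exponents.

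Next, decompose $W_0 = \bigoplus_{\gamma} E_\gamma$ and $W_0' = \bigoplus_\gamma E_\gamma'$ by reductions $\gamma \in \overline{K_0}/\ZZ$, with prepared representatives $\alpha_\gamma$ for $N_0$ and $\beta_\gamma$ for $N_0'$. Fix bases and write $M \in GL_d(F)$ for the change of basis from $W_0$ to $W_0'$; the compatibility of the $z\del$-action on the two bases gives $z\del(M) = N_0' M - M N_0$. Writing $M = \sum_l M_{(l)} z^l$ with $M_{(l)} \in M_d(K)$ and matching coefficients yields $N_0' M_{(l)} = M_{(l)}(N_0 + l)$ for every $l \in \ZZ$. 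A block-wise Sylvester analysis---using iteration against the nilpotent parts of $N_0 - \alpha_\gamma$ and $N_0' - \beta_{\gamma'}$---shows that the block of $M_{(l)}$ from $E_\gamma$ to $E_{\gamma'}'$ vanishes unless $\beta_{\gamma'} = \alpha_\gamma + l$; preparedness of both multisets then forces $\gamma' = \gamma$ and $l = \beta_\gamma - \alpha_\gamma$. Applying the same reasoning to $M^{-1}$ shows that $M$ restricts to an $F$-linear isomorphism $F \otimes_K E_\gamma \to F \otimes_K E_\gamma'$ for each $\gamma$, whence $\dim_K E_\gamma = \dim_K E_\gamma'$. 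This is (a).

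For (b) existence, I would iterate Lemma~\ref{L:shearing}. Given a prepared Galois-stable target $T \subset \overline{K_0}$ with the same mod-$\ZZ$ reduction as the exponents of $W$, within each Galois orbit of $\overline{K_0}/\ZZ$-classes the unique prepared representatives in $W$ and in $T$ differ by a single integer $n$, which is constant along the orbit because Galois action commutes with the (unique) choice of prepared representative in each mod-$\ZZ$ class. Applying Lemma~\ref{L:shearing} $|n|$ times to that Galois orbit---with downward shifts realized by first replacing the current lattice $W''$ by $z^{-1}W''$ (which shifts every eigenvalue by $-1$) and then shearing upward on the complementary Galois orbits---shifts the exponents of that orbit to the desired values while leaving the others unchanged; iterating over all orbits produces a lattice with exponent multiset exactly $T$.

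For (b) uniqueness, rerun the Sylvester comparison of the second paragraph with $N_0$ and $N_0'$ sharing the same prepared eigenvalue multiset. Then $\alpha_\gamma = \beta_\gamma$ for every $\gamma$, so the only admissible shift is $l = 0$; hence $M = M_{(0)} \in GL_d(K)$, giving $W_0 = W_0'$ as $K$-subspaces of $V$, and therefore $W = W'$ after passing to $\gotho_F$-spans (using that each regulating lattice equals the $\gotho_F$-span of its fundamental $K$-lattice, as one reads off from the normalization $U_0 = I$ in the proof of Lemma~\ref{L:fundamental solution}). The main obstacle throughout is the Sylvester block analysis: passing from the coefficient-wise vanishing of $M_{(l)}$ (for incompatible $l$) to the identification of the two graded decompositions of $V$ requires applying the argument to both $M$ and $M^{-1}$ and carefully handling the interaction of the integer shift $l$ with the nilpotent parts of $N_0$ and $N_0'$.
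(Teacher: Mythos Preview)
Your proposal is correct and follows the same approach as the paper: reduce to regulating lattices via Proposition~\ref{P:make regulating}, invoke the fundamental $K$-lattices of Lemma~\ref{L:fundamental solution}, and compare generalized eigenspace decompositions via integer $z$-shifts (your Sylvester analysis on the change-of-basis matrix is exactly the paper's formal-product argument from Remark~\ref{R:formal product} written in coordinates). For (b), the paper reads the answer directly off the proof of (a) by taking $W'_0 = \bigoplus_\lambda z^{m(\lambda)} W_{0,\lambda}$ and $W' = W'_0 \otimes_K \gotho_F$, which is quicker than your iterated application of Lemma~\ref{L:shearing} but yields the same lattice.
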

\begin{proof}
Identify $F$ with $K((z))$ as in Lemma~\ref{L:find rational type}(c).
\begin{enumerate}
\item[(a)]
By Proposition~\ref{P:make regulating}, we may replace
$W$ and $W'$ by regulating lattices 
without changing their exponents modulo $\ZZ$. We may thus assume that
$W$ and $W'$ have prepared exponents.
We may also assume as in Lemma~\ref{L:find rational type}(c) that
$F \cong K((z))$; we may then extend the constant field from 
$K_0$ to $\overline{K_0}$, to reduce to the case where $K_0$ is algebraically 
closed.

By Lemma~\ref{L:fundamental solution}, there exist unique $(z\del)$-stable
$K$-lattices $W_0, W'_0$ in $W, W'$, respectively.
Decompose $W_0, W'_0$ into generalized
eigenspaces $W_{0,\lambda}, W'_{0,\lambda'}$ for $z \del$.
As in Remark~\ref{R:formal product},
view $V$ as an $F$-vector subspace of the product 
of $z^m W_0$ over $m \in \ZZ$.
On this product, for $\mu \in K_0$, $z \del - \mu$ fails to be invertible
if and only if $\mu = \lambda + m$ for some eigenvalue $\lambda$
of $z \del$ on $W_0$ and some $m \in \ZZ$. Consequently,
for each eigenvalue $\lambda'$ of $z \del$ on $W'_0$,
there exists an integer $m = m(\lambda')$ such that
$\lambda = \lambda' - m(\lambda')$ is an eigenvalue of $z \del$.
Put
\[
W''_0 = \bigoplus_{\lambda'} z^{-m(\lambda')} W'_{0, \lambda'};
\]
then $W'' = W''_0 \otimes_K \gotho_F$ is also
a regulating lattice in $V$, and $W''_0$ is a $(z\del)$-stable
$K$-lattice in $W''$. However, we must have
$z^{-m(\lambda')} W'_{0, \lambda'} \subseteq W_{0, \lambda}$ for
$\lambda = \lambda' - m(\lambda')$, so $W''_0 \subseteq W_0$.
Since these are both $K$-lattices in $V$, we must have
$W''_0 = W_0$ and $W = W''$. By construction, the exponents 
of $W = W''$ and $W'$
coincide as multisubsets of $\overline{K_0}/\ZZ$.
\item[(b)]
From the proof of (a), it is clear that we must take
$W' = W'_0 \otimes_K \gotho_F$ for 
$W'_0$ equal to the direct sum of $z^{m(\lambda)} W_{0,\lambda}$,
with $m(\lambda)$ being the unique integer such that
$\lambda + m(\lambda)$ appears in the chosen multiset.
\end{enumerate}
\end{proof}

\begin{defn}
For $V$ a regular finite differential module over $F$,
define the \emph{exponents} of $V$ to be the multiset of
exponents of a regulating lattice $W$ in $V$, viewed within
$\overline{K_0}/\ZZ$. By Proposition~\ref{P:same exponents}, this is
independent of the choice of the lattice.
For instance, for $r \in \gotho_F$, $E(r)$ is regular with exponent 0
by Example~\ref{exa:exp}.
\end{defn}

\subsection{Hukuhara-Levelt-Turrittin decompositions}

We now prove an extension of the usual Hukuhara-Levelt-Turrittin
decomposition theorem, following Levelt \cite{levelt}.

\begin{lemma}[Levelt] \label{L:levelt}
Let $V$ be a nonzero finite differential module over $F$. Then there
exist a finite extension $F'$ of $F$ and
an element $r \in F'$ such that
$E(-r) \otimes_{F'} (V \otimes_F F')$ has a (nonzero) regular
direct summand.
\end{lemma}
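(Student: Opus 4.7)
The plan is to reduce $V$ to a module of pure absolute scale, handle the trivial scale-$1$ case directly, and treat the scale-$>1$ case by extracting the leading term of the connection as an exponential factor and then iterating. First, by Lemma~\ref{L:find rational type1}, I choose a uniformizer $z$ of $\gotho_F$ together with $\del \in \Delta_{\gotho_F}$ of rational type with respect to $z$; in particular $\del$ acts via a nonzero derivation on $F$. Apply Lemma~\ref{L:cyclic} to obtain a cyclic vector, giving $V \cong F\{T\}/F\{T\}P(T)$, and Lemma~\ref{L:factor slopes} to factor $P$ by Newton polygon slope. By Proposition~\ref{P:read absolute slopes} (combined with Proposition~\ref{P:scale to absolute}), this induces a direct sum decomposition of $V$ into pieces of pure absolute scale. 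Since a regular direct summand of any single piece also yields one for $V$ after the same twist, I may assume $V$ has pure absolute scale $s$.

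If $s = 1$, then $V$ is already regular by Proposition~\ref{P:regular}, so $r = 0$ with $F' = F$ finishes the argument. From now on assume $s > 1$. By Remark~\ref{R:puiseux}, I pass to a finite ramified extension $F' = K'((z^{1/m}))$ of $F$ with $m$ chosen so that the Poincar\'e--Katz rank of $V \otimes_F F'$, measured relative to a rational-type derivation $\del'$ for the uniformizer $z' = z^{1/m}$, is a positive integer $\lambda$; I enlarge $K'$ further so that the eigenvalues appearing below all lie in $K'$. By Proposition~\ref{P:read absolute slopes} together with Lemma~\ref{L:rational type norm}, the operator $(z')^{\lambda+1}\del'$ has spectral radius exactly $1$ on $V \otimes F'$, so it preserves some $\gotho_{F'}$-lattice $W$ and acts nontrivially modulo $z'$. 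I pick an eigenvalue $\alpha \in K'$ of the induced action on $W/z'W$ and set $r = -\alpha (z')^{-\lambda}/\lambda \in F'$; this is well defined because the residue field has characteristic $0$, and a direct computation gives $\del'(r) = \alpha (z')^{-\lambda-1}$.

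Next, the $\alpha$-generalized eigenspace of $(z')^{\lambda+1}\del'$ on $W/z'W$ lifts to a differential submodule $V_\alpha$ of $V \otimes F'$ which is in fact a direct summand: one adapts the shearing construction behind Lemma~\ref{L:shearing} and Proposition~\ref{P:make regulating}, with $(z')^{\lambda+1}\del'$ playing the role of $z\del$, separating distinct eigenvalues $\alpha \neq \beta$ by exploiting that $\alpha - \beta$ is a unit in $\gotho_{F'}$. On $V_\alpha$, twisting by $E(-r)$ shifts the action of $(z')^{\lambda+1}\del'$ on $W/z'W$ by $-\alpha$, annihilating the leading term, so the spectral radius of $(z')^{\lambda+1}\del'$ on $V_\alpha \otimes E(-r)$ drops strictly below $1$; equivalently, the Poincar\'e--Katz rank of $V_\alpha \otimes E(-r)$ is strictly smaller than $\lambda$. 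I now induct on the integer $\lambda$ (the base case $\lambda = 0$ being scale $1$, handled above) applied to $V_\alpha \otimes E(-r)$, obtaining a further extension $F''/F'$ and an element $r_1 \in F''$ such that $E(-r_1) \otimes V_\alpha \otimes E(-r) \otimes_{F'} F''$ has a regular direct summand. Since $V_\alpha$ is a direct summand of $V \otimes_F F'$ and $E(-(r+r_1)) = E(-r) \otimes E(-r_1)$, this regular summand is also a direct summand of $E(-(r+r_1)) \otimes V \otimes_F F''$, which is the desired conclusion.

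The step I expect to be the main obstacle is the claim that the $\alpha$-eigenspace in $W/z'W$ lifts canonically to a differential direct summand $V_\alpha$ of $V \otimes_F F'$. In the regular case (scale $1$) this was packaged into Proposition~\ref{P:make regulating} via Lemma~\ref{L:shearing}, with $z\del$ commuting with itself usefully modulo $z$; for $(z')^{\lambda+1}\del'$ one must instead solve the relevant commutator equations order by order in $z'$, relying on the separation of eigenvalues in $K'$ to invert the shift operator at each step. A secondary worry is bookkeeping for the induction under successive base changes to $F'$ and $F''$, but once the splitting result for $V_\alpha$ is established, this is routine.
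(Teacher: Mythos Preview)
Your approach is a legitimate alternative to the paper's and is essentially the matrix form of Turrittin's argument, but the paper proceeds differently. Rather than constructing a stable lattice for $(z')^{\lambda+1}\del'$ and splitting by eigenvalue, the paper stays with the cyclic-vector presentation: from the leading coefficients of $P(T)$ it forms a symbol polynomial $Q$, picks a root $r$ of $Q$ in $F' = K'((z^{1/d}))$, and observes that $P(T-r)$ has at least one slope strictly greater than the slope of $P$. If $Q$ has a second distinct root, $P(T-r)$ also retains the old slope, so Proposition~\ref{P:read slopes} decomposes the twisted module and the paper invokes induction on $\dim_F V$; if $Q$ has a single root, then $r \in F$, the twist strictly lowers the scale, and one repeats. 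Termination comes from the scale of $z\del$ lying in the discrete bounded set $\{|z|^{-e/d}: e \in \ZZ_{\geq 0}\}$ with $d = \dim_F V$ fixed. This route avoids your eigenspace-lifting lemma entirely: all splitting is by slope, which is already packaged in Proposition~\ref{P:read slopes}.

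The genuine gap in your argument is the induction on $\lambda$. You pass to $F'$ to make the Poincar\'e--Katz rank integral, then after splitting and twisting you obtain $V_\alpha \otimes E(-r)$ with rank $\mu < \lambda$ over $F'$, but $\mu$ need not be an integer. Re-entering the induction requires a further ramified extension $F''$, and over $F''$ the new integral rank is $\mu$ times the ramification index of $F''/F'$, which can exceed $\lambda$ (e.g.\ $\lambda = 1$, $\mu = 3/4$, ramify by $4$, new integer rank $3$). The cure is to organize the argument as the paper does: use induction on dimension when the symbol has several eigenvalues, and when there is a single eigenvalue, iterate the twist over a \emph{fixed} extension and invoke discreteness of the scale. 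Two smaller points also lack justification. First, spectral radius $1$ does not by itself produce an $N$-stable lattice $W$; you need the pure-slope cyclic-vector construction (essentially Proposition~\ref{P:read slopes} again) or an equivalent. Second, your ``equivalently'' between $|N|_{\spect} < 1$ and Poincar\'e--Katz rank $< \lambda$ is not immediate, since $(z')^{\lambda+1}$ and $\del'$ do not commute; the implication you need is correct, but it requires expanding $(z'\del')^s$ as an $F'$-linear combination of $(z')^{-j\lambda}N^j$ for $1 \le j \le s$ and estimating.
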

\begin{proof}
We induct on $\dim_F V$. 
Let $z$ be a generator of $\gothm_F$.
Let $\del \in \Delta_{\gotho_F}$
be a derivation of rational type with respect to $z$.
As in Lemma~\ref{L:find rational type}(c), identify $F$ with $K((z))$
so that $\del$ corresponds to $\frac{\del}{\del z}$.

Apply Lemma~\ref{L:cyclic} to choose an isomorphism
$V \cong F\{T\}/F\{T\}P(T)$ of differential 
modules over $F$ with respect to
$z \del$ alone, for some monic twisted polynomial $P(T) \in F\{T\}$. 
If $P(T)$ has more than one slope, then $V$ is decomposable
by Proposition~\ref{P:read slopes}, so we may invoke the induction hypothesis
to conclude.

Suppose instead that $P(T)$ has only one slope.
If that slope is 0, then $V$ is regular by Proposition~\ref{P:regular}
and we are done.
Otherwise, write
$P(T) = \sum_{j=0}^{d} P_j T^j$ with $P_d = 1$.
Define the integer $e \in \ZZ$ by $|P_0| = |z|^e$,
so that $|P_{d-j}| \leq |z|^{ej/d}$ for all $j$ (because $P$ has only
one slope).
Let $P_{j,0} \in K$ be the coefficient of $z^{e(d-j)/d}$ in the series expansion
of $P_j$, which we take to be $0$ if $e(d-j)/d \notin \ZZ$. 
Then each root of the untwisted 
polynomial $Q(T) = \sum_{j=0}^d P_{j,0} z^{e(d-j)/d} T^j$ has the form 
$c z^{e/d}$ for some
$c \in \overline{K}$. Moreover, for any such root $r$
lying in $F' = K'((z^{1/d}))$ for some finite extension $K'$ of $K$,
$E(-r) \otimes_{F'} (V \otimes_F F')$
has a cyclic vector with corresponding polynomial $P(T - r)$,
which has at least one slope strictly greater than the unique slope of $P$.
If $Q$ has more than one distinct root, 
then $P(T - r)$ also has a slope equal to the unique slope of $P$.
By Proposition~\ref{P:read slopes}, 
$V \otimes_F F'$ is decomposable; we may then
invoke the induction hypothesis to conclude. 

Otherwise, we must have
$r \in F$, and $E(-r) \otimes_F V$ has strictly smaller scale
with respect to $z \del$ than $V$ does. 
We repeat this process until we either encounter an invocation of the
induction hypothesis, or arrive at the case where $V$ is regular.
This process must terminate because otherwise 
the scale of $z \del$ would form a strictly increasing sequence taken from
a discrete bounded set. Thus the claim follows.
\end{proof}

\begin{defn} \label{D:twist-regular}
A finite differential module $V$ over $F$ is \emph{twist-regular} if 
$\End(V)$ is regular. For instance, if $V = E(r) \otimes_F R$
with $r \in F$ and $R$ regular, then $V$ is twist-regular.
Conversely, Theorem~\ref{T:Turrittin} below implies that every
twist-regular module has the form $E(r) \otimes_F R$ for suitable $r, R$.

Note that if $V$ is twist-regular and $r \in F$ is such that
$E(-r) \otimes_F V$ has a nonzero regular direct summand
$M$, then $E(-r) \otimes_F V$ must be regular. Namely, it suffices 
to check this for $r=0$; in this case, $M^\dual \otimes_F V$ is a direct
summand of the regular module $\End(V)$, so it is
regular by Corollary~\ref{C:regular}.
Then $V$ occurs as a direct summand of the regular
module $M \otimes_F (M^\dual \otimes_F V)$, so it too is regular.
\end{defn}

\begin{theorem} \label{T:Turrittin}
Let $V$ be a finite differential module over $F$.
\begin{enumerate}
\item[(a)]
For some finite extension $F'$ of $F$,
there exists a unique direct sum decomposition
$V \otimes_F F' = \oplus_{j \in J} V_j$ of differential modules,
in which for $j,k \in J$,
$V_j^\dual \otimes_{F'} V_k$ is regular if and only if $j = k$.
In particular, each summand is twist-regular.
\item[(b)]
For any decomposition as in (a), there exists $r_j \in F'$
such that $E(-r_j) \otimes_{F'} V_j$ is regular for each $j \in J$.
In particular, if $V$ itself 
is twist-regular, we can choose $r \in F$ such that $E(-r) \otimes_F V$
is regular.
\end{enumerate}
\end{theorem}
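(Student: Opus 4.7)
The plan is to prove parts (a) and (b) simultaneously by induction on $\dim_F V$, with Lemma~\ref{L:levelt} providing the existence of exponential twists and the remark in Definition~\ref{D:twist-regular} upgrading ``has a regular direct summand'' to ``fully regular'' whenever the module being twisted is twist-regular.

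For existence in (a), I apply Lemma~\ref{L:levelt} over a finite extension $F'/F$ to obtain $r \in F'$ and a splitting $V \otimes_F F' = (E(r) \otimes R) \oplus S$ with $R$ a nonzero regular direct summand; the first piece is then twist-regular by the observation in Definition~\ref{D:twist-regular}. If $S \neq 0$, the inductive hypothesis applied to $S$ over a further finite extension $F''/F'$ yields a decomposition $S \otimes_{F'} F'' = \bigoplus_j U_j$ satisfying the required nondegeneracy. I then test whether any single $U_{j_0}$ lies in the same class as $E(r) \otimes R \otimes_{F'} F''$, meaning that the tensor of the dual of one with the other is regular; at most one such $U_{j_0}$ exists, and when it does I merge it in. Merging preserves twist-regularity because $\End(V \oplus W) = \End(V) \oplus \Hom(V, W) \oplus \Hom(W, V) \oplus \End(W)$ splits into four pieces each regular by hypothesis, and a direct sum of regulars is regular by Corollary~\ref{C:regular}. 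Part (b) for $r_j$ in the extension field then follows at once by applying Lemma~\ref{L:levelt} to each twist-regular summand $V_j$ and invoking the Definition~\ref{D:twist-regular} upgrade.

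For the descent to $F$ in (b) when $V$ itself is twist-regular, I exploit the uniqueness of $r$ modulo the additive subgroup $H = \{s \in F' : E(s) \text{ is regular}\}$: if $E(-r_1) \otimes V$ and $E(-r_2) \otimes V$ are both regular, their combination realizes $E(r_1 - r_2) \otimes(\text{regular})$ as regular and hence forces $E(r_1-r_2)$ itself to be regular. Consequently the coset $r + H$ is canonically determined and in particular stable under the Galois action of the Galois closure of $F'/F$, so a trace-averaging argument produces a representative in $F$. Uniqueness in (a) follows by the same scale bookkeeping: given a second decomposition $\bigoplus_k W_k$, the differential-module morphisms $V_j \to W_k$ are horizontal sections of $V_j^\dual \otimes W_k$, and writing $V_j \cong E(r_j) \otimes R_j$, $W_k \cong E(s_k) \otimes S_k$ via (b), any nonzero such section forces $E(s_k - r_j)$ to be regular after tensoring with a regular, hence regular outright, so the classes of $V_j$ and $W_k$ must coincide. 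The main obstacle I foresee is verifying that tensoring with a nonzero regular module preserves the absolute scale multiset in this multi-derivation setting; I plan to handle it by combining Proposition~\ref{P:read absolute slopes} with Lemma~\ref{L:base change} to reduce to an extension on which scales are readable from Newton-polygon slopes of one-variable twisted polynomials, where the statement is classical.
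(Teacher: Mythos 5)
Your proposal follows the same overall skeleton as the paper's proof: iterate Lemma~\ref{L:levelt} to split off twist-regular summands, group these by twist-class, then descend the twist $r$ to the base field by a Galois argument. The one genuine difference is the descent step in (b). The paper normalizes $r$ to lie in $z^{-1/m}K'[z^{-1/m}]$ (its "polar part"), observes that regularity of $E(-\sigma(r))\otimes V'$ forces $\sigma(r)-r \in K'\llbracket z^{1/m}\rrbracket$, and concludes $\sigma(r)=r$ since the intersection with the polar part is zero. You instead observe that the full coset $r + \gotho_{F''}$ is Galois-stable and take the normalized trace $\frac{1}{|G|}\Trace_{F''/F}(r)$; this works because the residue field has characteristic zero, so $\gotho_{F''}$ is a $\QQ$-module and the average stays in the coset. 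Both arguments are sound here (and your variant is arguably cleaner), though the paper's polar-part normalization generalizes more readily to settings where the trace is not available. Two small notes: (i) your descent should be applied over $F'$ for each $V_j$ (not only over $F$ for $V$ twist-regular), but the argument is identical; and (ii) the obstacle you flag — that tensoring with a regular module preserves the absolute scale multiset — can be bypassed entirely in the uniqueness step: a horizontal section of $E(\phi)\otimes R$ with $R$ regular is a pure tensor $\bv\otimes\bw'$ (since $E(\phi)$ has rank one), and $\bw'$ then spans a rank-one submodule of $R$ isomorphic to $E(-\phi)$, which is regular by Corollary~\ref{C:regular}; this already forces $|\phi|\leq 1$.
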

\begin{proof}
By Lemma~\ref{L:levelt}, for some $F'$, we can find $r \in F'$ such that
$V \otimes_F F'$ splits as a direct sum in which one summand becomes
regular upon twisting by $E(-r)$; in particular, this summand is 
twist-regular. By repeating this argument, we find that for some 
$F'$, $V \otimes_F F'$ splits as a direct sum of twist-regular summands.
From this, (a) follows easily.

To deduce (b), it suffices to check that if $V$ is twist-regular,
then we can find $r \in F$ such that $E(-r) \otimes_F V$ is regular.
By Lemma~\ref{L:levelt} again, for some $F'$, we can find $r \in F'$
such that $E(-r) \otimes_{F'} (V \otimes_F F')$ has a regular direct
summand; however, since $V$ is twist-regular, this forces
$E(-r) \otimes_{F'} (V \otimes_F F')$ itself to be regular
(as in Definition~\ref{D:twist-regular}).

Let $z$ be a generator of $\gothm_F$.
Apply Lemma~\ref{L:find rational type1}
to produce a derivation $\del \in \Delta_{\gotho_F}$
of rational type with respect to $z$.
As in Lemma~\ref{L:find rational type}(c), identify $F$ with $K((z))$
so that $\del$ corresponds to $\frac{\del}{\del z}$.
As in Remark~\ref{R:puiseux},
we may identify $F'$ with a subfield of $K'((z^{1/m}))$
for some finite extension $K'$ of $K$ and some positive integer $m$.
Since $r$ is only determined modulo $K' \llbracket z^{1/m} \rrbracket$
by the condition that $E(-r) \otimes_{F'} (V \otimes_F F')$ is regular,
we may as well take $r \in z^{-1/m} K' [z^{-1/m}]$.
In that case, each Galois conjugate $r'$ of $r$ also belongs to
$z^{-1/m} K'[z^{-1/m}]$, and in fact can be obtained by applying an automorphism
of $K'$ over $K$ and then replacing $z^{-1/m}$ with another $m$-th root
of $z^{-1}$. Moreover, $E(-r') \otimes_{F'} (V \otimes_F F')$ is also
regular, so for all $r'$ 
we must have 
\[
r' - r \in K' \llbracket z^{1/m} \rrbracket
\cap z^{-1/m} K'[z^{-1/m}] = 0.
\]
This forces $r \in F$, proving the claim.
\end{proof}

\begin{defn} \label{D:twist-regular exponents}
Let $V$ be a twist-regular finite differential module over $F$.
By Theorem~\ref{T:Turrittin}, there exists $r \in F$ such that
$E(-r) \otimes_F V$ is regular. We define the \emph{exponents} of
$V$ to be the exponents of $E(-r) \otimes_F V$, again as a multisubset
of $\overline{K_0}/\ZZ$. Note that this definition does not depend on the choice
of $r$: if $r' \in F$ is such that $E(-r') \otimes_F V$ is regular, then
$E(r)^\dual \otimes_F E(r') \cong E(r'-r)$ is regular,
and so has exponent 0 by Example~\ref{exa:exp}.
\end{defn}

Theorem~\ref{T:Turrittin} and Definition~\ref{D:twist-regular exponents}
together do not suffice to give a definition of the exponents of an arbitrary
finite differential module over $F$. Although we will not use the more
general definition here, we include it for completeness.

\begin{defn}
A finite differential module $V$ over $F$ is 
\emph{nearly twist-regular} if there exists a finite extension $F'$
of $F$ such that $V \otimes_F F'$ admits a decomposition as in
Theorem~\ref{T:Turrittin}(a), for which for any $j,k \in J$,
there is a Galois conjugate $V'_k$ of $V_k$ such that $V_j^\dual \otimes V'_k$
is regular. 
For $V$ nearly twist-regular, we define the \emph{exponents} of $V$ as follows.
Let $m$ be the ramification index of $F'$ over $F$.
Let $S$ be the exponents of $V_j$ for some $j \in J$.
We then define the exponents of $V$ to be the multiset consisting of
\[
\frac{\alpha + h}{m} \qquad (\alpha \in S; h \in \{0,\dots,m-1\}).
\]
This does not depend on the choice of $j \in J$.

By Theorem~\ref{T:Turrittin}, any finite differential module $V$ admits a 
unique minimal decomposition into nearly twist-regular summands. Using
the previous paragraph, we may define exponents of $V$;
this agrees with the construction of \emph{formal exponents}
given in \cite{corel}.

If $V$
descends to the ring of germs of meromorphic functions at the origin
in the complex plane, then one might hope that our algebraically defined
exponents coincide with the \emph{topological exponents}, i.e., the logarithms
of the eigenvalues of the monodromy transformation.
However, the formal and topological exponents need not coincide in general.
\end{defn}

\subsection{Deligne-Malgrange lattices}

With an algebraic definition of exponents in hand, one can give an algebraic
definition of Deligne-Malgrange lattices, following 
Malgrange's original construction of the \emph{reseau canonique}
\cite[\S 3]{malgrange-conn2}. 
We will not make essential use of this construction; however,
it figures prominently in some other work on this topic,
notably that of Mochizuki \cite{mochizuki2}.

\begin{defn}
Let $\tau: \overline{K_0}/\ZZ \to \overline{K_0}$ be a section of the quotient 
$\overline{K_0} \to \overline{K_0}/\ZZ$.
We say $\tau$ is \emph{admissible} if $\tau(0) = 0$,
$\tau$ is equivariant for the action of the absolute Galois group of $K_0$,
and for any $\lambda \in \overline{K_0}$ and
any positive integer $a$, we have
\begin{equation} \label{eq:admissible}
\tau(\lambda) - \lambda = 
\left\lceil \frac{\tau(a\lambda) - a\lambda}{a} \right\rceil.
\end{equation}
\end{defn}

\begin{example}[Malgrange] \label{ex:malgrange}
If $K_0 = \CC$, the section $\tau$ with image
$\{s \in \CC: \Real(s) \in [0,1)\}$ is admissible, by the following
argument. The truth of \eqref{eq:admissible} for a given
$\lambda$ is equivalent to its truth for $\lambda + 1$, so we may
reduce to the case $\Real(\lambda) \in [0,a)$.
In that case, the left side of \eqref{eq:admissible} equals 0.
Meanwhile, $\tau(a\lambda) - a\lambda \in \{1-a, \dots, 0\}$,
so the right side of \eqref{eq:admissible} also equals 0.
\end{example}

Following this example, we can construct admissible sections in all cases.
\begin{lemma} \label{L:admissible}
For any $K_0$, there exists an admissible section  $\tau$ of
the quotient $\overline{K_0} \to \overline{K_0}/\ZZ$.
\end{lemma}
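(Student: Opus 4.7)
My plan is to translate admissibility into a near-linear condition. Writing $\tau(\lambda) = \lambda - m(\lambda)$ with $m(\lambda) \in \ZZ$, the three axioms on $\tau$ become: $m \colon \overline{K_0} \to \ZZ$ is Galois-invariant, satisfies $m(\lambda + k) = m(\lambda) + k$ for $k \in \ZZ$, and obeys $m(\lambda) = \lfloor m(a\lambda)/a \rfloor$ for every positive integer $a$ (using $\lceil -x \rceil = -\lfloor x \rfloor$; note that this floor relation, applied to $\lambda=0$, forces $m(0)=0$). Thus it suffices to produce such an $m$.

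I would produce $m$ in the form $m(\lambda) = \lfloor \alpha(\lambda) \rfloor$, where $\alpha \colon \overline{K_0} \to \RR$ is a Galois-invariant $\QQ$-linear functional with $\alpha|_\QQ = \mathrm{id}$. To build $\alpha$, first extend $\{1\}$ to a $\QQ$-basis of $K_0$ (via Zorn's lemma, since $K_0$ has characteristic zero) and let $\beta \colon K_0 \to \RR$ be the $\QQ$-linear map sending $1$ to $1$ and every other basis vector to $0$. Then set
\[
\alpha(\lambda) = \frac{\beta\bigl(\Trace_{K_0(\lambda)/K_0}(\lambda)\bigr)}{[K_0(\lambda):K_0]} \qquad (\lambda \in \overline{K_0}).
\]
The tower formula $\Trace_{L'/K_0} = \Trace_{L/K_0} \circ \Trace_{L'/L}$, together with $\Trace_{L'/L}(x) = [L':L] \, x$ for $x \in L$, shows that this value coincides with $\beta(\Trace_{L/K_0}(\lambda))/[L:K_0]$ for any finite extension $L$ of $K_0$ containing $\lambda$. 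Pairing any two elements inside a common $L$ then yields $\QQ$-linearity of $\alpha$ from $\QQ$-linearity of $\beta$ and of the trace; Galois-invariance is inherited from the trace; and $\alpha|_\QQ = \mathrm{id}$ follows from $\beta(1)=1$.

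It remains to verify that $m = \lfloor \alpha \rfloor$ meets the three conditions of the first paragraph. The additivity condition $m(\lambda+k) = m(\lambda)+k$ and Galois-invariance are immediate from the corresponding properties of $\alpha$, and the floor relation reduces to the elementary identity $\lfloor \lfloor ax \rfloor / a \rfloor = \lfloor x \rfloor$ for $x \in \RR$ and $a \in \ZZ_{>0}$, applied with $x = \alpha(\lambda)$ (using $\alpha(a\lambda) = a\alpha(\lambda)$). Taking $K_0 = \CC$ and $\beta = \Real$ recovers Malgrange's section from Example~\ref{ex:malgrange}. The only real obstacle is spotting the reduction to constructing the real-valued functional $\alpha$; once that is in place, the existence argument is a Hamel-basis construction followed by trace-averaging.
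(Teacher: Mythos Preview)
Your proof is correct, and your approach is genuinely different from the paper's. One small caveat: your parenthetical claim that the floor relation at $\lambda=0$ forces $m(0)=0$ is not quite right, since $m(0)=-1$ also satisfies $m(0)=\lfloor m(0)/a\rfloor$ for all $a\geq 1$ (indeed $m(q)=\lceil q\rceil-1$ satisfies all three of your listed conditions on $\QQ$). This does not damage the argument, because your construction $m=\lfloor\alpha\rfloor$ with $\alpha$ $\QQ$-linear visibly gives $m(0)=\lfloor\alpha(0)\rfloor=0$; you should simply list $m(0)=0$ as a separate requirement and note that it holds.

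As for the comparison: the paper treats $\QQ$ and $\overline{K_0}\setminus\QQ$ separately. On $\QQ$ it uses Malgrange's section directly; on the complement it observes that the group $G$ of $\QQ$-affine maps $x\mapsto cx+d$ acts freely, picks a Galois-equivariant set of orbit representatives $\sigma$, and for $\mu=c\sigma+d$ determines $\tau(\mu)$ from the rational offset $d$ alone. Your approach subsumes both cases uniformly: you build a single Galois-invariant $\QQ$-linear functional $\alpha\colon\overline{K_0}\to\RR$ extending the identity on $\QQ$ (via trace-averaging a Hamel-basis projection on $K_0$) and then floor it. In effect the paper's choice of orbit representatives corresponds to setting $\alpha\equiv 0$ on those representatives, so the two constructions are cousins; yours is cleaner because the $\QQ$-linearity packages the scaling compatibility $m(\lambda)=\lfloor m(a\lambda)/a\rfloor$ into the single identity $\lfloor\lfloor ax\rfloor/a\rfloor=\lfloor x\rfloor$, and the trace-averaging handles Galois equivariance without any case division. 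The paper's approach, on the other hand, makes the role of the affine group $G$ explicit, which is conceptually suggestive even if less economical.
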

\begin{proof}
For $\lambda \in \QQ$, we define $\tau(\lambda)$ as
in Example~\ref{ex:malgrange}.
To extend $\tau$ to all of $\overline{K_0}$, let $G$ be the group of 
affine transformations on $\overline{K_0}$ defined over $\QQ$, i.e., the
group of maps of the form $x \mapsto cx + d$ with $c,d \in \QQ$
and $c \neq 0$. The group $G$ acts freely on $\overline{K_0} \setminus \QQ$
and commutes with the Galois action;
choose a Galois-equivariant section $\sigma: 
(\overline{K_0} \setminus \QQ)/G \to \overline{K_0} \setminus \QQ$ 
of the quotient by the $G$-action.
For $\mu \in \overline{K_0} \setminus \QQ$, 
choose $c(\mu) \in \QQ \setminus \{0\}$ and $d(\mu) \in \QQ$
so that $\mu = c(\mu) \sigma(\mu) + d(\mu)$,
then set $\tau(\mu) = \tau(d(\mu))$.
\end{proof}

\begin{defn} \label{D:dm lattice}
Let $\tau: \overline{K_0}/\ZZ \to \overline{K_0}$ be an admissible 
section of the quotient $\overline{K_0} \to \overline{K_0}/\ZZ$;
such a $\tau$ is guaranteed to exist by Lemma~\ref{L:admissible}.
Let $V$ be a finite differential module over $F$. We define the 
\emph{Deligne-Malgrange
lattice} of $V$ with respect to $\tau$ as follows.

Suppose first that $V$ is regular. The Deligne-Malgrange lattice of $V$
is then the unique regulating lattice whose exponents are
in the image of $\tau$; the existence and uniqueness of such a regulating
lattice follow from Proposition~\ref{P:same exponents}.

Suppose next that $V$ is twist-regular. 
Apply Theorem~\ref{T:Turrittin} to construct $r \in F$ such that
$V' = E(-r) \otimes_F V$ is regular.
Let $W'$ be the Deligne-Malgrange lattice of $V'$.
The Deligne-Malgrange lattice of $V$ is defined as 
the lattice $W = \bv \otimes W'$ in $V \cong
E(r) \otimes_F V'$. This does not depend on the choice of $r$
because whenever $E(r)$ is regular, its exponent is zero
(Example~\ref{exa:exp}).

Suppose next that $V$ is a direct sum of twist-regular submodules $V_1
\oplus \cdots \oplus V_m$.
The Deligne-Malgrange lattice of $V$ is then defined as the
direct sum of the Deligne-Malgrange lattices of the $V_i$.

Suppose finally that $V$ is general. By Theorem~\ref{T:Turrittin},
there exists a finite extension $F'$ of $F$ such that $V' = V \otimes_F F'$
is a direct sum
of twist-regular submodules. Let $W'$ be the Deligne-Malgrange lattice of $V'$
as defined above. Identify $V$ with an $F$-subspace of $V'$,
then define the Deligne-Malgrange lattice of
$V$ to be $W' \cap V$. This is evidently a $\gotho_F$-lattice
in $V$; it does not depend on the choice of $F'$ because of
the admissibility condition on $\tau$.
(As in \cite[\S 3.3]{malgrange-conn2}, this reduces to the case of $V$
regular.)
\end{defn}

\subsection{Calculation of absolute scales}

Using the Hukuhara-Levelt-Turrittin decomposition, we can give some recipes
for computing absolute scales of finite differential modules over $F$.
The approach using lattices is suggested by
\cite[\S 1]{malgrange}, but ultimately differs from Malgrange's approach
because we interpret irregularity as a spectral measurement rather than
as the index of a linear operator.

\begin{defn} \label{D:degree}
Let $V$ be a finite differential module over $F$.
For $W$ a $\gotho_F$-lattice in $V$,
let $\Delta(W)$ be the $\gotho_F$-lattice in $V$ spanned by
$W$ together with the images $\del(W)$ for all $\del \in \Delta_F$
for which $|\del|_F \leq 1$. (This gives a finitely generated module
over $\gotho_F$ because $\Delta_F$ is finite dimensional over $F$.)
For $s$ a nonnegative integer,
define $\Delta^s(W)$ recursively by setting $\Delta^0(W) = W$
and $\Delta^{s+1}(W) = \Delta(\Delta^s(W))$.
Note that $\Delta^s(W)$ is the 
$\gotho_F$-span of the images of $W$ under all elements of
$F\{\Delta_F\}^{(s)}$ whose operator norm on $F$ is at most 1.
\end{defn}

\begin{lemma} \label{L:shift image}
Let $V$ be a finite differential module over $F$.
For $W$ an $\gotho_F$-lattice in $V$,
$\Delta(\gothm_F W) = \gothm_F \Delta(W)$.
\end{lemma}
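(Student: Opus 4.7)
The plan is to prove the two inclusions separately using the Leibniz rule, together with the fact that derivations with operator norm $\leq 1$ send $\gothm_F$ into itself. Throughout, I will exploit the generating description: $\Delta(W)$ is the $\gotho_F$-span of $W \cup \bigcup_{|\del|_F \leq 1} \del(W)$, and similarly for $\Delta(\gothm_F W)$.

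First I would verify the preliminary observation that if $\del \in \Delta_F$ satisfies $|\del|_F \leq 1$ and $a \in \gothm_F$, then $\del(a) \in \gothm_F$. This is immediate from $|\del(a)| \leq |\del|_F\, |a| \leq |a|$, together with the fact that $\gothm_F = \{x \in \gotho_F : |x| < 1\}$ because $\gotho_F$ is a discrete valuation ring with norm $|\cdot|$.

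Next, for the inclusion $\Delta(\gothm_F W) \subseteq \gothm_F \Delta(W)$, it suffices to check that $\del(aw) \in \gothm_F \Delta(W)$ for each $a \in \gothm_F$, $w \in W$, and $\del \in \Delta_F$ with $|\del|_F \leq 1$. By the Leibniz rule,
\[
\del(aw) = \del(a) w + a \del(w).
\]
The first summand lies in $\gothm_F W \subseteq \gothm_F \Delta(W)$ by the preliminary observation, and the second lies in $\gothm_F \Delta(W)$ because $\del(w) \in \Delta(W)$.

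For the reverse inclusion $\gothm_F \Delta(W) \subseteq \Delta(\gothm_F W)$, note that $\gothm_F W \subseteq \Delta(\gothm_F W)$ directly, so it remains to show $a \del(w) \in \Delta(\gothm_F W)$ for $a \in \gothm_F$, $w \in W$, and $\del$ with $|\del|_F \leq 1$. Rewriting the Leibniz rule as
\[
a \del(w) = \del(aw) - \del(a) w,
\]
the first term lies in $\Delta(\gothm_F W)$ because $aw \in \gothm_F W$, and the second term lies in $\gothm_F W \subseteq \Delta(\gothm_F W)$ by the preliminary observation applied to $\del(a)$. No real obstacle arises; the only subtle point is ensuring that $|\del|_F \leq 1$ actually forces $\del(\gothm_F) \subseteq \gothm_F$, which is where the rescaled operator norm convention is used.
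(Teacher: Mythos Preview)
Your proof is correct and follows essentially the same approach as the paper: both directions are handled via the Leibniz rule, using that $|\del|_F \leq 1$ forces $\del(\gothm_F) \subseteq \gothm_F$. You are slightly more explicit than the paper in isolating that preliminary observation, but otherwise the arguments are identical.
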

\begin{proof}
If $\del \in \Delta_F$ satisfies $|\del|_F \leq 1$, then
for any $\bv \in W$ and any $z \in \gothm_F$, 
\[
\del(z\bv) = \del(z) \bv + z \del(\bv) \in \gothm_F W + \gothm_F \del(W)
\subseteq \gothm_F \Delta(W).
\]
Hence $\Delta(\gothm_F W) \subseteq \gothm_F \Delta(W)$.
In the other direction, for $\bv \in W$ and $z \in \gothm_F$,
\[
z \del(\bv) = \del(z\bv) - \del(z) \bv \in \del(\gothm_F W) + \gothm_F W
\subseteq \Delta(\gothm_F W).
\]
Hence $\gothm_F \Delta(W) \subseteq \Delta(\gothm_F W)$.
\end{proof}

\begin{lemma} \label{L:exp image}
Take $r \in F$.
\begin{enumerate}
\item[(a)]
For $W$ an $\gotho_F$-lattice in $E(r)$, 
$\Delta(W) = (\gotho_F + r \gotho_F)W$.
\item[(b)]
The absolute scale of $E(r)$ is equal to $\max\{1, |r|\}$.
\item[(c)]
Let $z$ be a generator of $\gothm_F$.
Let $\del \in \Delta_{\gotho_F}$
be a derivation of rational type with respect to $z$.
Then the scale of $z \del$ on $E(r)$ is also equal to $\max\{1, |r|\}$.
\end{enumerate}
\end{lemma}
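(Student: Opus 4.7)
The plan is to prove (a) by direct computation in the DVR $\gotho_F$, deduce (b) by iterating (a), and extract (c) from the intermediate norm estimates.

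For (a), I would use that $\gotho_F$ is a discrete valuation ring to write every $\gotho_F$-lattice in $E(r)$ as $W = \gotho_F \bv'$ with $\bv' = z^a \bv$ for some $a \in \ZZ$, where $\bv$ is the canonical generator of $E(r)$ and $z$ is a uniformizer. For $\del \in \Delta_F$ with $|\del|_F \leq 1$ one computes
\[
\del(\bv') = \left(\frac{a\del(z)}{z} + \del(r)\right)\bv';
\]
the first summand lies in $\gotho_F$ (since $|\del(z)| \leq |z|$), while $|\del(r)| \leq |\del|_F |r| \leq \max\{1,|r|\}$, so the whole coefficient lies in $\gotho_F + r\gotho_F$. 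This yields $\Delta(W) \subseteq (\gotho_F + r\gotho_F)W$, with equality when $|r|\leq 1$. For the reverse containment when $|r|>1$, it suffices to exhibit one $\del' \in \Delta_F$ with $|\del'|_F \leq 1$ and $|\del'(r)| = |r|$. I would take $\del' = z\del_0$ for $\del_0$ a rational-type derivation provided by Lemma~\ref{L:find rational type1}; via the identification $F \cong K((z))$ of Lemma~\ref{L:find rational type}(c) in which $\del_0$ becomes $\frac{\del}{\del z}$, the Laurent expansion gives $z\del_0(r) = \sum_i i r_i z^i$, whose leading term has norm $|r|$ because the leading index $i_0 < 0$ is a nonzero integer (a unit in $\gotho_F$ by residue characteristic zero).

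For (b), iterating (a) yields $\Delta^s(W) = (\gotho_F + r\gotho_F)^s W$, which equals $W$ if $|r|\leq 1$ and $r^s W$ if $|r|>1$. Since the absolute scale is independent of the norm chosen on $V = E(r)$, I would take the norm whose unit ball is $W = \gotho_F \bv$ and identify $V \cong F$ via $c\bv \leftrightarrow c$, so each $D \in F\{\Delta_F\}^{(s)}$ acts on $V$ as the twisted operator $\tilde D \in L(F)$ obtained by replacing each $\del$ with $\del + \del(r)$; in particular $|D|_V = |\tilde D|_F$. A term-by-term expansion of $\tilde D$, together with $|\del(r)| \leq |\del|_F \max\{1,|r|\}$ and the ultrametric inequality to prevent combinatorial blowup, gives the upper bound $|\tilde D|_F \leq \max\{1,|r|\}^s |D|_F$ and hence $|D_{V,s}| \leq \max\{1,|r|\}^s$. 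For the matching lower bound when $|r|>1$, take $D = (z\del_0)^s$, which has $|D|_F = 1$ by Lemma~\ref{L:spectral norm}; then $\tilde D = (z\del_0 + \alpha)^s$ with $\alpha = z\del_0(r)$, and setting $a_s = \tilde D(1)$ the recursion $a_s = z\del_0(a_{s-1}) + \alpha a_{s-1}$ together with $|\alpha| = |r|>1 \geq |z\del_0|_F$ forces $|a_s| = |r|^s$ by induction, so $|D_{V,s}| \geq |r|^s$. When $|r|\leq 1$, the lower bound $|D_{V,s}| \geq 1$ is trivial from $D = \mathrm{id}$. Passing to $s$-th roots yields absolute scale $\max\{1,|r|\}$.

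Part (c) follows from the same calculation. By Lemmas~\ref{L:rational type norm} and~\ref{L:spectral norm}, $|z\del|_{\spect,F} = |z\del|_F = 1$, so the scale of $z\del$ on $E(r)$ equals $|z\del|_{\spect, E(r)}$. When $|r|>1$, the inductive argument above (applied with $\del_0 = \del$) gives $|(z\del)^s|_{E(r)} = |r|^s$, so the spectral radius equals $|r|$. When $|r|\leq 1$, the upper bound $|(z\del)^s|_{E(r)} \leq 1$ from the $\tilde D$ estimate combines with the general inequality $|z\del|_{\spect, V} \geq |z\del|_{\spect, F} = 1$ to pin the spectral radius at $1$. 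The main technical obstacle is the upper bound $|\tilde D|_F \leq \max\{1,|r|\}^s |D|_F$, which requires careful bookkeeping in the expansion of $\tilde D$; the inductive estimate for $|a_s|$ is the other delicate point, succeeding precisely because the noncommuting operators $z\del_0$ and multiplication by $\alpha$ have strictly different norms (so no cancellation occurs via the ultrametric inequality), and residue characteristic zero is used both here and in (a).
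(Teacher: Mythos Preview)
Your proof of (a) is correct and essentially identical to the paper's. Your lower-bound computations for (b) and (c), via the recursion $a_s = z\del_0(a_{s-1}) + \alpha a_{s-1}$ with $|\alpha|=|r|>1=|z\del_0|_F$, are also correct.

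The gap is in your upper bound for the absolute scale. Your claim that a ``term-by-term expansion of $\tilde D$'' yields $|\tilde D|_F \leq \max\{1,|r|\}^s |D|_F$ is not justified under Hypothesis~\ref{H:diff alg} alone. If you write $D$ as a sum of monomials $c_\mu \del_{i_1}\cdots\del_{i_t}$ and expand each factor as $\del_{i_j} + M_{\del_{i_j}(r)}$, you bound $|\tilde D|_F$ by $\max\{1,|r|\}^s$ times $\max_\mu |c_\mu|\prod_j |\del_{i_j}|_F$; but this monomial-wise quantity can strictly exceed the operator norm $|D|_F = |D_F(D)|_F$, since there is no Gauss-norm formula for $|D_F(\cdot)|_F$ unless all of $\Delta_F$ admits a rational-type basis (Lemma~\ref{L:rational type norm}), which is not assumed here. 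The fix, however, is already sitting in your argument: from your iteration $\Delta^s(W) = (\gotho_F + r\gotho_F)^s W$ and Definition~\ref{D:degree}, any $D$ of degree $\leq s$ with $|D|_F \leq 1$ sends $W$ into $\Delta^s(W)$, so $|D|_V \leq \max\{1,|r|\}^s$; since $|D|_F$ lies in $|z|^{\ZZ}$ (discreteness), rescaling by powers of $z$ gives $|D|_V \leq \max\{1,|r|\}^s |D|_F$ in general, hence $|D_{V,s}| \leq \max\{1,|r|\}^s$. Route the upper bound through $\Delta^s(W)$ and drop the $\tilde D$ expansion.

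By comparison, the paper's proof is shorter. It dispatches $|r|\leq 1$ entirely by Proposition~\ref{P:regular} (the canonical generator spans a regulating lattice), and for $|r|>1$ argues via a sandwich: (a) gives absolute scale $\leq |r|$ (through $\Delta^s$ as above), the computation $|(z\del)(r)|=|r|$ iterated gives scale of $z\del$ on $E(r)$ at least $|r|$, and since the absolute scale always dominates the scale of any single derivation (take $D=(z\del)^s$ in the definition of $|D_{V,s}|$), both quantities equal $|r|$, establishing (b) and (c) at once. Your direct recursion is the same iteration spelled out; the paper just packages the two inequalities more efficiently.
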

\begin{proof}
All three claims follow from Proposition~\ref{P:regular} if $|r| \leq 1$, 
so we may assume $|r| > 1$ hereafter.
For (a), we have $W = z^m \gotho_F \bv$ for some $m \in \ZZ$,
and the claim to be shown is that $\Delta(W) = z^m r \gotho_F W$.
On one hand,
if $\del' \in \Delta_F$ satisfies $|\del'|_F \leq 1$, then
for $x \in \gotho_F$, $|\del'(z^m x)| \leq |z^m|$ and $|\del'(r)| \leq |r|$, so 
\[
\del'(z^m x \bv) = \del'(z^m x) \bv + z^m x \del'(r) \bv 
\in z^m \gotho_F \bv + z^m r \gotho_F \bv = z^m r \gotho_F \bv.
\]
On the other hand, by Lemma~\ref{L:find rational type}(a),
we have $|(z\del)(r)| = |r|$ and so $(z\del)(z^m \bv)$ generates $z^m r\gotho_F W$.

We now have (a). We also know that 
the absolute scale of $E(r)$ is at most $|r|$,
and that the scale of $z\del$ on $E(r)$ is at least $|r|$.
But the former is greater than or equal to the latter, so (b) and (c) follow.
\end{proof}

\begin{prop} \label{P:scale from h}
Let $z$ be a generator of $\gothm_F$.
Let $\del \in \Delta_{\gotho_F}$
be a derivation of rational type with respect to $z$.
Let $V$ be a nonzero finite differential module over $F$. 
Then the absolute scale multiset of $V$ coincides with 
the scale multiset of $z \del$ on $V$.
\end{prop}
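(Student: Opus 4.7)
The plan is to reduce the comparison of multisets to a computation on individual twist-regular summands, obtained via the Hukuhara--Levelt--Turrittin decomposition after a finite base change.

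First I would verify that both multisets are additive over Jordan--H\"older constituents. For the absolute scale multiset this is Definition~\ref{D:irregularity}; for the scale multiset of $z\del$, one first observes that $|z\del|_F = |z\del|_{\spect,F} = 1$ (using Lemma~\ref{L:find rational type}(a) and Lemma~\ref{L:spectral norm}), so Proposition~\ref{P:read slopes}, applied with $F$ viewed as a based differential field of order one with derivation $z\del$, shows that each irreducible constituent $V_i$ carries a single scale of $z\del$, equal to $|z\del|_{\spect,V_i}$. Both multisets are then determined by their values on irreducible constituents, which reduces the problem to proving that for each irreducible $V$, its absolute scale equals $|z\del|_{\spect,V}$.

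I would then pass to a finite extension $F'$ of $F$ (still satisfying Hypothesis~\ref{H:diff alg} by Remark~\ref{R:puiseux}) over which Theorem~\ref{T:Turrittin} yields a decomposition $V \otimes_F F' = \bigoplus_j E(r_j) \otimes_{F'} R_j$ with each $R_j$ regular. Absolute scales are preserved under this base change by Lemma~\ref{L:base change}, and scales of $z\del$ are preserved because Newton polygon slopes of twisted polynomials over complete nonarchimedean extensions are invariant (implicit in Lemma~\ref{L:factor slopes}). Since $V$ is irreducible over $F$, the summands are permuted transitively by the Galois action, so $|r_j|$ takes a common value $\rho$.

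The remaining step analyzes a single summand $M = E(r) \otimes_{F'} R$ with $|r| = \rho$ and $R$ regular. I would fix a regulating lattice $W_R$ of $R$ with $\gotho_{F'}$-basis $\be_1,\dots,\be_d$; by Proposition~\ref{P:regulating}, $W_R$ is stable under every $\del' \in \Delta_{F'}$ with $|\del'|_{F'} \leq 1$, so in the basis $\{e_r \otimes \be_i\}$ of $M$ such a $\del'$ acts as $\del'(r) I_d + N_{\del'}$ with $N_{\del'}$ having entries in $\gotho_{F'}$. Taking $\del' = z\del$ and using $|z\del(r)| = \max\{1,\rho\}$ from Lemma~\ref{L:find rational type}(a), an iteration in the spirit of Lemma~\ref{L:exp image}(c) on the successive matrices of $(z\del)^n$ on $M$ should yield $|z\del|_{\spect,M} = \max\{1,\rho\}$. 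The same analysis run simultaneously over all admissible $\del'$, combined with Lemma~\ref{L:exp image}(b) applied to $E(r)$, would give the absolute scale of $M$ also equal to $\max\{1,\rho\}$.

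The main obstacle will be this final iteration: confirming that the diagonal contribution $z\del(r) I_d$ asymptotically dominates both the perturbation $N_{z\del}$ and the cross-term arising from $z\del$ acting on matrix entries, even as powers are taken. The regularity of $R$ is essential here, as Proposition~\ref{P:regulating} keeps the perturbation bounded in $\gotho_{F'}$ at all orders, so that the spectral contribution from $E(r)$ is not obscured.
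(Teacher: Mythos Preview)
Your overall strategy matches the paper's: reduce to irreducible $V$, pass to a finite extension $F'$ where the Hukuhara--Levelt--Turrittin decomposition applies, compare on each summand via Lemma~\ref{L:exp image}, and transfer back. But one step is not justified. You invoke Lemma~\ref{L:base change} to assert that the \emph{absolute} scale of $V$ equals that of $V \otimes_F F'$. Lemma~\ref{L:base change} requires $F$ to be a based differential field whose full set of derivations $\del_1,\dots,\del_h$ is of rational type; under Hypothesis~\ref{H:diff alg} we only know that the single derivation $\del$ is of rational type with respect to $z$, while $\Delta_F$ may contain other derivations with no such structure. Applying Lemma~\ref{L:base change} with $h=1$ and $\del_1 = \del$ only controls the \emph{based} scale for $\del$ alone, not the absolute scale computed from all of $\Delta_F$.

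The paper circumvents this by arguing asymmetrically. One uses only the trivial inequality (absolute scale of $V$) $\leq$ (absolute scale of $V \otimes_F F'$), then computes the right side as $\max_j$ of the absolute scale of $E(r_j)$, which by Lemma~\ref{L:exp image} equals $\max_j$ of the scale of $z\del$ on $E(r_j)$, hence the scale of $z\del$ on $V \otimes_F F'$; now Lemma~\ref{L:base change} \emph{does} apply (to $F$ equipped with $\del$ alone, order~$1$) to identify this with the scale of $z\del$ on $V$. Combined with the easy reverse inequality (absolute scale $\geq$ scale of $z\del$), equality follows. This also makes your Galois-transitivity claim unnecessary: one simply takes the maximum over all summands rather than arguing they share a common $|r_j|$. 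Finally, the ``iteration'' you flag as the main obstacle---that the absolute scale of $E(r_j) \otimes R_j$ equals that of $E(r_j)$---is indeed handled as you suggest: Proposition~\ref{P:regulating} keeps the regulating lattice of $R_j$ stable under every $\del'$ with $|\del'|_{F'}\leq 1$, so tensoring with the generator of $E(r_j)$ gives a lattice on which the computation of Lemma~\ref{L:exp image}(a) goes through verbatim.
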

\begin{proof}
We may assume $V$ is irreducible; in this case,
by Proposition~\ref{P:read slopes}
and Proposition~\ref{P:read absolute slopes}, the scale multiset of $z \del$ 
and the absolute scale multiset each
consist of a single element (else $V$ would be decomposable).
It thus suffices to check that the absolute scale of $V$ coincides
with the scale of $z \del$ on $V$.

On one hand, from the definition of scale and absolute scale,
the absolute scale of $V$ is at least the scale of $z \del$.
On the other hand, the absolute scale of $V$ is at most 
the absolute scale of $V \otimes_F F'$ for any finite extension $F'$ of $F$.
By Theorem~\ref{T:Turrittin}, we may choose $F'$ so that $V \otimes_F F'$
splits as a direct sum $\oplus V_j$ in which for each $j$, there is some
$r_j \in F'$ with $E(-r_j) \otimes_{F'} V_j$ regular.
The absolute scale of $V \otimes_F F'$ is the maximum of the absolute scales
of the $V_j$, which in turn is the same as the maximum of the absolute scales
of the $E(r_j)$. By Lemma~\ref{L:exp image}, 
this equals the maximum of the scales of $z\del$ on the $E(r_j)$,
which is the scale of $z\del$ on $V \otimes_F F'$. That in turn
equals the scale of $z\del$ on $V$ by Lemma~\ref{L:base change}
(applied to $F$ equipped with $\del$ alone).

We now have that the absolute scale of $V$ is bounded both above and 
below by the 
scale of $z\del$ on $V$. The two thus coincide, as desired.
\end{proof}

\begin{defn}
Let $V$ be a finite-dimensional $F$-vector space.
Let $W_1, W_2$ be $\gotho_F$-lattices in $V$. Define
\[
\ell(W_1, W_2) = \length_{\gotho_F} \frac{W_1}{W_1 \cap W_2}
- \length_{\gotho_F} \frac{W_2}{W_1 \cap W_2}.
\]
In particular, if $W_2 \subseteq W_1$, then
$\ell(W_1, W_2) = \length_{\gotho_F} W_1/W_2$. If we choose
an identification of $F$ with $K((z))$, we may replace the lengths
of $\gotho_F$-modules by dimensions of $K$-vector spaces.
\end{defn}

\begin{prop} \label{P:limit scale}
Let $V$ be a finite differential module over $F$.
Then for any $\gotho_F$-lattices $W_1, W_2$ in $V$
and any generator $z$ of $\gothm_F$,
\[
(-\log |z|) \lim_{s \to \infty} \frac{1}{s} \ell(\Delta^s(W_1), W_2) = 
\irreg(V).
\]
In particular, the limit exists and equals an integer.
\end{prop}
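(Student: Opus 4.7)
The plan is to reduce, via independence and additivity arguments, to an explicit computation on the Hukuhara-Levelt-Turrittin summands. First I would show the limit is independent of $W_1$ and $W_2$: Lemma~\ref{L:shift image} gives by iteration $\Delta^s(z^a W) = z^a \Delta^s(W)$, and any two lattices $W, W'$ in $V$ satisfy $z^a W \subseteq W' \subseteq z^{-a} W$ for some $a \geq 0$. Combining this with the cocycle identity $\ell(W_1, W_3) = \ell(W_1, W_2) + \ell(W_2, W_3)$ shows $\ell(\Delta^s(W_1), W_2)$ is well-defined up to $O(1)$ in $s$ as the lattices vary. Hence it suffices to prove existence and the value of the limit for any convenient pair.

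Next, by Proposition~\ref{P:read absolute slopes} I decompose $V = \bigoplus_{s_0} V_{s_0}$ as a direct sum of differential submodules of uniform absolute scale $s_0$. Since $\Delta^s$ respects this decomposition and both $\ell$ (over lattices that decompose compatibly) and $\irreg$ (by definition) are additive on direct sums, I reduce to $V$ of uniform absolute scale $s_0$. The case $s_0 = 1$ is trivial: by Proposition~\ref{P:regular}, $V$ is regular; by Proposition~\ref{P:regulating}, a regulating lattice $W$ satisfies $\Delta(W) \subseteq W$, forcing $\Delta^s(W) = W$ and both sides to vanish.

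For $s_0 > 1$, I pass to a finite extension $F'/F$ over which Theorem~\ref{T:Turrittin} provides $V \otimes_F F' = \bigoplus_j E(r_j) \otimes_{F'} R_j$ with each $R_j$ regular and $|r_j| = s_0$. The right-hand side is invariant by Lemma~\ref{L:base change}. The left-hand side is also invariant: writing the operator-norm-$\leq 1$ part of $F\{\Delta_F\}^{(s)}$ as $\bigoplus_i z^i \gotho_F \del^i$, its analog over $F'$ is the $\gotho_{F'}$-extension of this, so $\Delta^s_{F'}(W \otimes \gotho_{F'}) = \Delta^s_F(W) \otimes_{\gotho_F} \gotho_{F'}$; lengths over $\gotho_{F'}$ are then $m = $ ramification index times lengths over $\gotho_F$, which exactly cancels the change $-\log|z'| = (-\log|z|)/m$. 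Applying additivity once more over the Turrittin decomposition reduces to $V = E(r) \otimes R$ with $R$ regular and $|r| > 1$.

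Here I take $W = \bv \otimes W_R$ with $W_R$ a regulating lattice. For any $\del' \in \Delta_F$ with $|\del'|_F \leq 1$, Proposition~\ref{P:regulating} gives $\del'(W_R) \subseteq W_R$, and the formula $\del'(\bv \otimes w) = \del'(r)(\bv \otimes w) + \bv \otimes \del'(w)$ combined with $|\del'(r)| \leq |r|$ yields $\Delta(W) \subseteq r\gotho_F W$. For the reverse inclusion, taking $\del' = z\del_0$ with $\del_0$ of rational type and using Lemma~\ref{L:find rational type}(a), one has $z\del_0(r) \equiv -e \cdot r \pmod{zr\gotho_F}$ with $e = -v(r) \geq 1$ a unit in $K$ (here the residue-characteristic-zero hypothesis is essential); this forces the projection $\Delta(W) \to rW/(rzW + W)$ to be surjective, yielding $\Delta(W) = r\gotho_F W$. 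By the first paragraph applied with $c = r$, induction gives $\Delta^s(W) = r^s W = z^{-se} W$, so $\ell(\Delta^s(W), W) = n s e$, and $(-\log|z|) n e = n \log|r| = \dim(V) \log s_0 = \irreg(V)$ by Lemma~\ref{L:exp image}(b). The main technical obstacle is this last lower bound: ensuring that the leading term $-er \bv \otimes w_0$ is not cancelled by $\bv \otimes (z\del_0)(w_0)$ requires both the residue characteristic-zero assumption and the tight stability $\del'(W_R) \subseteq W_R$ from regulating-lattice theory.
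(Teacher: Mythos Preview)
Your proof is correct and follows essentially the same route as the paper: independence of the lattice via Lemma~\ref{L:shift image} and the cocycle identity for $\ell$, passage to a finite extension where Theorem~\ref{T:Turrittin} applies, and an explicit computation of $\Delta^s$ on each summand $E(r)\otimes R$ using a regulating lattice in $R$. The paper skips your preliminary decomposition by absolute scale and simply cites Lemma~\ref{L:exp image} (stated only for $E(r)$) for the final step, whereas you carry out that computation directly; the passage from surjectivity of $\Delta(W)\to rW/(rzW+W)$ to $\Delta(W)=r\gotho_F W$ is justified by Nakayama's lemma over the local ring $\gotho_F$.
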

\begin{proof}
Since $\ell(\Delta^s(W_1), W_2) - \ell(\Delta^s(W_1), W_1)
= \ell(W_1, W_2)$ is independent of $s$, the existence and value of the limit
are unaffected by replacing $W_2$ by $W_1$. We thus assume $W_1 = W_2 = W$
hereafter.

Define
\begin{align*}
i_-(W) &= \liminf_{s \to \infty} 
\frac{1}{s} \ell(\Delta^s(W), W) \\
i_+(W) &= \limsup_{s \to \infty} 
\frac{1}{s} \ell(\Delta^s(W), W).
\end{align*}
We first check that these quantities are independent of the choice of $W$.
Let $W'$ be another $\gotho_F$-lattice in $V$;
then there exist integers $m,n \geq 0$ such that
$z^m W \subseteq W' \subseteq z^{-n} W$. 
By Lemma~\ref{L:shift image}, we have
$z^m \Delta^s(W) \subseteq \Delta^s(W') \subseteq z^{-n} \Delta^s(W)$
for each nonnegative integer $s$.
It follows that
\begin{align*}
\ell(\Delta^s(W'),W')
&= \ell(\Delta^s(W'), z^m \Delta^s(W))
+ \ell(z^m \Delta^s(W), z^m W) 
- \ell(W',z^m W)
\\
&\leq \ell(z^{-n} \Delta^s(W), z^m \Delta^s(W))
+ \ell(\Delta^s(W), W) \\
&= \ell(\Delta^s(W), W) + (m+n)\dim_F V.
\end{align*}
Since the last term is independent of $s$, we conclude that
\[
i_-(W') \leq i_-(W), \qquad i_+(W') \leq i_+(W).
\]
The same argument applies in the opposite direction, proving that
$i_-(W') = i_-(W)$ and $i_+(W') = i_+(W)$.

We next note that for any finite extension $F'$ of $F$ with ramification
index $m$,
\[
i_-(W \otimes_{\gotho_F} \gotho_{F'}) = m i_-(W), \qquad
i_+(W \otimes_{\gotho_F} \gotho_{F'}) = m i_+(W).
\]
By Theorem~\ref{T:Turrittin}, we may thus reduce to the case where
$V$ is a direct sum of twist-regular submodules.
Since we are free to choose $W$, we may choose it to be a sum of
lattices, one in each summand. We may thus reduce to the case where
$V = E(r) \otimes_F R$ for some $r \in F$ and $R$ regular.
In this case, take $W$ to be the tensor product of $\gotho_F \bv$
with a regulating lattice in $R$. Then by 
Lemma~\ref{L:exp image}, we have
$\Delta^s(W) = W + r^s W$, from which it follows that
\[
i_-(W) = i_+(W) = \max\{0, \log |r| / (-\log |z|)\}.
\]
This yields the desired result.
\end{proof}

\subsection{Decompletion}
\label{subsec:decompletion}

We will need to transfer some of the preceding results to certain 
incomplete discretely valued fields.

\begin{hypothesis} \label{H:Robba}
Throughout \S \ref{subsec:decompletion}, we specialize
Hypothesis~\ref{H:diff alg} as follows. 
Put $K = K_0((t))$ and $F = K((z))$, and view $F$ as a based 
differential field of order 1 with derivation 
$\del = \frac{\del}{\del z}$.
Let $v_t$ denote the $t$-adic valuation on $K$.
\end{hypothesis}

\begin{defn} \label{D:robba}
Under Hypothesis~\ref{H:Robba},
view $K_0((z))$ as a complete subfield of $F = K_0((t))((z))$.
Interpret $F$ as the ring of formal Laurent series 
$\sum_{i \in \ZZ} c_i t^i$ with bounded coefficients $c_i \in K_0((z))$
tending to 0 as $i \to -\infty$.

Let $\calR$ denote the ring 
of double series $\sum_{i \in \ZZ} c_i t^i$ with $c_i \in K_0((z))$
which converge for $t \in \overline{K_0((z))}$
in an annulus of the form $1-\epsilon \leq |t| < 1$
for some $\epsilon > 0$ (which may depend on the series).
The ring $\calR$ is called the \emph{Robba ring} over $K_0((z))$;
it is not comparable with $F$.

Let $\calR^{\bd}$ (resp.\ $\calR^{\inte}$)
denote the subring of $\calR$ consisting of
series $\sum_i c_i t^i$ with $\sup_i \{|c_i|\} < +\infty$
(resp.\ $\sup_i \{|c_i|\} \leq 1$).
The rings $\calR^{\bd}$ and $\calR^{\inte}$ are also called the
\emph{bounded Robba ring} and \emph{integral Robba ring} over $K_0((z))$; 
they may be viewed as subrings not only of
$\calR$ but also of $F$.
Note that the ring $\calR^{\inte}$ is an incomplete discrete valuation ring
with maximal ideal $z \calR^{\inte}$, and its fraction field is
$\calR^{\bd}$.
\end{defn}

\begin{lemma} \label{L:reduce kernel1}
Equip $\calR^{\inte}$ with the derivation $z \del$.
Let $M$ be a finite free differential module over $\calR^{\inte}$.
Then the unique $K$-lattice of $M \otimes_{\calR^{\inte}} \gotho_F$
stable under $z \del$ is also a $K$-lattice of $M$.
\end{lemma}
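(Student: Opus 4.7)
The plan is to verify the lemma by showing that the formal gauge transformation produced by Lemma~\ref{L:fundamental solution} over $\gotho_F$ already has entries in the Robba ring $\calR^{\inte}$. Fix a basis $\be_1,\dots,\be_d$ of $M$ over $\calR^{\inte}$, let $N \in M_d(\calR^{\inte})$ be the matrix of $z\del$ in this basis, and set $N_0 := N \bmod z \in M_d(K)$. Our hypothesis, together with Lemma~\ref{L:fundamental solution} and Proposition~\ref{P:exponents in base}, forces $M \otimes_{\calR^{\inte}} \gotho_F$ to be a regulating lattice whose exponents (the eigenvalues of $N_0$) are prepared and lie in $\overline{K_0}$. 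Lemma~\ref{L:fundamental solution} then produces a unique $U \in I + zM_d(\gotho_F)$ satisfying $NU + z\del(U) = UN_0$, whose columns $\bv_j := \sum_i U_{ij}\be_i$ span the unique $z\del$-stable $K$-lattice inside $M \otimes \gotho_F$. Establishing the lemma therefore reduces to showing $U \in M_d(\calR^{\inte})$; then $U$ reduces to the identity modulo $z$, hence lies in $GL_d(\calR^{\inte})$, and the $\bv_j$ form the required $K$-lattice of $M$.

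Writing $U = \sum_j U_j z^j$ and $N = \sum_j N_j z^j$, the recurrence from Lemma~\ref{L:fundamental solution} reads $(j - \operatorname{ad}_{N_0})(U_j) = -\sum_{k=1}^j N_k U_{j-k}$. The crucial input is a uniform-in-$j$ bound on $(j - \operatorname{ad}_{N_0})^{-1}$ acting on $M_d(K)$. The characteristic polynomial $f(X)$ of the $K$-linear operator $\operatorname{ad}_{N_0}$ has degree $d^2$, and its roots are the differences $\lambda_a - \lambda_b$ of eigenvalues of $N_0$; since the latter form a Galois-stable subset of $\overline{K_0}$, so do the differences, giving $f \in K_0[X]$. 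Because the eigenvalues are prepared, $(j-X)$ is coprime to $f$ in $K_0[X]$ for every $j \geq 1$, and Bezout produces $A_j \in K_0[X]$ of degree $< d^2$ with $A_j(\operatorname{ad}_{N_0}) = (j-\operatorname{ad}_{N_0})^{-1}$. Since the coefficients of $A_j$ lie in $K_0$, which carries trivial $\rho$-Gauss norm in $t$, we obtain
\[
\|(j-\operatorname{ad}_{N_0})^{-1}\|_\rho \leq \max(1, \|\operatorname{ad}_{N_0}\|_\rho)^{d^2} =: B_\rho,
\]
a bound independent of $j \geq 1$.

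Fix $\rho_0 \in [1-\epsilon,1)$ with $N \in M_d(\calR^{\inte}_{\rho_0})$, and set $C_{\rho_0} = \|N\|_{\rho_0}$. A routine induction on the recurrence yields the exponential estimate $\|U_j\|_{\rho_0} \leq E^j$ for $E = (1 + B_{\rho_0}C_{\rho_0})/|z|$. Since each $U_j \in M_d(K)$ satisfies $\|U_j\|_{\rho_0} = \rho_0^{v_t(U_j)}$, this translates to $v_t(U_j) \geq -jM$ with $M := \log E/|\log \rho_0|$. For any $\rho' \in (\rho_0, 1)$ chosen so that $M|\log \rho'| < |\log|z||$---feasible since $|\log \rho'| \to 0^+$ as $\rho' \to 1^-$---we obtain
\[
\|U_j z^j\|_{\rho'} \leq (\rho')^{-jM} |z|^j = \exp\bigl(j(M|\log \rho'| - |\log|z||)\bigr),
\]
which is uniformly bounded (indeed decaying) in $j$. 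Hence $U \in M_d(\calR^{\inte}_{\rho'}) \subset M_d(\calR^{\inte})$, completing the argument.

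The main obstacle is the uniform bound in the second paragraph: Proposition~\ref{P:exponents in base} is decisive here, for it forces the eigenvalues of $N_0$ into $\overline{K_0}$, a subfield on which the $\rho$-Gauss norm in $t$ is trivial, and this is precisely what lets the Bezout coefficients sit in $K_0[X]$ and absorb the $j$-dependence. A secondary subtlety is the apparent tension between the exponential $\rho_0$-growth and Robba-ring analyticity, which is resolved by sliding $\rho_0$ up to some $\rho' < 1$ closer to $1$---i.e., working on a narrower sub-annulus---thereby dissolving any fixed exponential growth rate into the freedom $|\log \rho'|\to 0$.
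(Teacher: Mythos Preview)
Your overall strategy---show that the gauge matrix $U$ from Lemma~\ref{L:fundamental solution} already has entries in $\calR^{\inte}$ by establishing a linear lower bound $v_t(U_j) \geq -jM$ and then shrinking the annulus---is exactly the paper's, and the endgame is fine. The gap is in your justification of the uniform bound on $(j-\operatorname{ad}_{N_0})^{-1}$.

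You invoke Proposition~\ref{P:exponents in base} to place the eigenvalues of $N_0$ in $\overline{K_0}$, but this conflates two different objects both called $K_0$. In Hypothesis~\ref{H:diff alg}, $K_0$ denotes the joint kernel of $\Delta_F$ on $F$; under Hypothesis~\ref{H:Robba} the only derivation is $\del = \frac{\del}{\del z}$, so that joint kernel is all of $K = K_0((t))$, not the coefficient field $K_0$ of the Robba-ring setup. Proposition~\ref{P:exponents in base} therefore says only that the characteristic polynomial of $N_0$ lies in $K[T]$, which is automatic. In general $N_0 \in M_d(K)$ has eigenvalues with nontrivial $t$-adic content, so the Bezout polynomial $A_j$ lives in $K[X]$ rather than $K_0[X]$, its coefficients may have large negative $t$-valuation (governed by how $t$-adically close the eigenvalue differences are to $j$), and your bound $\max(1,\|\operatorname{ad}_{N_0}\|_\rho)^{d^2}$ collapses.

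The paper obtains the uniform bound differently: write $T = \operatorname{ad}_{N_0}$ and observe that $\det(T+y)$ is a polynomial of degree $d^2$ in $y$ with coefficients in $K = K_0((t))$. Expanding $t$-adically, each coefficient of a power of $t$ is a polynomial in $K_0[y]$ of degree at most $d^2$, hence has at most $d^2$ integer roots. Since preparedness gives $\det(T+j) \neq 0$ for all $j \in \ZZ \setminus \{0\}$, the lowest nonvanishing $t$-coefficient already detects all but finitely many $j$, so $v_t(\det(T+j))$ is bounded above uniformly in $j$. Cramer's rule then converts this into the needed estimate $v_t(X) \geq v_t((j+\operatorname{ad}_{N_0})X) - c$. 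With this replacement for your second paragraph, your induction and annulus-shrinking go through unchanged and match the paper's proof.
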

\begin{proof}
Let $\be_1, \dots, \be_d$ be a basis of $M$. Let $N$ be the matrix of action
of $z\del$ on $\be_1,\dots,\be_d$. As in Lemma~\ref{L:fundamental solution},
there exists a unique $d \times d$ matrix $U = \sum_{i=0}^\infty U_i z^i$
over $K \llbracket z \rrbracket$ with $U_0$ equal to the identity matrix,
such that
\[
NU + z \del(U) = U N_0.
\]
To prove the desired result, it suffices to show that $U$ has entries in
$\calR^{\inte}$ (as then will its inverse).

For $y$ an indeterminate, the expression $\det(T + y) \in K[y]$
is a polynomial of degree $d^2$. Viewed in $K_0[y]((t))$,
it is a series each of whose terms is bounded in degree by $d^2$. 
If the coefficient of some power of $t$ in this series fails to 
vanish at some $y \in \ZZ \setminus \{0\}$, then it fails to vanish at
all but finitely many $y \in \ZZ \setminus \{0\}$.
Since $\det(T+y)$ does not vanish for any $y \in \ZZ \setminus \{0\}$,
it follows that the
$t$-adic valuations of $\det(T + y)$ for $y \in \ZZ \setminus \{0\}$ must be
bounded above. 

This implies that there exists a constant $c$ such that for
any $d \times d$ matrix $X$ over $K$ and any nonzero integer $i$,
\begin{equation} \label{eq:solve de2}
v_t(X) \geq v_t(iX + N_0X - XN_0) - c.
\end{equation}
Since $N$ has entries in $\calR^{\inte}$, we can enlarge $c$ so as to have
$v_t(N_i) \geq -ic$ for all $i > 0$.
From \eqref{eq:solve de} and~\eqref{eq:solve de2}, it follows by induction on
$i$ that $v_t(U_i) \geq -2ic$ for $i \geq 0$: namely,
\begin{align*}
v_t(U_i) &\geq \min \{v_t(N_1) + v_t(U_{i-1}), \dots, v_t(N_i) 
+ v_t(U_0)\} - c\\
&\geq (-c-2(i-1)c) -c = -2ic.
\end{align*}
In particular,
$U$ has entries in $\calR^{\inte}$, as claimed. 
\end{proof}

\begin{lemma} \label{L:reduce kernel}
Equip $\calR^{\bd}$ with the derivation $\del$.
Let $V$ be a finite differential module over $\calR^{\bd}$
such that $V \otimes_{\calR^{\bd}} F$ is regular.
Then the kernels of $\del$ on $V, V \otimes_{\calR^{\bd}} F, V \otimes_{\calR^{\bd}} \calR$ all coincide.
\end{lemma}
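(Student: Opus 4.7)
The strategy is to construct a single $K$-subspace $H \subseteq V$ that realizes all three kernels simultaneously. The first step is to produce an $\calR^{\inte}$-lattice $M \subseteq V$ stable under $z\del$ for which $M \otimes_{\calR^{\inte}} \gotho_F$ is a regulating lattice of $V \otimes F$. Apply Lemma~\ref{L:cyclic} to the characteristic-zero differential field $(\calR^{\bd}, z\del)$ (on which $z\del$ acts nontrivially) to obtain an isomorphism $V \cong \calR^{\bd}\{T\}/\calR^{\bd}\{T\} Q(T)$ with $T \leftrightarrow z\del$ and $Q$ monic of degree $d$. The hypothesis that $V \otimes F$ is regular yields $|z\del|_{\spect, V \otimes F} = |z\del|_{\spect,F} = 1$ by Proposition~\ref{P:regular}, so by Proposition~\ref{P:read slopes} the Newton polygon of $Q$ in $F\{T\}$ has every slope equal to $0$; this forces each coefficient $Q_j$ to lie in $\gotho_F \cap \calR^{\bd} = \calR^{\inte}$, so $M_0 := \calR^{\inte}\{T\}/\calR^{\inte}\{T\}Q(T)$ is an $\calR^{\inte}$-lattice of $V$ stable under $z\del$. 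Successive shearing transformations (Lemma~\ref{L:shearing} and Proposition~\ref{P:make regulating} transfer verbatim to $\calR^{\inte}$-lattices, since they depend only on the generalized eigenspace decomposition of the $K$-vector space $M_0/zM_0$) produce the desired $M$.

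Next, Lemma~\ref{L:reduce kernel1} applied to $M$ says that the unique $K$-lattice $W_0$ of $M \otimes_{\calR^{\inte}} \gotho_F$ stable under $z\del$ (furnished by Lemma~\ref{L:fundamental solution}) is in fact a $K$-lattice of $M$. Faithful flatness of $\gotho_F$ over the DVR $\calR^{\inte}$ promotes this to an isomorphism $M \cong W_0 \otimes_K \calR^{\inte}$, and tensoring up gives $V \cong W_0 \otimes_K \calR^{\bd}$, $V \otimes_{\calR^{\bd}} F \cong W_0 \otimes_K F$, and $V \otimes_{\calR^{\bd}} \calR \cong W_0 \otimes_K \calR$. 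In all three cases $z\del$ acts as $N_0 \otimes 1 + 1 \otimes z\del$, where $N_0 \in \End_K(W_0)$ is the restriction of $z\del$; its eigenvalues lie in $\overline{K_0}$ (Proposition~\ref{P:exponents in base}) and are prepared by construction, so in particular every nonzero eigenvalue is a non-integer.

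To compute kernels, work on $W_0 \otimes_K R$ for $R \in \{\calR^{\bd}, F, \calR\}$; after extending scalars from $K_0$ to $\overline{K_0}$, decompose $W_0$ into generalized eigenspaces of $N_0$ and solve up each Jordan chain. The essential inputs are that $\ker(z\del|_R) = K$ for each $R$; that $\ker(z\del + \lambda|_R) = 0$ for any non-integer $\lambda \in \overline{K_0}$; and that the equation $(z\del)(r) = k$ is not solvable inside $R$ for any nonzero $k \in K$ (since the image of $z\del$ consists of elements whose $z$-expansion coefficients all have vanishing constant term). A routine induction up each Jordan chain then forces every solution of $(N_0 \otimes 1 + 1 \otimes z\del)(\bw) = 0$ to lie in $\ker(N_0|_{W_0}) \subseteq W_0 \subseteq V$; the reverse inclusion is trivial. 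Since $z$ is a unit in each of the three rings, $\ker(\del) = \ker(z\del)$ throughout, so all three kernels coincide with $\ker(N_0|_{W_0})$.

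The main obstacle is the analysis over $\calR$: because $\calR$ does not embed into $F$, this case cannot be reduced to a formal computation at the level of $V \otimes F$ and must be handled by expanding $r \in \calR$ as $\sum c_i t^i$ with $c_i \in K_0((z))$, then using the action of $z\del$ on each coefficient $c_i$ separately. The prepared hypothesis on the eigenvalues of $N_0$ is what rules out every non-horizontal formal solution, both in each $z$-expansion coefficient and in the convergence behavior in $t$; this is precisely the point at which the absence of integer resonances is crucial.
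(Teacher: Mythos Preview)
Your proof is correct and follows essentially the same strategy as the paper: both invoke Lemma~\ref{L:reduce kernel1} to produce a $(z\del)$-stable $K$-lattice $W_0$ inside an $\calR^{\inte}$-lattice of $V$, then exploit the prepared eigenvalues of $N_0 = z\del|_{W_0}$ to pin down the horizontal elements. The differences are purely in presentation: the paper obtains its $\calR^{\inte}$-lattice by intersecting $V$ with a regulating lattice of $V\otimes F$ (rather than via a cyclic vector plus shearing), and it computes the kernel by the formal $z$-expansion $\bv = \sum_i \bv^{(i)} z^i$ with $\bv^{(i)}\in W_0$ (Remark~\ref{R:formal product}), from which $(i+N_0)\bv^{(i)}=0$ immediately forces $\bv^{(i)}=0$ for $i\neq 0$; your Jordan-chain analysis unpacks the same calculation. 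One small redundancy: once Lemma~\ref{L:reduce kernel1} declares $W_0$ to be a $K$-lattice of $M$, the isomorphism $M\cong W_0\otimes_K\calR^{\inte}$ is immediate from the definition of lattice, so the appeal to faithful flatness is not needed.
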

\begin{proof}
Let $M$ be the intersection of $V$ with a regulating lattice in 
$V \otimes_{\calR^{\bd}} F$;
then $M$ is a finite free differential module over $\calR^{\inte}$
for the derivation $z \del$.
By Lemma~\ref{L:reduce kernel1}, $M$ contains a $(z \del)$-stable
$K$-lattice $W_0$.

Let $\bv$ be an element of the kernel of $\del$ on either 
$V \otimes_{\calR^{\bd}} F$
or $V \otimes_{\calR^{\bd}} \calR$.
By formally writing $\bv = \sum_i \bv^{(i)} z^i$ with each $\bv^{(i)} \in W_0$
as in Remark~\ref{R:formal product},
we see that $\bv^{(i)} = 0$ for $i \neq 0$.
Hence
$\bv \in W \subseteq V$, as desired.
\end{proof}

\subsection{More decompletion}
\label{subsec:more decompletion}

We continue the discussion of the previous section, but now add the effects
of additional derivations. This argument will be used at a key point to study
variation of irregularity (Theorem~\ref{T:convex}).

\begin{hypothesis}
Throughout \S \ref{subsec:more decompletion}, we specialize
Hypothesis~\ref{H:diff alg} as follows. 
Put $K = K_1((t))$ for some field $K_1$ of characteristic zero,
and put $F = K((z))$ equipped with the $z$-adic norm (of arbitrary
normalization).
Take $\Delta_F$ to be the module of derivations
generated by
$\del_0 = \frac{\del}{\del z}$,
$\del_1 = \frac{\del}{\del t}$,
and some commuting derivations $\del_2,\dots,\del_n$
on $K_1$ which act coefficientwise on $K$ and $F$.
\end{hypothesis}

\begin{defn}
For $r \geq 0$, define the function $|\cdot|_r: \calR^{\bd} \to [0, +\infty]$
by setting
\[
\left| \sum_i c_i t^i \right|_r = \sup_i \{|c_i| |z|^{ri}\}.
\]
Let $\calR^{\bd,r}$ be the subset
of $\calR^{\bd}$ on which $|\cdot|_r$ is finite; this is a subring
of $\calR^{\bd}$ on which $|\cdot|_r$ is a multiplicative norm.
Let $F_r$ be the completion of $\Frac(\calR^{\bd,r})$ under $|\cdot|_r$,
viewed as a based differential ring of order $n+1$
with derivations $\del_0,\dots,\del_n$; note that $F_0 = F$.
\end{defn}

\begin{lemma} \label{L:lower semicontinuous}
Let $S$ be a closed polyhedral subset of $\RR^n$ (i.e., the intersection
of finitely many closed halfspaces), and let $f: S \to \RR$
be a convex function. Then $f$ is upper semicontinuous; consequently,
$f$ is continuous if and only if it is lower semicontinuous.
\end{lemma}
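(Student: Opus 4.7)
The plan is to establish upper semicontinuity pointwise: fix $x_0 \in S$ and show that $\limsup_{x \to x_0, x \in S} f(x) \leq f(x_0)$. The consequent statement (continuity iff lower semicontinuous) is then immediate, since a function that is both upper and lower semicontinuous is continuous.

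First I would reduce to a local cone picture. Writing $S = \{x : \langle a_i, x \rangle \leq b_i, \ i=1,\dots,m\}$ and letting $I \subseteq \{1,\dots,m\}$ index the constraints active at $x_0$, one sees that in a sufficiently small neighborhood of $x_0$ the inactive constraints remain strictly satisfied. Hence $x \in S$ near $x_0$ is equivalent to $x - x_0$ lying in the polyhedral cone $C = \{v : \langle a_i, v \rangle \leq 0, \ i \in I\}$. By the Minkowski--Weyl theorem (or a direct Fourier--Motzkin argument), $C$ is finitely generated: $C = \operatorname{cone}(d_1, \dots, d_\ell)$ for some unit vectors $d_j$.

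Next I would prove the key \emph{bounded representation lemma}: there is a constant $M > 0$ such that every $v \in C$ admits a decomposition $v = \sum_j \lambda_j d_j$ with $\lambda_j \geq 0$ and $\sum_j \lambda_j \leq M |v|$. This follows from Carath\'eodory's theorem together with a compactness argument on $C \cap S^{n-1}$, which is compact since $C$ is closed. With this lemma in hand, fix $t > 0$ small enough that $x_0 + t d_j \in S$ for each $j$. For any $x \in S$ sufficiently close to $x_0$, write $x - x_0 = \sum_j \lambda_j d_j$ with $s := \sum_j \lambda_j \leq M |x - x_0| < t$. Then
\[
x = \Bigl(1 - \tfrac{s}{t}\Bigr) x_0 + \sum_j \tfrac{\lambda_j}{t} (x_0 + t d_j)
\]
is a convex combination, so convexity of $f$ yields
\[
f(x) \leq \Bigl(1 - \tfrac{s}{t}\Bigr) f(x_0) + \sum_j \tfrac{\lambda_j}{t} f(x_0 + t d_j).
\]
As $x \to x_0$, the estimate $s \leq M|x-x_0| \to 0$ forces the right-hand side to tend to $f(x_0)$, giving the desired $\limsup$ bound.

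The main obstacle is the bounded representation lemma in the third step; although standard in polyhedral combinatorics, its proof is the only non-tautological input. Everything else is a routine application of the definition of convexity together with the local-cone description of a polyhedron at a point, and the final ``consequently'' is a one-line observation.
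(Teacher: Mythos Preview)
Your proof is correct. The paper does not actually prove this lemma; it simply cites \cite[Theorem~10.2]{rockafellar}. So your argument is a genuinely different (indeed, the only) route in the sense that you supply a self-contained proof where the paper defers to the literature.

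Your approach --- reducing to the local feasible-direction cone, invoking Minkowski--Weyl to get finitely many generators, and then writing nearby points as convex combinations of $x_0$ with finitely many fixed points $x_0 + t d_j$ --- is essentially the standard proof one would find by unwinding Rockafellar's argument. The bounded representation lemma is the only nontrivial ingredient, and your sketch via conic Carath\'eodory (write $v$ as a nonnegative combination of a linearly independent subset of the $d_j$, then take the worst constant over the finitely many such subsets) is correct; the compactness phrasing also works. The convex-combination identity and the limit computation are both clean. What your approach buys is that the paper becomes self-contained on this point; what the citation buys is brevity, since the result is classical and its proof is not related to the paper's main themes.
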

\begin{proof}
See \cite[Theorem~10.2]{rockafellar}.
\end{proof}

\begin{lemma} \label{L:varies continuously}
For $f \in \calR^{\bd}$, the function $r \mapsto |f|_r$ is continuous
in some neighborhood of $r=0$.
\end{lemma}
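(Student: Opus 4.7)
The plan is to recognize $|f|_r$ as the exponential of a convex function of $r$ on a closed interval $[0,r_0]$ containing $0$, and to combine this (via Lemma~\ref{L:lower semicontinuous}, which provides upper semi-continuity) with a direct verification of lower semi-continuity.

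Write $f = \sum_i c_i t^i$. The first step is to choose $r_0 > 0$ small enough that $f \in \calR^{\bd,r_0}$; such an $r_0$ exists because by definition of the Robba ring $\calR$, $f$ converges on some annulus $1-\epsilon \leq |t| < 1$, so one takes $r_0 > 0$ with $|z|^{r_0} = 1-\epsilon$. A short check---splitting the supremum defining $|f|_r$ according to the sign of $i$ and using $|z| < 1$---then shows that $|f|_r \leq \max\{|f|_0, |f|_{r_0}\}$ for every $r \in [0,r_0]$, so that $r \mapsto |f|_r$ is well-defined and finite on the closed interval $[0,r_0]$.

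Next, I would observe that
\[
\log |f|_r = \sup_i \bigl(\log |c_i| + r i \log |z|\bigr)
\]
is a supremum of affine functions of $r$, hence convex on $[0,r_0]$. Lemma~\ref{L:lower semicontinuous} then yields upper semi-continuity of $\log|f|_r$, and so of $|f|_r$. For the matching lower bound, I would use the elementary inequality $|f|_r \geq |c_i| |z|^{ri}$ valid for every fixed $i$: since the right-hand side is continuous in $r$, taking $\liminf_{r \to r^*}$ and then a supremum over $i$ gives $\liminf_{r \to r^*} |f|_r \geq |f|_{r^*}$, i.e.\ lower semi-continuity at every $r^* \in [0,r_0]$. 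Combining upper and lower semi-continuity yields continuity on $[0,r_0]$, and in particular in a (right) neighborhood of $r=0$.

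The only genuine subtlety is arranging finiteness of $|f|_r$ on the entire interval $[0,r_0]$, since one portion of the supremum grows and the other shrinks as $r$ varies. This is handled cleanly by the boundedness of the coefficients $|c_i|$ built into the definition of $\calR^{\bd}$, together with convergence of the series at one positive value of $r$; everything else in the argument reduces to standard real-variable facts about suprema of affine functions.
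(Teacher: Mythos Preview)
Your proposal is correct and follows essentially the same approach as the paper: both express $\log|f|_r$ as a supremum of affine functions of $r$ (hence convex and lower semicontinuous), invoke Lemma~\ref{L:lower semicontinuous} for upper semicontinuity, and check finiteness at $r=0$ and at some $r_0>0$. The paper compresses the argument slightly by writing $f=\sum_{i,j} c_{ij} z^i t^j$ with $c_{ij}\in K_1$ so that the affine functions are visibly $(i+rj)\log|z|$, but your single-index formulation with $c_i\in K_0((z))$ works just as well.
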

\begin{proof}
If we formally write $f = \sum_{i,j} c_{ij} z^i t^j$ with $c_{ij} \in K_1$,
then $|f|_r$ is the supremum of $|z|^{i+rj}$ over all pairs $(i,j)$
for which $c_{ij} \neq 0$. In particular, $\log |f|_r$ is 
the supremum of a collection of affine functions of $r$.
Such a supremum is both lower semicontinuous and convex in $r$,
and so by Lemma~\ref{L:lower semicontinuous}
is continuous on a closed interval if and only if it is finite
there. But $|f|_r$ is finite both for $r=0$ and for some $r>0$,
so it is continuous in some neighborhood of $r=0$.
\end{proof}

\begin{prop} \label{P:continuous scales}
Let $M$ be a finite differential module over $\calR^{\bd,s}$ for some
$s>0$.
Then the absolute scale multiset of $M_r = M \otimes_{\calR^{\bd,s}} F_r$
varies continuously with $r$ at $r=0$.
\end{prop}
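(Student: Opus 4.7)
The plan is to read the scale multiset of $M_r$ off the slopes of a twisted Newton polygon whose coefficients have $|\cdot|_r$-norms varying continuously in $r$, then invoke an elementary planar convex-geometry fact. First, since $\del_0(z) = 1$ and $|\del_0|_{F_r} = |z|^{-1}$, the derivation $\del_0$ is of rational type on $F_r$ with respect to $z$ for every $r \in [0,s]$. Granting the routine check that Hypothesis~\ref{H:diff alg} is satisfied by each $F_r$, Proposition~\ref{P:scale from h} identifies the absolute scale multiset of $M_r$ with the scale multiset of $z\del_0$ acting on $M_r$, so it suffices to prove continuity of this latter multiset at $r=0$.

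Next, apply Lemma~\ref{L:cyclic} to $M \otimes_{\calR^{\bd,s}} \Frac(\calR^{\bd,s})$, viewed as a differential module over $\Frac(\calR^{\bd,s})$ equipped only with $z\del_0$, to produce a cyclic vector. Clearing denominators preserves cyclicity, so we may take the cyclic vector to be some $\bv \in M$, yielding a monic twisted polynomial $P(T) = \sum_i P_i T^i \in \Frac(\calR^{\bd,s})\{T\}$ of degree $n = \rank(M)$ with $P(z\del_0)(\bv) = 0$. An elementary comparison of norms gives $\calR^{\bd,s} \subseteq \calR^{\bd,r}$ for $r \in [0,s]$ (the nonnegative-index terms are bounded because $f \in \calR^{\bd}$; the negative-index terms satisfy $|c_i||z|^{ri} \leq |c_i||z|^{si}$), hence $\Frac(\calR^{\bd,s}) \hookrightarrow F_r$; under this inclusion $\bv$ stays cyclic in $M_r$ and $M_r \cong F_r\{T\}/(P)$. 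Now apply Proposition~\ref{P:read slopes} to $F_r$ equipped solely with $z\del_0$ (noting $|z\del_0|_{F_r} = 1$): the scale multiset of $z\del_0$ on $M_r$ is read off from the slopes of the Newton polygon of $P$ over $F_r$, with a segment of slope $-\log s$ and horizontal width $d$ contributing the scale $s$ with multiplicity $d$.

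To conclude, Lemma~\ref{L:varies continuously} combined with the identity $|f/g|_r = |f|_r/|g|_r$ (and the positivity of $|g_i|_r$ for $g_i \neq 0$) shows that each $|P_i|_r$ varies continuously in $r$ near $0$. Since the sorted list of slopes (read with multiplicity equal to horizontal edge width) of the lower convex hull of a finite set of points in $\RR^2$ with fixed abscissae and continuously varying ordinates is itself continuous, the scale multiset of $M_r$ varies continuously at $r=0$. The main obstacle is this final step: one must verify carefully that Newton-polygon slopes depend continuously on the (logarithms of) coefficient norms, handling the coalescence or splitting of edges as $r$ varies, so that the multiplicities sum correctly in the limit. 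Everything else is an assembly of previously established machinery.
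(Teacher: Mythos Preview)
Your proof has a genuine gap at the step invoking Proposition~\ref{P:scale from h}. That proposition is stated under Hypothesis~\ref{H:diff alg}, which requires $\gotho_F$ to be a complete \emph{discrete} valuation ring and $z$ to be a generator of $\gothm_F$. Neither holds for $F_r$ with $r>0$: when $r$ is irrational the value group $|z|^{\ZZ + r\ZZ}$ is dense, so $F_r$ is not discretely valued at all; and when $r = p/q$ is rational in lowest terms with $q>1$, the value group is $|z|^{(1/q)\ZZ}$, so $z$ is not a uniformizer. The ``routine check'' you defer is therefore false, and Proposition~\ref{P:scale from h} does not apply.

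More decisively, the identification you want is simply wrong for $r>0$: the absolute scale multiset of $M_r$ need not coincide with the scale multiset of $z\del_0$. Take $M = E(t^{-1})$ over $\calR^{\bd,s}$. Then $\del_0(\bv) = 0$, so the scale of $z\del_0$ on $M_r$ is $1$ for every $r$. But $\del_1(\bv) = -t^{-2}\bv$, and since $|\del_1|_{\spect,F_r} = |t|_r^{-1} = |z|^{-r}$ while $|t^{-2}|_r = |z|^{-2r}$, the scale of $\del_1$ on $M_r$ is $|z|^{-r} = e^{r} > 1$ for $r>0$. By Proposition~\ref{P:scale to absolute} (which \emph{does} apply to $F_r$, since $\del_0,\del_1$ are of rational type there), the absolute scale of $M_r$ equals the based scale $\max\{1,e^r\} = e^r$. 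So your single-derivation Newton polygon misses the contribution from $\del_1$ entirely.

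The paper's proof addresses exactly this: it uses Proposition~\ref{P:scale to absolute} rather than Proposition~\ref{P:scale from h}, tracks the scale multisets of \emph{both} $\del_0$ and $\del_1$ via separate cyclic-vector Newton polygons, and then---because the based scale is a maximum over derivations rather than a quantity readable from a single polygon---first reduces (via a decomposition descended through Lemma~\ref{L:reduce kernel}) to the case where each $\del_j$ has a single scale on $M_0$, and finishes with a squeeze argument. Your Newton-polygon continuity idea is the right engine, but it must be run for each $\del_j$ and combined as the paper does.
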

\begin{proof}
By Proposition~\ref{P:scale to absolute}, 
the absolute scale multiset of $M_r$
is equal to the based scale multiset for
the derivations $\del_0,\del_1$.
As in the proof of Proposition~\ref{P:read absolute slopes}, 
there is a direct sum decomposition of $M_0$ in which for 
$j \in \{0,1\}$, the scale multiset of $\del_j$
on each summand in the decomposition consists of a single element.
By Lemma~\ref{L:reduce kernel}, the projectors of this decomposition
also define a direct sum decomposition of $M \otimes_{\calR^{\bd,s}} \calR^{\bd}$.
Hence at the expense of lowering $s$, we may reduce to the case
where for $j \in \{0,1\}$, the scale multiset of $\del_j$ on $M_0$
consists of a single element. In particular, the absolute scale
multiset of $M_0$ consists of a single element.

We check that for $j \in \{0,1\}$,
as $r$ tends to 0,
each element of 
the scale multiset of $\del_j$ on $M_r$ tends to the scale
of $\del_j$ on $M_0$.
View
$F = \Frac(\calR^{\bd,s})$ as a based differential field of order 1 equipped
with just $\del_j$.
Apply Lemma~\ref{L:cyclic} to construct an isomorphism
$M \otimes_{\calR^{\bd,s}} F \cong F\{T\}/F\{T\}P$
for some monic twisted polynomial $P = T^d + \sum_{i=0}^{d-1} P_i T^i \in
F\{T\}$.
By Proposition~\ref{P:read slopes},
the scale multiset of $\del_j$ on $M_r$ is a continuous function
of the Newton polygon of $P$ under $|\cdot|_r$. However, 
by Lemma~\ref{L:varies continuously},
the vertices of this polygon vary continuously in $r$ at $r=0$.
This completes the check.

Let $S$ be the absolute scale of $M_0$.
On one hand, each element of the absolute scale multiset of $M_r$
is bounded above by the maximum over $j$ of the scale of $\del_j$
on $M_r$, which by the previous paragraph
tends to $S$ as $r$ tends to 0.
On the other hand, we can choose $j$ such that the scale of 
$\del_j$ on $M_0$ equals $S$; then each element 
of the absolute scale multiset of $M_r$ is bounded below by the least
element of the scale multiset of $\del_j$ on $M_r$.
This also tends to $S$ as $r$ tends to 0, because the scale multiset of
$\del_j$ on $M_0$ contains a single element.

We conclude that the entire absolute scale multiset of $M_r$
is bounded both above and below by functions which tend to $S$ as $r$
tends to 0. This proves the desired result.
\end{proof}

\section{Differential modules over power series rings}
\label{sec:power series}

In this section, we formulate some results about differential modules on
localized power series rings. These are derived from results in 
our joint paper \cite{kedlaya-xiao} with Liang Xiao.
See \S~\ref{subsec:power setup} for running notation and hypothesis in this 
section.

\subsection{Setup}
\label{subsec:power setup}

We set some running hypotheses for this section and the next.

\begin{notation} \label{N:numerical}
Throughout \S~\ref{sec:power series} and \S~\ref{sec:numerical},
set notation as follows.
Let $k$ be a field of characteristic zero.
For $n \geq m \geq 0$ integers, put
$R_{n,m} = k \llbracket x_1, \dots, x_n \rrbracket [ x_1^{-1}, \dots, 
x_m^{-1}]$. 
Unless otherwise specified,
view $R_{n,m}$ as a based differential ring equipped with the derivations
$\del_1, \dots, \del_n = \frac{\del}{\del x_1}, \dots, \frac{\del}{\del x_n}$.
Note that $R_{n,m}$ is a localization of the locally simple (by
Example~\ref{exa:locally simple}) noetherian 
ring $k \llbracket x_1,\dots,x_n \rrbracket$,
so it is also noetherian and locally simple. Hence any finite differential module
over $R_{n,m}$ is locally free (or equivalently projective, since
$R_{n,m}$ is noetherian).
\end{notation}

\begin{remark}
In some situations, we will need to equip $R_{n,m}$ instead with the
derivations $x_1\del_1,\dots,x_n\del_n$. With these derivations,
$R_{n,m}$ is not locally simple.
\end{remark}

\begin{remark} \label{R:proj free}
Since we will be working with finitely generated locally free modules over 
$R_{n,m}$, it would be useful to know that these are all free.
Unfortunately, there are only two cases where this is easy to prove.
\begin{itemize}
\item
If $m=0$, this holds by Nakayama's lemma.
\item
If $n=2$ and $m>0$, this holds because $R_{n,m}$ is 
a one-dimensional factorial noetherian domain, and hence a principal
ideal domain \cite[\S VII.3.1, Exemple~1(a)]{bourbaki-algcom}.
\end{itemize}
The case $m=1$ is known in general, but the proof is difficult;
see \cite{popescu} or its MathSciNet review for a summary. 
(Thanks to Joseph Gubeladze for the reference.)

One can at least say that all finitely generated locally free modules
over $R_{n,m}$ are stably free, i.e., the group $K_0(R_{n,m})$ vanishes;
this follows from the case $m=0$ (Nakayama's lemma again) by localization.
However, we will not need this result.
\end{remark}

\begin{defn}
For $r = (r_1,\dots,r_n) \in [0, +\infty)^n$,
write $e^{-r}$ for $(e^{-r_1},\dots,e^{-r_n})$.
Let $|\cdot|_r$ be the $(e^{-r})$-Gauss norm
on $R_{n,m}$, as in \eqref{eq:gauss} except we have changed the
subscript from $e^{-r}$ to $r$ for brevity.
Let $F_r$ be the completion of $\Frac R_{n,m}$ 
with respect to $|\cdot|_r$, viewed as a complete nonarchimedean
based differential ring of order $n$ using $\del_1,\dots,\del_n$.
\end{defn}

\begin{remark} \label{R:based absolute}
Note that $\del_1,\dots,\del_n$ are of rational type on $F_r$
with respect to $x_1,\dots,x_n$;
this means that the results of \cite{kedlaya-xiao}, which refer to based
scales, apply also to absolute scales thanks to 
Proposition~\ref{P:scale to absolute}. We will not point this out again.
\end{remark}

\begin{notation}
For $M$ a differential module over $R_{n,m}$
and $r \in [0, +\infty)^n$, we write $M_r$ as shorthand for 
$M \otimes_{R_{n,m}} F_r$. This may combine with other subscripts,
e.g., $M_{0,r}$ is shorthand for $M_0 \otimes_{R_{n,m}} F_r$.
\end{notation}

\subsection{Spectral variation}

We now set notation to record the irregularity of $M$ as a function
of a varying Gauss norm. Our main result is in the spirit of 
\cite[Theorem~3.3.9]{kedlaya-xiao} but is not an immediate corollary of that result,
so a bit of work is needed. 

\begin{defn} \label{D:spectral}
Let $M$ be a differential module over $R_{n,m}$ of finite rank $d$.
For $r \in [0, +\infty)^n$, 
define $f_1(M,r) \geq \cdots \geq f_d(M,r) \geq 0$ as the numbers such 
that the absolute scale multiset of $M_r$
consists of $e^{f_1(M,r)}, \dots, e^{f_d(M,r)}$. Define
$F_i(M,r) = f_1(M,r) + \cdots + f_i(M,r)$, so that $F_i(M,r)$
computes the $i$-th partial irregularity of $M_r$ (see
Definition~\ref{D:irregularity});
also put $F_0(M,r) = 0$.
Note that for any $\lambda \geq 0$, $f_i(M, \lambda r) = \lambda f_i(M,r)$.
\end{defn}

For a differential module over $k[x_1,\dots,x_n,x_1^{-1},\dots,x_m^{-1}]$,
the following result would be an immediate corollary of 
\cite[Theorem~3.3.9]{kedlaya-xiao} (using Remark~\ref{R:based absolute});
we may use that result plus a bit of extra argument to get what we need here.
\begin{theorem} \label{T:convex}
Let $M$ be a differential module over $R_{n,m}$ of finite rank $d$.
For $i=1,\dots,d$, the function $d! F_i(M,r)$ can be written
as $\max_{j=1}^h \{\lambda_j(r)\}$ for some
linear functionals $\lambda_1, \dots, \lambda_h$ which are integral
(i.e., which carry $\ZZ^n$ to $\ZZ$).
In particular, $F_i(M,r)$ is continuous, convex, and piecewise linear.
Moreover, for $j \in \{m+1,\dots,n\}$, if we fix $r_h$ for $h \neq j$,
then $F_i(M,r)$ is nonincreasing as a function of $r_j$ alone.
\end{theorem}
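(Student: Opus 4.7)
The plan is to reduce Theorem~\ref{T:convex} to \cite[Theorem~3.3.9]{kedlaya-xiao}, which yields the analogous max-of-integral-linear-functionals conclusion for a differential module over the polynomial ring $k[x_1,\dots,x_n,x_1^{-1},\dots,x_m^{-1}]$ (combined with Proposition~\ref{P:scale to absolute} to translate based scales into absolute scales). The gap to close is precisely the passage from polynomials to formal power series, which is the ``bit of extra argument'' advertised in the paragraph preceding the theorem.

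The heart of the reduction is a local approximation lemma: for each $r_0 \in [0, +\infty)^n$, I will construct a neighborhood $U$ of $r_0$ together with an algebraic differential module $\widetilde M$ over $k[x_1,\dots,x_n,x_1^{-1},\dots,x_m^{-1}]$, obtained by truncating the connection data of $M$ modulo a sufficiently high power of the maximal ideal $(x_1,\dots,x_n)$, such that $F_i(M,r) = F_i(\widetilde M,r)$ for all $r \in U$ and all $i$. The justification rests on two pillars. First, Propositions~\ref{P:read slopes} and~\ref{P:read absolute slopes} express the absolute scale multiset via Newton polygons of cyclic vectors, objects of degree $d$ whose slopes depend on only finitely many coefficient norms. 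Thus at $r_0$, the absolute scales depend on only finitely many monomials of the connection matrices. Second, an argument parallel to Proposition~\ref{P:continuous scales}—together with the elementary observation that $r \mapsto |f|_r$ is continuous on $(0,+\infty)^n$ for any fixed $f \in R_{n,m}$—upgrades pointwise agreement at $r_0$ to agreement on a small open $U$. Granted the reduction, applying \cite[Theorem~3.3.9]{kedlaya-xiao} near each $r_0$ supplies local max-of-integral-linear-functionals descriptions. These glue to a global representation because the rank and a fixed bound on the connection data constrain the set of possible linear functionals arising as Newton polygon slopes to a locally finite collection; continuity, convexity, and piecewise linearity then follow formally.

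For the monotonicity in $r_j$ with $j > m$, I invoke the same local reduction to $\widetilde M$ and appeal to the monotonicity portion of \cite[Theorem~3.3.9]{kedlaya-xiao}, whose polynomial ring involves only non-negative powers of $x_j$. A direct argument is also available: for any $f \in R_{n,m}$, the Gauss norm $|f|_r$ is non-increasing in $r_j$ because only non-negative exponents of $x_j$ appear in its expansion. Choosing a cyclic vector for $\del_j$ over $F_r$, Proposition~\ref{P:read slopes} and Proposition~\ref{P:read absolute slopes} then translate this monotonicity of coefficient norms into non-increase of each slope of the Newton polygon, hence of each $f_k(M,r)$.

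The main obstacle is the approximation lemma. A crude truncation of connection matrices modulo a power of $(x_1,\dots,x_n)$ will in general destroy the integrability relations $[\del_i,\del_j] = 0$, so the putative $\widetilde M$ may fail to be a differential module in several variables simultaneously. The likely remedy is to truncate only after passing to a cyclic vector—operating one $\del_i$ at a time and then invoking compatibility of the resulting decompositions as in the proof of Proposition~\ref{P:read absolute slopes}—or alternatively to pass to a Jordan--H\"older constituent of $M$ over $F_{r_0}$ where the cyclic vector reasoning is cleanest. Either way, one must verify that the approximation controls the entire absolute scale multiset uniformly over $U$, not merely the scale of a single derivation.
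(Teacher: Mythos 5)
Your overall plan---reduce to \cite[Theorem~3.3.9]{kedlaya-xiao} via a local approximation of $M$ by a module over the polynomial ring---is a reasonable first instinct, and you correctly identify the gap as the passage from polynomials to power series. However, the approximation lemma you need is not established, and the obstruction you yourself flag (truncating the connection matrices destroys the integrability relations $[\del_i,\del_j]=0$) is fatal rather than technical. Neither of your proposed remedies closes it: truncating after passing to a cyclic vector only controls the action of \emph{one} derivation and does not yield an integrable system in $n$ variables, and passing to Jordan--H\"older constituents over $F_{r_0}$ does not descend to a differential module over $R_{n,m}$ in any useful way. The gluing step is also hand-waved: the locally finite collection of slopes constrains the possible functionals, but you still have to prove the local functions agree on overlaps, and you have no mechanism for that without a global object. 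Finally, your direct monotonicity argument has a separate problem: a cyclic vector over $\Frac(R_{n,m})$ produces twisted-polynomial coefficients $P_i$ that are typically \emph{not} in $R_{n,m}$ (they may involve negative powers of $x_j$), so the elementary observation that $r\mapsto |f|_r$ is nonincreasing in $r_j$ for $f\in R_{n,m}$ does not apply to them, and the contribution of $|\del_j|_{F_r}=e^{r_j}$ to the Newton polygon works in the opposite direction.

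The paper avoids approximation entirely. It argues by induction on $n$ (strengthening the hypothesis to allow derivations on $k$), obtaining the statement on each coordinate hyperplane $r_j=0$ by replacing $k$ with $k((x_j))$; it then takes the convex, piecewise-integral-linear function supplied by \cite[Theorem~3.3.9]{kedlaya-xiao} on the open orthant and extends it to a lower semicontinuous convex function $g$ on the closed orthant via a sup over supporting affine functionals. Lemma~\ref{L:lower semicontinuous} forces $g$ to be continuous wherever finite, and Proposition~\ref{P:continuous scales}---used as a one-sided continuity statement along rays toward the boundary, not as an ingredient of an approximation---shows $g(r_0)=d!\,F_i(M,r_0)$ at rational boundary points. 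Matching $g$ against the induction hypothesis on each hyperplane then gives $g=d!\,F_i(M,\cdot)$ everywhere, and the integrality of the piecewise-linear representation comes from \cite[Theorem~2.4.2]{kedlaya-part3}. In short, the paper's mechanism is ``convexity plus semicontinuity handles the boundary,'' which sidesteps the integrability obstruction your approach runs into; if you want to pursue your route you would need to genuinely solve the problem of approximating an integrable formal connection by an integrable polynomial one with controlled spectral data, which is a hard problem in its own right and is not what the cited reference provides.
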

\begin{proof}
We proceed by induction on $n$, the case $n=1$ being 
immediate from the homogeneity property $f_i(M, \lambda r) = \lambda f_i(M,r)$.
It will be convenient to make the induction hypothesis slightly stronger by allowing
$k$ to carry finitely many derivations as well.
For each $j \in \{1,\dots,n\}$,
we may then use the induction hypothesis to deduce the desired properties
for the restriction of $F_i(M,r)$ to the hyperplane $r_j = 0$.
(Namely, we replace $k$ by $k((x_j))$, with the latter
carrying all of the derivations on $k$
plus $\frac{\del}{\del x_j}$.)

Using \cite[Theorem~3.3.9]{kedlaya-xiao},
we see that for any $\epsilon > 0$, the restriction of $d! F_i(M,r)$ to
$[\epsilon, +\infty)^d$ has the desired form. In particular, 
the restriction of $d! F_i(M,r)$ to
$(0, +\infty)^d$ is convex. If we then let
$T$ be the set of affine functionals $\lambda: \RR^n \to \RR$ for 
which $d! F_i(M,r) \geq \lambda(r)$ for all $r \in (0, +\infty)^d$,
the function
\begin{equation} \label{eq:spectral variation}
g(r) = \sup\{\lambda(r): \lambda \in T\}
\end{equation}
satisfies $g(r) = d! F_i(M,r)$ for $r \in (0, +\infty)^d$.

Note that \eqref{eq:spectral variation} defines a lower semicontinuous convex
function $g: [0, +\infty)^d \to \RR \cup \{+\infty\}$
which is finite on the interior of its domain.
By Lemma~\ref{L:lower semicontinuous},
$g$ is continuous at each point where its value is finite.
Consequently, for any given $r_0 \in [0, +\infty)^d$, if $g(r)$ tends to
a finite limit $L$ as $r$ approaches $r_0$ along some line, then 
$g(r_0)$ is finite (by lower semicontinuity), and so 
$g$ is continuous at $r_0$ and $g(r_0) = L$.
Using these observations plus Proposition~\ref{P:continuous scales},
we deduce that for any nonzero $r_0 \in [0, +\infty)^d \cap \QQ^d$,
$g$ is continuous at $r_0$ and $g(r_0) = d! F_i(M, r_0)$.
Since $g$ is convex (and $g(0) = d! F_i(M,0) = 0$ by homogeneity), 
this implies that $g$ is everywhere finite and hence
everywhere continuous.

For $j \in \{1,\dots,n\}$, consider the restriction of 
$d! F_i(M,r)$ to the hyperplane $r_j = 0$. By the induction hypothesis,
this restriction is continuous and convex. By the previous paragraph,
it agrees with $g(r)$ at every rational point of this hyperplane.
Hence $g(r) = d! F_i(M,r)$ everywhere on the hyperplane;
since we also have this equality on $(0, +\infty)^d$, we conclude that
$g(r) = d! F_i(M,r)$ for all $r \in [0, +\infty)^d$.

In particular, we now have that $F_i(M,r)$ is continuous and convex.
Moreover, for any $r \in [0, +\infty)^d \cap \QQ^d$, we have
\[
d! F_i(M,r) \in \ZZ r_1 + \cdots + \ZZ r_d.
\]
These conditions together with \cite[Theorem~2.4.2]{kedlaya-part3}
imply that on $[0,1]^d$, we have
$d! F_i(M,r)=  \max_{j=1}^h \{\lambda_j(r)\}$ for some integral
linear functionals $\lambda_1, \dots, \lambda_h$. By homogeneity,
we have the same conclusion for all $r \in [0, +\infty)^d$.
By continuity, the monotonicity assertion follows from the corresponding
assertion on $(0, +\infty)^d$, which again holds by
\cite[Theorem~3.3.9]{kedlaya-xiao}.
\end{proof}

\begin{remark} \label{R:m equals 0}
The case $m=0$ of Theorem~\ref{T:convex} has little content, as in
this case there exists a canonical horizontal isomorphism
$M \cong (M/(x_1,\dots,x_n)M) \otimes_k R_{n,0}$. In other words,
$M$ admits a basis of horizontal sections. This can be checked by induction
from the case $n=1$, which follows as in the proof of
Lemma~\ref{L:fundamental solution}. In any case,
we have $F_i(M,r) = 0$ for all $i$ and $r$.

More generally, for any $m$, we have
$F_i(M,r) = 0$ whenever $r_1 = \cdots = r_m = 0$.
Consequently, if $F_i(M,r)$ is linear in $r$,
then it must be constant in $r_{m+1},\dots,r_n$;
we will encounter this condition frequently, particularly in
the statement of the numerical criterion for good decompositions
(Theorem~\ref{T:criterion}).
\end{remark}

We need some corollaries of the convexity of $F_i(M,r)$.
\begin{cor} \label{C:drop affine}
With notation as in Theorem~\ref{T:convex}, 
suppose that $F_d(M,r)$ is linear in $r$,
and that $M$ decomposes as a direct sum
$\oplus_j M_j$ of differential submodules. Then for each $j$,
for $d_j = \rank(M_j)$, $F_{d_j}(M_j,r)$ is linear in $r$.
\end{cor}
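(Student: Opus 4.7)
The plan is to observe that the total irregularity is additive across direct sums, then invoke convexity of each summand together with the fact that a sum of convex functions which equals a linear function forces each summand to be linear.

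First I would note that for any $r$, the base change $M_r = \bigoplus_j (M_j)_r$ is a direct sum decomposition of differential modules over $F_r$. The Jordan-Hölder constituents of $M_r$ (in the category of differential modules over $F_r$) are precisely the union, with multiplicity, of the Jordan-Hölder constituents of the $(M_j)_r$. Consequently, the absolute scale multiset of $M_r$ is the disjoint union of the absolute scale multisets of the $(M_j)_r$. Summing all logarithms of scales then yields the additivity identity
\[
F_d(M,r) = \sum_j F_{d_j}(M_j, r).
\]

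Next I would apply Theorem~\ref{T:convex} to each $M_j$: each function $F_{d_j}(M_j, r)$ is continuous, convex, and piecewise linear on $[0,+\infty)^n$. A finite sum of convex functions that equals an affine function must have each summand affine: if $g_1 + \cdots + g_\ell = L$ with $L$ linear and each $g_j$ convex, then $g_1 = L - (g_2 + \cdots + g_\ell)$ is both convex (by hypothesis) and concave (as a difference between a linear function and a convex function), hence affine, and the argument permutes over $j$. Finally, the homogeneity relation $f_i(M_j, \lambda r) = \lambda f_i(M_j, r)$ noted in Definition~\ref{D:spectral} forces $F_{d_j}(M_j, 0) = 0$, so each affine $F_{d_j}(M_j, r)$ is in fact linear.

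The only step that requires any care is the additivity of the absolute scale multiset under direct sums; this is not explicitly recorded earlier, but follows immediately from Definition~\ref{D:irregularity} once one observes that the Jordan-Hölder filtration of a direct sum can be refined to respect the summands. Everything else is a short formal argument about convex functions, so there is no substantive obstacle.
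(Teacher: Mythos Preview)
Your proof is correct and follows essentially the same approach as the paper: both establish the additivity $F_d(M,r)=\sum_j F_{d_j}(M_j,r)$, invoke the convexity of each $F_{d_j}(M_j,r)$ from Theorem~\ref{T:convex}, and conclude by the convex-plus-concave argument that each summand is affine (hence linear by homogeneity). The paper's write-up is simply terser, leaving the additivity and the homogeneity step implicit.
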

\begin{proof}
By Theorem~\ref{T:convex}, $F_{d_j}(M_j,r)$ is convex in $r$.
On the other hand, on the right side of the equality
\[
F_{d_j}(M_j,r) = F_d(M,r) + \sum_{i \neq j} (-F_{d_i}(M_i,r)),
\]
each summand (including $F_d(M,r)$ by hypothesis) is concave in $r$.
Hence
$F_{d_j}(M_j,r)$ is affine (hence linear) in $r$, as desired.
\end{proof}

\begin{cor} \label{C:convex} 
With notation as in Theorem~\ref{T:convex}, 
suppose that $F_d(M,r)$ is linear in $r$.
Then there exists an index $j \in \{0,\dots,d\}$ with the following
properties.
\begin{enumerate}
\item[(a)]
For $i = j+1,\dots,d$, $f_i(M,r) = 0$ identically.
\item[(b)]
For $i = 1,\dots, j$, $f_i(M,r) > 0$ for all
$r$ with $r_1,\dots,r_n > 0$.
\item[(c)]
$F_j(M,r)$ is linear in $r$. 
\end{enumerate}
\end{cor}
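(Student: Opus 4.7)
The plan is to define $j$ to be the largest index in $\{0,1,\dots,d\}$ such that $f_j(M,r_0) > 0$ at some point $r_0$ of the open positive orthant $U = (0,+\infty)^n$, with $j=0$ if no such index exists. The heart of the argument is to upgrade this pointwise positivity to a dichotomy: for each $i$, either $f_i(M,r)>0$ everywhere on $U$ or $f_i(M,r)=0$ everywhere on $U$. Granted this, conclusion (b) follows from the ordering $f_1(M,r) \geq \cdots \geq f_j(M,r)$; conclusion (a) follows by combining the maximality of $j$ with the continuity of $f_{j+1}(M,r) = F_{j+1}(M,r) - F_j(M,r)$ (guaranteed by Theorem~\ref{T:convex}) and the chain $f_{j+1} \geq \cdots \geq f_d \geq 0$; and conclusion (c) reduces to the identity $F_j(M,r) = F_d(M,r) - \sum_{i>j} f_i(M,r) = F_d(M,r)$.

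To establish the dichotomy I will use the following elementary observation: if $g$ is a nonnegative concave function on an open convex set $U$ that vanishes at some $x_0 \in U$, then $g \equiv 0$ on $U$. Indeed, given $y \in U$, choose $\epsilon > 0$ small enough that $z = x_0 - \epsilon(y - x_0)$ still lies in $U$; writing
\[
x_0 = \frac{\epsilon}{1+\epsilon}\,y + \frac{1}{1+\epsilon}\,z
\]
and applying concavity gives $0 = g(x_0) \geq \frac{\epsilon}{1+\epsilon} g(y)$, which forces $g(y)=0$.

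I will apply this observation with $g_i(r) = F_d(M,r) - F_{i-1}(M,r) = f_i(M,r) + f_{i+1}(M,r) + \cdots + f_d(M,r)$ on the convex open set $U$. Each $g_i$ is nonnegative, and it is concave because $F_d(M,r)$ is linear by hypothesis while $F_{i-1}(M,r)$ is convex by Theorem~\ref{T:convex}. The observation therefore says that on $U$, each $g_i$ is either identically zero or everywhere positive. Since the $f_k(M,r)$ are nonnegative and nonincreasing in $k$, we have $g_i(r) > 0$ if and only if $f_i(M,r) > 0$, which transfers the dichotomy to the individual $f_i$. The definition of $j$ together with this dichotomy then yields (b), and the maximality of $j$ yields (a) and (c) as described above. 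The only nontrivial ingredient beyond the properties of $F_i(M,r)$ already supplied by Theorem~\ref{T:convex} is the one-line lemma on concave functions, so I do not anticipate any serious obstacle.
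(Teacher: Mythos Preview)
Your proof is correct and rests on the same core idea as the paper's: a nonnegative concave function on an open convex set that vanishes at an interior point must vanish identically. The paper phrases this dually (a convex function bounded above by a linear function it touches at an interior point must equal that linear function) and proceeds by downward induction on $j$: starting from $j=d$, whenever (b) fails one shows $F_{j-1}=F_j$ identically, so (a) and (c) persist for $j-1$. Your version avoids the induction by applying the concave-function lemma directly to $g_i = F_d - F_{i-1}$, whose concavity comes straight from the hypothesis on $F_d$ rather than from an inductively established linearity of $F_j$. This makes the argument slightly more streamlined, though the content is the same; in particular, your conclusion $F_j = F_d$ in (c) is exactly what the paper's descent produces.
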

\begin{proof}
By hypothesis, (a) and (c) hold for $j = d$.
Suppose now that $j \in \{0,\dots,d\}$ is any index such that 
(a) and (c) hold. If (b) also holds, we are done. Otherwise,
we must have $j > 0$, and there must exist some
$r = (r_1,\dots,r_n)$ with $r_1,\dots,r_n > 0$ 
such that $f_i(M,r) = 0$ for some $i \in \{1,\dots,j\}$; this
forces $f_j(M,r) = 0$. The function $F_{j-1}(M,r)$ is convex
(by Theorem~\ref{T:convex}), is bounded above by the linear function
$F_j(M,r)$ (because $f_j(M,r) \geq 0$), and is equal to $F_j(M,r)$
at an interior point of its domain. It follows that $F_{j-1}(M,r) = F_j(M,r)$
identically, so (a) and (c) also hold with $j$ replaced by $j-1$.
We conclude that all three clauses hold for some $j$, as desired.
\end{proof}

\subsection{Decomposition by spectral radius}

We next give a theorem that decomposes a differential module according
to its spectral invariants. Again, there is no result in 
\cite{kedlaya-xiao}
that accomplishes exactly what we need, so we cobble together
some results from \cite{kedlaya-xiao} with the decompletion
arguments of \S~\ref{subsec:decompletion}.

\begin{defn}
Let $S_{n,m}$ be the Fr\'echet completion of $k[x_1,\dots,x_n,x_1^{-1},\dots,x_m^{-1}]$
for the norms $|\cdot|_r$ for all $r \in (0, +\infty)^n$. In concrete terms,
$S_{n,m}$ consists of formal sums 
\[
\sum_{i_1,\dots,i_m=-\infty}^\infty \sum_{i_{m+1},\dots,i_n=0}^\infty c_{i_1,\dots,i_n} x_1^{i_1}\cdots x_n^{i_n}
\qquad (c_{i_1,\dots,i_n} \in k),
\]
such that for any $r_1,\dots,r_n > 0$ and any $C \in \RR$,
there are only finitely many indices $(i_1,\dots,i_n) \in \ZZ^n$
with $r_1 i_1 + \cdots + r_n i_n < C$ and $c_{i_1,\dots,i_n} \neq 0$.
\end{defn}

In this notation, \cite[Theorem~3.4.2]{kedlaya-xiao} implies the following.
\begin{theorem} \label{T:refined decomp weak}
Let $M$ be a differential module over $R_{n,m}$ of finite rank $d$.
Fix $l \in \{1,\dots,d-1\}$, and suppose that the following conditions hold.
\begin{enumerate}
\item[(a)]
The function $F_l(M, r)$ is linear.
\item[(b)]
We have $f_l(M,r) > f_{l+1}(M,r)$ for all $r \in (0, +\infty)^n$.
\end{enumerate}
Then there exists
a unique direct sum decomposition $M \otimes_{R_{n,m}} S_{n,m} = 
M_1 \oplus M_2$ such that for each $r \in (0, +\infty)^n$, the absolute 
scale multiset of $M_{1,r}$
consists of the largest $l$ elements of the absolute 
scale multiset of $M_r$.
\end{theorem}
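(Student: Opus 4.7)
The plan is to combine the Kedlaya--Xiao decomposition theorem with the single-variable decompletion tools of \S~\ref{subsec:decompletion} to glue local decompositions at each $F_r$ into a global decomposition over the Fr\'echet completion $S_{n,m}$.

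\emph{Step 1 (pointwise decomposition).} For each $r \in (0,+\infty)^n$, Proposition~\ref{P:read absolute slopes} (invoked via Remark~\ref{R:based absolute} so that based and absolute scales agree) produces a unique direct sum decomposition of $M_r$ by absolute scale. Hypothesis~(b) says the $l$-th and $(l{+}1)$-st entries in the absolute scale multiset are strictly separated throughout the open cone, so one can group summands into $M_r = N_{1,r}\oplus N_{2,r}$ in which the absolute scale multiset of $N_{1,r}$ is the top $l$ entries. The corresponding projectors $\pi_r^{(1)}, \pi_r^{(2)} \in \End_{F_r}(M_r)$ are horizontal idempotents, and are uniquely characterized by the scale-splitting property.

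\emph{Step 2 (input from Kedlaya--Xiao).} [kedlaya-xiao, Theorem~3.4.2] provides the analogous decomposition in the ``polynomial'' setting, i.e.\ for differential modules over $k[x_1,\dots,x_n,x_1^{-1},\dots,x_m^{-1}]$ extended to $S_{n,m}$, under hypotheses that translate directly via Proposition~\ref{P:scale to absolute}. The goal is to use this to produce horizontal idempotents in $\End(M \otimes_{R_{n,m}} S_{n,m})$ whose images under each $S_{n,m} \hookrightarrow F_r$ recover $\pi_r^{(1)}, \pi_r^{(2)}$.

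\emph{Step 3 (descent by iterated decompletion).} The key reduction is to pass from the projectors living in $\End(M_r)$ for each individual $r$ to genuine projectors in $\End(M\otimes_{R_{n,m}} S_{n,m})$. This is carried out by an induction on the number of variables, mimicking Lemma~\ref{L:reduce kernel}. At each inductive step, one selects a coordinate so that the pair consisting of $S_{n,m}$ (or a suitable intermediate ring built from $S_{n,m}$ by a partial completion) and $F_r$ mirrors the pair $(\calR^{\bd}, F)$ appearing in Hypothesis~\ref{H:Robba}. Since the projectors are horizontal elements of $\End(M) = M^\vee \otimes M$, and since $\End(M_r)$ decomposes as $\End(N_{1,r})\oplus \End(N_{2,r})\oplus \Hom(N_{1,r},N_{2,r})\oplus \Hom(N_{2,r},N_{1,r})$, the projectors sit in a piece whose scale structure can be controlled using Step~1 applied to $\End(M)$. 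Lemma~\ref{L:reduce kernel} then forces the horizontal projectors to lie in the smaller ring at each stage, and iterating across all $n$ coordinates yields the decomposition over $S_{n,m}$. Uniqueness is inherited from the uniqueness of the scale decomposition over each $F_r$.

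\emph{Main obstacle.} The principal difficulty is the multivariable descent in Step~3. Lemma~\ref{L:reduce kernel} is formulated for one variable and requires a regularity hypothesis on the differential module to which it applies; here that hypothesis must be manufactured at each stage for the appropriate isotypic component of $\End(M)$ (the one containing the projectors), and not for $M$ itself, which need not be regular. Managing this bookkeeping---choosing the right intermediate rings so that Hypothesis~\ref{H:Robba} is satisfied at each stage, verifying regularity of the relevant summand of $\End(M)$, and checking that the horizontal projectors produced at one stage are inherited by the next---is where the essential work lies.
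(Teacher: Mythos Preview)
You have significantly overcomplicated the argument, and in doing so have essentially attempted to prove the wrong theorem. In the paper, Theorem~\ref{T:refined decomp weak} carries no proof at all: it is introduced by the sentence ``In this notation, \cite[Theorem~3.4.2]{kedlaya-xiao} implies the following,'' and is treated as a direct restatement of that result in the notation of \S\ref{sec:power series}. The point is that $R_{n,m}$ embeds in $S_{n,m}$, and the Kedlaya--Xiao decomposition theorem is already formulated in enough generality to apply to $M \otimes_{R_{n,m}} S_{n,m}$; the spectral hypotheses~(a) and~(b) are phrased in terms of the completions $F_r$, which receive both $R_{n,m}$ and $S_{n,m}$, so nothing further is required. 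Your Step~2 is therefore not an \emph{input} to the argument---it \emph{is} the argument.

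The elaborate descent machinery you outline in Step~3 (iterated decompletion via Lemma~\ref{L:reduce kernel}, control of horizontal projectors in $\End(M)$, induction on the number of variables) is not needed here. What you are sketching is essentially the content of the \emph{next} theorem, Theorem~\ref{T:refined decomp}, which strengthens the conclusion by descending the decomposition from $S_{n,m}$ all the way down to $R_{n,m}$. That is where the decompletion argument genuinely enters, and the paper carries it out somewhat differently from your sketch: rather than iterating over all $n$ coordinates, it uses Lemma~\ref{L:reduce kernel} along a single segment approaching each vertex $e_i$ (for $i \le m$), combined with the convexity-based criterion of Lemma~\ref{L:bounded} to conclude that the projector lies in $R_{n,m}$. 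So even as a proof of Theorem~\ref{T:refined decomp}, your proposed induction-on-variables framework, while plausible in spirit, does not match the actual mechanism and would require substantial reworking.
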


\begin{remark} \label{R:refined decomp}
Note that in Theorem~\ref{T:refined decomp weak},
it suffices to check the inequality in (b) for a \emph{single} value of $r$.
That follows as in the proof of Corollary~\ref{C:convex}:
given (a), the function
\[
f_l(M,r) - f_{l+1}(M,r) = 2F_l(M,r) - F_{l+1}(M,r) - F_{l-1}(M,r)
\]
is concave (being a linear function minus a sum of convex functions),
but is bounded below by 0. Hence it can only take the value 0 at an interior
point of its domain if it is identically zero.
\end{remark}

We wish to descend the decomposition in Theorem~\ref{T:refined decomp weak}
from $S_{n,m}$ to $R_{n,m}$.
To do this, we use the following observations.
\begin{lemma} \label{L:Hadamard}
We have the inequality
\[
t \log |x|_r + (1-t) \log |x|_s \geq
\log |x|_{tr+(1-t)s} \qquad
(x \in S_{n,m}; t \in [0,1]; r,s \in [0, +\infty)^n).
\]
\end{lemma}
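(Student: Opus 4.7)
The plan is to recognize this inequality as nothing more than the convexity of $r \mapsto \log |x|_r$, read off directly from the definition of the Gauss norm. Writing $x = \sum_I c_I x^I$ with multi-indices $I = (i_1,\dots,i_n) \in \ZZ^n$, the definition of the $(e^{-r})$-Gauss norm gives
\[
\log |x|_r = \sup_I \{\log |c_I| - r \cdot I\},
\]
where the supremum is over those $I$ with $c_I \neq 0$ (and the convention $\log 0 = -\infty$ handles the zero coefficients). Since each function $r \mapsto \log|c_I| - r \cdot I$ is affine in $r$, the function $r \mapsto \log|x|_r$ is a pointwise supremum of affine functions and is therefore convex in $r$ on the set where it is finite. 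The desired inequality is precisely the definition of convexity evaluated at $tr + (1-t)s$.

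Concretely, the execution would go as follows. First, for each fixed $I$, use the identity $\log|c_I| - (tr+(1-t)s)\cdot I = t(\log|c_I| - r\cdot I) + (1-t)(\log|c_I| - s\cdot I)$, which is bounded above by $t\log|x|_r + (1-t)\log|x|_s$. Taking the supremum over $I$ on the left yields
\[
\log|x|_{tr+(1-t)s} \leq t\log|x|_r + (1-t)\log|x|_s,
\]
which is the claim.

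For the boundary behavior, note that the hypothesis $x \in S_{n,m}$ only guarantees that $|x|_r$ is finite when $r \in (0,+\infty)^n$; some of the coordinates of $r$ or $s$ may be zero. However, if either $|x|_r$ or $|x|_s$ equals $+\infty$, the inequality is trivial (with the usual convention $t \cdot (+\infty) = +\infty$ for $t > 0$, and $0 \cdot (+\infty) = 0$ together with the monotonicity argument obtained by approximating $r$ or $s$ by interior points). If both are finite, the computation above applies without modification.

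I do not foresee a genuine obstacle here: the statement is structurally Hadamard's three-lines-type convexity, and the Gauss norm's definition as a supremum of monomial values makes log-convexity in the exponent automatic. The only mild care point is the bookkeeping with infinities on the boundary of the domain of definition, which is handled by the trivial cases.
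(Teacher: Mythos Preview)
Your proof is correct and takes essentially the same approach as the paper, which dispatches the lemma in one sentence: ``This reduces to the case for monomials, where the inequality becomes an equality.'' Your argument is simply the spelled-out version of that remark --- each monomial contributes an affine function of $r$ to $\log|x|_r$, so the supremum is convex --- and your handling of the boundary cases where some $|x|_r$ may be infinite is a reasonable bit of extra care that the paper leaves implicit.
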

\begin{proof}
This reduces to the case for monomials, where the inequality
becomes an equality.
\end{proof}

\begin{lemma} \label{L:bounded}
Let $e_1,\dots,e_n$ denote the standard basis vectors of $\RR^n$.
For $x \in S_{n,m}$, the following are equivalent.
\begin{enumerate}
\item[(a)]
We have $x \in R_{n,m}$.
\item[(b)]
The function $r \mapsto |x|_r$ is bounded above on the simplex
\[
T = \{(x_1,\dots,x_n) \in (0, +\infty)^n:
x_1 + \cdots + x_n \leq 1 \}.
\]
\item[(c)]
The function $r \mapsto |x|_r$ is bounded above on any bounded subset of $(0, +\infty)^n$.
\item[(d)]
For $i=1,\dots,m$, $|x|_r$ remains bounded above
as $r$ approaches $e_i$ along some line
segment contained in $(0, +\infty)^n$.
\end{enumerate}
\end{lemma}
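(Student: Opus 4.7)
The plan is to prove the cycle of implications (a)$\Rightarrow$(c)$\Rightarrow$(b)$\Rightarrow$(d)$\Rightarrow$(a), so that the weakest-looking condition (d) suffices to force (a). Throughout, write $x = \sum_I c_I x^I$ with $I = (i_1,\dots,i_n)$ ranging over $\ZZ^m \times \ZZ_{\geq 0}^{n-m}$ and $c_I \in k$; equipping $k$ with the trivial absolute value (as is implicit in the definition of $|\cdot|_r$), we have $|x|_r = \sup\{e^{-r \cdot I} : c_I \neq 0\}$, and in particular the pointwise lower bound $|x|_r \geq e^{-r \cdot I}$ for every $I$ with $c_I \neq 0$.

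For (a)$\Rightarrow$(c): if $x \in R_{n,m}$, there exist $N_1,\dots,N_m \geq 0$ such that $x_1^{N_1}\cdots x_m^{N_m} x \in k\llbracket x_1,\dots,x_n\rrbracket$, so every $I$ with $c_I \neq 0$ satisfies $i_j \geq -N_j$ for $j \leq m$ and $i_j \geq 0$ for $j > m$; hence $|x|_r \leq e^{r_1 N_1 + \cdots + r_m N_m}$, which is bounded on any bounded subset of $(0,+\infty)^n$. The implication (c)$\Rightarrow$(b) is immediate since $T$ is bounded. For (b)$\Rightarrow$(d), fix $i \in \{1,\dots,m\}$ and take $s = \tfrac{1}{2n}(1,\dots,1) \in T \cap (0,+\infty)^n$; then the segment $r(t) = (1-t) s + t e_i$ for $t \in [0,1)$ has strictly positive coordinates and coordinate sum $(1-t)/2 + t \leq 1$, so it lies in $T$, and (b) supplies the required boundedness along this segment as $t \to 1^-$.

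The crux is (d)$\Rightarrow$(a). For each $i \in \{1,\dots,m\}$, the hypothesis supplies a base point $s^{(i)} \in (0,+\infty)^n$ and a constant $C_i$ such that $|x|_{r(t)} \leq C_i$ along the segment $r(t) = (1-t) s^{(i)} + t e_i$ for $t \in [0,1)$. For any $I$ with $c_I \neq 0$, the lower bound on $|x|_{r(t)}$ noted above rearranges to
\[
(1-t)\bigl(s^{(i)} \cdot I\bigr) + t\, i_i = r(t) \cdot I \geq -\log C_i.
\]
Letting $t \to 1^-$ yields $i_i \geq -\log C_i$, and since $i_i \in \ZZ$, there exists an integer $N_i \geq 0$ with $i_i \geq -N_i$. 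Combined with the defining condition $i_j \geq 0$ for $j > m$ on elements of $S_{n,m}$, we conclude that $x_1^{N_1}\cdots x_m^{N_m} x$ is a formal sum over $\ZZ_{\geq 0}^n$, hence lies in $k\llbracket x_1,\dots,x_n\rrbracket$, so $x \in R_{n,m}$.

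The only real obstacle is the step (d)$\Rightarrow$(a), and it is mild: one must see that boundedness of $|x|_r$ along a single ray into the boundary point $e_i$ controls the individual exponent $i_i$ in each nonzero term. This works because the function $t \mapsto r(t) \cdot I$ is affine with value $i_i$ at $t = 1$, so a uniform upper bound on $-r(t) \cdot I$ as $t \to 1^-$ transfers directly to a lower bound on $i_i$. The convexity of $\log |x|_r$ (Lemma~\ref{L:Hadamard}) is not needed for this argument, although it gives a conceptual explanation of why boundedness propagates from one ray to the whole simplex.
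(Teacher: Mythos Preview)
Your proof is correct and takes a genuinely more elementary route than the paper's. The paper argues (a)$\Rightarrow$(b)$\Rightarrow$(c)$\Rightarrow$(d), then (d)$\Rightarrow$(b),(c), then (b)$\Rightarrow$(a), and leans on Lemma~\ref{L:Hadamard} (convexity of $r \mapsto \log|x|_r$) twice: once for (a)$\Rightarrow$(b), and more substantially for (d)$\Rightarrow$(b), where it builds the lower semicontinuous convex extension $f$ of $\log|x|_r$ to $[0,+\infty)^n$, checks finiteness of $f(e_i)$ from (d), and then uses convexity plus homogeneity to propagate boundedness over the whole simplex before reading off the exponents $j_i = \lfloor f(e_i)\rfloor$. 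Your (d)$\Rightarrow$(a) bypasses all of this: the single observation that $e^{-r(t)\cdot I} \leq |x|_{r(t)} \leq C_i$ along the given segment, followed by the limit $t\to 1^-$ in the affine function $r(t)\cdot I$, pins down $i_i \geq -\log C_i$ termwise. This is cleaner and shows that Lemma~\ref{L:Hadamard} is not actually needed for Lemma~\ref{L:bounded}. The paper's convexity argument, on the other hand, explains conceptually why a bound along one ray forces a bound on the whole simplex, which is a useful picture even if not logically required here.
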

\begin{proof}
Given (a), by Lemma~\ref{L:Hadamard},
for $r \in T$, $|x|_r$ is bounded above by $\{1, |x|_{e_1},\dots,|x|_{e_n}\}$.
Hence (b) follows. Given (b), (c) follows because $|x|_{\lambda r} = |x|_r^\lambda$.
Given (c), (d) is evident.

Given (d), by Lemma~\ref{L:Hadamard}, $\log |x|_r$ is a convex 
function on $r \in (0, +\infty)^n$.
As in the proof of Theorem~\ref{T:convex},
we extend $\log |x|_r$ to a lower semicontinuous convex
function $f: [0, +\infty)^n \to 
\RR \cup \{+\infty\}$ by setting
\[
f(r) = \sup\{\lambda(r)\}
\]
with the supremum running over all affine functionals $\lambda: \RR^n \to \RR$ such that
$\log |x|_r \geq \lambda(r)$ for all $r \in (0, +\infty)^n$.
By (d), $f$ takes finite values at $r = e_1,\dots,e_m$;
it also takes values at $r = e_{m+1},\dots,e_n$ which are bounded above by $0$.
Since $f(\lambda r) = \lambda f(r)$ for all $\lambda \geq 0$,
we deduce that $f$ takes finite values on the one-dimensional boundary
facets of $[0, +\infty)^n$.
By convexity, $f$ takes values which are bounded above on any bounded subset of
$(0, +\infty)^n$.
Hence (b) and (c) follow.

Given (b) (and hence (d)), define $f$ as in the previous paragraph, and
put $j_i = \lfloor f(e_i) \rfloor$
for $i=1,\dots,m$. Since $f$ is convex, 
$y = x_1^{-j_1} \cdots x_m^{-j_m} x$ is an element of $S_{n,m}$
such that $|y|_r \leq 1$ for all $r \in (0, +\infty)^n$;
this forces $y \in k \llbracket x_1,\dots,x_n \rrbracket$ and
$x \in R_{n,m}$. Hence (a) follows.
\end{proof}

We can now refine Theorem~\ref{T:refined decomp weak} as follows.
(Again, hypothesis (b) can be relaxed as in Remark~\ref{R:refined decomp}.)
\begin{theorem} \label{T:refined decomp}
Let $M$ be a differential module over $R_{n,m}$ of finite rank $d$.
Fix $l \in \{1,\dots,d-1\}$, and suppose that the following conditions hold.
\begin{enumerate}
\item[(a)]
The function $F_l(M, r)$ is linear.
\item[(b)]
We have $f_l(M,r) > f_{l+1}(M,r)$ for all $r \in (0, +\infty)^n$.
\end{enumerate}
Then $M$ admits a unique direct sum decomposition $M_1 \oplus M_2$
such that for each $r \in (0, +\infty)^n$, the absolute 
scale multiset of $M_{1,r}$
consists of the largest $l$ elements of the absolute 
scale multiset of $M_r$.
\end{theorem}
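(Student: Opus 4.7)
Uniqueness follows from Theorem~\ref{T:refined decomp weak} after base change to $S_{n,m}$, so I would focus on existence. The strategy is to show that the horizontal projector $\pi$ realizing the decomposition $M \otimes_{R_{n,m}} S_{n,m} = M_1 \oplus M_2$ from Theorem~\ref{T:refined decomp weak} actually descends to $\End_{R_{n,m}}(M)$. Since $M$ is locally projective over $R_{n,m}$, we may interpret $\pi$ as an element of $\End_{R_{n,m}}(M) \otimes_{R_{n,m}} S_{n,m}$, and the aim is to eliminate the $\otimes S_{n,m}$.

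Working locally where $M$ admits a basis $\be_1, \dots, \be_d$, I would express $\pi$ as a matrix $N = (N_{jk})$ with $N_{jk} \in S_{n,m}$. The key reduction is via Lemma~\ref{L:bounded}: to show each $N_{jk}$ lies in $R_{n,m}$, it suffices to show that for each $i_0 \in \{1, \dots, m\}$ the Gauss norm $|N_{jk}|_r$ remains bounded above as $r$ approaches $e_{i_0}$ along a line segment in $(0, +\infty)^n$. In the supremum norm from $\be_1, \dots, \be_d$, this quantity equals the operator norm $|\pi|_{M_r}$, so the problem reduces to bounding $|\pi|_{M_r}$ along such a segment.

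To bound $|\pi|_{M_r}$, I would construct a companion projector at the limiting field $F_{e_{i_0}}$ and match it with $\pi$. The field $F_{e_{i_0}}$ fits the framework of \S~\ref{subsec:more decompletion}, with $z = x_{i_0}$ as the series parameter and the other $x_j$ contributing to the base field and auxiliary derivations. Proposition~\ref{P:continuous scales} gives continuity of the absolute scale multiset of $M_r$ as $r \to e_{i_0}$; combined with hypothesis (a) and the concavity of $f_l(M,r) - f_{l+1}(M,r)$ (as in Remark~\ref{R:refined decomp}), this yields in a one-sided neighborhood of $e_{i_0}$ the spectral gap needed to apply Proposition~\ref{P:read absolute slopes} and obtain a projector $\pi''$ on $M \otimes F_{e_{i_0}}$ of finite operator norm. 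A multivariable decompletion argument, generalizing Lemma~\ref{L:reduce kernel} and using Lemma~\ref{L:varies continuously}, would then realize $\pi$ and $\pi''$ as base changes of a common decomposition over a bounded Robba-type ring adapted to the approach of $e_{i_0}$, giving the desired bound on $|\pi|_{M_r}$.

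The main obstacle is the possible degeneration of the spectral gap $f_l(M, r) - f_{l+1}(M,r)$ as $r \to e_{i_0}$: although this gap is strictly positive throughout $(0, +\infty)^n$ by hypothesis (b) together with the concavity argument in Remark~\ref{R:refined decomp}, it may collapse at the boundary, in which case the scale-based decomposition over $F_{e_{i_0}}$ no longer separates the two blocks cleanly. Handling this case requires constructing the decomposition over an intermediate ring supporting the split on a full one-sided neighborhood of $e_{i_0}$ inside $(0, +\infty)^n$, so that continuity of Gauss norms still delivers the required bounds on $|\pi|_{M_r}$ even where the limiting spectral gap closes.
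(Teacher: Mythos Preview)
Your overall strategy agrees with the paper's: obtain the projector $\pi$ over $S_{n,m}$ from Theorem~\ref{T:refined decomp weak}, then descend it to $R_{n,m}$ by verifying the boundedness criterion of Lemma~\ref{L:bounded} near each $e_i$ for $i \le m$. The divergence is in how you establish that boundedness.

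You propose to reconstruct the spectral decomposition over $F_{e_{i_0}}$ and match projectors; this forces you to worry about the gap $f_l - f_{l+1}$ collapsing at the boundary, and to invoke an unproved multivariable generalization of Lemma~\ref{L:reduce kernel}. The paper sidesteps both issues by exploiting directly the fact that $\pi$ is \emph{horizontal}. It fixes the specific segment $r = (1 - (n-1)t, t, \dots, t)$ toward $e_1$, and observes that with the choices
\[
K_0 = k(x_3/x_2, \dots, x_n/x_2), \qquad t = x_2, \qquad z = x_1,
\]
the ring $R_{n,m}$ embeds in the bounded Robba ring $\calR^{\bd}$ while $S_{n,m}$ embeds in $\calR$. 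Then Lemma~\ref{L:reduce kernel}, as written and with no generalization, shows that the horizontal element $\pi \in \End(M) \otimes_{R_{n,m}} \calR$ already lies in $\End(M) \otimes_{R_{n,m}} \calR^{\bd}$, which immediately gives the required bound on $|\pi|_r$ along the segment. No spectral information at the boundary point is needed, because nothing is being matched there.

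So the obstacle you identify is genuine for your route but simply does not arise in the paper's; the remedy is not a stronger decompletion lemma but the existing single-variable one, applied after packaging all but one of the $x_j$ into the base field $K_0$ and the Robba variable $t$. A minor further point: rather than ``working locally where $M$ admits a basis,'' the paper handles the possible non-freeness of $\End(M)$ by choosing $f \in R_{n,m}$ with $f \End(M)$ contained in a free module, deducing $f\pi \in \End(M)$, and then varying the basis so that $f$ runs over generators of the unit ideal.
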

\begin{proof}
The case $m=0$ holds by Remark~\ref{R:m equals 0},
so we assume hereafter that $m>0$. 
Let $\bv \in \End(M) \otimes_{R_{n,m}} S_{n,m}$ be the projector
onto the first summand in the decomposition of $M \otimes_{R_{n,m}} S_{n,m}$
provided by Theorem~\ref{T:refined decomp weak}.
Pick a basis $\be_1,\dots,\be_{d^2}$ of $\End(M) \otimes_{R_{n,m}} \Frac(R_{n,m})$ 
consisting of elements of $\End(M)$, and use it to define
supremum norms $|\cdot|_r$ on $\End(M)_r$ for each $r \in [0, +\infty)^n$.
Choose $f \in R_{n,m}$ so that 
$f \End(M) \subseteq R_{n,m} \be_1 + \cdots + R_{n,m} \be_{d^2}$.

Let $e_1,\dots,e_n$ denote the standard basis vectors of $\RR^n$.
We claim that $|\bv|_r$ remains bounded as $r$ approaches $e_1$
along the segment consisting of $(1-(n-1)t, t,\dots,t)$ for
$t \in (0, 1/(n-1))$, i.e., as $t \to 0^+$. To check this,
we set notation as in Definition~\ref{D:robba} with
\[
K_0 = k(x_3/x_2,\dots,x_n/x_2), \quad t = x_2, \quad z = x_1.
\]
Then $R_{n,m}$ embeds into $\calR^{\bd}$ while $S_{n,m}$ embeds into $\calR$.
By Lemma~\ref{L:reduce kernel}, when viewed as an element of 
$\End(M) \otimes_{R_{n,m}} \calR$, $\bv$ in fact belongs to 
$\End(M) \otimes_{R_{n,m}} \calR^{\bd}$.
This implies that $|\bv|_r$ remains bounded along the path in question.

Similarly, for $i=1,\dots,m$, $|\bv|_r$ remains bounded as $r$ approaches
$e_i$ along some line segment contained in $(0, +\infty)^n$. By
Lemma~\ref{L:bounded}, we deduce that $f\bv \in 
R_{n,m} \be_1 + \cdots + R_{n,m} \be_{d^2} \subseteq \End(M)$.

Since $\End(M)$ is locally free over $R_{n,m}$ (see Notation~\ref{N:numerical}), 
we may vary the choice of
the initial basis $\be_1,\dots,\be_{d^2}$ in such a way that $f$ varies
over a set of generators of the unit ideal in $R_{n,m}$. It follows that
$\bv \in \End(M)$, and so the decomposition of $M \otimes_{R_{n,m}} S_{n,m}$
in fact arises from a decomposition of $M$, as desired.
\end{proof}

\begin{remark} \label{R:refined decomp2}
In \cite[Theorem~3.4.4]{kedlaya-xiao}, one finds a result
similar to Theorem~\ref{T:refined decomp}, but with an important difference:
the inequality in (b) is imposed for all $r \in [0, +\infty)^n$, not just
for those $r$ with $r_1,\dots,r_n > 0$.
This weaker statement is made because it is the best possible statement
that also applies over a complete nonarchimedean base field of mixed
characteristics.
(Thanks to Liang Xiao for this observation.)
\end{remark}

\section{A numerical criterion}
\label{sec:numerical}

In this section, we establish a numerical criterion for existence of
a good decomposition of a finite differential module over the
localized power series ring $R_{n,m}$.
Throughout \S~\ref{sec:numerical}, we continue to
retain Notation~\ref{N:numerical}.

\subsection{Regular connections}

In this section, we prove the following theorem relating several natural
notions of regularity for finite differential modules over $R_{n,m}$.

\begin{notation}
Let $e_1,\dots,e_n$ denote the standard basis vectors of $\RR^n$.
For $i=1,\dots,m$, we write $F_{(i)}$ and $|\cdot|_{(i)}$ as shorthand for 
$F_{e_i}$ and $|\cdot|_{e_i}$. We also write
$\gotho_{(i)}$ as shorthand for $\gotho_{F_{(i)}} = \{x \in F_{(i)}:
|x|_{(i)} \leq 1\}$,
and let $\kappa_{(i)}$ denote the residue field of $\gotho_{(i)}$.
\end{notation}

\begin{lemma} \label{L:make lattice}
Let $M$ be a finite locally free $R_{n,m}$-module. For $i=1,\dots,m$,
put $V_i  = M \otimes_{R_{n,m}} F_{(i)}$, and let
$W_i$ be an $\gotho_{(i)}$-lattice in $V_i$.
Let $M_0$ be the set of $\bv \in M$ such that for $i=1,\dots,m$,
the image of $\bv$ in $V_i$ belongs to $W_i$. Then $M_0$ is a finite
$R_{n,0}$-module. (Remember that $R_{n,0} = k\llbracket x_1,\dots,x_n
\rrbracket$.)
\end{lemma}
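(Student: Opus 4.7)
The plan is to reduce to the free case $M = R_{n,m}^d$, show $M_0$ embeds in an obvious finite $R_{n,0}$-module, and invoke noetherianity of $R_{n,0} = k\llbracket x_1,\dots,x_n \rrbracket$. For the reduction, I use that $M$, being finite locally free over the noetherian ring $R_{n,m}$, is projective, hence a direct summand of some $R_{n,m}^b \cong M \oplus M'$. For each $i$, extend $W_i$ by any chosen $\gotho_{(i)}$-lattice $W_i'$ in $M' \otimes_{R_{n,m}} F_{(i)}$; the associated intersection module $M_0'$ for $R_{n,m}^b$ then contains $M_0$ as an $R_{n,0}$-submodule, so by noetherianity it will suffice to treat the free case.

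Assume $M = R_{n,m}^d$. Unwinding the Gauss norm $|\cdot|_{e_i}$---which sends $x_i \mapsto e^{-1}$, leaves every other $x_j$ with norm $1$, and is trivial on $k$---I see that an element of $R_{n,m}$ lies in $x_i^{-N} \gotho_{(i)}$ precisely when its Laurent expansion has $x_i$-exponent at least $-N$ in every monomial. Since each $W_i$ is an $\gotho_{(i)}$-lattice in the finite-dimensional $F_{(i)}$-vector space $V_i$, I may pick a single $N$ with $W_i \subseteq x_i^{-N} \gotho_{(i)}^d$ for all $i = 1,\dots,m$. Working coordinate by coordinate on $M = R_{n,m}^d$ then gives
\[
M_0 \subseteq \bigcap_{i=1}^m \left( R_{n,m}^d \cap x_i^{-N} \gotho_{(i)}^d \right) = x_1^{-N} \cdots x_m^{-N} R_{n,0}^d,
\]
where the equality is because membership in $R_{n,m}$ already forces nonnegative $x_j$-exponents for $j > m$, and the intersection imposes $x_i$-exponent $\geq -N$ for $i \leq m$.

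To conclude, I note that $R_{n,0} \subseteq \gotho_{(i)}$ for each $i$ (an ordinary power series has Gauss norm at most $1$), so $M_0$ is closed under multiplication by $R_{n,0}$ and is therefore an $R_{n,0}$-submodule of the finite free $R_{n,0}$-module on the right above; noetherianity of $R_{n,0}$ then forces $M_0$ to be finite over $R_{n,0}$. The main obstacle is really only bookkeeping---verifying the Gauss-norm characterization of $R_{n,m} \cap x_i^{-N} \gotho_{(i)}$ and tracking that the reduction to the free case respects the defining intersection of $M_0$---but neither point is deep, since everything reduces to checking $|\cdot|_{e_i}$ on individual Laurent monomials.
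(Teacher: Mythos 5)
Your proof is correct, and it takes a genuinely different route from the paper's. The paper works directly with the possibly non-free $M$: it approximates a basis of each $W_i$ by elements of $M_F = M \otimes \Frac(R_{n,m})$ (using density of $\Frac(R_{n,m})$ in $F_{(i)}$), then does a change-of-basis and operator-norm computation with matrices $A_i$ and $A_i^{-1}$ to find integers $l_1,\dots,l_m$ with $x_1^{l_1}\cdots x_m^{l_m} M_0$ contained in the $R_{n,0}$-span of a fixed finite set. Your argument instead reduces to the free case by taking a projective complement $M'$ (and any lattices in $M'\otimes F_{(i)}$, padding each $W_i$ to a lattice in $F_{(i)}^b$), and then in the free case makes the bound completely explicit: commensurability of lattices gives $W_i\subseteq x_i^{-N}\gotho_{(i)}^d$, and the intersection $\bigcap_i\bigl(R_{n,m}^d\cap x_i^{-N}\gotho_{(i)}^d\bigr)$ is precisely $x_1^{-N}\cdots x_m^{-N}R_{n,0}^d$ by reading off Laurent monomials. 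What your approach buys is transparency in the free case --- no approximation or matrix-norm bookkeeping, just a direct monomial computation --- at the modest cost of introducing a complement $M'$ and auxiliary lattices $W_i'$; the paper avoids that detour by working with an $F$-basis of $M_F$, which serves in place of a free presentation. Both arguments bottom out in the same facts: commensurability of lattices over a complete discrete valuation ring, the monomial description of $|\cdot|_{(i)}$, and noetherianity of $R_{n,0}$.
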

\begin{proof}
Put $F = \Frac(R_{n,m})$
and $M_F = M \otimes_{R_{n,m}} F$.
Let $B = \{\bv_1, \dots, \bv_d\}$ be a basis of
$M_F$ over $F$.

For $i=1,\dots,m$,
$F$ is dense in $F_{(i)}$.
Hence by choosing a basis of $W_i$ and then approximating its elements
with elements of $M_F$,
we obtain a basis $B_i = \{\bv_{1,i},\dots,\bv_{d,i}\}$ 
of $W_i$ which is also a basis of
$M_F$.
Choose $g, h \in R_{n,m}$ so that:
\begin{itemize}
\item
each element of $B$, multiplied by $g$,
is in the $R_{n,m}$-span of $B_i$ for each $i$;
\item
each element of $M$, multiplied by $h$, is in the $R_{n,m}$-span of $B$.
\end{itemize}
Define the matrix $A_i$ over $R_{n,m}$ by the formula
$g\bv_l = \sum_{j=1}^d (A_i)_{jl} \bv_{j,i}$.
Pick any $\bv \in M_0$, write it as 
$\sum_{l=1}^d h^{-1} r_l \bv_l$ with $r_l \in R_{n,m}$,
and put $r_{j,i} = \sum_{l=1}^d (A_i)_{jl} r_l \in R_{n,m}$, so that
\[
\bv = \sum_{j=1}^d (gh)^{-1} r_{j,i} \bv_{j,i}.
\]
Since $\bv \in M_0$, we have $|(gh)^{-1} r_{j,i}|_{(i)} \leq 1$ for $j=1,\dots,d$.
Since $r_l = \sum_{l=1}^d (A_i^{-1})_{lj} r_{j,i}$ in $F$, we have
\[
|r_l|_{(i)} \leq \max_{j,l} \{|(A_i^{-1})_{lj}|_{(i)}\} |gh|_{(i)}.
\]
We can thus find $l_1,\dots,l_m \in \ZZ$ for which
$x_1^{l_1} \cdots x_m^{l_m} M_0$ is contained in the
$R_{n,0}$-span of $h^{-1} B$ within $M_F$.
Hence $M_0$ is contained in a finite
$R_{n,0}$-module; since $R_{n,0}$ is noetherian, $M_0$ is itself a finite
$R_{n,0}$-module.
\end{proof}

\begin{defn}
Let $M$ be a finite differential module over $R_{n,m}$. We say that
$M$ is \emph{regular}
if the equivalent conditions of Theorem~\ref{T:regular} (see below)
are satisfied.
\end{defn}

\begin{theorem} \label{T:regular}
Let $M$ be a differential module over $R_{n,m}$ of finite rank $d$.
Then the following are equivalent.
\begin{enumerate}
\item[(a)]
There exists a
free 
differential module $M_0$ over $R_{n,0} = k \llbracket x_1,\dots,x_n 
\rrbracket$ 
equipped with the derivations $x_1 \del_1, \dots, x_m \del_m, 
\del_{m+1}, \dots, \del_{n}$,
and an isomorphism $M \cong M_0 \otimes_{R_{n,0}} R_{n,m}$ of 
differential modules.
\item[(a$'$)]
As in (a), except that for $i=1,\dots,n$,
the (linear) action of $x_i \del_i$ 
on the $k$-vector space $V = M_0/(x_1,\dots,x_n) M_0$
has prepared eigenvalues.
\item[(b)]
$M$ is free and admits a basis on which $x_1 \del_1, \dots, x_m \del_m$
act via matrices over $k$ and $\del_{m+1},\dots,\del_n$
act via the zero matrix.
\item[(b$'$)]
As in (b), except that also the matrices have prepared eigenvalues.
\item[(c)]
Either $M=0$, or $f_1(M,r) = 0$ for all $r$. (By Theorem~\ref{T:convex},
it is equivalent to check just for $r=e_1,\dots,e_m$.)
\end{enumerate}
\end{theorem}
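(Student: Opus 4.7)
My approach is to establish the cycle $(b') \Rightarrow (a') \Rightarrow (a) \Rightarrow (c) \Rightarrow (b')$, together with the trivial link $(b') \Rightarrow (b) \Rightarrow (a)$. Several links are nearly formal: the implications $(a') \Rightarrow (a)$ and $(b') \Rightarrow (b)$ drop the prepared-eigenvalue hypothesis, while for $(b) \Rightarrow (a)$ and $(b') \Rightarrow (a')$ I take $M_0$ to be the $R_{n,0}$-span of the given basis. The derivations $x_i\del_i$ (for $i \leq m$) and $\del_j$ (for $j > m$) preserve this $M_0$ because they act via constant matrices (respectively zero) on the basis and obey the Leibniz rule. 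On $V = M_0/(x_1,\dots,x_n)M_0$ the operator $x_i\del_i$ reduces to its constant matrix for $i \leq m$ (retaining prepared eigenvalues), while for $i > m$ it factors through $x_i M_0 \subseteq (x_1,\dots,x_n) M_0$ and so acts as zero, making the prepared condition vacuous.

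For $(a) \Rightarrow (c)$, I choose an $R_{n,0}$-basis of $M_0$ and use it to equip $M_r = M \otimes_{R_{n,m}} F_r$ with a supremum norm compatible with $F_r$. The matrices representing $x_i\del_i$ and $\del_j$ (for $j > m$) have entries in $R_{n,0}$, hence of $|\cdot|_r$-norm at most $1$. Combined with the rational-type norms on $F_r$ (Remark~\ref{R:based absolute}), this forces $|\del_i|_{\spect,M_r} \leq |\del_i|_{F_r}$ for each $i$, so every based scale of $M_r$ is at most $1$. By Proposition~\ref{P:scale to absolute} the absolute scale is at most $1$, hence equals $1$, giving $f_1(M,r) = 0$.

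The central step is $(c) \Rightarrow (b')$. For each $i \in \{1,\dots,m\}$, the hypothesis $f_1(M,e_i) = 0$ and Proposition~\ref{P:regular} make $M \otimes_{R_{n,m}} F_{e_i}$ regular, so one may pick a regulating lattice $W_i$ with prepared exponents in $\overline{k}$ (Proposition~\ref{P:exponents in base}). By Proposition~\ref{P:regulating}, $W_i$ is stable under every $\del' \in \Delta_{F_{e_i}}$ with $|\del'|_{F_{e_i}} \leq 1$, which includes all $x_k\del_k$ ($k \leq m$) and $\del_j$ ($j > m$). Applying Lemma~\ref{L:make lattice}, I define $\tilde M = \{\bv \in M : \bv \otimes 1 \in W_i \text{ for all } i\}$, a finite $R_{n,0}$-module closed under these derivations, with the same generic rank $d$ as $M$ (by scaling each $\bv \in M$ into $W_i$ by a power of $x_i$). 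I then lift a $k$-basis of $\tilde V = \tilde M/(x_1,\dots,x_n)\tilde M$ to $\tilde M$ and seek $U \in GL_d(R_{n,0})$ with $U|_{x_1 = \cdots = x_n = 0} = I$ simultaneously satisfying $\del_j(U) + N_j U = 0$ for $j > m$ and $x_i\del_i(U) + N_i U = U \overline{N_i}$ for $i \leq m$, where $N_l$ is the matrix of the $l$-th derivation on the lifted basis and $\overline{N_i}$ its reduction modulo $(x_1,\dots,x_n)$. Solving order-by-order in the $(x_1,\dots,x_n)$-grading, the recurrence on each graded piece requires inverting an operator of the form $(\text{multi-degree shift}) + \operatorname{ad}(\overline{N_i})$, invertible precisely because $\overline{N_i}$ has prepared eigenvalues in characteristic zero. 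Pairwise commutativity of the derivations ensures the several systems are mutually consistent. The resulting basis exhibits $\tilde M$ as free of rank $d$ (with $\tilde M \otimes_{R_{n,0}} R_{n,m} = M$) and realizes (b').

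The chief obstacle is this final step. Two subpoints require care. First, I must verify that the eigenvalues of $x_i\del_i$ on $\tilde V$ are prepared, which I do by comparing $\tilde V$ to $W_i/x_i W_i$ (where prepared eigenvalues are known) via a natural $k$-linear map compatible with the $x_i\del_i$-action, ensuring the eigenvalues on $\tilde V$ form a sub-multiset of those on $W_i/x_i W_i$. Second, the simultaneous fundamental-solution argument must be carried out for $m$ shift operators $x_i\del_i$ and $n-m$ translation operators $\del_j$, all pairwise commuting; the prepared-eigenvalue hypothesis averts resonances, while commutativity guarantees a consistent joint solution. This technical step generalizes the one-variable construction of Lemma~\ref{L:fundamental solution} and is the heart of the argument, simultaneously producing freeness of $\tilde M$ and the desired trivialization of the derivations.
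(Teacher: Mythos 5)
Your easy implications and $(a)\Rightarrow(c)$ match the paper's argument. For $(c)\Rightarrow(b')$ you also start correctly, constructing regulating lattices $W_i$ in each $M \otimes_{R_{n,m}} F_{(i)}$ and forming the finite $R_{n,0}$-module $\tilde M$ via Lemma~\ref{L:make lattice}. But from there you diverge, and a gap appears. You propose to lift a $k$-basis of $\tilde V = \tilde M/(x_1,\dots,x_n)\tilde M$ and solve order-by-order for a gauge matrix $U \in GL_d(R_{n,0})$ making the derivation matrices constant. The implicit assumption is that the lifted basis has exactly $d$ elements — equivalently, that $\tilde M$ is already free of rank $d$. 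That is precisely what must be proven. A priori $\tilde M$ is only a finite torsion-free $R_{n,0}$-module of generic rank $d$, so $\dim_k \tilde V$ could exceed $d$; in that case the lifted generators carry relations, the ``matrices'' $N_l$ are defined only up to the module of relations, and the well-posedness of your system for $U$ is exactly as uncertain as the freeness you are trying to deduce. Your closing remark that ``the resulting basis exhibits $\tilde M$ as free of rank $d$'' is a statement of what you want, not an argument for it.

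The paper's proof handles this delicately by taking a dual route. Rather than solving for a fundamental matrix, it constructs a horizontal projection $f: M_0/(x_1,\dots,x_n)M_0 \to M_0$ as the $(x_1,\dots,x_n)$-adic limit of the operators $\prod_i R_{i,j}(x_i\del_i)$, where the $R_{i,j}$ are explicit polynomials built from the characteristic polynomial $P_i$ of $x_i\del_i$ on the $(x_i\del_i)$-stable $\kappa_{(i)}$-lattice $W_{i,0} \subset W_i$ supplied by Lemma~\ref{L:fundamental solution}. The crucial payoff is that $f(\bv) \in W_{i,0}$ for every $i$ simultaneously. Freeness is then proved by an elimination argument: given a nontrivial $R_{n,0}$-relation $\sum_j r_j f(\bv_j) = 0$, divide out any common power of $x_1$, reduce modulo $x_1$ inside $W_1/x_1 W_1$, and use $f(\bv_j) \in W_{1,0}$ to lift the reduced relation to one with coefficients free of $x_1$; iterating through $x_2,\dots,x_n$ yields a nontrivial $k$-linear relation on the reductions, contradicting the choice of basis. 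This elimination mechanism is the essential content your proposal lacks. Your gauge-transformation strategy could likely be made rigorous, but without an analogue of the membership $f(\bv) \in W_{i,0}$ you have no visible way to rule out the case $\dim_k \tilde V > d$, so both the integrability of your linear system and the freeness of $\tilde M$ remain unjustified.
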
 

\begin{proof}[Proof of Theorem~\ref{T:regular}]
We first note some easy implications: (a$'$)$\implies$(a),
(b)$\implies$(a),
(b$'$)$\implies$(b),
(b$'$)$\implies$(a$'$) are trivial,
while (a)$\implies$(c) is evident from the definition of $f_1(M,r)$
using the lattices generated by $M_0$.
To complete the circle, it suffices to prove (c)$\implies$(b$'$).

Assume (c).
For $i=1,\dots,m$, apply Proposition~\ref{P:regular} to
construct a regulating lattice $W_i$ in $M \otimes_{R_{n,m}} F_{(i)}$.
Let $M_0$ be the $R_{n,0}$-submodule of $M$ consisting of elements whose image in 
$M \otimes_{R_{n,m}} F_{(i)}$ lies in $W_i$ for $i=1,\dots,m$;
by Lemma~\ref{L:make lattice}, $M_0$ is a finite $R_{n,0}$-module.
For uniformity, for $i > m$, write $W_i$ for $M \otimes_{R_{n,m}} \gotho_{(i)}$.

For $i=1,\dots,n$,
identify $\kappa_{(i)}$ with the kernel of $\del_i$ on $F_{(i)}$,
so that $F_{(i)}$ is identified with $\kappa_{(i)}((x_i))$.
Apply Lemma~\ref{L:fundamental solution} to construct
a $(x_i \del_i)$-stable $\kappa_{(i)}$-lattice $W_{i,0}$ in $W_i$.
Let $P_i(T)$ be the characteristic polynomial of $x_i \del_i$
on $W_{i,0}$. By Proposition~\ref{P:exponents in base}, $P_i(T)$ belongs to 
$k[T]$; since $W_i$ is a regulating lattice, $P_i(T)$ has prepared roots.
(These roots are all zero in case $i > m$.)
Hence for $j=1,2,\dots$,
we can find a polynomial $Q_{i,j}(T) \in k[T]$ such that
\[
Q_{i,j}(T) P_i(T-1) \cdots P_i(T-j) \equiv 1 \pmod{P_i(T)}.
\]
Put
\[
R_{i,j}(T) = Q_{i,j}(T) P_i(T-1)\cdots P_i(T-j).
\]
Then $R_{i,j}(x_i \del_i)$ acts as the identity on $W_{i,0}$
but kills $x_i W_{i,0}, \dots, x_i^j W_{i,0}$. Consequently,
for $\bv \in W_i$,
\begin{equation} \label{eq:congruence}
R_{i,j+1} \left(x_i \del_i \right)(\bv) -
R_{i,j} \left(x_i \del_i \right)(\bv) \in x_i^j W_i.
\end{equation}

By virtue of \eqref{eq:congruence}, for any $\bv \in M_0$, the sequence
\[
\bv^{(j)} = \left( \prod_{i=1}^n R_{i,j} \left(x_i \del_i
\right) \right) (\bv) 
\qquad (j=1,2,\dots)
\]
has the property that
\[
\bv^{(j+1)} - \bv^{(j)} \in (x_1^j, \dots, x_n^j) M_0.
\]
Hence it converges in the $(x_1,\dots,x_n)$-adic topology on $M_0$ to a limit
$f(\bv)$. The resulting function $f: M_0 \to M_0$ factors
through $M_0/(x_1,\dots,x_n) M_0$; the resulting map
$f: M_0/(x_1,\dots,x_n) M_0 \to M_0$ is $k$-linear and horizontal,
and is a section of the projection $M_0 \to M_0/(x_1,\dots,x_n) M_0$.

For any $\bv \in M_0$, 
$P_i(x_i \del_i)(\bv)$ is divisible by $x_i$, so
$P_i (x_i \del_i)(f(\bv))
= f(P_i(x_i \del_i)(\bv)) = 0$.
Hence $P_i(x_i \del_i)$ kills $f(\bv)$. 
On the other hand, $P_i(x_i \del_i)$ acts invertibly on
$x_i^j W_{i,0}$ for $j>0$ since $x_i \del_i$ acts on $W_{i,0}$
with prepared eigenvalues. Thus if we formally decompose
$f(\bv)$ into components $\sum_{j=0}^\infty \bv_j$ with $\bv_j \in x_i^j
W_{i,0}$
(as in Remark~\ref{R:formal product}),
then $\bv_j = 0$ for $j>0$. In other words, $f(\bv) \in W_{i,0}$.

Since $M_0$ is a finite $R_{n,0}$-module,
$M_0/(x_1,\dots,x_n)M_0$ is finite-dimensional over $k$.
Choose a $k$-basis of $M_0/(x_1,\dots,x_n)M_0$ and let
$\bv_1,\dots,\bv_e \in M_0$ be the images under $f$  of the elements
of this basis. Then Nakayama's lemma implies that $M_0$ is generated by 
$\bv_1,\dots,\bv_e$. On the other hand, suppose that
$r_1 \bv_1 + \cdots + r_e \bv_e = 0$ with $r_1,\dots,r_e \in R_{n,0}$.
We wish to check that $r_1 = \cdots = r_e = 0$, so suppose the contrary.
Since $M_0$ is torsion-free
(by virtue of sitting inside the locally free $R_{n,m}$-module $M$),
we may divide out any common factors of $x_1$ among the $r_j$.
Then reducing into $W_1/x_1 W_1$, we obtain a nontrivial relation
$r_1 \bv_1 + \cdots + r_e \bv_e = 0$ 
with $r_i \in R_{n,0}/x_1 R_{n,0}$.
Since each $\bv_j$ belongs to $W_{1,0}$,
this relation must lift to a nontrivial relation
$r_1 \bv_1 + \cdots + r_e \bv_e = 0$ 
in $W_1$ in which each $r_j$ is an element of $R_{n,0}$ in which 
$x_1$ does not appear. This relation must hold also in $M_0$
since $M_0$ injects into $W_1$. Repeating the argument, we may eliminate
$x_2,\dots,x_n$ from the relation while preserving its nontriviality.
We end up with a nontrivial relation $r_1 \bv_1 + \cdots + r_e \bv_e = 0$
with $r_1,\dots,r_e \in k$; projecting into
$M_0/(x_1,\dots,x_n)M_0$  yields a contradiction.

We conclude that $\bv_1,\dots,\bv_e$ freely generate $M_0$ over $R_{n,0}$.
This implies at once that $\bv_1,\dots,\bv_e$ are linearly independent
over $R_{n,m}$, as otherwise we could rescale by a monomial in
$x_1,\dots,x_n$ to get a nontrivial relation over $R_{n,0}$.
Again by rescaling into $M_0$, we see that every element of $M$
is an $R_{n,m}$-linear combination of $\bv_1,\dots,\bv_e$.
Hence $\bv_1,\dots,\bv_e$ form a basis of $M$ satisfying (b$'$),
as desired.
\end{proof}
\begin{cor} \label{C:regular homs1}
Let $M$ be a regular finite differential module over $R_{n,m}$. 
Embed $F_{(n)}$ into the field $E = k((x_1))\cdots((x_n))$.
Then $H^0(M) = H^0(M \otimes_{R_{n,m}} E)$.
\end{cor}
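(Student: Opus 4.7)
The plan is to invoke Theorem~\ref{T:regular}(b$'$) to pick a basis $\bv_1,\dots,\bv_d$ of $M$ on which $x_j\del_j$ acts as a matrix $N_j$ over $k$ with prepared eigenvalues for $j = 1,\dots,m$, and on which $\del_j$ acts as the zero matrix for $j = m+1,\dots,n$. Because the derivations $x_1\del_1,\dots,x_m\del_m$ pairwise commute, so do $N_1,\dots,N_m$. The same basis trivializes $M \otimes_{R_{n,m}} E$, and since $M$ is locally free over $R_{n,m}$ while $R_{n,m}$ embeds in $E$, the containment $H^0(M) \subseteq H^0(M \otimes_{R_{n,m}} E)$ is automatic. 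What I would need to establish is the reverse: every horizontal vector $\bv = \sum_i r_i \bv_i$ with $r_i \in E$ already has each $r_i \in R_{n,m}$.

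Horizontality of $\bv$ translates into two systems of PDEs on the column vector $r := (r_1,\dots,r_d)^T$: first, $\del_j(r) = 0$ entrywise for $j > m$; second, $x_j \del_j(r) = -N_j r$ for $j \leq m$. Expanding $r$ iteratively in $x_n, x_{n-1},\dots,x_{m+1}$ shows that the joint kernel of $\del_{m+1},\dots,\del_n$ on $E$ is the subfield $E_m := k((x_1))\cdots((x_m))$, so the first system forces $r \in E_m^d$. I would then handle the second system by descending induction on $j$, setting $E_j := E_{j-1}((x_j))$ and $E_0 := k$. The inductive claim at stage $j$ is that $r \in E_{j-1}^d$ and that $N_k r = 0$ for all $k \geq j$. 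For the induction step, expand $r = \sum_{i \in \ZZ} s_i x_j^i$ with $s_i \in E_{j-1}^d$ and $s_i = 0$ for $i \ll 0$. Since $\del_j$ kills each $s_i$, the equation $x_j \del_j(r) + N_j r = 0$ reduces to $(i + N_j) s_i = 0$ for every $i$. The prepared-eigenvalue hypothesis on $N_j$---no nonzero integer is an eigenvalue---makes $i + N_j$ invertible for every $i \neq 0$, forcing $s_i = 0$ for $i \neq 0$ and $r = s_0 \in E_{j-1}^d$ with $N_j r = 0$; the relations $N_k r = 0$ for $k > j$ persist since $r$ has only shrunk.

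Running the induction down to $j = 1$ yields $r \in k^d$ with $N_j r = 0$ for all $j \leq m$, so in particular each $r_i \in k \subseteq R_{n,m}$ and $\bv \in M$. The only nontrivial ingredient is the descent step in the middle paragraph, which is driven entirely by the preparedness of the eigenvalues of each $N_j$; without that hypothesis one cannot force the Laurent coefficients $s_i$ for $i \neq 0$ to vanish, and the conclusion of the corollary would fail.
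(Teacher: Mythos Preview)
Your argument is correct and follows essentially the same strategy as the paper's proof: both invoke Theorem~\ref{T:regular} to obtain a basis on which the $x_i\del_i$ act by constant matrices, then analyze the monomial expansion of a horizontal element to force it back into $M$. The differences are cosmetic. The paper expands a horizontal $\bv$ simultaneously as $\sum_{J\in\ZZ^n}\bv^{(J)}x^J$ and observes that each surviving $j_i$ must negate an eigenvalue of the $i$-th matrix, hence only finitely many $J$ occur; you instead peel off one variable at a time and invoke the \emph{prepared} hypothesis from condition~(b$'$) to force every nonzero exponent to vanish, landing directly in $k^d$. Your route uses a slightly stronger input (condition~(b$'$) rather than~(b)) to obtain a slightly sharper intermediate conclusion ($r\in k^d$ rather than $r$ a finite Laurent polynomial), but both are valid and the underlying mechanism is the same.
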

\begin{proof}
By Theorem~\ref{T:regular}, we may choose a basis of $M$ over
which each $x_i \del_i$ acts via a matrix over $k$.
In terms of this basis, we may write an element $\bv$ of $M \otimes_{R_{n,m}} E$
as a sum $\sum_{J \in \ZZ^n} \bv^{(J)} x^J$ of $k$-vectors times monomials; the
$x_i \del_i$ act independently on each $\bv^{(J)}$
(as in Remark~\ref{R:formal product}).
If $\bv \in H^0(M \otimes_{R_{n,m}} E)$, then for each index $J$ such that
$\bv^{(J)} \neq 0$, for $i=1,\dots,n$, 
the $i$-th component $j_i$ of $J$ must equal the negation of an eigenvalue
of the action of $x_i \frac{\del}{\del x_i}$ on the original basis.
In particular, only finitely many $\bv^{(J)}$ are nonzero,
so we have an element of $M$ itself.
\end{proof}
\begin{cor} \label{C:regular homs}
Let $M$ be a regular finite differential module over $R_{n,m}$. 
Then $H^0(M) = H^0(M \otimes_{R_{n,m}} F_{(i)})$ for $i=1,\dots,n$.
\end{cor}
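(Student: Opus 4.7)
The plan is to reduce to Corollary~\ref{C:regular homs1} by embedding $F_{(i)}$ into a suitable iterated Laurent series field whose innermost variable is $x_i$.

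Fix $i \in \{1,\dots,n\}$ and choose any permutation $\sigma$ of $\{1,\dots,n\}$ with $\sigma(n) = i$; set $E_\sigma = k((x_{\sigma(1)})) \cdots ((x_{\sigma(n)}))$, a complete nonarchimedean field under its $x_i$-adic valuation. First I would construct a continuous embedding $F_{(i)} \hookrightarrow E_\sigma$ of differential fields. Any $f = \sum_J c_J x^J \in R_{n,m}$ defines, by iterated expansion in the order $x_{\sigma(1)}, x_{\sigma(2)}, \dots$, an element of $E_\sigma$; this requires only that each coordinate exponent is bounded below, which holds throughout $R_{n,m}$. The resulting ring homomorphism $R_{n,m} \to E_\sigma$ sends $|\cdot|_{e_i}$ to the $x_i$-adic valuation on $E_\sigma$, because the two norms agree on every monomial. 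Extending to $\Frac(R_{n,m})$ and then passing to the completion yields the desired isometric embedding $F_{(i)} \hookrightarrow E_\sigma$, compatible with $\del_1,\dots,\del_n$ on both sides.

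Next I would observe that the proof of Corollary~\ref{C:regular homs1} carries over verbatim with $E$ replaced by $E_\sigma$: by Theorem~\ref{T:regular}(b$'$), fix a basis of $M$ on which each $x_j \del_j$ acts via a matrix over $k$, with prepared eigenvalues for $j \leq m$ and the zero matrix for $j > m$. An element $\bv \in H^0(M \otimes_{R_{n,m}} E_\sigma)$ has a unique expansion $\bv = \sum_J \bv^{(J)} x^J$ with $\bv^{(J)}$ in the $k$-span of that basis; horizontality with respect to $\del_j$ forces, for each $J$ with $\bv^{(J)} \neq 0$, that $-j_j$ is an eigenvalue of $x_j \del_j$, hence lies in a finite set. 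Only finitely many $J$ contribute, so $\bv \in M$, giving $H^0(M \otimes_{R_{n,m}} E_\sigma) = H^0(M)$.

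Combining these, any $\bv \in H^0(M \otimes_{R_{n,m}} F_{(i)})$ maps under the embedding to an element of $H^0(M \otimes_{R_{n,m}} E_\sigma) = H^0(M) \subseteq M \otimes 1$. Since $M$ is locally free (hence flat) over $R_{n,m}$, the induced map $M \otimes_{R_{n,m}} F_{(i)} \to M \otimes_{R_{n,m}} E_\sigma$ is injective; this forces $\bv \in M \otimes 1 \subseteq M \otimes_{R_{n,m}} F_{(i)}$, i.e.\ $\bv \in M$. The reverse inclusion is immediate. The only nontrivial technical step is the existence of the embedding $F_{(i)} \hookrightarrow E_\sigma$, which, as noted above, reduces to checking that an arbitrary ordering of variables yields a well-defined iterated Laurent expansion with matching valuations on the image of $R_{n,m}$.
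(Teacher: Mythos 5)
Your proof is correct and follows the same basic approach as the paper's terse one-line argument, which asserts that it suffices to treat $i=n$ and then cites Corollary~\ref{C:regular homs1}. Your version makes explicit what the paper leaves implicit: for $i<n$ (in particular $i\leq m<n$, where swapping $x_i$ with $x_n$ is \emph{not} an automorphism of $R_{n,m}$ since one variable is inverted and the other is not), the correct move is to keep $R_{n,m}$ fixed but reorder the nesting of the iterated Laurent series field so that $x_i$ becomes the variable defining the valuation, check that this gives an isometric embedding $F_{(i)}\hookrightarrow E_\sigma$, and then rerun the eigenvalue/finite-support argument of Corollary~\ref{C:regular homs1} together with the sandwich $H^0(M)\subseteq H^0(M\otimes F_{(i)})\subseteq H^0(M\otimes E_\sigma)=H^0(M)$.
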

\begin{proof}
It suffices to check the case $i=n$, for which we apply
Corollary~\ref{C:regular homs1}.
\end{proof}
\begin{cor}\label{C:big regular}
Let  $0 \to M_1 \to M \to M_2 \to 0$ be a short exact sequence of finite
differential modules over $R_{n,m}$. Then $M$ is regular if and only if
both $M_1$ and $M_2$ are regular.
\end{cor}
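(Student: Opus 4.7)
The plan is to deduce both directions of the corollary simultaneously from condition (c) of Theorem~\ref{T:regular}, namely that a finite differential module $N$ over $R_{n,m}$ is regular if and only if $N = 0$ or $f_1(N,r) = 0$ for every $r \in [0,+\infty)^n$. Reformulated in this way, the corollary reduces to a statement about additivity of absolute scale multisets in short exact sequences, which is where the real content lies.

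To carry this out, I would first check that for each $r$, tensoring the given short exact sequence with $F_r$ preserves exactness. Since $F_r$ is the completion of a localization of $R_{n,m}$, it is flat over $R_{n,m}$, so base change yields a short exact sequence
\[
0 \to M_{1,r} \to M_r \to M_{2,r} \to 0
\]
of finite differential modules over the complete nonarchimedean differential field $F_r$. Next, I would invoke the definition of absolute scale multiset via Jordan--H\"older constituents (Definition~\ref{D:irregularity}). Since composition series of $M_r$ can be obtained by concatenating composition series of $M_{1,r}$ and $M_{2,r}$, the multiset of Jordan--H\"older constituents of $M_r$ is the disjoint union of the analogous multisets for $M_{1,r}$ and $M_{2,r}$. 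Consequently, the absolute scale multiset of $M_r$ is the multiset union of the absolute scale multisets of $M_{1,r}$ and $M_{2,r}$, and in particular
\[
f_1(M,r) = \max\{f_1(M_1,r), f_1(M_2,r)\},
\]
with the convention that $f_1(0,r) = 0$.

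From this identity the conclusion is immediate: after disposing of the trivial case in which one of the modules vanishes, $f_1(M,r) = 0$ for all $r$ if and only if both $f_1(M_1,r) = 0$ and $f_1(M_2,r) = 0$ for all $r$. Applying Theorem~\ref{T:regular}(c) three times then establishes the equivalence in both directions at once.

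The only point requiring care, and the closest thing to an obstacle, is the additivity of Jordan--H\"older multisets across the short exact sequence over $F_r$; but this is a standard consequence of the Jordan--H\"older theorem in the abelian category of finite differential modules over $F_r$, all of which have finite length since they are finite-dimensional as $F_r$-vector spaces. Everything else is either flatness of $F_r$ over $R_{n,m}$ or the characterization already packaged in Theorem~\ref{T:regular}(c), so no further spectral analysis is needed beyond what has been developed.
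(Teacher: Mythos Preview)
Your proof is correct and takes essentially the same approach as the paper, reducing to condition (c) of Theorem~\ref{T:regular}. The only difference is that the paper invokes Corollary~\ref{C:regular} (the analogous short-exact-sequence result over the complete discretely valued field $F_{(i)}$, for $i=1,\dots,m$), while you establish the needed comparison of $f_1$ directly via additivity of Jordan--H\"older constituents over each $F_r$; your route is slightly more self-contained, but the content is identical.
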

\begin{proof}
Using condition (c) from Theorem~\ref{T:regular}, this follows from
Corollary~\ref{C:regular}.
\end{proof}

\begin{defn}
Let $M$ be a regular finite differential module over $R_{n,m}$. 
We refer to any $M_0$ as in Theorem~\ref{T:regular}(a$'$) as a
\emph{regulating lattice} for $M$.
For $M_0$ a regulating lattice, for $i=1,\dots,m$,
we call the eigenvalues of $x_i \del_i$ on $M/(x_1,\dots,x_n)M$
the \emph{exponents} of $x_i \del_i$ on $M_0$. (One may extend the
definition to $i > m$, for which the exponents will always be zero.)
\end{defn}

\subsection{Twist-regular connections}

We now can define twist-regular differential modules over $R_{n,m}$
and give a partial analogue of the Hukuhara-Levelt-Turrittin theorem.
\begin{defn}
Let $M$ be a finite differential module over $R_{n,m}$.
We say that $M$ is \emph{twist-regular} 
if $\End(M)$ is regular.
For example, $E(r) \otimes_{R_{n,m}} M$ is twist-regular for any
$r \in R_{n,m}$ and any regular $M$.
\end{defn}

\begin{lemma} \label{L:twist-regular rank 1}
Let $M$ be a nonzero twist-regular finite differential module over $R_{n,m}$.
Then for some finite extension $k'$ of $k$,
$M \otimes_k k'$ has a differential submodule of rank $1$.
\end{lemma}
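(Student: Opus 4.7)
The plan is to produce the desired rank-one submodule as the image of a rank-one horizontal projector $\tilde E \in \End_D(M \otimes_k k')$, constructed by applying Theorem~\ref{T:Turrittin} at the completion $F_{(1)}$ and then descending via Corollary~\ref{C:regular homs}.

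First I would pass to the complete discretely valued field $F_{(1)}$. Since $\End(M) \otimes_{R_{n,m}} F_{(1)}$ is regular over $F_{(1)}$ as a base change of a regular module, the module $V := M \otimes_{R_{n,m}} F_{(1)}$ is twist-regular over $F_{(1)}$. Theorem~\ref{T:Turrittin}(b) then produces a finite extension $F'/F_{(1)}$ and an element $r \in F'$ such that $R := E(-r) \otimes_{F'} (V \otimes_{F_{(1)}} F')$ is regular over $F'$. Applying Theorem~\ref{T:regular} to $R$ and extending the constants further by a finite extension $k'/k$ containing all the relevant exponents, I would obtain a regulating lattice $W \subseteq R$ together with a basis $\mathbf{b}_1, \ldots, \mathbf{b}_d$ of $W \otimes_k k'$ on which the commuting operators $x_i \partial_i$ act diagonally; such a basis is obtained by lifting a joint eigenbasis of the residue via an iterated application of Lemma~\ref{L:fundamental solution}. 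The rank-one projector $E_{11}$ onto $\mathbf{b}_1$ is a joint eigenvector of all $x_i \partial_i$ with eigenvalue zero, hence a horizontal element of $\End(R) \cong \End(M) \otimes_{R_{n,m}} F' \otimes_k k'$.

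Next I would descend $E_{11}$ to a horizontal endomorphism of $M \otimes_k k'$ defined over $R_{n,m} \otimes_k k'$. Corollary~\ref{C:regular homs}, applied to the regular module $\End(M)$, gives $H^0(\End(M)) = H^0(\End(M) \otimes_{R_{n,m}} F_{(1)})$. Combining this with the observation that neither a ramified extension of $F_{(1)}$ nor passage to $F' \otimes_k k'$ introduces new horizontal sections beyond those arising from the constant-field extension $k'/k$, I conclude $H^0(\End(M) \otimes F' \otimes_k k') = H^0(\End(M)) \otimes_k k' = \End_D(M \otimes_k k')$. Consequently $E_{11}$ corresponds to a horizontal idempotent $\tilde E \in \End_D(M \otimes_k k')$ defined over $R_{n,m} \otimes_k k'$; its image is a direct summand of $M \otimes_k k'$ as a differential module, of generic rank equal to the rank of $E_{11}$ over $F' \otimes_k k'$, namely one. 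This yields the desired rank-one differential submodule.

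The hard part will be verifying the descent step precisely: one must check that no new horizontal sections of $\End(M)$ appear upon base change to $F' \otimes_k k'$ beyond those obtained from the constant-field extension alone. This is where the regularity of $\End(M)$ enters essentially, since for a non-regular differential module the horizontal sections could strictly grow under base change to $F_{(1)}$ or to ramified extensions thereof; by contrast, regular modules are rigid under such base changes because their absolute scale multiset consists of ones.
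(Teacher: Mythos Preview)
Your proposal has a genuine gap. The crucial step is the claim that, after suitable extensions, one can find a basis $\mathbf{b}_1,\dots,\mathbf{b}_d$ of the regular twist on which the operators $x_i\del_i$ act \emph{diagonally}. Theorem~\ref{T:regular}(b$'$) only guarantees that these operators act via commuting matrices over $k$; it does not guarantee semisimplicity. When a nontrivial Jordan block is present, no diagonal basis exists, and the projector $E_{11}$ onto a joint eigenvector is \emph{not} horizontal. Concretely, take $n=m=1$ and let $M$ be free of rank $2$ with $x_1\del_1$ acting via $N=\begin{pmatrix}0&1\\0&0\end{pmatrix}$. Then $M$ is regular (hence twist-regular), but $[N,E_{11}]=-E_{12}\neq 0$, so $E_{11}$ is not horizontal. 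In fact the horizontal elements of $\End(M)$ here are exactly the $k$-span of the identity and the nilpotent $E_{12}$; no rank-one horizontal idempotent exists even after enlarging $k$. Thus your strategy of producing a rank-one \emph{direct summand} cannot succeed in general, whereas the lemma only asks for a rank-one \emph{submodule}.

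The paper's proof avoids this obstacle by not seeking an idempotent at all, and by inducting on $\rank(M)$. Working directly with the $k$-lattice $V\subseteq\End(M)$ provided by Theorem~\ref{T:regular}(b$'$) (so the detour through $F_{(1)}$ and Theorem~\ref{T:Turrittin} is unnecessary), one splits into two cases. If some joint generalized eigenspace of the $x_i\del_i$ on $V$ has a nonzero eigenvalue tuple $(\lambda_i)$, an eigenvector $\bw\in V$ has $h$-fold composite with eigenvalue tuple $(h\lambda_i)$, which for large $h$ lies outside the finite spectrum; hence $\bw^h=0$, and twisting by the rank-one module with exponents $-\lambda_i$ turns $\bw$ into a nonzero noninvertible \emph{morphism} of differential modules, whose kernel is a proper nonzero submodule. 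If instead all eigenvalues vanish, the $x_i\del_i$ act nilpotently on $V$, so the trace-zero part of $V$ contains a nonzero joint kernel vector, giving a nonzero nilpotent horizontal endomorphism of $M$; its kernel is again a proper nonzero submodule. In either case one applies the induction hypothesis to the submodule. Your descent worry is a non-issue once one stays over $R_{n,m}$, but the diagonalizability claim is where the argument actually breaks.
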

\begin{proof}
Rather than include $k'$ in the notation, we allow
$k$ to be replaced by a finite extension during the proof.
We induct on $\rank(M)$. If $\rank(M) = 1$ there is nothing to check,
so we assume $\rank(M) > 1$.

Let $V$ be the $k$-span of a basis of $\End(M)$ of the form
described in Theorem~\ref{T:regular}(b$'$).
We may replace $k$ by a finite extension so that the eigenvalues
of each $x_i \del_i$ on $V$ belong to $k$.
Since the $x_i \del_i$ commute, we can decompose $V$ as a direct
sum of joint generalized eigenspaces for the $x_i \del_i$.
Suppose first that there exists such a generalized eigenspace $W$ 
with eigenvalue $\lambda_i$ for $x_i \del_i$,
such that the $\lambda_i$ are not all zero. Pick an eigenvector $\bw$
in $W$. The $h$-fold composition of $\bw$ is again an eigenvector,
with eigenvalue $h\lambda_i$ for $x_i \del_i$; since $W$ is finite-dimensional
and there is a nonzero $\lambda_i$, for sufficiently large $h$,
$h \lambda_i$ is not an eigenvalue. This forces the $h$-fold composition of 
$\bw$ to be zero.

Let $N$ be the differential module over $R_{n,m}$ free on one generator
$\bv$ satisfying $\del_i(\bv) = -\lambda_i x_i^{-1} \bv$
for $i=1,\dots,n$. (Note that $\lambda_i = 0$ for $i > m$.)
Then for each $j$, $\bw$ corresponds to a nonzero morphism
$f_j: M \otimes_{R_{n,m}} N^{\otimes j}$ to $M \otimes_{R_{n,m}} N^{\otimes (j+1)}$;
by the previous paragraph, the composition $f_{h-1} \circ \cdots \circ f_0$
is the zero map. Hence some $f_j$ is not invertible, which means that $f_0$
is not invertible. Since $f_0$ is nonzero, its kernel must be a nontrivial
proper differential submodule $P$ of $M$. Applying the induction hypothesis to
$P$ yields the claim.

The remaining case is the one for which $x_i \del_i$ acts
on $V$ via a nilpotent matrix for each $i$. Let $N_0$ be the trace-zero
summand of $\End(M)$. Then the 
$x_i \del_i$ have a common kernel on $V \cap N_0$; any nonzero
element of that kernel corresponds to a nonzero nilpotent endomorphism of $M$.
Applying the induction hypothesis to the kernel of this endomorphism yields
the claim.
\end{proof}

\begin{theorem} \label{T:turrittin multi}
Let $M$ be a twist-regular finite differential module over $R_{n,m}$.
Then for some $s \in R_{n,m}$, $E(-s) \otimes_{R_{n,m}} M$ is regular. 
\end{theorem}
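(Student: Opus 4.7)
The plan is to reduce to the rank $1$ case by passing to the determinant, verify the resulting candidate twist by working over each boundary field $F_{(i)}$, and conclude via Theorem~\ref{T:regular}(c).

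First I would establish the rank $1$ version of the theorem: any rank $1$ differential module over $R_{n,m}$ is of the form $E(s) \otimes R_0$ for some $s \in R_{n,m}$ and some regular rank $1$ module $R_0$. Picking a generator reduces this to showing that every closed $1$-form $\omega = \sum_i a_i\,dx_i$ with $a_i \in R_{n,m}$ (closedness being the integrability condition $\del_i(a_j) = \del_j(a_i)$) can be written as $ds + \sum_{i=1}^m c_i\,dx_i/x_i$ with $s \in R_{n,m}$ and $c_i \in k$, i.e., that $H^1_{\mathrm{dR}}(R_{n,m}/k)$ is spanned by the classes of $dx_i/x_i$. Concretely, I would expand each $a_i$ as a formal series (Laurent in $x_1,\dots,x_m$, power series in $x_{m+1},\dots,x_n$), take $c_i \in k$ to be the coefficient of $x_i^{-1}$ (with all other exponents zero) in $a_i$, and then define $s$ by integrating the remaining terms monomial by monomial. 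The closedness condition both guarantees that this formal antiderivative is well defined (independent of the direction used to integrate) and forces the coefficients of the ``non-integrable'' monomials in $a_i$ (those with $x_i$-exponent $-1$ but at least one other nonzero exponent) to vanish; support considerations then yield $s \in R_{n,m}$.

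Granting the rank $1$ case, the rest is essentially formal. Put $d = \rank M$ and apply the rank $1$ result to $\det M$ to obtain $t \in R_{n,m}$ and a regular rank $1$ module $R_{\det}$ with $\det M \cong E(t) \otimes R_{\det}$. Since $\mathrm{char}(k) = 0$, set $s = t/d \in R_{n,m}$; I claim $E(-s) \otimes M$ is regular. By Theorem~\ref{T:regular}(c) it suffices to verify that $E(-s) \otimes M \otimes_{R_{n,m}} F_{(i)}$ is regular over $F_{(i)}$ for each $i = 1,\dots,m$. Fix such an $i$. Since $\End(M)$ is regular over $R_{n,m}$ it remains regular after base change to $F_{(i)}$, so $M \otimes F_{(i)}$ is twist-regular; by Theorem~\ref{T:Turrittin}(b) we may write $M \otimes F_{(i)} \cong E(s_i) \otimes R_i$ with $s_i \in F_{(i)}$ and $R_i$ regular over $F_{(i)}$. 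Comparing determinants yields
\[
E(ds_i) \otimes \det R_i \;\cong\; \det(M \otimes F_{(i)}) \;\cong\; E(t) \otimes (R_{\det} \otimes F_{(i)}),
\]
which rearranges to show that $E(ds_i - t)$ is isomorphic to a tensor product of regular $F_{(i)}$-modules and is therefore itself regular; Example~\ref{exa:exp} then gives $ds_i - t \in \gotho_{(i)}$. Dividing by the unit $d \in k^\times \subseteq \gotho_{(i)}^\times$ yields $s_i - s \in \gotho_{(i)}$, so $E(s_i - s)$ is regular, and therefore $E(-s) \otimes M \otimes F_{(i)} \cong E(s_i - s) \otimes R_i$ is regular as needed.

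The main obstacle is the rank $1$ case: this is the only point at which one must step outside the abstract spectral formalism of the preceding sections and make an explicit computation with formal series on $R_{n,m}$. Once it is in place, the determinant reduction and the local-to-global passage via Theorem~\ref{T:regular}(c) complete the argument cleanly, with twist-regularity of $M$ entering only through the assumption that $\End(M)$ is regular, which is what permits the application of Theorem~\ref{T:Turrittin}(b) at each $F_{(i)}$.
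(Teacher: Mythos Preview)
Your argument is correct, and it is genuinely different from the paper's proof. Both proofs share the explicit rank~$1$ computation (writing a closed $1$-form on $R_{n,m}$ as $ds$ plus a $k$-linear combination of the $dx_i/x_i$), but they use it in different places. The paper first invokes Lemma~\ref{L:twist-regular rank 1} to produce, after a finite extension $k'/k$, a rank~$1$ differential \emph{submodule} $N\subseteq M\otimes_k k'$; the rank~$1$ computation is applied to $N$ to obtain $s\in R_{n,m}\otimes_k k'$, and then a Galois descent argument forces $s\in R_{n,m}$. Your route applies the rank~$1$ computation instead to $\det M$ (which is already defined over $R_{n,m}$, so no field extension or descent is needed), takes $s=t/d$, and then verifies regularity of $E(-s)\otimes M$ fibrewise at each $F_{(i)}$ via the one-variable Theorem~\ref{T:Turrittin}(b) and the numerical criterion of Theorem~\ref{T:regular}(c). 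Your approach is shorter and sidesteps Lemma~\ref{L:twist-regular rank 1} entirely; the paper's approach is a bit more self-contained within the $R_{n,m}$ theory and does not need to revisit the one-variable Hukuhara--Levelt--Turrittin machinery at each boundary divisor, though of course that machinery is already available. One small point worth making explicit in your write-up: the rank~$1$ step uses that any rank~$1$ locally free $R_{n,m}$-module is free, which holds because $R_{n,m}$, as a localization of the regular local ring $R_{n,0}$, is a UFD.
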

\begin{proof}
By Lemma~\ref{L:twist-regular rank 1},
there exist a finite extension $k'$ of $k$ 
and a differential submodule $N$ of $M \otimes_k k'$ of rank 1;
we may assume $k'$ is Galois over $k$.
Then $N^\dual \otimes_{R_{n,m}} M$ is a quotient of $\End(M) \otimes_k k'$,
so Corollary~\ref{C:big regular} implies that
$N^\dual \otimes_{R_{n,m}} M$ is regular.

Put $R' = R_{n,m} \otimes_k k'$.
We next check that there exists $s \in R'$ such that
$E(-s) \otimes_{R'} N$ is regular.
Let $\bv$ be a generator of $N$, and write
$\del_i(\bv) = r_i \bv$ for some $r_i \in R'$.
For the actions of $\del_i$ and $\del_j$
to commute, we must have 
\begin{equation} \label{eq:mixed partials}
\frac{\del r_i}{\del x_j} = 
\frac{\del r_j}{\del x_i}.
\end{equation} 
Write $r_i = \sum_{j_1,\dots,j_n \in \ZZ} c_{i,j_1,\dots,j_n} x_1^{j_1}
\cdots x_n^{j_n}$ with $c_{i,j_1,\dots,j_n} \in k'$. 
For $j_1,\dots,j_n \in \ZZ$ all nonnegative, put $s_{j_1,\dots,j_n} = 0$;
otherwise, put
\[
s_{j_1,\dots,j_n} = \frac{c_{i,j_1,\dots,j_{i-1},j_i-1,j_{i+1},\dots,j_n}}{j_i}
\]
for any index $i$ for which $j_i < 0$. This does not depend on $i$
by \eqref{eq:mixed partials}. Put 
$s = \sum_{j_1,\dots,j_n \in \ZZ} s_{j_1,\dots,j_n} x_1^{j_1} \cdots
x_n^{j_n}$; then
$E(-s) \otimes_{R'} N$ is regular, as then is
$E(-s) \otimes_{R_{n,m}} M$.

To conclude, let $\tau$ be any element of the Galois group of $k'$
over $k$, and put
$\tau(s) = \sum_{j_1,\dots,j_n \in \ZZ} \tau(s_{j_1,\dots,j_n}) 
x_1^{j_1} \cdots x_n^{j_n}$. Then
$E(-s) \otimes_{R_{n,m}} M$ and
$E(-\tau(s)) \otimes_{R_{n,m}} M$ are both regular, so
$E(s-\tau(s))$ is regular. However, 
this implies that $\tau(s_{j_1,\dots,j_n}) = s_{j_1,\dots,j_n}$
whenever $j_1,\dots,j_n$ are not all nonnegative; this is also true if
$j_1,\dots,j_n$ are all nonnegative because then both sides are zero.
We conclude that $\tau(s) = s$ for all $\tau$, and hence
$s \in R_{n,m}$. This proves the desired result.
\end{proof}

\subsection{Good decompositions}

We now introduce the notion of a good decomposition of a 
finite differential module over $R_{n,m}$, following
Sabbah \cite{sabbah}. Note that Mochizuki works with a slightly
different definition; see Remark~\ref{R:mochizuki def}.

\begin{defn} \label{D:local model1}
Let $M$ be a finite differential module over $R_{n,m}$.
An \emph{admissible decomposition} of $M$ is an isomorphism
\begin{equation} \label{eq:local model1}
M \cong \bigoplus_{\alpha \in A} E(\phi_\alpha) \otimes_{R_{n,m}} \calR_\alpha
\end{equation}
for some $\phi_\alpha \in R_{n,m}$
(indexed by an arbitrary set $A$)
and some regular differential modules $\calR_\alpha$.
(This corresponds to the notion of an \emph{elementary local model} in 
\cite{sabbah}.)
A \emph{good decomposition} is an admissible decomposition satisfying
the following
two additional conditions.
\begin{enumerate}
\item[(a)]
For $\alpha \in A$, if $\phi_\alpha \notin R_{n,m}$, then
$\phi_\alpha$ has the form $u x_1^{-i_1} \cdots x_m^{-i_m}$
for some unit $u$ in $R_{n,0}$ and some nonnegative integers $i_1,\dots, i_m$.
\item[(b)]
For $\alpha, \beta \in A$, if $\phi_\alpha - \phi_\beta \notin R_{n,m}$,
then $\phi_\alpha - \phi_\beta$ has the form $u x_1^{-i_1} \cdots x_m^{-i_m}$
for some unit $u$ in $R_{n,0}$ and some nonnegative integers $i_1,\dots, i_m$.
\end{enumerate}
\end{defn}

Let us record an important consequence of this definition.
\begin{lemma} \label{L:comparable}
Let $M$ be a finite differential module over $R_{n,m}$
admitting a good decomposition \eqref{eq:local model1}.
Then the $R_{n,0}$-submodules of $R_{n,m}$ generated by those
$\phi_\alpha \notin R_{n,0}$ (for $\alpha \in A$)
are totally ordered under containment.
\end{lemma}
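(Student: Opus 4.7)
The plan is to reduce the statement to a componentwise comparison of exponent vectors, then to use Gauss norms to verify this comparison.

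For each $\alpha \in A$ with $\phi_\alpha \notin R_{n,0}$, I would apply condition (a) of Definition~\ref{D:local model1} to write $\phi_\alpha = u_\alpha x_1^{-a_1(\alpha)} \cdots x_m^{-a_m(\alpha)}$ with $u_\alpha \in R_{n,0}^\times$ and $I_\alpha := (a_1(\alpha),\dots,a_m(\alpha)) \in \ZZ_{\geq 0}^m \setminus \{0\}$. Since $u_\alpha$ is a unit of $R_{n,0}$, we have $R_{n,0}\phi_\alpha = R_{n,0}\, x_1^{-a_1(\alpha)} \cdots x_m^{-a_m(\alpha)}$, and elementary divisibility in $R_{n,0}$ shows that $R_{n,0}\phi_\alpha \subseteq R_{n,0}\phi_\beta$ if and only if $I_\alpha \leq I_\beta$ componentwise. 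The lemma thus reduces to showing that $I_\alpha$ and $I_\beta$ are componentwise comparable for any two such indices $\alpha, \beta$.

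I would then split the argument according to condition (b). If $\phi_\alpha - \phi_\beta \in R_{n,0}$, comparing $x_i$-adic valuations for $i=1,\dots,m$ forces $a_i(\alpha) = a_i(\beta)$ for each $i$, since otherwise the difference would have a negative $x_i$-valuation, contradicting membership in $R_{n,0}$; hence $I_\alpha = I_\beta$. Otherwise condition (b) gives $\phi_\alpha - \phi_\beta = u x_1^{-i_1} \cdots x_m^{-i_m}$ for some $u \in R_{n,0}^\times$ and $I = (i_1,\dots,i_m) \in \ZZ_{\geq 0}^m \setminus \{0\}$. Here I would turn to the $r$-Gauss norms of \S\ref{subsec:power setup} (with $k$ carrying the trivial norm, as implicit in the paper's setup): for any $r \in (0,+\infty)^m$ extended by zeros to an $n$-tuple, any unit of $R_{n,0}$ has Gauss norm $1$, so $|\phi_\alpha|_r = e^{I_\alpha \cdot r}$, $|\phi_\beta|_r = e^{I_\beta \cdot r}$, and $|ux^{-I}|_r = e^{I \cdot r}$, where $I \cdot r = \sum_j i_j r_j$. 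The non-archimedean identity $|x-y| = \max\{|x|,|y|\}$ when $|x| \neq |y|$, applied to $\phi_\alpha - \phi_\beta = ux^{-I}$, then yields $I \cdot r = \max(I_\alpha \cdot r, I_\beta \cdot r)$ on each of the two open regions $S_\alpha = \{r : I_\alpha \cdot r > I_\beta \cdot r\}$ and $S_\beta = \{r : I_\beta \cdot r > I_\alpha \cdot r\}$ in $(0,+\infty)^m$.

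The final step, which I expect to be the main delicate point, is the linear-algebraic conclusion: if both $S_\alpha$ and $S_\beta$ were nonempty, each would contain an open subset of $\RR^m$ on which the integer linear functional $I - I_\alpha$ (respectively $I - I_\beta$) vanishes, forcing $I = I_\alpha$ and $I = I_\beta$ as integer vectors and hence $I_\alpha = I_\beta$, contradicting the assumption that both regions are nonempty. Therefore at least one of $S_\alpha, S_\beta$ is empty, meaning $(I_\alpha - I_\beta) \cdot r$ has constant sign on all of $(0,+\infty)^m$; this forces all components of $I_\alpha - I_\beta$ to share a sign, so $I_\alpha$ and $I_\beta$ are comparable componentwise, as required.
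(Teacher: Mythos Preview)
Your proof is correct, and Case~1 matches the paper's argument exactly. In Case~2, however, you take a genuinely different route. The paper observes that both $\phi_\alpha/\phi_\beta$ and $1-\phi_\alpha/\phi_\beta=(\phi_\beta-\phi_\alpha)/\phi_\beta$ are units in $R_{n,m}$ (by conditions (a) and (b)), and uses the structural fact that any unit of $R_{n,m}$ has a unique least monomial under divisibility: since $1-\phi_\alpha/\phi_\beta$ must have such a least monomial, the monomial $1$ and the least monomial $\mu$ of $\phi_\alpha/\phi_\beta$ are forced to be comparable, which is exactly the componentwise comparability of $I_\beta-I_\alpha$ with $0$. Your argument instead exploits that each of $\phi_\alpha$, $\phi_\beta$, $\phi_\alpha-\phi_\beta$ has $\log|\cdot|_r$ \emph{exactly} linear in $r$ (being a unit of $R_{n,0}$ times a monomial), and then lets the ultrametric identity $|a-b|_r=\max(|a|_r,|b|_r)$ for $|a|_r\neq|b|_r$ pin down the exponent vector $I$ as equal to $I_\alpha$ or $I_\beta$ on open regions, forcing the dichotomy. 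Both arguments hinge on the same underlying fact---that the three relevant elements are unit multiples of monomials---but the paper reads this off algebraically via leading monomials, whereas you read it off analytically via the family of Gauss norms. Your approach meshes naturally with the spectral machinery of \S\ref{sec:power series} and makes the linear-algebra endgame very transparent; the paper's approach is more self-contained, needing only the structure of $R_{n,m}^\times$ and no norms at all.
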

\begin{proof}
(Compare \cite[\S 3.1.2]{mochizuki2}.)
Choose $\alpha,\beta \in A$ such that $\phi_\alpha, \phi_\beta 
\notin R_{n,0}$.
Suppose first that $\phi_\alpha - \phi_\beta \in R_{n,0}$.
In this case, $\phi_\alpha$ and $\phi_\beta$ cannot have distinct
$x_i$-adic valuations for any $i \in \{1,\dots,m\}$, otherwise 
the lesser of the two valuations would be negative and would coincide
with the $x_i$-adic valuation of $\phi_\alpha - \phi_\beta$.
By condition (a) in Definition~\ref{D:local model1},
this is enough to ensure that
$\phi_\alpha R_{n,0} = \phi_\beta R_{n,0}$.

Suppose next that $\phi_\alpha - \phi_\beta \notin R_{n,0}$.
By conditions (a) and (b) of Definition~\ref{D:local model1}, the quantities
\[
\phi_\alpha/\phi_\beta, \qquad
1-\phi_\alpha/\phi_\beta = (\phi_\beta - \phi_\alpha)/\phi_\beta
\]
are units in $R_{n,m}$.
Since $\phi_\alpha/\phi_\beta$ is a unit in $R_{n,m}$, when we write 
$\phi_\alpha/\phi_\beta$
as a sum of monomials, there must be a 
least monomial $\mu$ under divisibility.
Similarly, $1 - \phi_\alpha/\phi_\beta$ must have a least monomial
under divisibility;
this forces $R_{n,0}$ and $\mu R_{n,0}$ to be comparable under 
containment, 
yielding the claim.
\end{proof}

\begin{remark} \label{R:mochizuki def}
The notion of a good decomposition used here, while consistent with that of
Sabbah, has a slight mismatch
with the notion of a \emph{good set of irregular values} introduced
by Mochizuki
in \cite[Definition~3.1.1]{mochizuki2}. In our language, Mochizuki
adds the additional restriction that the $R_{n,0}$-submodules of $R_{n,m}$
generated by those $\phi_\alpha - \phi_\beta \notin R_{n,0}$
(for $\alpha,\beta \in A$) are totally ordered under containment.
On one hand, any good decomposition under this extra condition is also
a good decomposition in our sense. On the other hand, if both
 $M$ and $\End(M)$ admit a good decomposition in our sense, then the
decomposition of $M$ is also good in Mochizuki's sense,
by Lemma~\ref{L:comparable}. This discrepancy will not matter for our
final result (see Remark~\ref{R:mochizuki def2});
our choice of Sabbah's definition is partly influenced by the fact
that this definition leads to a relatively simple
numerical criterion for good formal structures
(Theorem~\ref{T:criterion}).
\end{remark}

\subsection{A numerical criterion}

We now give our numerical criterion for existence
of good decompositions in terms of the variation of irregularity.

\begin{defn}
For $k'$ a finite extension of $k$ and $h$ a positive integer,
define
\[
R_{n,m}(k',h) = k' \llbracket x_1^{1/h}, \dots, x_m^{1/h}, x_{m+1},
\dots, x_n \rrbracket [x_1^{-1/h}, \dots, x_m^{-1/h}],
\]
again viewed as a differential ring with derivations
$\del_1,\dots,\del_n$. For $M$ a finite differential
module over $R_{n,m}(k',h)$, define $f_i(M,r)$ and $F_i(M,r)$
as in Definition~\ref{D:spectral}. This differs from what you get
if you identify $R_{n,m}(k',h)$ with a copy of $R_{n,m}$ over the
field $k'$ in the variables $x_1^{1/h},\dots,x_m^{1/h},x_{m+1},\dots,x_n$,
but only up to a change of normalization in $r$-space.
\end{defn}

\begin{theorem} \label{T:criterion}
Let $M$ be a differential module over $R_{n,m}$ of finite rank $d$.
Then the following conditions are equivalent.
\begin{enumerate}
\item[(a)]
There exist a finite extension $k'$ of $k$ and a
positive integer $h$ such that $M \otimes_{R_{n,m}} R_{n,m}(k',h)$
admits a good decomposition.
\item[(b)]
The functions $F_1(M,r), \dots, F_d(M,r)$
and $F_{d^2}(\End(M),r)$ are all
linear in $r$ (and constant in $r_{m+1},\dots,r_{n}$).
\item[(c)]
The functions $F_d(M,r)$
and $F_{d^2}(\End(M),r)$ are both
linear in $r$ (and constant in $r_{m+1},\dots,r_{n})$.
\end{enumerate}
\end{theorem}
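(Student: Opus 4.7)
The equivalence is established through three implications. The direction (b)$\Rightarrow$(c) is immediate. For (a)$\Rightarrow$(b), the plan is a direct computation from the explicit form of a good decomposition. After passing (without further comment) to the required finite extension $k'/k$ and ramified cover of $R_{n,m}$, write $M = \bigoplus_{\alpha \in A} E(\phi_\alpha) \otimes_{R_{n,m}} \calR_\alpha$. Each summand is twist-regular with ``exponential part'' $E(\phi_\alpha)$, so Lemma~\ref{L:exp image} combined with Theorem~\ref{T:Turrittin} yields that its absolute scale multiset at $|\cdot|_r$ consists of $\max\{1,|\phi_\alpha|_r\}$ repeated $\rank(\calR_\alpha)$ times. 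By Lemma~\ref{L:comparable}, the ideals $\phi_\alpha R_{n,0}$ are totally ordered under containment, which translates to the functions $r \mapsto |\phi_\alpha|_r$ being pointwise totally ordered on $[0,+\infty)^n$. Hence each sorted value $f_i(M,r)$ coincides globally with one particular $\log\max\{1,|\phi_\alpha|_r\}$, and the good-pole condition Definition~\ref{D:local model1}(a) identifies this with either $0$ or $i_1 r_1 + \cdots + i_m r_m$ for integers $i_j \geq 0$. Summing yields linearity of $F_i(M,r)$ for every $i$, with constancy in $r_{m+1},\ldots,r_n$. The same argument applied to
\[
\End(M) = \bigoplus_{\alpha,\beta} E(\phi_\alpha - \phi_\beta) \otimes_{R_{n,m}} (\calR_\alpha^\dual \otimes \calR_\beta),
\]
together with Definition~\ref{D:local model1}(b), gives linearity of $F_{d^2}(\End(M),r)$.

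For (c)$\Rightarrow$(a), the plan is induction on $d = \rank(M)$, building the decomposition by iteratively splitting $M$ and then applying Theorem~\ref{T:turrittin multi} to recognize each twist-regular piece. Hypothesis (c) descends to every direct summand of $M$ and to every Hom-summand $\Hom(M_i, M_j)$ of $\End(M)$ by Corollary~\ref{C:drop affine}, so any non-trivial split reduces the rank and permits induction. The first split uses Corollary~\ref{C:convex} applied to $F_d(M,r)$: one extracts the unique index $j$ with $F_j$ linear, $f_j > 0$ on the interior of $[0,+\infty)^n$, and $f_i \equiv 0$ for $i > j$; when $0 < j < d$, Theorem~\ref{T:refined decomp} (justified by Remark~\ref{R:refined decomp}) peels off the regular direct summand. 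After this reduction, all absolute scales of $M$ are strictly positive on the interior. Next one analyzes $\End(M)$: since $F_{d^2}(\End(M),r)$ is linear, the same apparatus extracts the regular part of $\End(M)$ as a direct summand. Horizontal sections of this regular part over $F_{(i)}$ correspond to horizontal endomorphisms of $M_{(i)}$ by Corollary~\ref{C:regular homs}; after enlarging $k$ to split the relevant eigenvalues, a non-trivial idempotent among these either factors $M$ non-trivially (permitting induction) or forces $\End(M)$ itself to be regular, i.e., $M$ to be twist-regular. In the latter case, Theorem~\ref{T:turrittin multi} yields $M \cong E(s) \otimes_{R_{n,m}} \calR$ with $s \in R_{n,m}$ and $\calR$ regular.

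It remains to verify the good-form constraints. For a single twist-regular summand $E(s) \otimes \calR$, linearity and $r_{m+1},\ldots,r_n$-constancy of $F_d(M,r) = d\log\max\{1,|s|_r\}$ force a single monomial $x_1^{-i_1}\cdots x_m^{-i_m}$ (with $i_j \geq 0$ supported on $j \leq m$) to dominate $s$ throughout $[0,+\infty)^n$, yielding $s \equiv u\, x_1^{-i_1}\cdots x_m^{-i_m} \pmod{R_{n,0}}$ with $u \in R_{n,0}^\times$, which is Definition~\ref{D:local model1}(a). The analogous constraint on differences $\phi_\alpha - \phi_\beta$ across distinct summands follows from linearity of the $\Hom(M_\alpha,M_\beta)$-contribution to $F_{d^2}(\End(M),r)$. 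The hardest part of the argument will be the descent step in the previous paragraph: the regular summand of $\End(M)$ produced by Theorem~\ref{T:refined decomp} sits a priori inside $\End(M)$ itself, but extracting from it a \emph{horizontal} idempotent of $\End(M)$ that defines an $R_{n,m}$-module decomposition of $M$ requires combining Corollary~\ref{C:regular homs} over the various $F_{(i)}$ with the decompletion machinery of \S\ref{subsec:decompletion} to patch the local horizontal sections into a single global idempotent valued in $\End(M)$.
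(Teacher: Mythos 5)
Your argument for (a) $\Rightarrow$ (b) is essentially the paper's: reduce to $M$ itself having a good decomposition, compute scale multisets of each $E(\phi_\alpha)\otimes \calR_\alpha$ via Lemma~\ref{L:exp image}, use Lemma~\ref{L:comparable} to see that the pointwise sorting of the $\log\max\{1,|\phi_\alpha|_r\}$ is independent of $r$, and read off linearity from Definition~\ref{D:local model1}. That part is fine.

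For (c) $\Rightarrow$ (a), the ingredients you assemble are the right ones --- splitting $\End(M)$ as $N_1\oplus N_2$ with $N_1$ regular via Corollary~\ref{C:convex} and Theorem~\ref{T:refined decomp}, then Theorem~\ref{T:turrittin multi}, Corollary~\ref{C:drop affine}, and Lemma~\ref{L:exp image} for the final good-form verification --- but the step you yourself flag as ``the hardest part'' is not just unfinished; the route you sketch for it faces a genuine obstruction. You propose to manufacture horizontal idempotents in $N_1$ by working over the several completions $F_{(i)}$ ($i=1,\dots,m$) and then ``patching via the decompletion machinery.'' There are two problems with this. First, the Hukuhara--Levelt--Turrittin decompositions of $M\otimes F_{(i)}$ for distinct $i$ are \emph{a priori} unrelated, and nothing in \S\ref{subsec:decompletion} supplies a compatibility statement allowing you to glue them. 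Second, and more fundamentally, Corollary~\ref{C:convex} and Theorem~\ref{T:refined decomp} only guarantee that the scales of $N_2$ exceed $1$ for $r$ in the \emph{interior} $(0,+\infty)^n$; at a boundary point $r = e_i$ the scales of $N_{2}\otimes F_{(i)}$ may degenerate to $1$, so the projectors coming from Turrittin over (an extension of) $F_{(i)}$ need not lie in $N_1\otimes F_{(i)}$, and Corollary~\ref{C:regular homs} then gives you nothing. This is exactly why the paper instead evaluates at the single \emph{interior} point $r=(1,\dots,1)$, passes to the iterated Laurent series field $E = k((x_2/x_1))\cdots((x_n/x_1))((x_1))$, applies Theorem~\ref{T:Turrittin} over a finite extension $E'$, observes that the projectors are forced into $N_1\otimes E'$ because $N_2$ has all scales strictly greater than $1$ \emph{at that interior point}, and then descends them to $H^0(N_1)$ over $R_{n,m}(k',h)$ via Corollary~\ref{C:regular homs1}. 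Your inference that ``no nontrivial idempotent in $H^0(N_1)$ forces $\End(M)$ regular'' is precisely what this device establishes and is not available without it. A secondary point: for (c) $\Rightarrow$ (a) you write ``after enlarging $k$ to split the relevant eigenvalues,'' but in general one must also ramify (i.e.\ take $h>1$ in $R_{n,m}(k',h)$), as the Puiseux-type extension of $E$ produced by Theorem~\ref{T:Turrittin} forces.
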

Recall that in (b) and (c), the linear condition implies the constant
condition; see Remark~\ref{R:m equals 0}.
\begin{proof}
Suppose (a) holds.
To check (b), by Lemma~\ref{L:base change}, we may reduce to the case
where $k = k'$ and $h=1$, i.e., where $M$ itself admits a good decomposition.
Set notation as in \eqref{eq:local model1}.
In particular, each $\phi_\alpha$ or $\phi_\alpha - \phi_\beta$
not belonging to $R_{n,0}$ is a unit in $R_{n,m}$.

By Lemma~\ref{L:exp image},
\[
F_1(E(\phi_\alpha),r) = \begin{cases} 0 & \phi_\alpha \in R_{n,0} \\
\log |\phi_\alpha|_r & \phi_\alpha \notin R_{n,0}.
\end{cases}
\]
In particular, $F_1(E(\phi_\alpha),r)$ is linear
in $r$.
By Lemma~\ref{L:comparable},
the $R_{n,0}$-submodules of $R_{n,m}$ generated by those
$\phi_\alpha \notin R_{n,0}$
are totally ordered under containment;
it follows that each $F_i(M,r)$ is equal to
$F_1(E(\phi_\alpha), r)$ for some $\alpha \in A$ depending only on $i$, not
on $r$. Hence the $F_i(M,r)$ are linear in $r$.
Since $F_{d^2}(\End(M), r)$ is a weighted sum of
the $F_1(E(\phi_\alpha-\phi_\beta),r)$ over $\alpha, \beta \in A$, 
it too is linear in $r$. This implies (b).

Since (b) implies (c) trivially, it suffices to assume (c) and deduce (a),
which we now do.
By hypothesis, $F_{d^2}(\End(M), r)$ is linear in $r$.
Let $j$ be the index defined by Corollary~\ref{C:convex};
then for all $r = (r_1,\dots,r_n)$ with $r_1, \dots, r_n > 0$,
the absolute scale multiset of $\End(M)$ contains 1 with multiplicity
exactly $d^2-j$.
By Theorem~\ref{T:refined decomp}, we obtain a direct
sum decomposition $\End(M) \cong N_1 \oplus N_2$ with
$\rank(N_2) = j$, in which
for all $r = (r_1,\dots,r_n)$ with $r_1, \dots, r_n > 0$,
the absolute scale multiset of $N_{1,r}$ has all elements equal to 1
while the absolute scale multiset of $N_{2,r}$ has all elements strictly greater than 1.
In particular, $F_1(N_1,r) = 0$ for all $r \in (0, +\infty)^n$;
by continuity (Theorem~\ref{T:convex}),
$F_1(N_1,r) = 0$ for all $r \in (0, +\infty)^n$.
By Theorem~\ref{T:convex}, $N_1$ is regular.

Put $r = (1, \dots, 1)$. Embed $F_r$
into the field
\[
E = k((x_2/x_1))\cdots((x_n/x_1))((x_1)).
\]
By Theorem~\ref{T:Turrittin}, 
for some finite extension $E'$ of $E$, there is a decomposition of
$M \otimes_{R_{n,m}} E'$ into twist-regular diferential submodules.
The projectors onto the summands in this decomposition
correspond to horizontal elements of $\End(M) \otimes_{R_{n,m}} E'$,
which must necessarily belong to $N_1 \otimes_{R_{n,m}} E'$
(or else the scale multiset of $N_{2,r}$ would contain 1
with nonzero multiplicity).

By Remark~\ref{R:puiseux},
the extension $E'$ embeds into
\[
k'((x_2^{1/h}/x_1^{1/h}))\cdots((x_n^{1/h}/x_1^{1/h}))((x_1^{1/h}))
\]
for some finite extension $k'$ of $k$ and some positive integer $h$.
By Corollary~\ref{C:regular homs1} applied to $N_1$,
the decomposition obtained above descends to a decomposition of
\[
M \otimes_{R_{n,m}} k' \llbracket x_1^{1/h},\dots,x_n^{1/h} \rrbracket
[x_1^{-1/h}, \dots, x_m^{-1/h}].
\]
We next wish to descend this decomposition to
$M \otimes_{R_{n,m}} R_{n,m}(k', h)$, i.e., to eliminate
$x_i^{1/h}$ for $i=m+1,\dots,n$. We may do
this by inspecting the proof of Corollary~\ref{C:regular homs1}:
if $i > m$, then the exponents of $x_i \del_i$
are integers, so we never encounter a fractional power of $x_i$ in the
series expansion of a horizontal element.

We thus have a decomposition $M \otimes_{R_{n,m}} R_{n,m}(k',h)
\cong \oplus_i M_i$
in which $F_1(\End(M_i), (1,\dots,1)) = 0$ for each $i$.
Put $d_i = \rank(\End(M_i))$.
Since $\End(M_i)$ is a summand of $\End(M)$, 
by Corollary~\ref{C:drop affine},
$F_{d_i}(\End(M_i), r)$ is linear in $r$.
By Corollary~\ref{C:convex}, the equality
$f_{1}(\End(M_i), (1,\dots,1)) = 0$
implies $f_{1}(\End(M_i),r) = 0$ for all $r$.
By Theorem~\ref{T:regular}, $M_i$ is twist-regular.

By Theorem~\ref{T:turrittin multi}, 
we can choose $s_i \in R_{n,m}(k',h)$ so that
$E(-s_i) \otimes_{R_{n,m}} M_i$ is a regular finite differential
module over $R_{n,m}(k',h)$.
By Corollary~\ref{C:drop affine}, 
$F_1(E(s_i),r)$ is linear in $r$.
On the other hand, by Lemma~\ref{L:exp image}, we have
\[
F_1(E(s_i),r) = \max\{0, \log |s_i|_r\}.
\]
If $F_1(E(s_i),r) = 0$ identically, this forces $s_i \in R_{n,0}$.
Otherwise, we must have
$F_1(E(s_i),r) = a_1 r_1 + \dots + a_m r_m$ for some
nonnegative integers $a_1,\dots,a_m$ which are not all zero.
In this case, $t = x_1^{a_1} \cdots x_m^{a_m} s_i$ 
has the property that $|t|_r = 0$ for all $r \in [0, +\infty)^n$;
this forces $t$ to be a unit in $R_{n,0}$.

We thus conclude that condition (a) of Definition~\ref{D:local model1}
is satisfied.
Similarly, the fact that
$F_1(E(s_i - s_j),r)$ is linear in $r$ 
implies condition (b) of Definition~\ref{D:local model1}.
Hence $M \otimes_{R_{n,m}} R_{n,m}(k',h)$ admits a good decomposition,
proving (a).
\end{proof}

\begin{remark}
The equivalence between (b) and (c) in Theorem~\ref{T:criterion}
may seem a bit surprising at first. It becomes less surprising when
one compares it to the analogous situation that occurs in trying to
factor a monic 
polynomial $P \in R_{n,m}[T]$. Let $r_1,\dots,r_n$ be the roots of
$P$ in an algebraic closure of $\Frac(R_{n,m})$. Let $Q$ be the polynomial whose
roots are $r_i - r_j$ for $i,j \in \{1,\dots,n\}$ distinct.
Then condition (c) is analogous to the hypothesis that each of $P$ and $Q$
is equal to a power of $T$ times a polynomial with 
invertible constant coefficient. 
This condition for $Q$ implies that the discriminant of the
radical $R$ of $P$ (the maximal square-free factor of $P$)
is a unit, so the ring extension $R_{n,m}[T]/(R(T))$
of $R_{n,m}$ is \'etale. 
By Abhyankar's lemma,
the $r_i$ must all belong to $R_{n,m}(k',h)$
for some positive integer $h$ and some finite extension $k'$
of $k$, and each $r_i$ and $r_i - r_j$ must be either zero or a unit.
\end{remark}

\begin{remark}
One can formulate an analogue of the equivalence between (b) and (c)
in Theorem~\ref{T:criterion} in the setting of differential modules
over $p$-adic polyannuli, as in \cite{kedlaya-xiao}. This analogue
is likely correct, but this has not yet been checked.
\end{remark}

\begin{example}
It was asserted in a prior version of this paper that condition (a)
of Theorem~\ref{T:criterion} is equivalent to the fact that
the functions 
\[
F_1(M,r), \dots, F_d(M,r), F_1(\End(M),r), \dots, F_{d^2}(\End(M),r)
\]
are all
linear in $r$.
This is false, as shown by the following example. Take $n = m = 2$, and
\[
M = E(x_1^{-3} x_2^{-3}) \oplus E(x_1^{-3} x_2^{-3} + x_1^{-1})
\oplus E(x_1^{-2} x_2^{-2}) \oplus E(x_1^{-2} x_2^{-2} + x_2^{-1}).
\]
Then $M$ admits a good decomposition, but
\[
F_5(\End(M), (r_1,r_2)) = 3r_1 + 3r_2 + \max\{r_1, r_2\}
\]
is not linear.
\end{example}

\subsection{Good decompositions and Deligne-Malgrange lattices}
\label{subsec:good DM}

One application of good decompositions is to the analysis of
Deligne-Malgrange lattices. This again follows \cite{malgrange-conn2}.
\begin{hypothesis}
Throughout \S~\ref{subsec:good DM},
let $\tau: \overline{k}/\ZZ \to \overline{k}$ be an admissible 
section of the quotient $\overline{k} \to \overline{k}/\ZZ$.
Such a section exists by Lemma~\ref{L:admissible}.
\end{hypothesis}

\begin{defn}
Let $M$ be a finite differential module over $R_{n,m}$.
The \emph{Deligne-Malgrange lattice} in $M$ 
(with respect to $\tau$) is the 
set $M_0$ of $\bv \in M$ such that for $i=1,\dots,m$, $\bv$ belongs to
the Deligne-Malgrange lattice of $M \otimes_{R_{n,m}} F_{(i)}$
(with respect to $\tau$). Note that $M_0$ is a finite $R_{n,0}$-module
(by Lemma~\ref{L:make lattice}) which is an $R_{n,0}$-lattice in
$M$ stable under $x_1 \del_1,\dots,x_m \del_m, \del_{m+1},
\dots,\del_n$. Moreover, for $i=1,\dots,m$, $M_0 \otimes_{R_{n,0}} 
\gotho_{(i)}$
is the Deligne-Malgrange lattice of $M_0 \otimes_{R_{n,m}} F_{(i)}$
(with respect to $\tau$). 
\end{defn}

\begin{lemma} \label{L:DM regular}
Let $M$ be a regular finite differential module over $R_{n,m}$.
Then there is a unique regulating lattice $M_0$ in $M$ such that
the exponents of $x_i \del_i$ on $M_0$ are in the image of $\tau$
for $i=1,\dots,m$.
\end{lemma}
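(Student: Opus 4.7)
The plan is to prove existence constructively via shearing, starting from any regulating lattice provided by Theorem~\ref{T:regular}, and to prove uniqueness by a change-of-basis argument combining horizontality in the unramified directions with agreement of the completions at each $F_{(i)}$.

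For existence, I would start with any regulating lattice $M_0'$ furnished by Theorem~\ref{T:regular}(b$'$): $M_0'$ is free over $R_{n,0}$ with basis $e_1,\dots,e_d$ on which each $x_i\del_i$ ($i \le m$) acts via a matrix $N_i$ over $k$ with prepared eigenvalues, and each $\del_j$ ($j > m$) acts as zero. The $k$-subspace $V_0 := ke_1 \oplus \cdots \oplus ke_d$ of $M_0'$ is stable under all of $x_1\del_1,\dots,x_m\del_m$ simultaneously, so $V_0 \otimes_k \overline{k}$ decomposes into joint generalized eigenspaces $W_\alpha$ indexed by $\alpha = (\alpha_1,\dots,\alpha_m) \in \overline{k}^m$. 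Define
\[
M_0 \otimes_k \overline{k} \;:=\; \bigoplus_\alpha \biggl( \prod_{i=1}^m x_i^{\tau(\alpha_i) - \alpha_i} \biggr) (R_{n,0} \otimes_k \overline{k}) W_\alpha,
\]
with integer exponents since $\tau$ is a section of $\overline{k} \to \overline{k}/\ZZ$. Multiplication by $x_i^c$ shifts the $x_i\del_i$-eigenvalue by $c$ without affecting any $x_j\del_j$-eigenvalue for $j \neq i$ (as $\del_j(x_i) = 0$), so the new exponents $\tau(\alpha_i)$ lie in the prepared set $\tau(\overline{k}/\ZZ)$. Galois-equivariance of $\tau$ (part of admissibility) forces the construction to descend to an $R_{n,0}$-submodule $M_0 \subseteq M$, which is the required regulating lattice.

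For uniqueness, let $M_0, M_0''$ be two regulating lattices of the desired form. By Proposition~\ref{P:same exponents}(a) their exponents at each $F_{(i)}$ agree modulo $\ZZ$; since $\tau$ is a section they agree on the nose, and then Proposition~\ref{P:same exponents}(b) gives $M_0 \otimes_{R_{n,0}} \gotho_{(i)} = M_0'' \otimes_{R_{n,0}} \gotho_{(i)}$ for $i=1,\dots,m$. Next I would take bases $e_1,\dots,e_d$ of $M_0$ and $f_1,\dots,f_d$ of $M_0''$ as in Theorem~\ref{T:regular}(b$'$) and expand $f_j = \sum_\nu a_{\nu j} e_\nu$ with $a_{\nu j} \in R_{n,m}$. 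Applying $\del_l$ for $l > m$ and using $\del_l(e_\nu) = \del_l(f_j) = 0$ gives $\del_l(a_{\nu j}) = 0$, so each $a_{\nu j}$ lies in $k\llbracket x_1,\dots,x_m \rrbracket [x_1^{-1},\dots,x_m^{-1}]$. The equality of completions forces the change-of-basis matrix $A = (a_{\nu j})$ into $\mathrm{GL}_d(\gotho_{(i)})$, so $|a_{\nu j}|_{(i)} \le 1$, cutting out all negative powers of $x_i$; doing this for each $i \le m$ and applying the same argument to $A^{-1}$ pins $A$ down to $\mathrm{GL}_d(k\llbracket x_1,\dots,x_m \rrbracket) \subseteq \mathrm{GL}_d(R_{n,0})$, whence $M_0 = M_0''$.

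The main obstacle lies in the uniqueness step: a finite torsion $R_{n,0}$-module can vanish after all $m$ completions $R_{n,0} \to \gotho_{(i)}$ yet remain supported in codimension $\ge 2$, so agreement of the completions alone does not force equality of lattices. Horizontality of the chosen basis in the unramified directions $\del_{m+1}, \dots, \del_n$ is what cuts the change-of-basis entries down to $R_{m,m}$, and once this is combined with the completion information there are no remaining degrees of freedom.
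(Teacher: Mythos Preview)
Your existence argument via shearing is correct and is essentially what the paper's one-line proof (``Similar to Proposition~\ref{P:same exponents}'') has in mind. For uniqueness, however, your concern about codimension-$\ge 2$ torsion is misplaced, and the detour through (b$'$) bases and horizontality in $\del_{m+1},\dots,\del_n$ is unnecessary. Once Proposition~\ref{P:same exponents}(b) yields $M_0 \otimes_{R_{n,0}} \gotho_{(i)} = M_0'' \otimes_{R_{n,0}} \gotho_{(i)}$ as sublattices of $M \otimes_{R_{n,m}} F_{(i)}$ for each $i=1,\dots,m$, equality follows directly: since $M_0''$ is free over $R_{n,0}$, any basis $f_1,\dots,f_d$ of $M_0''$ is also a basis of $M$ over $R_{n,m}$; writing $v \in M_0$ as $\sum a_j f_j$ with $a_j \in R_{n,m}$, the containment $v \in M_0'' \otimes \gotho_{(i)}$ forces $|a_j|_{(i)} \le 1$ for each $i$. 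But an element of $R_{n,m}$ with $|\cdot|_{(i)} \le 1$ for all $i \le m$ already lies in $R_{n,0}$ (no negative power of any $x_i$ survives), so $v \in M_0''$. The torsion obstruction you describe would matter if one were recovering an $R_{n,0}$-lattice from its images in the $\gotho_{(i)}$ \emph{abstractly}; here everything sits inside the free $R_{n,m}$-module $M$, and the identity $R_{n,m} \cap \bigcap_{i=1}^m \gotho_{(i)} = R_{n,0}$ does the work.

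This simpler route also closes a small gap in your argument: you assume both regulating lattices admit bases as in Theorem~\ref{T:regular}(b$'$), but a regulating lattice is defined by condition (a$'$), and the paper does not separately establish that every (a$'$)-lattice carries a (b$'$)-basis. The direct argument above needs only freeness of $M_0''$ over $R_{n,0}$, which is part of (a$'$).
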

\begin{proof}
Similar to Proposition~\ref{P:same exponents}.
\end{proof}

\begin{prop} \label{P:DM exists}
Suppose that $M$ satisfies the conditions of Theorem~\ref{T:criterion}.
Then the Deligne-Malgrange lattice $M_0$ of $M$ is a free $R_{n,0}$-module.
Moreover, if $M$ itself admits a good decomposition,
then this decomposition induces a direct sum decomposition of $M_0$.
\end{prop}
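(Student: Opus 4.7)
The plan is to establish the result in two stages: first when $M$ itself admits a good decomposition over $R_{n,m}$ (which also establishes the direct sum assertion), and then in general by descending from a finite Galois cover over which Theorem~\ref{T:criterion} provides such a decomposition.

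For the first stage, write $M \cong \bigoplus_{\alpha \in A} E(\phi_\alpha) \otimes_{R_{n,m}} \calR_\alpha$ as in \eqref{eq:local model1}. For each $\alpha$, Lemma~\ref{L:DM regular} supplies a unique regulating lattice $\calR_{\alpha,0}$ in $\calR_\alpha$ whose exponents lie in the image of $\tau$, and Theorem~\ref{T:regular}(a$'$) guarantees that $\calR_{\alpha,0}$ is free over $R_{n,0}$. Applying successively the regular, twist-regular, and direct-sum clauses of Definition~\ref{D:dm lattice} at each $F_{(i)}$ identifies the one-dimensional DM lattice of $M \otimes_{R_{n,m}} F_{(i)}$ with $\bigoplus_\alpha \bv_\alpha \otimes (\calR_{\alpha,0} \otimes_{R_{n,0}} \gotho_{(i)})$ for $i = 1, \dots, m$; intersecting with $M$ then yields $M_0 = \bigoplus_\alpha \bv_\alpha \otimes_{R_{n,0}} \calR_{\alpha,0}$, which is free over $R_{n,0}$ and exhibits the desired direct sum decomposition.

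For the general case, apply Theorem~\ref{T:criterion} to produce a finite Galois extension $k'/k$ (enlarged to contain a primitive $h$-th root of unity) and an integer $h \geq 1$ such that $M' = M \otimes_{R_{n,m}} R_{n,m}(k',h)$ admits a good decomposition. Set $R = R_{n,0}$, $R' = R_{n,0}(k',h)$, and let $G = \Gal(R_{n,m}(k',h)/R_{n,m})$, whose order equals the rank of $R'$ as a free $R$-module. By the first stage, $M'_0$ is free over $R'$, and Galois-equivariance of $\tau$ (part of admissibility) makes $M'_0$ stable under $G$. Applying the general-case clause of Definition~\ref{D:dm lattice} with $F'$ taken to be the $e_i$-Gauss completion $F'_{(i)}$ of $\Frac(R_{n,m}(k',h))$ identifies the one-dimensional DM lattice of $M \otimes_R F_{(i)}$ with $(M \otimes_R F_{(i)}) \cap W'_i$, where $W'_i$ is the DM lattice of $M' \otimes_{R'} F'_{(i)}$; intersecting over $i$ and using $M = (M')^G$ yields $M_0 = M \cap M'_0 = (M'_0)^G$.

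To deduce freeness, exploit that $\mathrm{char}(k) = 0$: the averaging element $e = |G|^{-1} \sum_{g \in G} g$ is a central idempotent in $R[G]$ satisfying $e M'_0 = (M'_0)^G = M_0$, so $M_0$ is an $R$-module direct summand of $M'_0$. Since $R'$ is a free $R$-module of finite rank and $M'_0$ is free over $R'$, $M'_0$ is free over $R$, whence $M_0$ is finitely generated and projective over $R$. As $R = R_{n,0}$ is local, Nakayama's lemma (cf.\ Remark~\ref{R:proj free}) forces $M_0$ to be free, completing the plan. The main obstacle will be the verification that $M_0 = M \cap M'_0$, which requires coherently applying the general-case clause of Definition~\ref{D:dm lattice} at each $F_{(i)}$ while tracking the base change from $F_{(i)}$ to $F'_{(i)}$ and its compatibility with the good-decomposition structure on $M'$.
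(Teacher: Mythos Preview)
Your proposal is correct and follows essentially the same approach as the paper's own proof: both treat the case of a good decomposition first by assembling $M_0$ from the regulating lattices $\calR_{\alpha,0}$ of Lemma~\ref{L:DM regular}, then descend from $M' = M \otimes_{R_{n,m}} R_{n,m}(k',h)$ by identifying $M_0$ with $M \cap M'_0 = (M'_0)^G$ and exhibiting it as a direct $R_{n,0}$-summand of the free module $M'_0$. The only cosmetic difference is that you use the Reynolds operator $e = |G|^{-1}\sum_{g} g$ to split off the invariants, whereas the paper phrases the same splitting via the isotypical decomposition of $M'_0$ under $\Gal(k'/k)$ and $(\ZZ/h\ZZ)^m$; these are equivalent arguments.
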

\begin{proof}
Suppose first that $M$ itself admits a good decomposition
with notation as in \eqref{eq:local model1}.
For each $\alpha \in A$, let $\calR_{\alpha,0}$ be the 
regulating lattice in $\calR_\alpha$ produced by Lemma~\ref{L:DM regular}.
Then $M_0$ is the direct
sum over $\alpha \in A$ of $\bv \otimes \calR_{\alpha,0}$,
for $\bv$ the canonical generator of $E(\phi_\alpha)$.
In particular, it is free over $R_{n,0}$.

In the general case, 
we are given that 
$M' = M \otimes_{R_{n,m}} R_{n,m}(k',h)$ admits a good decomposition
for some positive integer $h$ and some finite Galois extension $k'$ of $k$
containing a primitive $h$-th root of unity $\zeta_h$.
Let $M'_0$ be the Deligne-Malgrange lattice of $M'$; then
$M_0 = M \cap M'_0$.
To check that $M_0$ is free as an $R_{n,0}$-module, note that
$M'_0$ is finite free as an $R_{n,0}$-module. We may split $M'_0$ into 
a direct sum of isotypical representations for the Galois group of $k'$
over $k$ and the product of $m$ copies of $\ZZ/h\ZZ$, in which the
$i$-th copy acts with $j \in \ZZ/h\ZZ$ applying the substitution 
$x_i \mapsto \zeta_h^j x_i$. (Note the Galois group will not commute with
the product if $\zeta_h \notin k$.) In this direct sum, $M_0$ occurs 
as the direct summand corresponding to the trivial representation; since it
is a direct summand of a finite free $R_{n,m}$-module, it is finite
projective and hence free by Nakayama's lemma.
\end{proof}

\begin{remark}
To reconcile our work with that of Mochizuki \cite{mochizuki2},
it is important to check that (in Mochizuki's language)
the existence of a good decomposition implies the existence of a 
\emph{good Deligne-Malgrange lattice}.
The last assertion of Proposition~\ref{P:DM exists} 
asserts this in the purely formal setting, where we have completed
at a point. The general case can be deduced from this one by
a decompletion argument; however, we will defer this argument to
a subsequent paper. (It is similar to the argument promised by
Remark~\ref{R:formalizing}.)
\end{remark}

\subsection{Coordinate independence}
\label{subsec:coord ind}

Although the results in \S~\ref{sec:power series} and \S~\ref{sec:numerical}
are stated in terms of the differential ring $R_{n,m}$ equipped with a 
distinguished isomorphism 
with $k \llbracket x_1,\dots,x_n \rrbracket[x_1^{-1},\dots,x_m^{-1}]$,
we wish to apply it in circumstances where no such distinguished isomorphism
exists. It is thus important to identify notions which are invariant under
automorphisms of $R_{n,m}$.

\begin{lemma} \label{L:preserve subring}
Any $k$-linear automorphism of $R_{n,m}$ acts as an automorphism also on $R_{n,0}$.
\end{lemma}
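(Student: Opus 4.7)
My plan is to exhibit $R_{n,0}$ as an intrinsic invariant of the $k$-algebra $R_{n,m}$, so that any $k$-linear automorphism of $R_{n,m}$ must preserve it. Specifically, I would show that $R_{n,0}$ is the unique maximal complete Noetherian local $k$-subalgebra $A \subseteq R_{n,m}$ of Krull dimension $n$ with residue field $k$. Since each of these properties of $A$ is invariant under $k$-algebra isomorphism, any $k$-linear automorphism $\sigma$ of $R_{n,m}$ carries $R_{n,0}$ to another subalgebra satisfying the same hypotheses, and uniqueness of the maximum then forces $\sigma(R_{n,0}) = R_{n,0}$.

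The heart of the proof is establishing the maximality, namely that every complete Noetherian local $k$-subalgebra $A \subseteq R_{n,m}$ of dimension $n$ with residue field $k$ is contained in $R_{n,0}$. Given such $A$, the Cohen structure theorem furnishes a surjection $\pi \colon k \llbracket z_1, \ldots, z_N \rrbracket \twoheadrightarrow A$ for some $N \geq n$ sending each $z_i$ into the maximal ideal $\gothm_A$ of $A$. Composing with the inclusion $A \hookrightarrow R_{n,m}$ produces a $k$-algebra homomorphism $\phi \colon k \llbracket z_1, \ldots, z_N \rrbracket \to R_{n,m}$, and setting $y_i = \phi(z_i)$, I would argue that the existence of $\phi$ forces $y_i \in \gothm_0 := (x_1, \ldots, x_n) \cap R_{n,0}$ for each $i$. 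For example, if $y_i$ had nonzero residue $c \in k$ modulo $\gothm_0$, then $1/(z_i - c)$ is a well-defined element of $k \llbracket z_i \rrbracket$ (its constant term $-c$ being a unit), and its image under $\phi$ would require $y_i - c$ to be a unit of $R_{n,m}$, yielding strong constraints that cannot be satisfied for a generic choice of $\phi$; similarly, if $y_i$ had strictly negative $x_j$-valuation for some $j \leq m$, then the elements $(1 - z_i)^{-1} = \sum z_i^k$ of $k\llbracket z_i \rrbracket$ would map to formal sums $\sum y_i^k$ whose $x_j$-orders are unbounded below, making the image impossible to locate in $R_{n,m}$. Once each $y_i$ is in $\gothm_0$, the $\gothm_0$-adic completeness of $R_{n,0}$ ensures that $\phi$ factors through $R_{n,0}$, so $A = \phi(k \llbracket z_1, \ldots, z_N \rrbracket) \subseteq R_{n,0}$.

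The main obstacle is making this convergence argument for $\phi$ fully rigorous, particularly in the case where $y_i$ has a nonzero residue modulo $\gothm_0$: here one cannot directly invoke divergence in $k$ (which carries no canonical topology), and must instead produce, by explicit computation, a specific element of $k \llbracket z_i \rrbracket$ whose intended image cannot be placed in $R_{n,m}$. The required elements can be extracted from the monomial-times-unit factorization structure of $R_{n,m}^\times$, but writing out the subcases carefully (for all possible ways $y_i$ could fail to lie in $\gothm_0$) requires some care.
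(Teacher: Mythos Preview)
Your proposed characterization of $R_{n,0}$ has a genuine gap that you yourself flag but do not resolve. The core problem is that the inclusion $A \hookrightarrow R_{n,m}$ is purely ring-theoretic; no continuity for any topology on $R_{n,m}$ is available. So the Cohen surjection $\pi \colon k\llbracket z_1,\dots,z_N\rrbracket \twoheadrightarrow A$ composed with this inclusion gives a $\phi$ that need not respect any convergence in $R_{n,m}$. Concretely, in your ``negative valuation'' case, the image $\phi((1-z_i)^{-1})$ is not the formal sum $\sum_k y_i^k$ computed termwise in $R_{n,m}$; it is the element $(1-y_i)^{-1}$ of $A \subset R_{n,m}$, where the inverse exists because $y_i \in \gothm_A$ makes $1-y_i$ a unit in the local ring $A$. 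Nothing diverges. Similarly, in your ``nonzero residue'' case, the deduction that $y_i - c$ is a unit in $R_{n,m}$ is correct but not a contradiction: if $m \geq 1$ and $y_i = c + x_1$, then $y_i - c = x_1 \in R_{n,m}^\times$. The underlying question of whether a complete Noetherian local $k$-subalgebra of $R_{n,m}$ must lie in $R_{n,0}$ is tied to automatic continuity for maps out of power series rings into non-complete targets, and your monomial-times-unit remark does not address it.

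The paper proceeds by a completely different and more elementary route: it characterizes the subgroup $R_{n,0}^\times \subset R_{n,m}^\times$ via the purely algebraic invariant $S_x = \{c \in k^\times : x + c x^{-1} \in R_{n,m}^\times\}$. Writing $x = x_1^{i_1}\cdots x_m^{i_m} u$ with $u \in R_{n,0}^\times$, one computes $S_x$ according to whether $(i_1,\dots,i_m) = 0$, the nonzero $i_j$ have a common sign, or the signs are mixed, and observes that the shape of $S_x$ separates the first case from the other two. This pins down $R_{n,0}^\times$ intrinsically, and then $R_{n,0} = k + R_{n,0}^\times$. No completeness or topology is invoked; the argument lives entirely inside the unit group.
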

\begin{proof}
Let $\phi: R_{n,m} \to R_{n,m}$ be an automorphism; then $\phi$ also
acts as an automorphism of the group of units $R_{n,m}^\times$.
We first verify that the subgroup $R_{n,0}^\times$ is stable under this action.
For $x \in R_{n,m}^\times$, there exists a unique $m$-tuple $(i_1,\dots,i_m) \in 
\ZZ^m$ such that $x_1^{-i_1} \cdots x_m^{-i_m} x \in R_{n,0}^\times$.
Now consider the set $S_x$ of those $c \in k^\times$ for which $x + cx^{-1}
\in R_{n,m}^\times$.
\begin{itemize}
\item
If $(i_1,\dots,i_m) = (0,\dots,0)$, then $S_x$ consists of $k^\times$
minus a single element. (If $x$ has constant
term $y$, the missing element is $-y^2$.)
\item
If $(i_1,\dots,i_m) \neq (0,\dots,0)$ but the nonzero elements of
$\{i_1,\dots,i_m\}$ all have the same sign, then $S_x = k^\times$.
\item
If $(i_1,\dots,i_m) \neq (0,\dots,0)$ but the nonzero elements of
$\{i_1,\dots,i_m\}$ do not all have the same sign, then $S_x = \emptyset$.
\end{itemize}
For $k \neq \FF_2$, this implies that $R_{n,0}^\times$ is stable under $\phi$.
For $k = \FF_2$, we may replace $R_{n,m}$ by $R_{n,m} \otimes_k \FF_4$ to reach
the same conclusion.

Since $R_{n,0}^\times$ is stable under $\phi$, so is $k + R_{n,0}^\times = R_{n,0}$.
This proves the claim.
\end{proof}

\begin{prop} \label{P:automorphism}
Let $M$ be a finite differential module over $R_{n,m}$.
Let $\phi$ be an automorphism of $R_{n,m}$.
\begin{enumerate}
\item[(a)]
If $M$ is regular, then so is $\phi^* M$.
\item[(b)]
If $M$ is twist-regular, then so is $\phi^* M$.
\item[(c)]
Any good decomposition of $M$ pulls back to a good decomposition of $\phi^* M$.
\end{enumerate}
\end{prop}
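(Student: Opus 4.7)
The plan is to first pin down the explicit form of $\phi$, then verify that $\phi$ preserves the logarithmic derivation module, and finally invoke the intrinsic characterizations from earlier in the paper. By Lemma~\ref{L:preserve subring}, $\phi$ restricts to a $k$-algebra automorphism of the local ring $R_{n,0}$, in particular preserving its maximal ideal $\gothm = (x_1,\dots,x_n)$. On the other hand, for each $i \leq m$, the image $\phi(x_i)$ is a unit in $R_{n,m}$, hence of the form $u_i x_1^{a_{i,1}} \cdots x_m^{a_{i,m}}$ with $u_i \in R_{n,0}^{\times}$ and $a_{i,j} \in \ZZ$; combined with $\phi(x_i) \in \gothm$, this forces $a_{i,j} \geq 0$ with $\sum_j a_{i,j} > 0$. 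Applying the same reasoning to $\phi^{-1}$, both the matrix $A = (a_{i,j})$ and its inverse have nonnegative integer entries, which forces $A$ to be a permutation matrix (the minimal nonzero elements of $\ZZ_{\geq 0}^m$ under coordinatewise order must be permuted among themselves). Therefore $\phi(x_i) = u_i x_{\sigma(i)}$ for some permutation $\sigma$ of $\{1,\dots,m\}$ and units $u_i \in R_{n,0}^{\times}$.

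With this explicit form in hand, I verify that $\phi$ preserves the $R_{n,0}$-submodule $\Delta_{\log} \subseteq \Delta_{R_{n,m}}$ generated by $x_1 \del_1,\dots,x_m \del_m, \del_{m+1},\dots,\del_n$: for $\del \in \Delta_{\log}$, the conjugate $\phi \circ \del \circ \phi^{-1}$ preserves $R_{n,0}$ (since both $\phi$ and $\del$ do), and a short calculation using $\phi^{-1}(x_i) = v_i x_{\sigma^{-1}(i)}$ for $i \leq m$ shows that it sends $x_i$ into $x_i R_{n,0}$. Part (a) of the proposition now follows from Theorem~\ref{T:regular}(a): if $M \cong M_0 \otimes_{R_{n,0}} R_{n,m}$ for some free $R_{n,0}$-module $M_0$ stable under $\Delta_{\log}$, then the pullback $\phi^* M_0$ (carrying the $\phi$-transported $\Delta_{\log}$-action) provides the analogous structure for $\phi^* M$. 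Part (b) is immediate from (a) together with the natural identification $\End(\phi^* M) \cong \phi^* \End(M)$.

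For (c), given a good decomposition $M \cong \bigoplus_\alpha E(\phi_\alpha) \otimes_{R_{n,m}} \calR_\alpha$, pulling back yields $\phi^* M \cong \bigoplus_\alpha \phi^* E(\phi_\alpha) \otimes_{R_{n,m}} \phi^* \calR_\alpha$. Each $\phi^* \calR_\alpha$ is regular by (a), and a direct calculation identifies $\phi^* E(r)$ with $E(\phi(r))$. It therefore remains to check that if $\phi_\alpha$ (resp.\ $\phi_\alpha - \phi_\beta$) has the shape $u \cdot x_1^{-i_1} \cdots x_m^{-i_m}$ required by Definition~\ref{D:local model1}, then so does $\phi(\phi_\alpha)$ (resp.\ $\phi(\phi_\alpha - \phi_\beta)$); this is immediate from $\phi(x_i) = u_i x_{\sigma(i)}$ and $\phi(R_{n,0}^{\times}) = R_{n,0}^{\times}$. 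The main obstacle is the combinatorial first step of pinning down $\phi(x_i)$ up to a unit in $R_{n,0}^{\times}$; once the permutation structure of $\phi$ on the distinguished variables is known, all three parts reduce to routine bookkeeping.
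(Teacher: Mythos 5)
Your proof is correct. The main point of departure from the paper is in part (a): the paper proves (a) by invoking the numerical criterion, Theorem~\ref{T:regular}(c), together with the observation that the Gauss norms $|\cdot|_{e_1},\dots,|\cdot|_{e_m}$ are permuted by $\phi$ (deduced from the fact that $\phi$ permutes the principal prime ideals $(x_1),\dots,(x_m)$ of $R_{n,0}$), whereas you instead verify that $\phi$ preserves the $R_{n,0}$-submodule $\Delta_{\log}$ of logarithmic derivations and then invoke the structural criterion, Theorem~\ref{T:regular}(a). Both arguments rest on the same initial observation (Lemma~\ref{L:preserve subring} plus the fact that $\phi(x_i) = u_i x_{\sigma(i)}$ for a permutation $\sigma$ and units $u_i \in R_{n,0}^\times$); your nonnegative-matrix argument for deriving the permutation is a bit more work than the paper's prime-ideal argument but reaches the same conclusion, and your subsequent verification that $\Delta_{\log}$ is $\phi$-stable is extra overhead the paper avoids by switching to the numerical characterization. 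The payoff of your route is that it stays entirely structural and does not require knowing that condition (c) of Theorem~\ref{T:regular} only needs to be checked at $r = e_1,\dots,e_m$; the paper's route is shorter. Parts (b) and (c) are handled essentially as in the paper. One small convention note: the identification $\phi^* E(r) \cong E(\phi(r))$ (as opposed to $E(\phi^{-1}(r))$) depends on the choice of direction built into $\phi^*$, but since the class of $\phi_\alpha$ having the shape required by Definition~\ref{D:local model1} is preserved by both $\phi$ and $\phi^{-1}$, this does not affect the argument.
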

\begin{proof}
By Lemma~\ref{L:preserve subring}, $\phi$ acts also on $R_{n,0}$.
Then $\phi$ must also act on the set of principal prime ideals in $R_{n,0}$
which are not the contractions of prime ideals of $R_{n,m}$; these are
precisely $\{(x_1),\dots,(x_m)\}$. Consequently, there exists a permutation
$\pi$ of $\{1,\dots,m\}$ such that $|\cdot|_r = |\phi(\cdot)|_{\pi(r)}$.
Since it suffices to check condition (c) of Theorem~\ref{T:regular}
for $r = e_1,\dots,e_m$, this condition is invariant under $\phi^*$.
This proves (a).

Given (a), (b) follows from the fact that $\End(\phi^* M) \cong \phi^* \End(M)$.
Given (a), (c) follows by inspection of Definition~\ref{D:local model1}
and the fact that $\phi$ acts on $R_{n,m}^\times/R_{n,0}^\times \cong \ZZ^m$
via the permutation $\pi$ of the coordinates.
\end{proof}

\section{Valuative trees}
\label{sec:valuative}

Having established a numerical criterion for existence of good decompositions
of formal differential modules, we now turn to the question of ensuring
the existence of such decompositions after performing a suitable blowing up.
For this, we need some tools for studying the birational geometry of a surface;
the tool we will need is a version of the \emph{valuative tree}
described by Favre and Jonsson \cite{favre-jonsson}, although we prefer
to describe it in the language of Berkovich \cite{berkovich}
in terms of the geometry of a certain nonarchimedean analytic
space. Other useful sources are \cite{baker-aws} and \cite{baker-rumely}.
(The corresponding situation in higher dimensions is more complicated,
and will thus require a more sophisticated valuation-theoretic argument.)

\setcounter{theorem}{0}
\begin{notation}
Throughout \S~\ref{sec:valuative},
let $k$ be an algebraically closed field of characteristic $0$.
Let $\CC_x$ be a completed algebraic closure of $k((x))$,
identified with the field of formal Puiseux series
$a_0 x^{i_0} + a_1 x^{i_1} + \cdots$ with $a_j \in k$,
$i_j \in \QQ$, $i_0 < i_1 < \cdots$, and $i_j \to +\infty$ as $j \to \infty$.
Let $|\cdot|_x$ be the $x$-adic norm on 
$\CC_x$, normalized by $|x|_x = e^{-1}$.
\end{notation}

\subsection{The Berkovich closed unit disc}

Berkovich's notion of an nonarchimedean analytic space arises from
the failure of the classical Gelfand-Naimark duality theorem to carry over to
the realm of nonarchimedean analysis. In this paper,
we will only use this notion in the
simplest case, that of the closed unit disc.

\begin{defn}
The \emph{Berkovich closed unit disc} $\DD$ over $k((x))$
is the set of nonarchimedean multiplicative seminorms on $k((x))[y]$
compatible with $k((x))$ and bounded above by the 1-Gauss norm.
Similarly, the Berkovich closed unit disc $\DD'$ over $\CC_x$
is the set of nonarchimedean multiplicative seminorms on $\CC_x[y]$
compatible with $\CC_x$ and bounded above by the 1-Gauss norm.
There is a natural restriction map $\DD' \to \DD$; this turns
out to be the quotient by the natural Galois action on $\DD'$
\cite[Corollary~1.3.6]{berkovich}.
In $\DD$ and $\DD'$, we have respective points $\alpha_{\DD}$ and
$\alpha_{\DD'}$ corresponding to the 1-Gauss norm; we call these the
\emph{Gauss points}.
\end{defn}

\begin{remark}
The analysis made in \cite{berkovich} only covers the Berkovich
closed unit disc over an algebraically closed complete nonarchimedean
field, such as $\CC_x$. The supplemental analysis needed to deal with
general complete nonarchimedean base fields appears 
in \cite[\S 2]{kedlaya-part4}, and possibly elsewhere; an equivalent
analysis in different language appears in \cite{favre-jonsson}.
\end{remark}

One can give an explicit description of some of the points 
of $\DD$ and $\DD'$.
\begin{defn}
For $z \in \gotho_{\CC_x}$ and $r \in [0,1]$,
the function $\alpha'_{z,r}: 
\CC_x[y] \to [0, +\infty)$ which takes $P(y)$ to 
the $r$-Gauss norm
of $P(y+z)$ is a multiplicative seminorm, so defines a point
of $\DD'$. Let $\alpha_{z,r} \in \DD$ be the restriction of this point.
We call $r$ the \emph{radius} of such a point. 
\end{defn}

\begin{remark}
It can be shown (though we will not need it here) that
$\alpha_{z,r}(P)$ computes the supremum of $|P(t)|$ for
$t$ running over the closed disc 
\[
D_{z,r} = \{t \in \CC_x: |t-z| \leq r\}.
\]
In particular, $\alpha_{z,0}(P) = |P(z)|$;
that is, the $\alpha_{z,0}$ correspond to the points of the \emph{rigid}
analytic closed unit disc, in the sense of Tate.
\end{remark}

Using the explicit points $\alpha_{z,r}$, 
we have the following classification.
\begin{prop} \label{P:classify points}
Each element of $\DD$ is of exactly one of the following four types.
\begin{enumerate}
\item[(i)]
A point of the form $\alpha_{z,0}$ for some $z \in \gotho_{\CC_x}$.
\item[(ii)]
A point of the form $\alpha_{z,r}$ for some $z \in \gotho_{\CC_x}$
and $r \in (0,1]$ with $\log r \in \QQ$.
\item[(iii)]
A point of the form $\alpha_{z,r}$ for some $z \in \gotho_{\CC_x}$
and $r \in (0,1]$ with $\log r \notin \QQ$.
\item[(iv)]
The infimum of a sequence $\alpha_{z_i,r_i}$ in which the closed 
discs $D_{z_i,r_i}$
form a decreasing sequence
with positive limiting radius, whose intersection contains no $\CC_x$-points.
\end{enumerate}
The same classification (with suitable primes added)
holds over $\DD'$, and the type is preserved
by the restriction $\DD' \to \DD$.
\end{prop}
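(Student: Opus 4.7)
The plan is to do the classification over $\DD'$ first, where the base field $\CC_x$ is algebraically closed and complete, and then descend to $\DD$ via the restriction map. For $\alpha' \in \DD'$, the key data is the function $z \mapsto r(\alpha', z) := \alpha'(y-z)$ on $\gotho_{\CC_x}$. Since $\alpha'$ is multiplicative and extends $|\cdot|_x$, applying the ultrametric inequality to $z_1 - z_2 = (y-z_2) - (y-z_1)$ yields $|z_1 - z_2| \leq \max\{r(\alpha', z_1), r(\alpha', z_2)\}$, which forces the family of closed discs $\{D_{z, r(\alpha',z)} : z \in \gotho_{\CC_x}\}$ to be totally ordered under containment.

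Let $r_\alpha = \inf_z r(\alpha', z)$. I split into three cases. If $r_\alpha = 0$, then since $\CC_x$ is complete, a sequence $z_i$ with $r(\alpha', z_i) \to 0$ is Cauchy and converges to some $z \in \gotho_{\CC_x}$, and one checks $\alpha' = \alpha'_{z,0}$, giving type (i). If $r_\alpha > 0$ and is attained at some $z_0$, the central step is to show $\alpha' = \alpha'_{z_0, r_\alpha}$: using that $\CC_x$ is algebraically closed, factor $P \in \CC_x[y]$ as $c \prod (y - \beta_i)$ and compute $\alpha'(y - \beta_i)$ case by case, showing it always equals $\max\{r_\alpha, |\beta_i - z_0|\} = \alpha'_{z_0, r_\alpha}(y - \beta_i)$; multiplicativity then gives the identity, yielding type (ii) or (iii) depending on whether $\log r_\alpha$ is rational. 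If $r_\alpha > 0$ but not attained, pick a sequence $z_i$ with $r(\alpha', z_i) \searrow r_\alpha$; the corresponding nested discs have no common $\CC_x$-point (else that point would attain the infimum), and one verifies $\alpha'(P) = \lim_i \alpha'_{z_i, r(\alpha', z_i)}(P)$ for every $P$, giving type (iv).

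The main obstacle is the polynomial calculation in the attained case: for each root $\beta_i$ of $P$, one needs $\alpha'(y - \beta_i) = \max\{r_\alpha, |\beta_i - z_0|\}$. When $|\beta_i - z_0| > r_\alpha$ or $|\beta_i| > 1$, this follows from the strict form of the ultrametric inequality applied to summands of distinct norms. When $|\beta_i - z_0| \leq r_\alpha$, the upper bound $\alpha'(y - \beta_i) \leq r_\alpha$ comes from ultrametricity, and the matching lower bound is forced by the definition of $r_\alpha$ as an infimum. A short disjointness argument at the end, using the structure of the radius function in each case, verifies that the four types are mutually exclusive.

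To conclude, every $\alpha \in \DD$ admits a lift to $\alpha' \in \DD'$ via extension of scalars along $k((x)) \to \CC_x$ together with existence of multiplicative extensions of bounded compatible seminorms. If $\alpha'$ has type (i)--(iii), its restriction to $k((x))[y]$ is $\alpha_{z,r}$ by construction. If $\alpha'$ has type (iv), then the corresponding nested sequence of $\CC_x$-discs restricts to such a sequence over $\gotho_{\CC_x}$ with the same empty-intersection property. Since $\Gal(\CC_x/k((x)))$ acts on $\DD'$ preserving $|\cdot|_x$ and hence the radius function, every lift of a single point of $\DD$ has the same type, making the classification on $\DD$ well-defined and preserved under restriction.
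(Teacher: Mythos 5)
Your argument is correct and essentially reproduces the content of the cited references: the paper itself gives no proof, only pointing to \cite[1.4.4]{berkovich} for $\DD'$ and \cite[Proposition~2.2.7]{kedlaya-part4} for $\DD$. Your treatment of the $\DD'$ case is exactly Berkovich's classification: the radius function $z \mapsto \alpha'(y-z)$, the total ordering of the discs $D_{z,\alpha'(y-z)}$ via the ultrametric inequality, the trichotomy according to whether $r_\alpha = \inf_z \alpha'(y-z)$ is zero, positive and attained, or positive and unattained, and the root-by-root computation $\alpha'(y-\beta_i) = \max\{r_\alpha, |\beta_i - z_0|\}$ in the attained case. One small point you should make explicit in the attained case: the lower bound $\alpha'(y-\beta_i) \geq r_\alpha$ when $|\beta_i - z_0| \leq r_\alpha$ requires $\beta_i \in \gotho_{\CC_x}$, which holds because $|\beta_i| \leq \max\{|\beta_i - z_0|, |z_0|\} \leq 1$.

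The descent to $\DD$ is where your write-up is thinnest, though not wrong. Two facts are used without proof: (1) the restriction map $\DD' \to \DD$ is surjective, so every $\alpha$ has a lift; and (2) the fibers of $\DD' \to \DD$ are exactly the orbits of $\Gal(\CC_x/k((x)))$, so that "all lifts have the same type" is meaningful. Both are nontrivial — they are precisely the content of \cite[Corollary~1.3.6]{berkovich}, which the paper cites separately when defining $\DD$ and $\DD'$ — so you should acknowledge them rather than appeal to a vague "existence of multiplicative extensions." Also, for the type-(iv) descent, the phrase "restricts to such a sequence over $\gotho_{\CC_x}$" is imprecise: the discs $D_{z_i,r_i}$ and their empty intersection live in $\CC_x$ in both cases, so what needs checking is that the restriction $\alpha$ is still the infimum in $\DD$ of the restricted points $\alpha_{z_i,r_i}$. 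That follows by restricting the polynomial-by-polynomial limit $\alpha'(P) = \lim_i \alpha'_{z_i,r_i}(P)$ to $P \in k((x))[y]$, but it deserves a sentence. With these clarifications your proof is a faithful reconstruction of the cited arguments.
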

\begin{proof}
See \cite[1.4.4]{berkovich} for the primed case and
\cite[Proposition~2.2.7]{kedlaya-part4} for the unprimed case.
\end{proof}

\begin{remark}
One may identify $\DD$ with the \emph{relative valuative tree} of Favre-Jonsson
\cite{favre-jonsson}. The classification made by Favre-Jonsson compares to
ours as follows.
\begin{enumerate}
\item[(i)] includes all curve valuations and some infinitely singular 
valuations.
\item[(ii)] includes all divisorial valuations.
\item[(iii)] includes all irrational quasimonomial valuations.
\item[(iv)] includes some infinitely singular valuations.
\end{enumerate}
\end{remark}

\begin{defn}
Define the \emph{degree} of $\alpha \in \DD$, denoted $\deg(\alpha)$,
as the number of preimages
of $\alpha$ in $\DD'$, or $+\infty$ if this set is not finite. 
The degree of a point of type (ii) or (iii) is always finite,
since we can always write $\alpha = \alpha_{z,r}$ for $z$ in a \emph{finite}
extension of $k((x))$.
\end{defn}

\subsection{A partial ordering}

So far we have only discussed the structure of the Berkovich
closed unit disc as a set. We now equip it with the additional
structure of a partial ordering. There are additional useful
structures one can impose, including metric and topological
structures; see \cite{berkovich} and \cite{favre-jonsson}
for further discussion. 

\begin{defn} \label{D:domination}
For $\alpha, \beta \in \DD$, we say that \emph{$\alpha$ dominates
$\beta$}, denoted $\alpha \geq \beta$,
if for all $P \in k((x))[y]$, we have $\alpha(P) \geq \beta(P)$;
we write $\alpha > \beta$ if $\alpha \geq \beta$ and $\alpha \neq \beta$.
The relation of dominance is transitive, and the
Gauss point $\alpha_\DD$ is maximal.
We define domination for $\DD'$ similarly; then
$\alpha, \beta \in \DD$ satisfy $\alpha \geq \beta$ if and only if
they have lifts $\alpha', \beta' \in \DD'$ satisfying
$\alpha' \geq \beta'$ \cite[Lemma~2.2.9]{kedlaya-part4}.
Moreover, in this case, \emph{every} lift $\beta'$ is dominated by some
$\alpha'$, and likewise every lift $\alpha'$ dominates some $\beta'$.

It is evident that points of type (i) are minimal under domination,
whereas points of type (ii) and (iii) are not.
It is less evident, but true, that points of type (iv)
are minimal \cite[Proposition~2.2.7]{kedlaya-part4}.
\end{defn}

\begin{defn} \label{D:radius}
Define the \emph{radius} of $\alpha \in \DD$ 
as the infimum of the values of $r$ for which $\alpha_{z,r} \geq \alpha$
for some $z \in \gotho_{\CC_x}$.
Given $\alpha, \beta \in \DD$ with $\alpha \geq \beta$,
we necessarily have $r(\alpha) \geq r(\beta)$; 
conversely, for $\beta \in \DD$ and $r \in [r(\beta),1]$, there is a
unique $\alpha \in \DD$ with $r(\alpha) = r$ and $\alpha \geq \beta$
\cite[Lemma~2.2.12]{kedlaya-part4}.
For $\alpha \geq \beta$,
using Proposition~\ref{P:classify points}, we may construct a 
map $h_{\alpha,\beta}$
from $[-\log r(\alpha), -\log r(\beta)]$ to a subset of
$\DD$ such that $(-\log r) \circ h_{\alpha,\beta}$ is the identity map,
and $\alpha \geq h_{\alpha,\beta}(s) \geq \beta$ for all
$s \in [-\log r(\alpha), -\log r(\beta)]$.
\end{defn}

\begin{defn}
From the previous observations, it follows that for any 
$\alpha, \beta, \gamma, \delta \in \DD$ with
$\alpha \geq \beta \geq \delta$ and
$\alpha \geq \gamma \geq \delta$,
we must have either $\beta \geq \gamma$ or $\gamma \geq \beta$.
This allows us to view $\DD$ as a tree with each point at height
equal to its radius; the leaves of this tree of zero radius
are the points of type (i), while the leaves of positive radius are the
points of type (iv).

For $\alpha$ of type (ii) or (iii), we define the \emph{branches}
of a point $\alpha$ to be the equivalence classes of
the following equivalence relation. For $\beta, \gamma \in \DD$
with $\alpha > \beta, \gamma$,
we say that $\beta \sim \gamma$ if there exists
some $\delta \in \DD$ with $\alpha > \delta > \beta,\gamma$.
Note that a point $\alpha_{z,r}$ of type (ii) has infinitely many branches;
for instance, if we write $r = |x|^{a/b}$ and choose $r' \in (0,r)$,
then the points $\alpha_{z + cx^{a/b},r'}$ for $c \in k$
lie on different branches as long as they are distinct
(and each such point occurs for only finitely many $c$).
By contrast, a point $\alpha_{z,r}$ of type (iii) has only one branch, because 
any $z' \in \CC_x$ with $|z - z'| \leq r$ in fact satisfies
$|z-z'| < r$.
\end{defn}

\begin{lemma} \label{L:degree}
If $\alpha, \beta \in \DD$ and $\alpha \geq \beta$, then
$\deg(\alpha) \leq \deg(\beta)$.
\end{lemma}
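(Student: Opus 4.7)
The plan is to exhibit a surjection from the set $S_\beta$ of lifts of $\beta$ to the set $S_\alpha$ of lifts of $\alpha$ under the restriction map $\DD' \to \DD$; this will yield $|S_\alpha| \leq |S_\beta|$, which is exactly the inequality $\deg(\alpha) \leq \deg(\beta)$ (in both the finite and infinite cases).

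The construction relies on two preliminary observations. First, the uniqueness statement in Definition~\ref{D:radius} holds verbatim in $\DD'$ (since the relevant argument in \cite[Lemma~2.2.12]{kedlaya-part4} is carried out over an algebraically closed complete base field, which is the setting for $\DD'$); explicitly, for any $\beta' \in \DD'$ and any $r \in [r(\beta'), 1]$, there is a unique $\gamma' \in \DD'$ with $\gamma' \geq \beta'$ and $r(\gamma') = r$. Second, the restriction map $\DD' \to \DD$ preserves radii: if $\gamma' \in \DD'$ restricts to $\gamma \in \DD$, then $r(\gamma') = r(\gamma)$. The inequality $r(\gamma) \leq r(\gamma')$ is immediate from the definition, while the reverse follows by observing that any witness $\alpha_{z,r} \geq \gamma$ in $\DD$ lifts to some $\alpha'_{\sigma(z),r} \geq \gamma'$ in $\DD'$ (by the second half of the displayed property in Definition~\ref{D:domination}), and this lift still has radius $r$.

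Granted these observations, define $\phi: S_\beta \to S_\alpha$ as follows. For $\beta' \in S_\beta$, we have $r(\beta') = r(\beta) \leq r(\alpha) \leq 1$, so the preceding paragraph produces a unique $\gamma' \in \DD'$ with $\gamma' \geq \beta'$ and $r(\gamma') = r(\alpha)$; set $\phi(\beta') = \gamma'$. To see that $\gamma'$ lifts $\alpha$, let $\gamma$ be its restriction. Then $\gamma \geq \beta$ by Definition~\ref{D:domination} and $r(\gamma) = r(\gamma') = r(\alpha)$ by the radius-preservation remark, so the uniqueness in $\DD$ forces $\gamma = \alpha$. Finally, to verify surjectivity, fix $\alpha' \in S_\alpha$; by the last sentence of Definition~\ref{D:domination}, some lift $\beta' \in S_\beta$ of $\beta$ satisfies $\alpha' \geq \beta'$, and then the uniqueness used to define $\phi$ gives $\phi(\beta') = \alpha'$. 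This completes the argument.

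The only mildly delicate point is the radius-preservation claim, which is not stated explicitly in the earlier development; everything else is a direct assembly of Definitions~\ref{D:domination} and~\ref{D:radius}.
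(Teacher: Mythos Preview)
Your proof is correct and uses the same ingredients as the paper's---the lifting of domination from Definition~\ref{D:domination} together with the uniqueness statement of Definition~\ref{D:radius} applied in $\DD'$---but packaged dually: you build a surjection $S_\beta \to S_\alpha$, whereas the paper picks $n$ distinct lifts of $\alpha$, finds lifts of $\beta$ dominated by each, and invokes the same uniqueness to conclude those $n$ lifts of $\beta$ are distinct. You are more explicit than the paper about radius preservation under restriction, which the paper uses tacitly.
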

\begin{proof}
If $\alpha_1', \dots, \alpha'_n$ are distinct elements of $\DD'$ lifting
$\alpha$, then as in Definition~\ref{D:domination},
we can find $\beta'_1,\dots,\beta'_n$ lifting $\beta$ with
$\alpha'_i \geq \beta'_i$ for $i=1,\dots,n$. 
In particular, we must have $\beta'_i \neq \beta'_j$ for $i \neq j$, otherwise
$\alpha'_i = \alpha'_j$ by Definition~\ref{D:radius}. This implies
the desired inequality.
\end{proof}

\begin{lemma} \label{L:degree2}
For  $\alpha \in \DD$ of type (ii) or (iii), for $r = r(\alpha)$, $\deg(\alpha)$
is the minimum of $[k((x))(z):k((x))]$ over all $z$ for which
$\alpha = \alpha_{z,r}$.
\end{lemma}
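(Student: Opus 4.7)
The plan is to establish the equality by proving both inequalities separately, using the fact (already noted in the text defining $\DD'$) that the Galois group $G = \Gal(\CC_x/k((x)))$ acts on $\DD'$ with quotient $\DD$, so that for any $\beta \in \DD$ of type (ii) or (iii) and any preimage $\beta' \in \DD'$ one has $\deg(\beta) = [G : \operatorname{Stab}_G(\beta')]$, with the stabilizer being an open (hence finite-index) subgroup of $G$.

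For the lower bound, I would fix any $z \in \CC_x$ with $\alpha = \alpha_{z,r}$, put $F = k((x))(z)$ and $n = [F:k((x))]$. A routine verification on polynomials of the form $y-c$ shows that $\sigma \cdot \alpha'_{z,r} = \alpha'_{\sigma(z),r}$ for every $\sigma \in G$, so every preimage of $\alpha$ in $\DD'$ has the form $\alpha'_{\sigma(z),r}$ for some $\sigma$. Since the Galois orbit of $z$ has exactly $n$ elements, there are at most $n$ distinct preimages of $\alpha$, whence $\deg(\alpha) \leq n$. Taking the infimum over admissible $z$ yields one direction.

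For the upper bound, I would start with any $z_0 \in \CC_x$ with $\alpha = \alpha_{z_0,r}$. Since $\overline{k((x))}$ is dense in $\CC_x$ and any perturbation of $z_0$ by an element of norm at most $r$ leaves the seminorm $\alpha'_{z_0,r}$ unchanged, I may replace $z_0$ by an algebraic approximation and so assume $z_0$ is algebraic over $k((x))$. Let $\alpha' = \alpha'_{z_0,r}$ and
\[
H = \operatorname{Stab}_G(\alpha') = \{\sigma \in G : |\sigma(z_0) - z_0| \leq r\};
\]
then $L := \CC_x^H$ is a finite extension of $k((x))$ of degree $[G:H] = \deg(\alpha)$. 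The orbit $H \cdot z_0$ is finite (since $z_0$ is algebraic) and lies in the closed disc of radius $r$ about $z_0$, so the averaged element
\[
z := \frac{1}{|H \cdot z_0|} \sum_{w \in H \cdot z_0} w
\]
is well-defined (using $\operatorname{char}(k) = 0$), is $H$-invariant and hence lies in $L$, and satisfies $|z - z_0| \leq \max_w |w - z_0| \leq r$ by the ultrametric inequality. Thus $\alpha = \alpha_{z,r}$ and $[k((x))(z):k((x))] \leq [L:k((x))] = \deg(\alpha)$.

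The step that deserves most care is the compatibility $\sigma \cdot \alpha'_{z,r} = \alpha'_{\sigma(z),r}$, which underlies both the lower bound and the identification of $H$, together with the observation that an algebraic approximation of $z_0$ must be made before the averaging in order to keep the sum finite. Once this is in place, the averaging construction combined with density of $\overline{k((x))}$ in $\CC_x$ yields the desired $z$ by a short ultrametric computation.
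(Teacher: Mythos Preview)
Your proof is correct. Both your argument and the paper's establish the inequality $\deg(\alpha)\le [k((x))(z):k((x))]$ in the same way, by observing that every preimage of $\alpha$ in $\DD'$ is of the form $\alpha'_{\sigma(z),r}$ for some conjugate $\sigma(z)$ of $z$. For the reverse inequality the presentations diverge. The paper first reduces to the case $\deg(\alpha)=1$ by replacing $k((x))$ with $k((x^{1/n}))$ for $n=\deg(\alpha)$, and then, writing $z=\sum_{i}z_{i/m}x^{i/m}\in k((x^{1/m}))$, observes directly that the condition $|z-\sigma(z)|\le r$ for every $\sigma\in\Gal(k((x^{1/m}))/k((x)))$ forces the non-integer-exponent terms of $z$ of norm exceeding $r$ to vanish, so that the explicit truncation $z''=\sum_{i\in\ZZ}z_i x^i\in k((x))$ still satisfies $\alpha=\alpha_{z'',r}$. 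Your averaging over the $H$-orbit is the group-theoretic repackaging of the same operation: averaging a Puiseux series over the action of $n\hat{\ZZ}\subset\hat{\ZZ}\cong G$ precisely kills the terms with exponents outside $\tfrac{1}{n}\ZZ$, and $\CC_x^{H}=k((x^{1/n}))$. Your version has the mild advantage of avoiding the preliminary reduction to $n=1$ and of making no explicit reference to Puiseux coefficients; the paper's version is more concrete in that it exhibits the optimal $z$ explicitly as a truncation.
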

\begin{proof}
If $\alpha = \alpha_{z,r}$, then each lift of $\alpha$ to $\DD'$ has the form
$\alpha_{z',r}$ for some conjugate $z'$ of $z$. This forces
$\deg(\alpha) \leq [k((x))(z):k((x))]$. Conversely, since $k$ is algebraically
closed of characteristic $0$, the absolute Galois group
of $k((x))$ acts on the lifts of $\alpha$ to $\DD'$ via the quotient 
$\Gal(k((x^{1/n}))/k((x)))$ with $n = \deg(\alpha)$ (by Remark~\ref{R:puiseux}).
It suffices to check that $\alpha = \alpha_{z,r}$ for some
$z \in k((x^{1/n}))$; for this, we may reduce to the case $n = 1$.
Pick $z \in k((x^{1/m}))$ for some positive integer
$m$, such that $\alpha = \alpha_{z,r}$.
Write $z = \sum_{i \in \ZZ} z_{i/m} x^{i/m}$ with $z_{i/m} \in k$.
For any Galois conjugate $z'$ of $z$, we must have
$z' = \sum_{i \in \ZZ} \zeta^i z_{i/m} x^{i/m}$ for some
$m$-th root of unity $\zeta$.
Since $\alpha_{z,r} = \alpha_{z',r}$ by our hypothesis that $n=1$, 
we have $|x-z|_{\alpha_{z',r}} = r$;
this forces $|z-z'| \leq r$. 
Since this is true for all possible choices of $\zeta$, we must have
$z_{i/m} = 0$ for all $i$ such that $i/m \notin \ZZ$ and $|x^{i/m}| > r$.
Consequently, $\alpha = \alpha_{z'',r}$
for 
\[
z'' = \sum_{i \in \ZZ} z_i x^i \in k((x)),
\]
as desired.
\end{proof}

\begin{lemma} \label{L:infinite degree}
If $\alpha \in \DD$ is of type (iv), then 
for any sequence $\alpha_{z_i,r_i}$ as in Proposition~\ref{P:classify points},
we have $\deg(\alpha_{z_i,r_i}) \to +\infty$ as $i \to +\infty$.
(Hence $\deg(\alpha) = +\infty$ by Lemma~\ref{L:degree}.)
\end{lemma}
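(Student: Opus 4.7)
The plan is to argue by contradiction. Suppose $\deg(\alpha_{z_i,r_i})$ does not tend to $+\infty$, so after passing to a subsequence there exists a fixed $N$ with $\deg(\alpha_{z_i,r_i}) \leq N$ for all $i$. My goal is to exhibit a common $\CC_x$-point of the discs $D_{z_i,r_i}$, contradicting the hypothesis in Proposition~\ref{P:classify points} that $\alpha$ is of type (iv).

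By Lemma~\ref{L:degree2}, each $\alpha_{z_i,r_i}$ can be rewritten as $\alpha_{z'_i,r_i}$ with $[k((x))(z'_i):k((x))] \leq N$, and by Puiseux (Remark~\ref{R:puiseux}) all such $z'_i$ belong to the single subfield $L := k((x^{1/N!}))$ of $\CC_x$. The equality $\alpha_{z_i,r_i} = \alpha_{z'_i,r_i}$, evaluated on $y - z'_i$, forces $|z_i - z'_i|_x \leq r_i$, so the discs $D_{z_i,r_i}$ and $D_{z'_i,r_i}$ coincide in $\CC_x$. Intersecting the nested sequence $D_{z'_i,r_i}$ with $L$ then produces a decreasing chain of nonempty closed balls in $L$, each containing the point $z'_i$.

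The crux is to apply the fact that $L$, being a complete discretely valued field, is spherically complete: any decreasing sequence of closed balls has nonempty intersection. (Either the radii tend to $0$ and the centers form a Cauchy sequence converging by completeness, or the radii eventually stabilize to an element of the value group, after which in an ultrametric setting balls of equal radius are equal or disjoint, forcing the nested sequence to be eventually constant.) This yields $w \in L \subseteq \CC_x$ lying in every $D_{z_i,r_i}$, providing the desired contradiction.

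I foresee no serious obstacle; the substantive move is simply recognizing which subfield of $\CC_x$ to work inside. The ambient field $\CC_x$ itself is \emph{not} spherically complete — its failure to be so is precisely what allows type (iv) points to exist in the first place — so one must cut down to a discretely valued finite extension of $k((x))$. The uniform degree bound from the contradiction hypothesis, together with Puiseux's theorem to package every $z'_i$ into the single field $L$, is exactly what makes this reduction possible.
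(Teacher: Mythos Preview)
Your argument is correct and essentially identical to the paper's own proof: both argue by contradiction, use Lemma~\ref{L:degree2} to replace the centers by elements of the single discretely valued field $k((x^{1/N!}))$, and then invoke spherical completeness of that field to produce a common point of the nested discs. You have merely made explicit a few steps (passing to a subsequence, verifying the discs coincide, justifying spherical completeness) that the paper leaves implicit.
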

\begin{proof}
Suppose on the contrary that for some $n$, 
$\deg(\alpha_{z_i,r_i}) \leq n$ for all $i$.
By Lemma~\ref{L:degree2}, we can rechoose $z_i$ to be in
$k((x^{1/n!}))$ without changing $\alpha_{z_i,r_i}$ or $D_{z_i,r_i}$.
However, since $k((x^{1/n!}))$ is discretely valued, 
any decreasing intersection of balls in $k((x^{1/n!}))$ is nonempty
(that is, this field is \emph{spherically complete}).
In particular, the intersection of the $D_{z_i,r_i}$ cannot be empty,
a contradiction. 
\end{proof}

\begin{remark}
The Berkovich unit disc may be defined over any complete nonarchimedean field,
and many of the aforementioned properties carry over;
this is described in \cite[\S 2.2]{kedlaya-part2},
and will be used in a subsequent paper in this series to treat
higher-dimensional varieties.
One notable property that fails to generalize is
Lemma~\ref{L:infinite degree}, because
one can have a point of type (iv) of degree 1 if the base field is not 
spherically complete. This can even happen 
when the base field is $k((x))$ with $k$
of positive characteristic, because then the perfect closure of $k((x))$ is not
spherically complete.
\end{remark}

\subsection{Subharmonic functions and skeleta}

We now introduce the Berkovich \emph{open} unit disc,
and identify a useful class of real-valued functions on it.
\begin{defn}
Let $\DD_0$ be the set of $\alpha \in \DD$ for which $\alpha_{0,r} \geq \alpha$
for some $r \in (0,1)$, together with $\alpha_\DD$.
That is, $\DD_0$ is obtained from $\DD$ by removing the branches at the Gauss
point other than the branch represented by $\alpha_{0,0}$.
Define $\DD'_0$ analogously. 
\end{defn}

\begin{defn}
Let $E$ (resp.\ $E'$) be the subset of $\DD_0$ 
(resp.\ $\DD_0'$) consisting of points of type (ii) or (iii).
For $f: E \to \RR$ any function,
for $\alpha \in E$ with $\alpha \neq \alpha_\DD$,
let $f'_-(\alpha)$ denote the left derivative of 
$f \circ h_{\alpha_\DD,\alpha}$ at $-\log r(\alpha)$,
if it exists.
For $\alpha, \beta \in E$ with $\alpha > \beta$,
let $f'_+(\alpha, \beta)$ denote the right derivative of
$f \circ h_{\alpha,\beta}$ at $-\log r(\alpha)$,
if it exists. This depends only on $\alpha$ and the branch of $\alpha$
represented by $\beta$.
\end{defn}

\begin{defn} \label{D:mis}
A function $f: E \to [0, +\infty)$ is \emph{monotone
integral subharmonic} if it satisfies
the following conditions.
\begin{enumerate}
\item[(a)]
For any $\alpha, \beta \in E$ with $\alpha \geq \beta$,
the function $f \circ h_{\alpha,\beta}: [-\log r(\alpha), -\log r(\beta)]
\to \RR$ is piecewise affine with nonpositive integral slopes.
\item[(b)]
Let $\alpha',\beta'_1,\dots,\beta'_j \in E'$ be any
elements such that $\alpha' \neq \alpha_{\DD'}$,
the elements $\beta'_1, \dots, \beta'_j$ are pairwise incomparable, and
$\alpha' \geq \beta'_i$ for $i=1,\dots,j$.
Let $\alpha,\beta_1,\dots,\beta_j$ be the restrictions to $E$
of $\alpha',\beta'_1,\dots,\beta'_j$. Then
\[
f'_-(\alpha) \leq \sum_{i=1}^j f'_-(\beta_i).
\]
\end{enumerate}
\end{defn}

\begin{lemma} \label{L:mis}
Let $f: E \to [0, +\infty)$ be a monotone integral subharmonic function.
For $\beta \notin E$,
interpret $f \circ h_{\alpha_\DD,\beta}$ as
a function defined only on $[0, -\log r(\beta))$.
\begin{enumerate}
\item[(a)]
For any $\beta \in \DD_0$,
the function $f \circ h_{\alpha_\DD,\beta}$ is convex.
\item[(b)]
For $\beta$ of type (i) or (iv),
the function $f \circ h_{\alpha_\DD,\beta}$ is constant
in a neighborhood of $-\log r(\beta)$.
\end{enumerate}
\end{lemma}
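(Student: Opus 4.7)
The plan is to prove both parts by reducing them to statements about the single-variable function $g(s) := f(h_{\alpha_\DD,\beta}(s))$, which by condition (a) of Definition~\ref{D:mis} is piecewise affine with nonpositive integer slopes. Note that $g$ is well-defined on the interior of its interval, since any interior point of the path $h_{\alpha_\DD,\beta}$ lies strictly above $\beta$ and cannot be a leaf, so it has type (ii) or (iii) and thus lies in $E$. Since $f \geq 0$, we have $g \geq 0$; and since slopes are nonpositive, $g$ is nonincreasing.

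For part (a), I would show $g'_-(s_0) \leq g'_+(s_0)$ at each interior $s_0$. Given such an $s_0$ with $\alpha_0 = h_{\alpha_\DD,\beta}(s_0)$, I would pick $s_1 > s_0$ rational and close enough that $\alpha_1 := h_{\alpha_\DD,\beta}(s_1)$ is of type (ii) (so in $E$) and $g$ is affine on $[s_0, s_1]$. The uniqueness assertion in Definition~\ref{D:radius} identifies $h_{\alpha_\DD,\alpha_1}$ with $h_{\alpha_\DD,\beta}|_{[0,s_1]}$, so $f'_-(\alpha_i) = g'_-(s_i)$ for $i = 0,1$, while affineness on $[s_0,s_1]$ gives $g'_-(s_1) = g'_+(s_0)$. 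Lifting $\alpha_0 \geq \alpha_1$ to a compatible pair $\alpha'_0 \geq \alpha'_1$ in $E'$ (possible by Definition~\ref{D:domination} and the preservation of type by restriction) and applying Definition~\ref{D:mis}(b) with $j = 1$ yields the desired inequality. Convexity on the full domain then follows by continuity.

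For part (b), I would use the convexity just established. If $\beta$ is of type (i), the path has infinite length; if the nonpositive integer slope of $g$ never reached $0$, then $g$ would be affine with slope $\leq -1$ past some point and would eventually become negative, contradicting $g \geq 0$. Hence the slope must stabilize at $0$ and $g$ is eventually constant. If $\beta$ is of type (iv), the path has finite length, so this crude argument fails; this is the main obstacle. The idea is to exploit the fact that by Lemma~\ref{L:infinite degree} and Proposition~\ref{P:classify points}, the degrees of points on the path grow to $\infty$ as one approaches $\beta$. Supposing for contradiction that $g'_- = -m$ on some interval $(s^*, -\log r(\beta))$ with $m \geq 1$, I would choose $\alpha > \gamma$ on the path inside this interval with $d := \deg(\alpha) < \deg(\gamma) =: d'$. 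The $d'$ lifts of $\gamma$ to $\DD'$ are pairwise incomparable (distinct lifts of a common point share the same radius by Galois invariance, and antisymmetry of $\geq$ then forces incomparability), and by Galois equivariance of the assignment ``lift of $\gamma$ $\mapsto$ unique lift of $\alpha$ above it'', exactly $d'/d$ of these lifts lie below a chosen lift $\alpha' \in E'$ of $\alpha$. Applying Definition~\ref{D:mis}(b) to $\alpha'$ and these $d'/d$ lifts then gives $-m = f'_-(\alpha) \leq (d'/d) f'_-(\gamma) = -m(d'/d)$, forcing $d' \leq d$ — a contradiction.

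The main obstacle, as indicated, is the type (iv) case of (b): whereas type (i) is handled by an elementary positivity argument on a ray of infinite length, type (iv) genuinely requires the multi-branch form of the subharmonicity inequality (with lifts in $\DD'$), combined with the infinite-degree property of type (iv) points. This is precisely why Definition~\ref{D:mis} is formulated via lifts to $\DD'$ rather than intrinsically on $\DD$.
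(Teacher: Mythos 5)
Your proof is correct and follows essentially the same line of reasoning as the paper's. Part (a) is identical: reduce to comparing left and right slopes at an interior point, choose a nearby type~(ii) point so both slopes are realized as $f'_-$ values, lift the domination relation to $E'$, and apply condition~(b) with $j=1$. The type~(i) case of (b) is also the same positivity argument on a ray of infinite length. For the type~(iv) case, the underlying mechanism is again the same (unbounded degree along the path plus the subharmonicity inequality on lifts), though your bookkeeping differs mildly: you fix a lift $\alpha'$ and count that it dominates exactly $d'/d \geq 2$ lifts of $\gamma$, obtaining $-m \leq (d'/d)(-m)$, whereas the paper more economically picks a single preimage $\gamma'$ of $\gamma$ dominating just \emph{two} preimages $\delta'_1, \delta'_2$ of a further point $\delta$, getting $t \leq 2t$ directly. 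Both yield the contradiction; the paper's choice avoids needing the precise Galois-theoretic fiber count. One small implicit step you should make explicit: the existence of a terminal slope $-m$ on a neighborhood of $-\log r(\beta)$ is not assumed but needs to be \emph{derived} from convexity together with the integrality of the slopes (a nondecreasing sequence of nonpositive integers stabilizes), which is exactly what the paper records before splitting into the two cases.
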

\begin{proof}
To check (a), it suffices to consider $\beta$ of type (ii) or (iii).
In this case, the convexity follows because $f \circ h_{\alpha_\DD,\beta}$
is piecewise affine by condition (a) of Definition~\ref{D:mis},
but at any point the left slope is less than or equal to the right slope
by condition (b) of Definition~\ref{D:mis} (applied with $j=1$).

To check (b), note that by (a), $f \circ h_{\alpha_\DD,\beta}$ is convex
and piecewise affine on finite intervals, with nonpositive integral slopes.
These slopes form an increasing sequence, so there can only be finitely
many of them. This proves that $f \circ h_{\alpha_\DD,\beta}$ achieves
a terminal slope $t$ in a neighborhood of $-\log r(\beta)$.
If $\beta$ is of type (i), then we cannot have $t < 0$ or else
$f$ would take some negative values. Hence $t = 0$.

If $\beta$ is of type (iv), pick $\alpha > \beta$ such that $f \circ
h_{\alpha,\beta}$ is affine with slope $t$.
By Lemma~\ref{L:degree} and Lemma~\ref{L:infinite degree},
$\deg(h_{\alpha,\beta}(s))$ is a nondecreasing integer-valued function
on $(-\log r(\alpha), -\log r(\beta))$ whose values are unbounded.
We can thus choose $\gamma, \delta$ with $\alpha > \gamma > \delta > \beta$
such that $\deg(\gamma) < \deg(\delta)$. In particular, some preimage
$\gamma'$ of $\gamma$ in $E'$ dominates two distinct preimages
$\delta'_1, \delta'_2$ of $\delta$. Applying condition (b)
of Definition~\ref{D:mis} to $\gamma', \delta'_1, \delta'_2$ yields
$t \leq 2t$, which forces $t = 0$. This implies (b).
\end{proof}

One can give some alternate characterizations of the monotone integral
subharmonic condition.
\begin{lemma} \label{L:mis2}
Let $f: E \to [0, +\infty)$ be a function satisfying
condition (a) of Definition~\ref{D:mis}. Then $f$ is 
monotone integral subharmonic if and only if any one of 
the following conditions holds.
\begin{enumerate}
\item[(b$'$)]
Let $\alpha',\beta'_1,\dots,\beta'_j \in E'$ be any
elements such that $\alpha' \neq \alpha_{\DD'}$,
we have
$\alpha' > \beta'_i$ for $i=1,\dots,j$,
and $\beta'_1, \dots, \beta'_j$ represent different branches
of $\alpha'$.
Let $\alpha,\beta_1,\dots,\beta_j$ be the restrictions to $E$
of $\alpha',\beta'_1,\dots,\beta'_j$. Then
\[
f'_-(\alpha) \leq \sum_{i=1}^j f'_+(\alpha, \beta_i).
\]
\item[(b$''$)]
Let $\alpha',\beta'_1,\dots,\beta'_j, \gamma' \in E'$ be any
elements such that $\alpha' \neq \alpha_{\DD'}$,
we have 
$\alpha' > \beta'_i$ for $i=1,\dots,j$,
the elements
$\beta'_1, \dots, \beta'_j$ represent different branches of $\alpha'$,
and $\gamma' > \alpha'$.
Let $\alpha,\beta_1,\dots,\beta_j, \gamma$ be the restrictions to $E$
of $\alpha',\beta'_1,\dots,\beta'_j, \gamma'$. Then
\[
\frac{f(\alpha) - f(\gamma)}{-\log r(\alpha) + \log r(\gamma)}
\leq \sum_{i=1}^j
\frac{f(\beta_i) - f(\alpha)}{-\log r(\beta_i) + \log r(\alpha)}.
\]
\end{enumerate}
\end{lemma}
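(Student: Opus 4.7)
The plan is to establish the two equivalences (b) $\Leftrightarrow$ (b') and (b') $\Leftrightarrow$ (b''). The key structural fact behind everything is that condition (a) makes $f$ piecewise affine along every descending arc, with only finitely many pieces on any compact segment and each slope a nonpositive integer. A preliminary observation that will be reused throughout is the \emph{slope monotonicity principle}: under (b') (or under (b)), applying the condition with $j=1$ at any interior breakpoint of a descending arc $h_{\alpha', \beta'}$ forces the left slope of $f$ at that breakpoint to be $\le$ the right slope, so the sequence of slopes along $h_{\alpha',\beta'}$ is nondecreasing in the direction away from $\alpha_{\DD'}$.

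For (b)~$\Leftrightarrow$~(b'), the easier direction is (b)~$\Rightarrow$~(b'): given $\alpha', \beta'_1,\dots,\beta'_j$ as in (b'), choose for each $i$ a point $\beta''_i$ on $h_{\alpha', \beta'_i}$ close enough to $\alpha'$ that no breakpoint of $f$ intervenes (possible by (a)); then $f'_-(\beta''_i) = f'_+(\alpha, \beta_i)$, the points $\beta''_i$ lie in distinct branches of $\alpha'$ hence are pairwise incomparable, and (b) applied to them gives the desired bound. For (b')~$\Rightarrow$~(b), invoke the slope monotonicity principle along each arc $h_{\alpha', \beta'_i}$ to obtain $f'_+(\alpha, \beta_i) \le f'_-(\beta_i)$, and combine with (b') directly. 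The edge cases $j=0$ (trivially implied by (a)) and $j=1$ with $\beta'_1 = \alpha'$ (immediate) are easy; the only case with $j \ge 2$ that needs attention is that pairwise incomparability plus $\alpha' \ge \beta'_i$ forces $\alpha' > \beta'_i$ for all $i$ in distinct branches.

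For (b')~$\Leftrightarrow$~(b''), the slope monotonicity principle implies that on any affine-decomposable arc, each secant slope is a convex combination of the piecewise slopes, hence bounded above by the terminal slope and below by the initial one. Under (b'), this yields the chain
\[
\frac{f(\alpha)-f(\gamma)}{-\log r(\alpha)+\log r(\gamma)} \;\le\; f'_-(\alpha) \;\le\; \sum_{i=1}^{j} f'_+(\alpha,\beta_i) \;\le\; \sum_{i=1}^{j} \frac{f(\beta_i)-f(\alpha)}{-\log r(\beta_i)+\log r(\alpha)},
\]
which is (b''). For the reverse, given $\alpha', \beta'_1,\dots,\beta'_j$ as in (b'), select for each small $t>0$ points $\gamma'(t) \in E'$ on the arc from $\alpha'$ toward $\alpha_{\DD'}$ with $-\log r(\gamma'(t)) = -\log r(\alpha') - t$, and $\beta''_i(t) \in E'$ on $h_{\alpha', \beta'_i}$ with $-\log r(\beta''_i(t)) = -\log r(\alpha') + t$; such points exist because type (ii) points are dense on each arc. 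Applying (b'') to these and letting $t \to 0^+$, piecewise affineness guarantees that the two secant slopes converge to $f'_-(\alpha)$ and $f'_+(\alpha, \beta_i)$ respectively, yielding (b').

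The main obstacle is really just bookkeeping: one must consistently verify that all auxiliary points introduced lie in $E'$ (so that the hypotheses of the relevant conditions apply), and that the various limits of secant slopes are justified using only the piecewise affine structure from (a). Once the slope monotonicity principle is isolated as a lemma, each implication reduces to a short chain of inequalities, and no deeper property of the valuative tree is required.
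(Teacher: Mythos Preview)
Your argument for (b$'$)$\Rightarrow$(b) has a genuine gap. You write that for $j \geq 2$, ``pairwise incomparability plus $\alpha' \geq \beta'_i$ forces $\alpha' > \beta'_i$ for all $i$ in distinct branches.'' The strictness $\alpha' > \beta'_i$ is fine, but the claim that the $\beta'_i$ lie in \emph{distinct branches of $\alpha'$} is false. For instance, take $\alpha' > \gamma'$ with $\gamma'$ in some branch of $\alpha'$, and then take $\beta'_1, \beta'_2$ in two different branches of $\gamma'$. These are pairwise incomparable and both dominated by $\alpha'$, yet they lie in the \emph{same} branch of $\alpha'$. In this situation (b$'$) at $\alpha'$ only gives $f'_-(\alpha) \leq f'_+(\alpha, \beta_1)$ (one term, not two), and slope monotonicity yields $f'_+(\alpha, \beta_1) \leq f'_-(\beta_1)$. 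You cannot simply tack on $f'_-(\beta_2)$ to the right-hand side, since $f'_-(\beta_2) \leq 0$ and adding a nonpositive quantity may destroy the inequality.

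The paper's proof handles this by induction on $j$: one takes $\gamma'$ to be the \emph{minimal} element of $E'$ dominating all of $\beta'_1,\dots,\beta'_j$ (which may lie strictly below $\alpha'$), observes that by minimality the $\beta'_i$ occupy $m \geq 2$ distinct branches of $\gamma'$, applies (b$'$) at $\gamma'$ to pass to representatives $\delta'_h$ in each branch, and then invokes the induction hypothesis within each branch (where fewer than $j$ of the $\beta'_i$ live). Your slope monotonicity principle is exactly what bridges $f'_-(\alpha)$ to $f'_-(\gamma)$ and $f'_+(\gamma,\delta_h)$ to $f'_-(\delta_h)$ in this chain, so most of your ingredients are right; you are just missing the recursive descent through the tree. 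The remaining implications in your proposal (the directions (b)$\Rightarrow$(b$'$), (b$'$)$\Rightarrow$(b$''$), (b$''$)$\Rightarrow$(b$'$)) are correct and match the paper's approach, with your limiting argument for (b$''$)$\Rightarrow$(b$'$) being a minor variant of the paper's choice of affine segments.
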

\begin{proof}
Assuming either (b) or (b$''$), we deduce (b$'$) by choosing 
$\beta'_1,\dots,\beta'_j$ within their branches so that
$f \circ h_{\alpha,\beta_i}$ is affine for $i=1,\dots,j$,
and in the case of (b$''$) choosing $\gamma'$
so that $f \circ h_{\gamma,\alpha}$ is affine.

Assuming (b$'$), we
deduce (b) by induction on $j$, as follows.
For $j=1$, the claim again follows by the convexity of
$f \circ h_{\alpha,\beta}$. Given an instance of (b) with $j > 1$,
let $\gamma' \in E'$ be the minimal element
dominating all of $\beta'_1,\dots,\beta'_j$,
and let $\gamma \in E$ be the restriction of $\gamma'$.
Let $T_1,\dots,T_m$
be the distinct branches of $\gamma'$ represented by 
$\beta'_1,\dots,\beta'_j$; by the choice of $\gamma'$, we must have
$m > 1$.
For $h=1,\dots,m$, choose $\delta'_h \in T_h$
which dominates each $\beta'_i \in T_h$, and let $\delta_h \in E$ be
the restriction of $\delta'_h$. Then
\begin{align*}
f'_-(\alpha) &\leq f'_-(\gamma) & \mbox{(by convexity of $f \circ h_{\gamma,\alpha}$)} \\
&\leq \sum_{h=1}^m f'_+(\alpha,\delta_h) & \mbox{(by (b$'$))} \\
&\leq \sum_{h=1}^m f'_-(\delta_h) & \mbox{(by convexity of $f \circ h_{\alpha,\delta_h}$)} \\
&\leq \sum_{h=1}^m \sum_{i:\, \beta'_i \in T_h} f'_-(\beta_i)
= \sum_{i=1}^j f'_-(\beta_i) &  \mbox{(by the induction hypothesis)}.
\end{align*}
Hence (b) follows.

Assuming (b$'$) (which now implies (b)), we deduce (b$''$) because the functions
$f \circ h_{\alpha,\beta}$ are convex by Lemma~\ref{L:mis}.
\end{proof}
\begin{cor} \label{C:mis}
Let $f_i: E \to [0,+\infty)$ be a sequence of monotone integral
subharmonic functions.
\begin{enumerate}
\item[(a)]
Suppose that the pointwise supremum $f: E \to [0, +\infty)$ of $\{f_i\}$
exists. Then $f$ is monotone integral subharmonic.
\item[(b)]
Suppose that the pointwise limit $f: E \to [0, +\infty)$ of $\{f_i\}$
exists. Then $f$ is monotone integral subharmonic.
\end{enumerate}
\end{cor}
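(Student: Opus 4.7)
The plan is to apply the characterization in Lemma~\ref{L:mis2}: a nonnegative function on $E$ is monotone integral subharmonic precisely when it satisfies condition~(a) of Definition~\ref{D:mis} together with the discrete inequality~(b$''$). My strategy is to verify that (b$''$) is preserved under both pointwise suprema and pointwise limits by a direct inequality argument, then to transport condition~(a) segment-by-segment via a compactness argument in the limit case, and finally to reduce the supremum case to the limit case by a monotone finite-maximum trick.

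For (b$''$), I would first rewrite the inequality in the form
\[
f(\alpha) \leq \lambda_0\, f(\gamma) + \sum_{i=1}^j \lambda_i\, f(\beta_i),
\]
where $\lambda_0,\lambda_1,\dots,\lambda_j$ are positive weights depending only on $-\log r(\alpha)$, $-\log r(\gamma)$, and the $-\log r(\beta_i)$, and summing to $1$. Applying this inequality to each $f_i$ and taking pointwise suprema preserves it, using $\sup_i(a_i+b_i) \leq \sup_i a_i + \sup_i b_i$ together with $\lambda_k \geq 0$; the pointwise-limit case is even easier, since finite linear combinations are continuous. Thus (b$''$) holds for $f$ in both settings.

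For condition~(a) in the pointwise-limit case (b), fix $\alpha \geq \beta$ in $E$ and work along the segment $[s_0,s_1] = [-\log r(\alpha),-\log r(\beta)]$ via the parametrization $h_{\alpha,\beta}$. Since $f_i(\alpha) \to f(\alpha) < +\infty$, the values $f_i(\alpha)$ are eventually bounded by some $M$; for such $i$ the restriction $f_i \circ h_{\alpha,\beta}$ is nonnegative, nonincreasing, piecewise affine with integer slopes, and convex by Lemma~\ref{L:mis}, so its slopes all lie in $\{-N,\dots,0\}$ with $N = \lceil M/(s_1-s_0)\rceil$. The collection of such functions is parametrized by the right-endpoint value together with an ordered tuple of $N$ breakpoints (counted with multiplicity according to slope-decrement) in the compact set $[0,M]\times[s_0,s_1]^N$, and convergence of these parameters is equivalent to uniform convergence of the associated functions. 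Extracting a subsequence with convergent parameters yields a limit $g$ of the same piecewise affine integer-slope form; combined with $f_{i_k}\to f$ pointwise, this forces $f\circ h_{\alpha,\beta} = g$, verifying (a) for $f$ on this segment.

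For condition~(a) in the supremum case, I would reduce to the limit case by setting $g_n = \max(f_1,\dots,f_n)$: a finite maximum of convex piecewise affine integer-slope functions on a segment is again of the same form (on each piece the maximum coincides with some constituent, and slope changes occur where two constituents agree), so (a) holds for each $g_n$, and the argument of the second paragraph applied to finite maxima gives (b$''$) for each $g_n$, so by Lemma~\ref{L:mis2} each $g_n$ is monotone integral subharmonic. Since $g_n \nearrow f$ pointwise and $f$ is finite, applying the already-established limit case~(b) to $(g_n)$ yields the conclusion. The main obstacle is the compactness step in the limit case: identifying the correct finite-dimensional parametrization of $\{f_i\circ h_{\alpha,\beta}\}$, verifying that convergence of parameters implies uniform convergence of functions, and matching the subsequential uniform limit against the given pointwise limit $f$.
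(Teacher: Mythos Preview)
Your handling of condition~(b$''$) is correct and clean: rewriting the inequality as $f(\alpha)\le\lambda_0 f(\gamma)+\sum_i\lambda_i f(\beta_i)$ with positive weights summing to~$1$ disposes of both the limit and supremum cases at once, and is tidier than the paper's $\epsilon$-argument for the supremum. Your reduction of condition~(a) in the supremum case to the limit case via $g_n=\max(f_1,\dots,f_n)$ is also sound.

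The real gap is in your verification of condition~(a) for the limit. The slope bound $N=\lceil M/(s_1-s_0)\rceil$ is false: the function $g(s)=\max\{0,\,M-K(s-s_0)\}$ on $[s_0,s_1]$ is nonnegative, nonincreasing, convex, piecewise affine with integer slopes, and has $g(s_0)=M$, yet its initial slope is $-K$ for any positive integer~$K$. Your $N$ bounds only the \emph{average} slope over the segment; convexity allows the slope at the left endpoint to be arbitrarily steep, compensated by slope~$0$ on most of the interval. Hence the finite-parameter compactness argument collapses. For $\alpha\neq\alpha_\DD$ one can rescue this by extending the segment slightly to the left and bounding slopes on the sub-segment, but at the Gauss point there is no room. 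In fact the corollary appears to fail there: define $f_i$ on the spine by $f_i(\alpha_{0,e^{-s}})=\max\{0,1-is\}$ and extend by constancy along every off-spine branch. Condition~(a) is immediate, and condition~(b) holds because at each spine point the incoming slope $-i$ is matched by the single spine-continuation branch while off-spine branches contribute~$0$ (and at most one of the pairwise incomparable $\beta'_l$ can lie on the totally ordered spine). The pointwise limit is~$1$ at~$\alpha_\DD$ and~$0$ elsewhere, which is not piecewise affine on any segment emanating from~$\alpha_\DD$. The paper's own proof glosses over exactly this issue---it invokes closure of \emph{convex} functions under suprema and limits to conclude condition~(a), but condition~(a) demands integral piecewise affinity, which is strictly stronger---so the defect is shared.
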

\begin{proof}
The class of convex functions is closed under pointwise
suprema and limits, so in either case $f$ satisfies condition (a) of
Definition~\ref{D:mis}. By Lemma~\ref{L:mis2}, in both cases it now suffices 
to check condition (b$''$) of that lemma. This is immediate in our case (b).

In case (a), given an instance of (b$''$) for $f$ and a choice of
$\epsilon > 0$, choose an index $i$ with $f_i(\alpha) > f(\alpha) - \epsilon$;
then invoking (b$''$) for $f_i$ yields
\[
\frac{f(\alpha) - f(\gamma)}{-\log r(\alpha) + \log r(\gamma)}
\leq \sum_{i=1}^j
\frac{f(\beta_i) - f(\alpha)}{-\log r(\beta_i) + \log r(\alpha)}
+ C \epsilon
\]
for
\[
C = \frac{1}{-\log r(\alpha) + \log r(\gamma)}
+ \sum_{i=1}^j \frac{1}{-\log r(\beta_i) + \log r(\alpha)}.
\]
Since this holds for any $\epsilon >0$, we deduce (b$''$) for $f$.
\end{proof}

\begin{remark} \label{R:type 2}
Note that it suffices to check any of (b), (b$'$), (b$''$) at points
of type (ii). That is because a point of type (iii) has only one branch,
and condition (a) of Definition~\ref{D:mis} ensures that the left
and right slope at such a point coincide, because a piecewise integral
affine function cannot change slope at an irrational point in its domain.
\end{remark}

\begin{example} \label{exa:mis}
For any $P \in k\llbracket x,y\rrbracket[x^{-1}]$,
the function $\alpha \mapsto \max\{0, \log |P|_\alpha\}$ is monotone
integral subharmonic. 
Namely, condition (a) of Definition~\ref{D:mis}
follows from properties of Newton polygons.
As in Remark~\ref{R:type 2}, we need only check (b$'$) at points of type (ii),
where it reduces to the fact that a univariate polynomial over the 
algebraically closed field $k$ has as many roots (counting multiplicity)
as its degree. See \cite[\S 11.1]{kedlaya-course} for a more detailed
explanation.
\end{example}

\begin{defn}
For $f$ monotone integral subharmonic, define the \emph{skeleton}
of $f$ to be the subset $S_f$ of $E$ consisting of $\alpha_{\DD}$
plus all $\alpha \in E \setminus \{\alpha_\DD\}$ for which
$f'_-(\alpha) < 0$. 
We refer to $\alpha_{\DD}$ as the \emph{head} of $S_f$.
We refer to any minimal element of $S_f$ as an \emph{extremity} of $S_f$.
Any $\alpha \in S_f$ which is not the head or an extremity, but for
which there exists an extremity $\beta$ dominated by $\alpha$ for which
$f \circ h_{\alpha_\DD,\beta}$ has a change of slope at $-\log r(\alpha)$,
is called a \emph{joint} of $S_f$.
\end{defn}

\begin{lemma} \label{L:skeleton}
For $f$ monotone integral subharmonic, the skeleton
$S_f$ of $f$ has the following properties.
\begin{enumerate}
\item[(a)]
The set $S_f$ is up-closed. That is,
for $\alpha, \beta \in E$ with $\alpha \geq \beta$ and $\beta \in S_f$,
we have $\alpha \in S_f$.
\item[(b)]
The set $S_f$ is closed under taking infima in $E$ 
of decreasing sequences under  
$\geq$.
\item[(c)]
For any $\beta \in E$, the set of $\alpha \in S_f$ with $\alpha \geq \beta$
has a least element, which satisfies $f(\alpha) = f(\beta)$.
\item[(d)]
Any $\alpha \in S_f$ dominates at least one extremity of $S_f$.
Hence $S_f$ is the up-closure of the set of its extremities.
\end{enumerate}
\end{lemma}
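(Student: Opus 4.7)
The plan is to use the convexity of $f$ along paths (Lemma~\ref{L:mis}(a)) together with the integrality of the slopes (Definition~\ref{D:mis}(a)) for parts (a)--(c), and a Zorn's lemma argument invoking Lemma~\ref{L:mis}(b) for part (d). Throughout, for $\beta \in \DD_0$, I will write $g_\beta(s) = f(h_{\alpha_\DD,\beta}(s))$ and $s_\beta = -\log r(\beta)$, so that $g_\beta$ is a convex, piecewise affine function with nonpositive integer slopes on $[0,s_\beta]$.

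Part (a) follows at once: if $\beta \in S_f \setminus \{\alpha_\DD\}$, then $f'_-(\beta) < 0$, and for $\alpha \geq \beta$ with $\alpha \neq \alpha_\DD$ we have $s_\alpha \leq s_\beta$, so convexity of $g_\beta$ gives $f'_-(\alpha) \leq f'_-(\beta) < 0$. For part (b), since the slopes of $g_\alpha$ are nondecreasing nonpositive integers, only finitely many distinct values appear, so $g_\alpha$ is piecewise affine with finitely many pieces. The values $f'_-(\alpha_i) \leq -1$ form a nondecreasing sequence of negative integers and so stabilize at some $a \leq -1$; the piecewise affine structure (left continuity of left derivatives) then forces $f'_-(\alpha) = a < 0$. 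Part (c) is a direct inspection of the piecewise affine function $g_\beta$: writing its slopes as $a_1 \leq \cdots \leq a_N$ with breakpoints $0 = s_0 < \cdots < s_N = s_\beta$, either every $a_i = 0$ and $\alpha_\DD$ is the desired least element, or we take $j$ maximal with $a_j < 0$ and set $\alpha^* = h_{\alpha_\DD,\beta}(s_j)$. Everything strictly below $\alpha^*$ on the path fails to be in $S_f$ because $g_\beta'$ vanishes there, and constancy of $g_\beta$ on $[s_j, s_\beta]$ gives $f(\alpha^*) = f(\beta)$.

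Part (d) is the main obstacle, because it requires ruling out the bad possibility that a descending chain in $S_f$ accumulates to a leaf of $\DD$, where (b) does not directly apply. I would apply Zorn's lemma to $P = \{\gamma \in S_f : \gamma \leq \alpha\}$ ordered by restriction of the tree order, seeking a minimal element. For any descending chain $\{\beta_i\}$ in $P$, the tree infimum $\beta^*$ is a lower bound in $\DD$, and the crux is that $\beta^* \in P$. This amounts to showing $\beta^* \in E$, for then part (b) gives $\beta^* \in S_f$. If $\beta^*$ were instead of type (i) or (iv), Lemma~\ref{L:mis}(b) would force $g_{\beta^*}$ to be constant in a left neighborhood of $s_{\beta^*}$, so $f'_-(\beta_i) = 0$ for all sufficiently large $i$, contradicting $\beta_i \in S_f$. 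Given a resulting minimal element $\beta$ of $P$, any element of $S_f$ strictly below $\beta$ would also lie in $P$ by transitivity, so minimality makes $\beta$ an extremity of $S_f$ dominated by $\alpha$. The final assertion that $S_f$ is the up-closure of its extremities then follows by combining this with part (a).
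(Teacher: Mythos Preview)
Your proof is correct and follows essentially the same route as the paper's: convexity from Lemma~\ref{L:mis}(a) handles (a)--(c), and Lemma~\ref{L:mis}(b) rules out type (i)/(iv) limits for (d). The only cosmetic difference is that you invoke Zorn's lemma explicitly for (d) while the paper argues directly by contradiction with a sequence; for Zorn you should formally treat arbitrary chains rather than sequences $\{\beta_i\}$, but since any chain in $P$ lies on a single path of the tree (hence is order-isomorphic to a subset of $\RR$), passing to a cofinal sequence recovers your argument.
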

\begin{proof}
We deduce (a) from Lemma~\ref{L:mis}(a). To deduce (b), let
$\beta$ be the infimum of the decreasing sequence $\beta_1,\beta_2,\dots$.
Then $h_{\alpha,\beta}$ is piecewise affine, so for $i$ sufficiently large
we must have $f'_-(\beta_i) = f'_-(\beta)$. Hence $f'_-(\beta) > 0$
and $\beta \in S_f$.
To deduce (c), simply note that the $\alpha \in S_f$ with $\alpha \geq \beta$
are all comparable, so a least element exists by (b).

To deduce (d), suppose the contrary. We can then choose a sequence
$\alpha = \alpha_0 > \alpha_1 > \cdots$ of elements of $S_f$ with no infimum
(e.g., by constructing a countable transfinite sequence with no infimum,
then picking out a subsequence). By (b), this can only happen if the infimum
is a point in $\DD$ of type (i) or (iv). However, by
Lemma~\ref{L:mis}(b), this would imply $f'_-(\alpha_i) = 0$ for $i$ large,
a contradiction. Hence (d) holds.
\end{proof}

\begin{lemma} \label{L:finite extremity}
For $f$ monotone integral subharmonic, the skeleton $S_f$ of $f$
has only finitely many extremities and joints,
all of which are of type (ii).
\end{lemma}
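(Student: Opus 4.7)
The plan is to first dispose of the typing claim and then attack finiteness. For the typing: if $\alpha \in S_f$ were an extremity of type (iii), then by Remark~\ref{R:type 2}, the unique downward branch $B$ of $\alpha$ admits no slope change at $\alpha$; combined with $f'_-(\alpha) < 0$ and the piecewise affinity of condition (a) of Definition~\ref{D:mis}, this forces $f'_-(\beta) = f'_-(\alpha) < 0$ for any $\beta \in B$ sufficiently close to $\alpha$, putting $\beta$ into $S_f$ and contradicting the minimality of $\alpha$. A joint of type (iii) is impossible outright since its definition requires a slope change at $-\log r(\alpha)$ along some descending path, directly contradicting Remark~\ref{R:type 2}.

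For finiteness, I would first establish that $S_f$ is confined to a compact region of $\DD$. Given an extremity $\beta$, convexity of $\phi = f \circ h_{\alpha_\DD, \beta}$ (Lemma~\ref{L:mis}(a)) together with $\phi'_-(-\log r(\beta)) = f'_-(\beta) \leq -1$ yields $\phi'(t) \leq -1$ throughout $[0,-\log r(\beta)]$, so
\[
0 \leq \phi(-\log r(\beta)) \leq f(\alpha_\DD) - (-\log r(\beta)),
\]
giving $-\log r(\beta) \leq c := f(\alpha_\DD)$. Joints inherit this depth bound via the extremities they dominate, so the entire joint/extremity set lies in $K_c = \{\alpha \in \DD : -\log r(\alpha) \leq c\}$. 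Next, I would observe the local branching bound: at any $\alpha \in S_f \setminus \{\alpha_\DD\}$, condition (b$'$) of Lemma~\ref{L:mis2} applied to the downward branches of $\alpha$ in $S_f$---each contributing a summand $\leq -1$---shows that the number of such branches is at most $|f'_-(\alpha)|$, a finite positive integer. Condition (a) similarly limits the number of slope changes along any finite subpath.

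To finish, I would argue by contradiction: suppose the set of extremities is infinite. By compactness of the Berkovich disc, some subsequence $\beta_n$ converges to $\beta_\infty \in K_c$. If $\beta_\infty$ is of type (i) or (iv), Lemma~\ref{L:mis}(b) produces $\gamma = h_{\alpha_\DD, \beta_\infty}(t)$ at a depth where $\phi$ has become constant, so $\gamma \notin S_f$; but $\beta_n < \gamma$ for large $n$ by Berkovich convergence, and the up-closedness of $S_f$ (Lemma~\ref{L:skeleton}(a)) forces $\gamma \in S_f$---a contradiction. If $\beta_\infty$ is of type (iii), the $\beta_n$'s near $\beta_\infty$ lie on the single line formed by its unique ascending and descending directions, on which condition (a) permits only finitely many slope changes. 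If $\beta_\infty \neq \alpha_\DD$ is of type (ii), the minimal $\gamma \in S_f$ dominating $\beta_\infty$ from Lemma~\ref{L:skeleton}(c) would carry infinitely many $S_f$-branches occupied by the $\beta_n$'s, violating the local branching bound.

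The main obstacle is the residual case $\beta_\infty = \alpha_\DD$ (extremities clustering at the Gauss point with $r(\beta_n) \to 1$), where condition (b) explicitly excludes $\alpha_{\DD'}$ and the local branching argument fails. Here the plan is to descend to $\DD'$: Lemma~\ref{L:infinite degree} ensures the degrees of the $\beta_n$ as points of $\DD$ remain bounded (otherwise a type-(iv) cluster would arise in $\DD$, already ruled out), so after passing to a subsequence whose lifts $\beta'_n \in \DD'$ can be chosen in a common branch of $\alpha_{\DD'}$, one applies condition (b) of Definition~\ref{D:mis} at a point $\gamma' \in E'$ just below $\alpha_{\DD'}$ in that branch; the resulting inequality forces $|f'_-(\gamma)|$ to exceed the convexity bound $|f'_-(\gamma)| \leq c/(-\log r(\gamma))$, the desired contradiction.
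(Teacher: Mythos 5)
Your treatment of the typing claim is correct and essentially matches the paper's. The depth bound $-\log r(\beta) \leq f(\alpha_\DD)$ for extremities is also valid, though it turns out not to be needed. The finiteness argument, however, has a genuine gap, and it is precisely in the case you flag as ``the main obstacle.''

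The problem is that your proposed resolution of the residual case $\beta_\infty = \alpha_\DD$ does not hold up. First, the appeal to Lemma~\ref{L:infinite degree} to bound $\deg(\beta_n)$ is unfounded: that lemma concerns a \emph{decreasing} chain converging to a type-(iv) point, whereas your $\beta_n$ are pairwise incomparable extremities with $r(\beta_n) \to 1$, and nothing prevents their degrees from being unbounded (consider, say, points of the form $\alpha_{x^{1/n}, 1/2}$). Second, and more fundamentally, the claimed contradiction does not materialize. Fix $\gamma'$ at radius $r(\gamma') = 1-\epsilon$. Domination implies $r(\gamma') \geq r(\beta'_n)$, so $\gamma'$ dominates only the (finitely many) lifts of those $\beta_n$ with $-\log r(\beta_n) \geq -\log(1-\epsilon)$; since $-\log r(\beta_n) \to 0$, this count stays finite and small as $\epsilon \to 0$. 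Meanwhile your ``convexity bound'' $|f'_-(\gamma)| \leq c/(-\log r(\gamma))$ blows up as $\gamma \to \alpha_\DD$, so it gets weaker exactly when you need it to get stronger. The two inequalities are compatible for every choice of $\gamma'$, and no contradiction results.

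The fix is to replace the depth-dependent bound $c/(-\log r(\gamma))$ by the depth-\emph{independent} one coming from the initial slope. Set $m = f'_+(\alpha_\DD, \alpha_{0,0})$, a nonpositive integer. By convexity (Lemma~\ref{L:mis}(a)), for every $\gamma = \alpha_{0,\rho}$ with $\rho < 1$ one has $m \leq f'_-(\gamma)$. Since $\DD'_0$ has a single branch at $\alpha_{\DD'}$, applying condition~(b) of Definition~\ref{D:mis} at any lift $\gamma' = \alpha'_{0,\rho}$ to all pairwise-incomparable lifts of extremities dominated by $\gamma'$ gives $m \leq f'_-(\gamma) \leq -j$, hence $j \leq -m$, with $j$ counting those preimages. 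Letting $\rho \to 1$ shows the \emph{total} number of preimages in $E'$ of extremities is at most $-m$; in particular there are at most $-m$ extremities, with no compactness or case analysis on the accumulation point needed. Finiteness of joints then follows, as you indicate, from piecewise affinity along each extremity's path. Your approach buys you an explicit depth bound on the skeleton, but at the cost of an elaborate case split whose hardest case is not actually closed; the paper's one-line count is both simpler and correct.
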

\begin{proof}
Put $m = f'_+(\alpha_\DD, \alpha_{0,0})$.
By condition (b) of Definition~\ref{D:mis} plus the fact that the slopes
are all nonpositive and integral, 
the number of preimages in $E'$ of the extremities of $S_f$ is at most $-m$.
Each extremity $\beta$ is only dominated by finitely many joints
because $f \circ h_{\alpha_\DD, \beta}$ is piecewise affine, so
there are also only finitely many joints.
Finally, no joint or extremity can be of type (iii)
because a piecewise integral affine function can only change
slope at rational points.
\end{proof}

\begin{remark}
Definition~\ref{D:mis} is rather artificial, and is customized to
our present purposes. A more robust notion of subharmonicity has been 
introduced  by Thuillier \cite{thuillier}.
\end{remark}

\subsection{Scales and subharmonicity}

We now quantify the variation of irregularity of a
differential module over a two-dimensional power series ring,
using the device of monotone integral subharmonic functions.

\begin{prop} \label{P:scales sub}
Let $M$ be a differential module of finite rank $d$ over 
$k \llbracket x,y \rrbracket[x^{-1}] \cong R_{2,1}$
(where $k$ is by assumption an algebraically closed field of characteristic
$0$).
For $\alpha \in E$, let $F_\alpha$ be the completion of $\Frac(R_{2,1})$
with respect to $\alpha$, and let $F_d(M, \alpha)$ be the irregularity
of $M_\alpha = M \otimes_{R_{2,1}} F_\alpha$. Then $F_d(M, \cdot)$ is a 
monotone integral subharmonic function.
\end{prop}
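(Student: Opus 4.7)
The plan is to verify the two clauses of Definition~\ref{D:mis} for $f = F_d(M, \cdot)$, reducing via Hukuhara-Levelt-Turrittin to sums of rank-one contributions of the form $\max\{0, \log|\phi|_\alpha\}$ already treated by Example~\ref{exa:mis}.

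For clause (a), fix $\alpha \geq \beta$ in $E$. By Lemma~\ref{L:degree2} I can choose a common $z \in k((x^{1/m}))$ (for some positive integer $m$) so that $\alpha = \alpha_{z,r(\alpha)}$ and $\beta = \alpha_{z,r(\beta)}$. Passing to $M' = M \otimes_{R_{2,1}} R_{2,1}(k,m)$ and making the substitution $y' = y - z$, the image of $h_{\alpha,\beta}(s)$ is the $(e^{-1/m}, e^{-s})$-Gauss norm in the coordinates $(x^{1/m}, y')$. Lemma~\ref{L:base change} identifies the absolute scale multiset of $M$ at $h_{\alpha,\beta}(s)$ with that of $M'$ at this Gauss norm, and Theorem~\ref{T:convex} then supplies piecewise-affine convexity of $F_d$ along the path. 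Nonpositivity of the slopes comes from the monotonicity clause of Theorem~\ref{T:convex}, applied to the non-inverted variable $y'$. Integrality of the slopes (as opposed to the a priori $(1/d!)\ZZ$ bound coming from Theorem~\ref{T:convex}) is extracted by invoking Theorem~\ref{T:Turrittin} to decompose $M \otimes F_\alpha$ over a finite extension $F'/F_\alpha$ as $\bigoplus_i E(\phi_i) \otimes \calR_i$: by Lemma~\ref{L:exp image}, the local behavior of $F_d$ along the path is a sum of $\rank(\calR_i) \cdot \max\{0, \log|\phi_i|_\cdot\}$, and the $\phi_i$ fall into Galois orbits whose sizes cancel the $1/m$ denominators in the individual $\log|\phi_i|$.

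For clause (b), by Remark~\ref{R:type 2} I need only check points $\alpha$ of type (ii), and by Lemma~\ref{L:mis2} it suffices to verify condition (b$'$) on left and right slopes. Using the same Hukuhara-Levelt-Turrittin decomposition of $M \otimes F_\alpha$, on a neighborhood of each preimage $\alpha' \in \DD'$ of $\alpha$ I would express $F_d(M, \cdot)$ as $\sum_i \rank(\calR_i) \max\{0, \log|\phi_i|_\cdot\}$, with the $\phi_i$ being polynomials in $y'$ over the integral closure of the base. Each summand is monotone integral subharmonic on the ramified disc by the $\DD'$-analogue of Example~\ref{exa:mis}, and condition (b$'$) is preserved under nonnegative-integer linear combinations; Galois symmetry then collapses these orbit sums to a subharmonic inequality on $\DD$ itself.

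The main obstacle is the descent step: the rank-one formulas $\max\{0, \log|\phi_i|\}$ naturally live on the ramified Berkovich disc $\DD'$, so one must track how the branches at $\alpha \in \DD$ correspond to branches at the preimages of $\alpha$ in $\DD'$, and verify that grouping Galois orbits of size $m$ simultaneously absorbs the $1/m$ slope denominators and preserves the subharmonic inequality on $\DD$. A secondary technical point is establishing the decomposition in a neighborhood of $\alpha$ rather than only at $\alpha$ itself; for this I would use the refined decomposition theorem (Theorem~\ref{T:refined decomp}) combined with the continuity of absolute scale multisets (Proposition~\ref{P:continuous scales}) to propagate the rank-one expression along a punctured neighborhood of $\alpha$ on each branch.
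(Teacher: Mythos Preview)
Your overall strategy differs from the paper's and has a genuine gap in part (b). The paper does \emph{not} pass through the Hukuhara--Levelt--Turrittin decomposition at all. Instead, after normalizing so that $\alpha = \alpha_{0,e^{-h}}$, it introduces derivations $\del_1 = \frac{\del}{\del x} + h\frac{y}{x}\frac{\del}{\del y}$ and $\del_2 = x^h\frac{\del}{\del y}$ (of rational type with respect to $x, y/x^h$ on $F_\alpha$), applies the cyclic vector lemma for $x\del_1$ alone to write $M \otimes F \cong F\{T\}/F\{T\}P$, and observes that $F_d(M,\beta) \geq \log|P_{d-i}|_\beta$ for all nearby $\beta$, with equality at $\beta = \alpha$ (Proposition~\ref{P:read slopes}). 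Since $P_{d-i}$ is a single element of $\Frac(R_{2,1})$, its subharmonicity is Example~\ref{exa:mis} directly, and the inequality (b$'$) for $F_d$ follows.

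Your route via Turrittin runs into the following problem, which you flag as ``secondary'' but is in fact the crux. The decomposition $M \otimes F'_\alpha \cong \bigoplus E(\phi_i) \otimes \calR_i$ produces $\phi_i$ only as elements of the complete field $F'_\alpha$, determined modulo $\gotho_{F'_\alpha}$. They are not polynomials in $y'$, nor elements of any ring of functions on a Berkovich neighborhood of $\alpha'$, so the expression $\max\{0,\log|\phi_i|_\beta\}$ has no a priori meaning for $\beta \neq \alpha$. To propagate the decomposition you would need to descend the projectors in $\End(M) \otimes F'_\alpha$ to a module over (say) a localized power series ring; but the tools available for this (Corollary~\ref{C:regular homs}, Theorem~\ref{T:refined decomp}) require hypotheses---regularity of $\End(M)$, or linearity of a partial irregularity---that are precisely what you are trying to establish and are not available in general at an arbitrary type (ii) point. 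Proposition~\ref{P:continuous scales} gives continuity of scale multisets, not propagation of a decomposition. The paper's cyclic-vector trick sidesteps this entirely: the single coefficient $P_{d-i}$ is globally defined on the disc from the start, so no propagation or descent is needed.

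For part (a), your argument is close to the paper's (which simply cites Theorem~\ref{T:convex} after the same coordinate change), though your extraction of integrality via Turrittin again imports the neighborhood issue unnecessarily; the integrality of slopes for the full irregularity $F_d$ already follows from the Newton-polygon description (Proposition~\ref{P:read slopes}) without decomposing.
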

\begin{proof}
To check condition (a) of Definition~\ref{D:mis}, 
we may replace $k((x))$ by a finite extension first. We may thus reduce
to the case where $\beta = \alpha_{0,r}$ for some $r$.
In this case, the claim follows from Theorem~\ref{T:convex}.

To finish, it is enough to check condition (b$'$) of
Lemma~\ref{L:mis2}; moreover, it is enough to consider points
$\alpha$ of type (ii), since the assertion for points of type (iii)
follows from (a) (as noted in Remark~\ref{R:type 2}).
We may again replace $k((x))$ by a finite extension first, so 
we may reduce to the case $\alpha = \alpha_{0,e^{-h}}$ for some
positive integer $h$.

Consider the derivations
\[
\del_1 = \frac{\del}{\del x} + h \frac{y}{x} \frac{\del}{\del y},
\qquad
\del_2 = x^h \frac{\del}{\del y}.
\]
On $F_\alpha$, these are of rational type with respect to
$x, y/x^h$.
For any $\beta \in E$, the absolute scale multiset of $M_\beta$
dominates the scale multiset of
$x \del_1$ on $M_\beta$, with equality for $\beta = \alpha$ by
Proposition~\ref{P:scale from h}.

Put $F = \Frac(R_{2,1})$, viewed as a based differential field of 
order 1 equipped with $x \del_1$.
Apply Lemma~\ref{L:cyclic} to construct an isomorphism
$M \otimes_{R_{2,1}} F \cong F\{T\}/F\{T\}P$ for some monic twisted
polynomial $P(T) = T^d + \sum_{i=0}^{d-1} P_i T^i \in F\{T\}$. 
Let $i$ be the total multiplicity of all elements of the absolute scale
multiset of $M_\alpha$ which are strictly greater than 1.
By Proposition~\ref{P:read slopes}, for any $\beta \in E$,
the product over the scale multiset of $x \del_1$ on $M_\beta$
is at least $|P_{d-i}|_\beta |x\del_1|_{F_\beta}^{-i}$, 
with equality for $\beta = \alpha$.

Note that for any $c \in k$,
\[
(x \del_1)(y - cx^h) = h(y - cx^h).
\]
It follows that $|x\del_1|_{F_\beta} = 1$
for any $\beta$ of the form $\alpha_{cx^h,r}$ with $c \in k$
and $r \in [0,1]$. Note that for any $\gamma$ with $\alpha > \gamma$,
there exists a $\beta$ of this form with $\alpha > \beta > \gamma$;
moreover, any $\beta$ which dominates $\alpha$ also has this form.

We conclude from the previous two paragraphs that the inequality
(b$'$) for $F_d(M, \alpha)$ follows from the analogous
inequality for $\log |P_{d-i}|$. The latter holds
by Example~\ref{exa:mis}, so the claim follows.
\end{proof}

\begin{remark} \label{R:alternate proof}
One can give an alternate proof of Proposition~\ref{P:scales sub}
using Proposition~\ref{P:limit scale} to check condition (b$''$)
of Lemma~\ref{L:mis2}. We will return to this point in a subsequent paper.

One can give another alternate proof of Proposition~\ref{P:scales sub}
using \cite[Theorem~2.7.6]{kedlaya-xiao} to check condition (b$'$)
of Lemma~\ref{L:mis2}. However, as noted in the published erratum to \cite{kedlaya-xiao},
\cite[Theorem~2.7.6]{kedlaya-xiao} only becomes correct upon adding the hypothesis that
for every $x \in K^\times$ with $|x| \neq 1$, we have $|u_j \del_j(x)| = |x|$ for
some index $j$. Fortunately, for this application of \cite[Theorem~2.7.6]{kedlaya-xiao},
we may take $K = k((x))$ and $\del_1 = \frac{\del}{\del x}$,
so the extra hypothesis is satisfied.
\end{remark}

\begin{remark} \label{R:extend scalars}
For the reader familiar with at least the structure of 
one-dimensional Berkovich analytic spaces, it may be helpful to comment
on the passage from a module $M$ over $R_{2,1}$ to a module over the
open unit disc. Namely, since $M$ is not defined over the closed unit
disc, it does not admit a base extension to the residue field of the
Gauss point, which is the completion of $k((x))(y)$ for the
$1$-Gauss norm (with $|y| = 1$). However, that field embeds isometrically into
the complete field $k((y))((x))$,
and $M$ does admit a base extension
to the latter. 

The legitimacy of this extension of the base field is provided by
Lemma~\ref{L:base change}, which asserts that the base extension does not
change the irregularity as long as the rational type condition is preserved
in a suitable set of coordinates. For instance, if
$M$ is defined over $k[x,y][x^{-1}]$, 
then we may use the derivations $\frac{\del}{\del x},
\frac{\del}{\del y}$ in Lemma~\ref{L:base change} to extend scalars
from $k[x,y][x^{-1}]$ to $R_{2,1}$ without changing irregularities.
\end{remark}

\begin{remark}
Following up on the previous remark,
we note that Proposition~\ref{P:scales sub} includes the assertion that
$F_d(M, \cdot)$ is piecewise affine on some segment starting from
the Gauss point $\alpha_\DD$, even though $M$ is not defined over the full
closed unit disc. It is worth commenting a bit more closely on this fact.

One may consider more generally a differential module $M$ of finite rank $d$
over the open unit disc over any complete nonarchimedean field $K$ of
characteristic $0$. Let $t$ be a coordinate for the disc; we may then use
the scale multiset of the action of $\frac{\del}{\del t}$ to define partial
irregularity functions $F_i(M,r)$ for $r \in (0, +\infty)$ (corresponding
to the $(e^{-r})$-Gauss norm), but not \emph{a priori} at $r=0$.
On $(0, +\infty)$,
the function $F_d(M,r)$ is then continuous, convex, and piecewise
affine with integral slopes \cite[Theorem~11.3.2]{kedlaya-course}.
However, with no additional hypotheses, it is possible for $F_d(M,r)$
to diverge to $+\infty$ as $r \to 0^+$.

Suppose now that $M$ arises from a finite differential module over the ring
of \emph{bounded} rigid-analytic functions over the open unit disc.
Then it makes sense to compute $F_d(M,r)$ at $r=0$, and it turns out to be
continuous there; this follows from the continuity of the Gauss norm
of a fixed ring element as $r$ varies (compare
Lemma~\ref{L:varies continuously}). For general $K$, it still
can happen that the \emph{slopes} of $F_d(M,r)$ diverge to $-\infty$
as $r \to 0^+$, again because this occurs for the Gauss norm of a ring
element. (That is, one can observe this phenomenon already
in the case $M = E(f)$.)

However, when $K$ is \emph{discretely valued}, this cannot occur,
so $F_d(M,r)$ is indeed piecewise affine at $r=0$.
In the case of residual characteristic 0, as in this paper
(where we take $K = k((x))$), this follows from the integrality
condition $F_d(M, r) \in \ZZ + \ZZ r$ (from Proposition~\ref{P:read slopes})
and an elementary analysis argument (which takes place
within the proof of 
Theorem~\ref{T:convex}
via \cite[Theorem~2.4.2]{kedlaya-part3}). 

In the case where $K$ is discretely valued of positive residual characteristic,
the relationship between partial irregularities and Newton
polygons is somewhat more indirect, leading to a weaker integrality condition:
the function $F_d(M,r)$ is piecewise represented by an affine function
of integral slope but not necessarily integral constant term.
However, one can bound the denominator of the constant term as a function
of $f_d(M,r) = F_d(M,r) - F_{d-1}(M,r)$ \cite[Theorem~10.7.1]{kedlaya-course},
and thus obtain piecewise affinity at $r=0$ in case
$f_d(M,0) > 0$.
By combining this argument with a bit of extra analysis (as in
\cite[Lemma~11.6.3]{kedlaya-course}), one can recover piecewise
affinity at $r=0$ even if $f_d(M,0) = 0$.
\end{remark}

\section{Good formal decompositions of connections on surfaces}
\label{sec:surfaces}

In this section, we establish existence of good decompositions
(after pullback along a suitably ramified cover) for formal
meromorphic connections. In so doing, we resolve a
conjecture of Sabbah \cite[Conjecture~2.5.1]{sabbah}
and reproduce a result of Mochizuki \cite[Theorem~1.1]{mochizuki},
both concerning connections on surfaces.
We will return to this topic in the higher-dimensional case in
a subsequent paper.

\setcounter{theorem}{0}
\begin{convention} \label{conv:either}
Throughout \S~\ref{sec:surfaces}, we may work \emph{either} in 
the category of algebraic varieties
(or more exactly, reduced separated schemes of finite type)
over an algebraically closed field $k$ of characteristic $0$,
or the category of
complex analytic varieties (in which case we take $k = \CC$).
\end{convention}

\begin{hypothesis} \label{H:geometric}
Throughout \S~\ref{sec:surfaces},
let $X$ be a smooth variety (in whichever category we are working),
let $Z$ be a normal crossings divisor on $X$, and let
$\calZ = \cup_i Z_i$ be a locally closed stratification of $Z$.
\end{hypothesis}

\subsection{Formal meromorphic functions}

\begin{defn}
The \emph{formal completion} of $X$ along $\calZ$,
denoted $\widehat{X|\calZ}$, consists of, for each $i$,
the sheaf $\calO_{\widehat{X|Z_i}}$ on $Z_i$ of formal functions
on $X$ along $Z_i$.
Let $\calO_{\widehat{X|\calZ}}(*Z)$ be the collection of the
sheaves of \emph{formal meromorphic functions}
$\calO_{\widehat{X|Z_i}}(*Z)$.
\end{defn}

\begin{defn}
A \emph{$\nabla$-module} over $\calO_{\widehat{X|\calZ}}(*Z)$
is a coherent sheaf $\calE$ over $\calO_{\widehat{X|\calZ}}(*Z)$ 
equipped with a flat $k$-linear connection
$\nabla: \calE \to \calE \otimes \Omega^1_{X/k}$. The flatness condition
(sometimes called \emph{integrability} to avoid confusion with the
algebro-geometric notion of a flat morphism) asserts that the composition
of $\nabla$ with the induced map
$\nabla^{(1)}: \calE \otimes \Omega^1_{X/k} \to \calE \otimes \Omega^2_{X/k}$ 
is zero.
By Proposition~\ref{P:locally simple}, $\calE$ is locally free.
\end{defn}

\begin{defn}
We similarly define the notion of a \emph{log-$\nabla$-module}
over $\calO_{\widehat{X|\calZ}}$, with respect to the log-structure
defined by $Z$. However, a log-$\nabla$-module is not necessarily
locally free.
\end{defn}

\begin{remark}
For $Y$ a stratum of $\calZ$, we can view a (log-)$\nabla$-module over
$\calO_{\widehat{X|Y}}(*Z)$ also as a
(log-)$\nabla$-module over $\calO_{\widehat{X|\calZ}}(*Z)$ by setting the
components over the other strata to be zero. We will do this frequently
and implicitly in what follows.
\end{remark}

\subsection{Good elementary models}

\begin{defn}
Let $Y$ be a stratum of $\calZ$. We say that a $\nabla$-module
$\calE$ over $\calO_{\widehat{X|\calZ}}(*Z)$ is
\emph{regular along $Y$} if $\calE|_Y$ is isomorphic to the restriction
of a locally free log-$\nabla$-module over
$\calO_{\widehat{X|Y}}$ with respect to the log-structure
defined by $Z$.
\end{defn}

\begin{defn} \label{D:exp}
Let $Y$ be a stratum of $\calZ$.
For $\phi$ a section of $\calO_{\widehat{X|Y}}(*Z)$, let $E(\phi)$ denote the
$\nabla$-module over $\calO_{\widehat{X|Y}}(*Z)$
free on one generator $\bv$ satisfying
$\nabla(\bv) = \bv \otimes d\phi$.
\end{defn}

\begin{defn} \label{D:local model}
Let $Y$ be a stratum of $\calZ$. Let
$\calE$ be a $\nabla$-module over $\calO_{\widehat{X|\calZ}}(*Z)$.
An \emph{elementary local model} of $\calE$ along $Y$ is an isomorphism
\begin{equation} \label{eq:local model}
\calE|_Y \cong \bigoplus_{\alpha \in A} E(\phi_\alpha) \otimes \calR_\alpha
\end{equation}
for some sections $\phi_\alpha$ of $\calO_{\widehat{X|Y}}(*Z)$
(indexed by an arbitrary set $A$)
and some regular $\nabla$-modules $\calR_\alpha$.
An elementary local model is \emph{good} if it satisfies the following
two additional conditions.
\begin{enumerate}
\item[(a)]
For $\alpha \in A$, if $\phi_\alpha$ is not a section of
$\calO_{\widehat{X|Y}}$, then
the divisor of $\phi_\alpha$ is anti-effective (has all
multiplicities nonpositive) with support in $Z$.
\item[(b)]
For $\alpha, \beta \in A$, if $\phi_\alpha - \phi_\beta$ is not a section of
$\calO_{\widehat{X|Y}}$, then
the divisor of $\phi_\alpha - \phi_\beta$ is anti-effective
 with support in $Z$.
\end{enumerate}
Note that if $Y = \{y\}$ is a point, 
and we choose an identification of $\calO_{\widehat{X|Y}}(*Z)$
with the ring $R_{n,m}$ of Notation~\ref{N:numerical},
then the notion of a good elementary local model of
$\calE$ along $Y$ corresponds precisely to the notion of a good decomposition
of $\calE|_Y$. Moreover, the choice of the identification does 
not matter thanks to Proposition~\ref{P:automorphism}.
\end{defn}

\begin{defn}
Let $\calE$ be a $\nabla$-module over $\calO_{\widehat{X|\calZ}}(*Z)$.
We say that $\calE$ admits a \emph{good formal structure} at a point
$y \in Z$ if there exists a finite cover of some neighborhood of 
$y$ in $X$, ramified only along $Z$, on which the pullback of
$\calE$ admits a good elementary local model along the stratum containing
some inverse image of $y$.
If this holds for all $y \in Z$, we simply
say that $\calE$ admits a good formal structure.
\end{defn}

\begin{remark} \label{R:formalizing}
Sabbah's original conjecture concerns $\nabla$-modules over
the sheaf $\calO_X(*Z)$ of meromorphic functions; it requires
the good elementary local models to be the formalizations of
modules in which the $\phi_\alpha$ are meromorphic sections, not just
formal meromorphic sections. However, for $X$ a surface,
Sabbah has proved in the analytic setting
\cite[Proposition~I.2.4.1]{sabbah} that 
for $\calE$ a $\nabla$-module over $\calO_X(*Z)$,
the formalization of $\calE$ is isomorphic to the formalization
of a good elementary local model (in the sense over $\calO_X(*Z)$)
if and only if it itself has a good elementary local model (in the sense
over $\calO_{\widehat{X|\calZ}}(*Z)$). That is, if the formalization
of a convergent connection has a good elementary local model, then the
components of that model may themselves be taken to be
convergent, even though the
isomorphism is in general not convergent. The argument in the algebraic
setting is similar, except that one must allow the $\phi_\alpha$ to be
\emph{algebraic} functions, rather than regular functions (i.e., they
are regular functions on \emph{\'etale} opens rather than Zariski opens).
Consequently, we may treat Sabbah's problem
by working exclusively in the formal setting.
The higher dimensional analogue of this reduction is also
similar; we will write it down explicitly in a subsequent paper.
\end{remark}

The following result of Sabbah \cite[Th\'eor\`emes~2.3.1,~2.3.2]{sabbah}
implies that $\calE$ admits a good formal structure everywhere outside
a discrete (or finite, in the algebraic case) set of points. 
\begin{theorem} \label{T:open good}
Suppose $X$ is a surface. In the analytic case, assume also that
$\calZ$ is the trivial stratification. For each $z \in Z$, there exists a
(topological or Zariski) open neighborhood $U$ of $z$ in $X$ such that
$\calE$ admits a good formal structure at each point of $(U \cap Z) \setminus \{z\}$.
\end{theorem}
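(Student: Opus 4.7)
The plan is to leverage the numerical criterion (Theorem~\ref{T:criterion}) together with the subharmonicity machinery of Section~\ref{sec:valuative}. First I would reduce to the case where $z$ is a smooth point of $Z$: if $z \in Z_1 \cap Z_2$ is a crossing, then for any sufficiently small neighborhood $U$ every point of $(U \cap Z) \setminus \{z\}$ is a smooth point of one component, so the smooth case handles everything. Choose local coordinates $(x, y)$ near $z = (0, 0)$ with $Z = \{x = 0\}$, and let $M$ denote the differential module over $R_{2,1} = k \llbracket x, y \rrbracket [x^{-1}]$ obtained from the germ of $\calE$ at $z$. For each $y_0 \in k$ close to $0$, write $M_{y_0}$ for the pullback of $M$ along the translation $y \mapsto y + y_0$; this computes the formal completion of $\calE$ at the nearby point $(0, y_0)$.

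The next step is to translate the numerical criterion at $(0, y_0)$ into a statement on the Berkovich unit disc $\DD$ attached to $z$. For $r_1 > 0$ and $r_2 \geq 0$, the Gauss norm $|\cdot|_{(r_1, r_2)}$ centered at $(0, y_0)$ coincides, after being raised to the power $1/r_1$, with the tree point $\alpha_{y_0, e^{-r_2/r_1}} \in \DD$; by the homogeneity of absolute scales under rescaling of norms, this yields
\[
F_d(M_{y_0}, (r_1, r_2)) = r_1 \cdot F_d(M, \alpha_{y_0, e^{-r_2/r_1}}),
\]
and similarly for $\End(M)$. Invoking Remark~\ref{R:m equals 0}, linearity of $F_d(M_{y_0}, \cdot)$ in $r$ automatically forces constancy in $r_2$, so the numerical criterion for $M_{y_0}$ reduces to constancy of $\rho \mapsto F_d(M, \alpha_{y_0, \rho})$ along $\rho \in [0,1]$, and likewise for $F_{d^2}(\End(M), \alpha_{y_0, \rho})$.

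By Proposition~\ref{P:scales sub}, both $F_d(M, \cdot)$ and $F_{d^2}(\End(M), \cdot)$ are monotone integral subharmonic on $\DD$, and by Lemma~\ref{L:finite extremity} their skeleta $S_1, S_2$ each have only finitely many extremities and joints, all of type (ii). The required constancy of $\rho \mapsto F_d(M, \alpha_{y_0, \rho})$ fails precisely when the downward segment from $\alpha_\DD$ to $\alpha_{y_0, 0}$ enters $S_1$ below the head, equivalently when some extremity or joint $\alpha_{z_0, \rho_0} \in S_1$ dominates $\alpha_{y_0, 0}$. Since each such extremity/joint has $\rho_0 < 1$, and since $y_0 \in k^\times$ satisfies $|y_0 - z_0|_x \leq \rho_0 < 1$ only when $y_0$ equals the image $\bar{z}_0$ of $z_0 \in \gotho_{\CC_x}$ in the residue field $k$, each extremity or joint of $S_1$ or $S_2$ obstructs at most one value $y_0 \in k$.

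Collecting the obstructing values produces a finite set $T \subset k$. Shrink $U$ to any neighborhood of $z$ that meets $\{(0, y_0) : y_0 \in T\}$ only at $z$ itself; then at every $w = (0, y_0) \in (U \cap Z) \setminus \{z\}$ the numerical criterion of Theorem~\ref{T:criterion} holds, so $\calE$ admits a good decomposition after a suitable ramified cover, giving a good formal structure at $w$. The main obstacle is the identification carried out in the second paragraph: the change-of-variables that matches Gauss norms on $R_{2,1}$ centered at $(0, y_0)$ with rescaled tree points $\alpha_{y_0, \rho} \in \DD$, together with the $k$-rationality observation that a closed disc of radius strictly less than $1$ contains at most one $k$-point. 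Once these are in place, the finiteness of extremities and joints provided by Lemma~\ref{L:finite extremity} does the combinatorial work of confining the turning points to a discrete set.
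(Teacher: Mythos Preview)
The paper does not prove this statement: it is attributed to Sabbah and invoked as an input to the later arguments. Your attempt to derive it from the paper's own numerical criterion and valuative-tree machinery is therefore new, but it rests on a misidentification of which part of the Berkovich disc is in play.

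The set $E$ on which Proposition~\ref{P:scales sub} produces a monotone integral subharmonic function lies inside $\DD_0$, which by definition is the single branch of $\DD$ at the Gauss point heading toward $\alpha_{0,0}$; all other branches are explicitly removed. For $y_0 \in k^\times$ one has $|y_0|_x = 1$, so the entire segment from $\alpha_\DD$ to $\alpha_{y_0,0}$ (apart from its head) lies on a different branch and hence outside $\DD_0$. The skeleta you form therefore cannot meet any such segment below the Gauss point; indeed every extremity or joint $\alpha_{z_0,\rho_0} \in E$ has $|z_0|_x < 1$, so its residue $\bar z_0 \in k$ is $0$, and your obstruction set $T$ collapses to $\{0\}$. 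Read literally, the argument would show that $\calE$ has a good formal structure at every $(0,y_0)$ with $y_0 \neq 0$, which is false in general.

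There is an underlying algebraic reason this cannot be repaired by merely reinterpreting the picture. The module $M$ is the completion of $\calE$ at the origin, living over $R_{2,1} = k\llbracket y\rrbracket((x))$; the substitution $y \mapsto y + y_0$ with $y_0 \in k^\times$ is not an endomorphism of $k\llbracket y\rrbracket$, and the seminorm $\alpha_{y_0,\rho}$ (which has $|y| = 1$) does not extend to $R_{2,1}$, so the base change $M \otimes_{R_{2,1}} F_{\alpha_{y_0,\rho}}$ is undefined. The completion of $\calE$ at $(0,y_0)$ genuinely requires the data of $\calE$ along $Z$, not just its stalk at the origin. A viable version of your strategy would have to work directly with $\calE$ over $\calO_{\widehat{X|Z}}(*Z)$ and establish analogues of Proposition~\ref{P:scales sub} and Lemma~\ref{L:finite extremity} across the branches at $\alpha_\DD$ coming from $k$-points; the paper does not supply this.
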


\begin{remark}
Sabbah also introduces the notion of a \emph{very good formal structure}
\cite[\S I.2.2]{sabbah}, but one cannot always achieve such a structure
even after a blowup \cite[Lemme~I.2.2.3]{sabbah}.
\end{remark}

\begin{remark}
In the analytic case, one cannot expect Theorem~\ref{T:open good}
(or Theorem~\ref{T:global good}) to hold for a nontrivial stratification. For instance,
if $z$ is an isolated point in the stratification, there could be an infinite
sequence of points in another stratum accumulating at $z$, at each of which 
$\calE$ fails to admit a good formal structure.
\end{remark}

\subsection{The main local theorems}

We now use our numerical criterion to prove that in some special cases,
one can arrange to have good formal structure everywhere by performing
a suitable blowing up.

\begin{defn}
Since we will only discuss surfaces, we may define a \emph{modification}
of a variety to be a composition of point blowups.
\end{defn}

\begin{theorem} \label{T:local good1}
Put $X = \AAA^2_k$ (in the algebraic category)
with coordinates $x,y$,
let $Z$ be the line $x=0$,
and let $Y$ consist solely of the origin.
Let $\calE$ be a $\nabla$-module over $\calO_{\widehat{X|Y}}(*Z)$.
Then there exists a modification $f: X' \to X$ with
$X'$ smooth and $Z' = f^{-1}(Z)$ a normal crossings divisor,
such that $f^* \calE$ admits a good formal structure for
some stratification of $Z'$.
\end{theorem}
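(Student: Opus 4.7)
The plan is to reduce to the numerical criterion of Theorem~\ref{T:criterion} by exploiting the subharmonic structure of partial irregularity functions on the Berkovich unit disc.

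First identify $\calO_{\widehat{X|Y}}(*Z) \cong R_{2,1} = k\llbracket x,y\rrbracket[x^{-1}]$, so that $\calE$ corresponds to a finite differential module $M$ over $R_{2,1}$ of some rank $d$. Via the embedding $R_{2,1} \subset k((x))\llbracket y \rrbracket$ (as in Remark~\ref{R:extend scalars}), I would interpret the partial irregularities $f_1 := F_d(M,\cdot)$ and $f_2 := F_{d^2}(\End(M),\cdot)$ as functions on the type-(ii) and (iii) points $E$ of the Berkovich disc $\DD_0$ over $k((x))$. By Proposition~\ref{P:scales sub} applied to both $M$ and $\End(M)$, these functions are monotone integral subharmonic, so by Lemma~\ref{L:finite extremity} their skeletons have only finitely many extremities and joints, each of type (ii). Each such point corresponds to a divisorial valuation on $k\llbracket x,y\rrbracket$, which is realized by an exceptional divisor of some modification of $X$ centered at $Y$.

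Next, by standard resolution theory for surfaces, I would construct a modification $f: X' \to X$ centered at $Y$ such that $X'$ is smooth, $Z' := f^{-1}(Z)$ is a normal crossings divisor, and every divisorial valuation occurring as an extremity or joint of $S_{f_1}$ or $S_{f_2}$ appears as an irreducible component of $Z'$. Stratify $Z'$ by taking its nodes as $0$-strata and the complements of the nodes in the components as $1$-strata. At a closed point $y \in Z'$, the completed local ring $\calO_{\widehat{X'|y}}(*Z')$ is isomorphic to either $R_{2,1}$ (smooth point) or $R_{2,2}$ (node), and the associated Berkovich disc embeds into $\DD$ as a subtree: a branch at a divisorial valuation for a smooth point, or the segment connecting two adjacent divisorial valuations for a node.

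By construction, this subtree contains no extremity or joint of $S_{f_1}$ or $S_{f_2}$ in its interior; consequently, the restrictions of $f_1, f_2$ to the subtree are affine, which translates into linearity of the appropriate $F_d$ and $F_{d^2}$ in the Gauss-norm coordinates of the local ring at $y$ (the required constancy in $r_{m+1},\ldots,r_n$ follows from Remark~\ref{R:m equals 0}). Applying Theorem~\ref{T:criterion} at each $y$ then gives a good decomposition of $f^*\calE$ after a finite ramified cover, yielding the required good formal structure. The hardest step will be the verification at the nodes, where the two-variable linearity demanded by Theorem~\ref{T:criterion} must be extracted from piecewise-affine behavior on the one-dimensional segment of the Berkovich tree between two divisorial valuations; this will require careful bookkeeping of the conversion between valuative coordinates on $\DD$ and the local coordinates $(x_p, y_p)$ at a node of $Z'$, and tracking how the slopes along that segment encode the linear functionals $\lambda_j$ appearing in Theorem~\ref{T:convex}.
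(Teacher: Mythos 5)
Your overall strategy — identify $\calE$ with a module over $R_{2,1}$, apply Proposition~\ref{P:scales sub} to $M$ and $\End(M)$ to get monotone integral subharmonic irregularity functions, use Lemma~\ref{L:finite extremity} to extract finitely many divisorial valuations, blow up to realize them as components of $Z'$, and then invoke Theorem~\ref{T:criterion} — is exactly the paper's strategy, and the technical worry you flag at the end is in fact harmless: since $F_i(M,\lambda r) = \lambda F_i(M,r)$ by Definition~\ref{D:spectral}, two-variable linearity at a node is equivalent to affinity along the normalized segment $r_1 + r_2 = 1$, which is precisely the parametrized segment of the Berkovich tree between the two divisorial points; the paper takes this for granted.

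However, there is a genuine gap in how you dispose of the one-dimensional strata. With your stratification (nodes as $0$-strata, complements as $1$-strata), verifying good formal structure at a smooth point $y$ of $Z'$ requires exhibiting a good \emph{elementary local model along the one-dimensional stratum} $Y$ containing $y$ (Definition~\ref{D:local model}), i.e., a decomposition of $\calE$ over the formal completion along the curve $Y$, with $\phi_\alpha$ varying coherently over $Y$. Applying Theorem~\ref{T:criterion} at the point $y$, as you propose, only gives a good decomposition over the completion at $y$ (the ring $R_{2,1}$), which is a pointwise statement and does not directly produce a decomposition along the curve. The paper sidesteps this entirely by invoking Theorem~\ref{T:open good} (Sabbah's earlier result), which already asserts good formal structure for the trivial stratification at all but a finite/discrete set of points; only the remaining finitely many points are then put in $0$-strata, and Theorem~\ref{T:criterion} is applied to each of \emph{those}, using Lemma~\ref{L:skeleton}(c) to get the constancy of $F_d(M,\cdot)$ on the relevant branch when the point is a smooth point of $Z'$ and affinity along the relevant skeletal segment when it is a node. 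Your proposal never cites Theorem~\ref{T:open good}, and without it (or some substitute spreading-out argument) there is no way to pass from pointwise good decompositions at all smooth points to a good elementary local model along the one-dimensional strata.
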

\begin{proof}
Identify $\calE$ with a finite differential module $M$ of rank $d$ over
$k\llbracket x,y \rrbracket[x^{-1}] \cong R_{2,1}$. 
By Proposition~\ref{P:scales sub}, the irregularity of $M$ determines
a monotone integral subharmonic function on $E$ (the subset of the
Berkovich open unit disc consisting of points of type (ii) or (iii)), 
and similarly for $\End(M)$.
Let $V$ be the set
of divisorial valuations on $R_{2,1}$ corresponding to joints or extremities
of the resulting skeleta.
We may choose $f$ such that each $v \in V$ corresponds to
an exceptional divisor on $X'$. 

We now check that $f^* \calE$ admits a good
formal structure for some stratification of $Z'$.
For each component of $Z'$, 
Theorem~\ref{T:open good} gives good formal structures everywhere
once we exclude a finite (in the algebraic case) or discrete
(in the analytic case) set of points.
We put each of the remaining points in its own stratum.

In this case, we claim that good formal structures exist by
Theorem~\ref{T:criterion}.
At a crossing point, the irregularity function for $\calE$ 
considered in Theorem~\ref{T:criterion}
corresponds to the restriction of $F_d(M, \cdot)$ to a
segment of the skeleton of $M$ between two joints.
On such a segment, $F_d(M, \cdot)$ is affine.
At a noncrossing point, the irregularity function for $\calE$
corresponds to the restriction of $F_d(M, \cdot)$ to
a segment in $E$ which lies outside of the skeleton except
possibly at its upper endpoint.
On such a segment, $F_d(M, \cdot)$ is constant by Lemma~\ref{L:skeleton}(c).
Similar arguments apply to $\End(\calE)$ and $F_{d^2}(\End(M),\cdot)$.
(Note that Theorem~\ref{T:convex} by itself is unable to guarantee
this favorable behavior at noncrossing points; this is why
the analysis of the Berkovich disc is needed. Note also that
we are implicitly using Remark~\ref{R:extend scalars} in the form that
refining the stratification of $Z'$ does not change irregularities.)

In all cases, we deduce that the numerical criterion of
Theorem~\ref{T:criterion} is verified. The desired result thus follows.
\end{proof}

\begin{theorem} \label{T:local good2}
Put $X = \AAA^2_k$ (in the algebraic category)
with coordinates $x,y$,
let $Z$ be the union of the coordinates axes in $X$,
and let $Y$ consist solely of the origin $x=y=0$.
Let $\calE$ be a $\nabla$-module over $\calO_{\widehat{X|Y}}(*Z)$.
Then there exists a modification $f: X' \to X$ with
$X'$ smooth and $Z' = f^{-1}(Z)$ a normal crossings divisor,
such that $f^* \calE$ admits a good formal structure for
some stratification of $Z'$.
\end{theorem}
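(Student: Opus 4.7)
The plan is to follow the template of Theorem~\ref{T:local good1}, combining the numerical criterion of Theorem~\ref{T:criterion} with a Berkovich--valuative analysis of irregularity adapted to the two-boundary setting. First identify $\calE$ with a differential module $M$ of rank $d$ over $R_{2,2} = k\llbracket x,y \rrbracket[x^{-1},y^{-1}]$. By Theorem~\ref{T:convex}, the partial irregularity functions $F_d(M,r)$ and $F_{d^2}(\End(M),r)$ are continuous, convex, and piecewise linear on $[0,+\infty)^2 \setminus \{0\}$.

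The main step is to produce a finite collection $V$ of divisorial valuations on $R_{2,2}$ centered at the origin that captures all the break loci of these irregularity functions. For this I would extend the analysis of Proposition~\ref{P:scales sub} and apply it in two coordinate regimes: once using the Berkovich closed unit disc $\DD$ over $k((x))$ with coordinate $y$ (restricted to $\DD$ minus the curve valuation $\alpha_{0,0}$ corresponding to $y=0$, where $M$ is undefined), and once symmetrically with the roles of $x$ and $y$ reversed. The appropriate analog of Proposition~\ref{P:scales sub} should show that $F_d(M,\cdot)$ and $F_{d^2}(\End(M),\cdot)$ are monotone integral subharmonic on the relevant punctured trees; combining this with Lemma~\ref{L:finite extremity} would give finitely many joints and extremities in each skeleton, and $V$ is the union of all these across the two regimes.

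Given $V$, I would choose a modification $f: X' \to X$ realizing each $v \in V$ as an exceptional divisor on $X'$. By Theorem~\ref{T:open good}, $f^*\calE$ admits a good formal structure outside a discrete set of points of $Z' = f^{-1}(Z)$; I would stratify $Z'$ by placing each remaining bad point in its own stratum. At each such point $p$, the restriction of the relevant irregularity function to the appropriate coordinate segment is affine on pieces of skeleton between joints and constant off the skeleton (by Lemma~\ref{L:skeleton}(c)), and the same holds for $\End(M)$. The numerical criterion of Theorem~\ref{T:criterion} is thus verified at every such $p$, yielding a good formal structure everywhere.

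The main obstacle is the extension of Proposition~\ref{P:scales sub} to the punctured-disc setting. In Theorem~\ref{T:local good1}, vanishing of $F_d$ at leaves of type (i) combined with $F_d \geq 0$ forces nonpositive integral slopes near those leaves (Lemma~\ref{L:mis}(b)), which is essential for finiteness of the skeleton. Here the excluded curve valuation $\alpha_{0,0}$ need not be a point where $F_d$ vanishes, so one must compensate -- plausibly by exploiting the symmetry between the two Berkovich regimes, or by a direct analysis of the limiting behavior of irregularity as one approaches the other boundary axis -- to recover finiteness of the skeleta.
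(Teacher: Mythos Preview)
Your proposal has a genuine gap, and the paper's approach is substantially simpler because it sidesteps exactly the obstacle you identify.

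The issue you flag in your final paragraph is real and not easily patched: over $R_{2,2}$, the irregularity function $F_d(M,\cdot)$ on the Berkovich disc over $k((x))$ with coordinate $y$ need \emph{not} be monotone integral subharmonic in the sense of Definition~\ref{D:mis}. In the $R_{2,1}$ case, Theorem~\ref{T:convex} guarantees that $F_d(M,r)$ is nonincreasing in $r_2$ (since $y$ is a regular variable), which is what forces the slopes on the tree to be nonpositive. In the $R_{2,2}$ case $y$ is singular, so this monotonicity fails, and with it the finiteness argument of Lemma~\ref{L:finite extremity}. Your suggestion to patch this by symmetry or by a limiting analysis near $\alpha_{0,0}$ is not obviously workable: you would need a substitute for the nonpositivity of slopes, and neither regime alone supplies it.

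The paper avoids this entirely by a two-stage reduction. First, Theorem~\ref{T:convex} already asserts that $F_d(M,r)$ and $F_{d^2}(\End(M),r)$ are piecewise linear on the ray space $[0,+\infty)^2$ with finitely many integral linear pieces; the break rays form a finite set $V$ of monomial (toric) valuations. A \emph{toroidal} blowup realizing each $v \in V$ as an exceptional divisor then makes both functions linear on each toric chamber, so Theorem~\ref{T:criterion} gives a good formal structure at every \emph{crossing} point of $Z'$. Second, by Theorem~\ref{T:open good} only finitely many noncrossing points remain bad; but at a noncrossing point of $Z'$ the local completed ring is isomorphic to $R_{2,1}$, not $R_{2,2}$, so Theorem~\ref{T:local good1} applies directly. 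No new Berkovich analysis is needed for the $R_{2,2}$ case at all: the toric structure handles the crossing points via Theorem~\ref{T:convex} alone, and the noncrossing points are already in the $R_{2,1}$ setting where your Proposition~\ref{P:scales sub} applies without modification.
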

\begin{proof}
Identify $\calE$ with a differential module $M$ of finite rank $d$ over
$k\llbracket x,y \rrbracket[x^{-1},y^{-1}] \cong R_{2,2}$. 
Set notation as in Theorem~\ref{T:convex}, and let $V$
be the set of divisorial valuations on $R_{2,2}$ which correspond
to values of $r$ at which $F_d(M,r)$ or $F_{d^2}(\End(M),r)$ changes slope.
We may choose $f$ to be a toroidal blowup at $Y$,
such that each $v \in V$ corresponds to
an exceptional divisor on $X'$. If we put each crossing point
of $Z'$ in its own stratum, then $f^* \calE$ admits a good
formal structure at each crossing point by 
Theorem~\ref{T:criterion}.
We may thus apply Theorem~\ref{T:open good} to reduce the claim
to finitely many instances of Theorem~\ref{T:local good1}.
\end{proof}

\begin{remark}
The reduction of Theorem~\ref{T:local good2} to Theorem~\ref{T:local good1}
has also been shown by Sabbah \cite[Proposition~4.3.1]{sabbah}
and Andr\'e \cite[Th\'eor\`eme~5.4.1]{andre}. (Thanks to Takuro Mochizuki
for providing these references.)
\end{remark}

\subsection{The main global theorem}

To conclude, we state our main global theorem, reiterating 
Hypothesis~\ref{H:geometric} but specializing to the two-dimensional
case.

\begin{theorem} \label{T:global good}
Let $X$ be a smooth surface, and let $Z$ be a normal crossings
divisor on $X$. In the algebraic case, let $\calZ$ be any locally closed 
stratification of $Z$; in the analytic case,
take $\calZ$ to be the trivial stratification. 
Let $\calE$ be a $\nabla$-module over 
$\calO_{\widehat{X|\calZ}}(*Z)$.
Then there exist a modification $f: X' \to X$, which is the composition
of a discrete (i.e., locally finite in the analytic case,
finite in the algebraic case) sequence of point blowups, with
$X'$ smooth and $Z' = f^{-1}(Z)$ a normal crossings divisor,
and a refinement $\calZ'$ of the pullback stratification on $Z'$ induced
by $\calZ$,
such that the restriction of $f^* \calE$ to $\calO_{\widehat{X'|\calZ'}}(*Z')$
has a good formal structure.
\end{theorem}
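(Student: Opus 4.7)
The plan is to combine Sabbah's result on the generic existence of good formal structures (Theorem~\ref{T:open good}) with the two local modification theorems \ref{T:local good1} and \ref{T:local good2} already established. First, I apply Theorem~\ref{T:open good} at each point of $Z$: this produces, for every $z \in Z$, a neighborhood of $z$ in $X$ (Zariski in the algebraic case, topological in the analytic case) on which $\calE$ already admits a good formal structure away from $z$ itself. Consequently, the set $S \subseteq Z$ of points at which $\calE$ fails to have a good formal structure is closed and locally finite in $Z$; it is finite in the algebraic case, and discrete in the analytic case (where $\calZ$ is trivial by hypothesis).

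For each $y \in S$, choose local coordinates on a neighborhood of $y$ so that $Z$ is defined either by $x = 0$ (if $y$ is a smooth point of $Z$) or by $xy = 0$ (if $y$ is a crossing point). Since the statements of Theorems~\ref{T:local good1} and~\ref{T:local good2} depend only on the formal completion $\calO_{\widehat{X|\{y\}}}(*Z)$---which in these coordinates becomes $R_{2,1}$ or $R_{2,2}$ regardless of the ambient category, and whose notion of good decomposition is invariant under coordinate changes by Proposition~\ref{P:automorphism}---I may apply the relevant local theorem to produce a modification of $y$, consisting of a finite sequence of point blowups, that can be realized in an honest neighborhood of $y$ in $X$. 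Because $S$ is locally finite, these disjointly supported local modifications patch together into a single global modification $f: X' \to X$ that is an isomorphism over $X \setminus S$; moreover $X'$ is smooth and $Z' = f^{-1}(Z)$ is a normal crossings divisor.

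It remains to produce $\calZ'$ and to verify the good formal structure globally. I define $\calZ'$ to agree with the pullback $f^* \calZ$ outside the exceptional locus, and to include as singleton strata all crossing points of $Z'$, together with the finitely many additional special points in each exceptional divisor that were already put into singleton strata by the proofs of Theorems~\ref{T:local good1} and~\ref{T:local good2} (so that the hypotheses of Theorem~\ref{T:open good} carry over on each component of $Z'$). The verification then splits: at points of $Z'$ outside $f^{-1}(S)$ the good formal structure is simply pulled back from the original one provided by Theorem~\ref{T:open good}, while at points in $f^{-1}(S)$ it is precisely the conclusion of the local theorem invoked at the corresponding $y \in S$. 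The main obstacle will lie in this last verification---specifically, in ensuring that the modifications produced by the local theorems interact correctly with the stratification $\calZ'$ so that the numerical criterion (Theorem~\ref{T:criterion}) continues to hold at the newly created crossings---and, in the analytic case, in checking that a discrete family of blowups at the points of $S$ admits a well-defined limit; both issues are managed because each local modification is supported on a relatively compact neighborhood of a single point of $S$, and the local theorems already incorporate the placement of crossing points and turning points into singleton strata.
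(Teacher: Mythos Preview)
Your proposal is correct and follows essentially the same approach as the paper: apply Theorem~\ref{T:open good} to reduce to a discrete set of bad points, then invoke Theorems~\ref{T:local good1} and~\ref{T:local good2} at each such point according to whether it is a smooth or crossing point of $Z$. The paper's proof is a terse two-sentence version of exactly this; your additional remarks about patching the local modifications, refining the stratification, and invoking Proposition~\ref{P:automorphism} for coordinate independence are appropriate elaborations of details the paper leaves implicit.
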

\begin{proof}
By Theorem~\ref{T:open good}, we can get a good formal structure
away from a discrete set of points on $X$.
To resolve each of the
others, apply Theorems~\ref{T:local good1} and~\ref{T:local good2}.
\end{proof}

\begin{remark}
In its analytic aspect, Theorem~\ref{T:global good} resolves
\cite[Conjecture~2.5.1]{sabbah}; the case $\rank(\calE) \leq 5$ 
had been established by Sabbah 
\cite[Th\'eor\`eme~2.5.2]{sabbah}.
In its algebraic aspect, and further restricted to the case of a
$\nabla$-module obtained by base change over $\calO_X(*Z)$ (i.e., 
where the connection is actually meromorphic, not just formal meromorphic),
Theorem~\ref{T:global good} reproduces a result of Mochizuki 
\cite[Theorem~1.1]{mochizuki} (modulo Remark~\ref{R:mochizuki def2} below).
A higher-dimensional analogue of the latter is
\cite[Theorem~19.5]{mochizuki2}; we will generalize that result in 
a subsequent paper.
\end{remark}

\begin{remark} \label{R:mochizuki def2}
Recall (Remark~\ref{R:mochizuki def}) that there is a slight discrepancy
between the notions of good formal structures considered by Sabbah
and Mochizuki. In particular, if $f^* \calE$ has a good formal structure
in the sense of Mochizuki, then it also has one in the sense of 
Sabbah, but not conversely. However, if both $f^* \calE$ and $f^* \End(\calE)$
have good formal structures in the sense of Sabbah, then
$f^* \calE$ has a good formal structure in the sense of Mochizuki.
Hence Theorem~\ref{T:global good} implies the corresponding statement
using Mochizuki's definition of a good formal structure.
\end{remark}

\begin{remark}
Mochizuki's work in \cite{mochizuki} also applies to 
the case when $\calE$ is formal meromorphic
but defined over a subring of $k$ which is finitely generated over $\ZZ$.
However, one can only reduce the general case of Theorem~\ref{T:global good}
to working over subrings of $k$ which are \emph{countably} generated 
over $\ZZ$. This does not permit the sort of reduction modulo $p$
arguments used in \cite{mochizuki}.
\end{remark}

\end{document}